\numberwithin{equation}{section}
\newtheorem{proposition}{Proposition}[section]
\newtheorem{theorem}[proposition]{Theorem}
\newtheorem{lemma}[proposition]{Lemma}
\newtheorem{corollary}[proposition]{Corollary}
\theoremstyle{definition}
\newtheorem{definition}[proposition]{Definition}
\newtheorem{remark}[proposition]{Remark}
\definecolor{gr}{rgb}   {0., 0.8, 0. } 
\definecolor{bl}{rgb}   {0., 0.5, 1. } 
\definecolor{mg}{rgb}   {0.7, 0., 0.7}
\DeclareMathOperator*{\esssup}{ess\,sup}
\title{On existence and uniqueness for non-autonomous parabolic Cauchy 
problems with rough coefficients}
\author{Pascal Auscher\,\thanks{Laboratoire de Math\'ematiques d'Orsay, 
Univ. Paris-Sud, CNRS, Universit\'e Paris-Saclay, 91405 Orsay, France.}\,
\and Sylvie Monniaux\,\thanks{Aix-Marseille Universit\'e, CNRS, Centrale Marseille, 
I2M, UMR 7373, 13453 Marseille, France.}\, 
\and Pierre Portal\,\thanks{Australian National 
University, Canberra, Australia, and Universit\'e Lille 1, France.}\, 
}
\date{\today}
\begin{document}

\maketitle

\begin{abstract}
We consider existence and uniqueness issues for the initial value problem 
of parabolic equations $\partial_{t} u = {\rm div} A \nabla u$ on the upper half 
space, with initial data in $L^p$ spaces. The coefficient matrix $A$ is assumed 
to be uniformly elliptic, but merely bounded measurable in space and time.
For real coefficients and a single equation, this is an old topic for which a 
comprehensive theory is available, culminating in the work of Aronson.
Much less is understood for complex coefficients or systems of equations 
except for the work of Lions, mainly because of the failure of maximum principles. 
In this paper, we come back to this topic with new methods that do not rely on 
maximum principles. This allows us to treat systems in this generality when 
$p\ge 2$, or under  certain   assumptions such as bounded variation in the time 
variable (a much weaker assumption that the usual H\"older continuity assumption) 
when $p< 2$. We reobtain results for real coefficients, and also 
complement them. For instance, we obtain uniqueness for arbitrary $L^p$ data, 
$1\le p \le \infty$, in the class $L^\infty(0,T; L^p({\mathbb{R}}^n))$.
Our approach to the existence problem relies on a careful construction of 
propagators for an appropriate energy space, encompassing previous 
constructions. Our approach to the uniqueness problem, the most novel aspect 
here, relies on a parabolic version of the Kenig-Pipher maximal function, used in 
the context of elliptic equations on non-smooth domains.  
We also prove comparison estimates involving conical square functions of Lusin 
type and prove some Fatou type results about non-tangential convergence of 
solutions. Recent results on maximal regularity operators in tent spaces that do 
not require pointwise heat kernel bounds are key tools in this study. 
\end{abstract}

\tableofcontents

\section{Introduction} 
\label{sec1}

We consider the problem
\begin{equation}
\label{eq1}
\partial_t u(t,x)={\rm div}\,(A(t,x)\nabla u(t,x)),\quad t>0, x\in {\mathbb{R}}^n
\end{equation}
where 
$A\in L^\infty((0,\infty)\times {\mathbb{R}}^n,{\mathscr{M}}_n({\mathbb{C}})))$
satisfies uniform ellipticity estimates:
\begin{equation}
\label{ell}
\begin{array}{l}
\exists\,\Lambda>0 \mbox{ such that } \forall\,\xi,\eta\in{\mathbb{C}}^n,
|\langle A(t,x)\xi,\eta\rangle|\le \Lambda|\xi||\eta| \mbox{ for a.e. $t>0$ and
$x\in{\mathbb{R}}^n$};
\\[8pt]
\exists\,\lambda>0 \mbox{ such that } \forall\,\xi\in{\mathbb{C}}^n,
\Re e(\langle A(t,x)\xi,\xi\rangle)\ge \lambda|\xi|^2 \mbox{ for a.e. $t>0$ and
$x\in{\mathbb{R}}^n$}.
\end{array}
\end{equation}
The divergence and gradient are taken with respect to the $x$ variables only. 
We mention right away that our results extend to systems of parabolic equations
with ellipticity \eqref{ell} replaced by a G\aa rding inequality on ${\mathbb{R}}^n$ 
uniformly with respect to $t$. For the sake of simplicity, we only consider one 
equation, but complex valued coefficients. We also restrict to $t>0$ since we 
are interested in the initial value problem with data at $t=0$. More precisely, 
we shall study three problems. 

1) Construct weak solutions for general $L^p$ initial data and prove sharp 
estimates.

2)  Show when a weak solution has a trace at $t=0$ and is uniquely determined 
by it. 

3) Establish well-posedness as a consequence. 
 
These problems have been studied in \cite{Li57, Ar68}; see  also \cite{russe} 
and the references therein. Here, we obtain striking results for systems and 
$L^p$ data, as well as new results (e.g., concerning well-posedness classes) 
for the case of a real equation. For example, we prove uniqueness results for 
arbitrary $L^p$ data, an issue left unresolved by Aronson.  
Furthermore, even in the case of a real equation, our methods are technically 
innovative: they have to be so to circumvent the loss of maximum principles.
In particular, we do not rely on the local regularity theory for solutions which 
culminated in \cite{Na, Mo}, and do not require a priori knowledge of boundedness 
or regularity properties of solutions in our approach.   

Recall the meaning of a weak solution. 

\begin{definition} 
Let $0\le a <b \le \infty$, $\Omega$ be an open subset of ${\mathbb{R}}^n$ and 
$Q=(a,b)\times \Omega$. A weak solution of \eqref{eq1} on $Q$ is 
a (complex-valued) function $u\in L^2_{\rm loc}(a,b; H^1_{\rm loc}(\Omega))$ 
such that  
\begin{equation}
\label{eq:weak}
\iint_Q u(t,x)\overline{\partial_t\varphi(t,x)}
\,{\rm d}x\,{\rm d}t
=\iint_Q A(t,x)\nabla u(t,x)\cdot \overline{\nabla\varphi(t,x)}
\,{\rm d}x\,{\rm d}t
\end{equation}
for all $\varphi\in{\mathscr{C}}_c^\infty(Q)$. For $0\le a<b<\infty$ and 
$\Omega={\mathbb{R}}^n$, we say that $u$ is a local (in time) solution on $(a,b)$, 
and when $Q={\mathbb{R}}^{n+1}_{+}:= (0,\infty)\times {\mathbb{R}}^n$ we say 
that $u$ is a global weak solution.   
\end{definition}

Recall that well-posedness for the Cauchy problem consists in proving 
existence and uniqueness for global (or local) weak solutions of \eqref{eq1} 
$u$ in some solution space $X$, converging, as $t$ tends to $0$, to an initial data 
$f$ in a space of initial data $Y$, in some appropriate sense. In this case, $X$ 
is said to be a well-posedness class for \eqref{eq1} for $Y$ data.  

This problem is well-understood for global solutions of the heat equation when 
$Y= L^p({\mathbb{R}}^n)$ and $X = L^\infty((0,\infty);L^p({\mathbb{R}}^n))$, 
for $p \in [1,\infty]$.  

First, the heat extension of $f\in Y$ is easily seen to belong to $X$. 
Conversely, use of the maximum principle and  form methods allow one to 
prove that all weak solutions (which are, in fact, classical solutions) in $X$ 
have a trace in $Y$ and are given by the semigroup.
The most efficient arguments seem to be the ones designed for Riemannian 
manifolds, because they do not rely on any explicit formula for the heat kernel. 
Strichartz, in \cite{Str83}, proves this result for $1<p<\infty$, even for global 
solutions with $ \|u(t,\cdot)\|_{L^p({\mathbb{R}}^n)}$ possibly growing as 
$t\to \infty$ (but not faster than exponentially). For $p=1$, 
we refer to \cite{Li84} for a neat proof, and another argument for $1<p<\infty$. 
For $p=\infty$, see \cite{Dod83} for a uniqueness result under a continuity 
assumption.

\medskip

Back to the Euclidean case for the non-autonomous problem \eqref{eq1}, 
it was Aronson \cite{Ar68} who obtained the most complete results for real 
equations in divergence form. He considers the energy space 
$L^\infty(0,T; L^2({\mathbb{R}}^n))\cap L^2(0,T; H^1({\mathbb{R}}^n))$.   
He proves that all solutions $u$ in this space have a trace $u_0$ in 
$L^2({\mathbb{R}}^n)$, and are uniquely determined by this trace.  
It follows that this class is a uniqueness class. Aronson also obtains existence 
given an $L^2$ initial data, hence defines a propagator $\Gamma$
such that $u(t, \cdot)=\Gamma(t,0)u_{0}$ for $t> 0$. The same strategy, 
with a slightly different energy space, was employed by Lions \cite{Li57} 
earlier for complex equations, and it yields the same solution. For real equations,  
however, Aronson also proved pointwise Gaussian decay of the propagator in
\cite{Ar67}. This allows one to define weak solutions by the integral representation
$$
u(t,x)=\int_{{\mathbb{R}}^n}k(t,0,x,y)u_0(y)\,{\rm d}y
$$
for $u_0$ in various spaces of initial conditions. 
For solutions satisfying an integral condition
$$
\|u\|_{\mathcal{E}}^2:=\int_{0}^T\int_{{\mathbb{R}}^n} e^{-a |x|^2} u(t,x)^2
\,{\rm d}t\,{\rm d}x<\infty
$$
for some $a>0$, Aronson proves uniqueness in this class, and existence given  
$u_0\in L^2(e^{-\gamma |x|^2}{\rm d}x)$ (with an assumption linking
$\gamma>0$, $T>0$ and $a$). This covers $u_0\in L^p({\rm d}x)$ with 
$2\le p\le \infty$, but note that $\|u\|_{\mathcal{E}}$ is not comparable to 
$\|u_0\|_{L^p}$. Aronson's class may thus not be optimal for uniqueness 
(one could look for a larger one). We are not mentioning here the results 
for non-negative solutions as they are clearly outside the scope of the present 
article, since we want to address complex equations. 

\medskip

Let us come back to the heat equation and consider solutions given by 
$u(t,x)=e^{t\Delta}f(x)$ for, say, $f \in L^2$. In harmonic analysis, there are 
other well-known estimates for such solutions given in terms of the 
non-tangential maximal function and the Lusin area functionals:
\begin{equation}
\label{FS}
\|u^*\|_{L^p} \sim \| \nabla u \|_{T^{p,2}},  \quad 1\le p <\infty.
\end{equation}
Here $T^{p,2}$ denotes the tent space of Coifman-Meyer-Stein. See 
Section~\ref{subsec:functions} for its definition. We denote by  
$u^{*}$ the non-tangential maximal function 
$x\mapsto \displaystyle{\sup_{|y-x|<\sqrt t}|u(t,y)|}$.
A key feature of these estimates is that they hold also for some $p\le1$, 
and play a fundamental role in Hardy space theory.
  
For example, $\|u^*\|_{L^p}<\infty$ characterises the real Hardy space 
$H^p$ as shown in \cite{FeSt72}.   
When $1<p<\infty$, an implicit argument (it is done for harmonic functions 
but the same idea applies to caloric functions) in \cite{FeSt72}, using Fatou type 
results (based on the maximum principle), shows that all weak solutions of the 
heat equation satisfying $\|u^*\|_{L^p}<\infty$ are given by the semigroup, 
and thus are uniquely determined by their traces in $L^p$ at $t=0$. 
As we have comparability of norms, uniqueness in such a class is an optimal 
result for $L^p$ data. It is not known to us whether the condition 
$\|\nabla u\|_{T^{p,2}}<\infty$ with $u$ vanishing at $\infty$ yields uniqueness 
(recall that $\nabla$ is only with respect to $x$) except when $p=2$.

\medskip

Our approach to \eqref{eq1} starts as in Aronson (\cite{Ar68}) 
or Lions (\cite{Li57}), by considering energy solutions. If $u$ is either one's 
solution (it turns out that they are the same) for a data 
$u_0\in L^2({\mathbb{R}}^n)$, one obtains the energy equality 
$$
\|u_0\|_{L^2}^2 = 2\Re e \int_0^T\int_{{\mathbb{R}}^n}  A(s,x)\nabla u(s,x) 
\cdot \overline{\nabla u(s,x)} \,{\rm d}s\,{\rm d}x  + \|u(T)\|_{L^2}^2.
$$
By taking the limit as $T\to \infty$, provided that $u$ is a global weak solution 
and that $\|u(T)\|_{L^2}\to 0$, one obtains 
$$
\|u_0\|_{L^2}^2 = 2\Re e \int_0^\infty\int_{{\mathbb{R}}^n} A(s,x)\nabla u(s,x) 
\cdot \overline{\nabla u(s,x)} \,{\rm d}s\,{\rm d}x.
$$   
This equality suggests that it should be possible to work directly in the
largest possible energy space to begin with, consisting of global weak solutions 
with $\nabla u \in L^2({\mathbb{R}}^{n+1}_{+})=L^2(L^2)$. We prove that this is 
indeed the case, and establish well-posedness of the Cauchy 
problem for $L^2({\mathbb{R}}^n)$ data in this energy space. We also show 
that such solutions are continuous from $[0,\infty)$ into $L^2({\mathbb{R}}^n)$,  
norm decreasing in time with limit 0 at $\infty$, and satisfying this energy equality, 
of course, together with  
$$
\|u_0\|_{L^2}= \|u\|_{L^{\infty}(L{^2})}\sim \|\nabla u \|_{L^2(L^2)}.
$$
This is to be expected but note that the lack of a priori control on the 
$L^2({\mathbb{R}}^n)$ norm in our energy space is a difficulty which we 
overcome thanks to a structural lemma for this space. As a consequence, 
we recover, by restriction to finite intervals $(0,T)$, the Aronson/Lions solutions. 
This gives rise to a propagator $\Gamma(t,s)$ that sends a data at time 
$s$ to the solution at time $t$. The only available estimates for this propagator 
in full generality are Gaffney type estimates, which are localized $L^2$ Gaussian 
bounds. The same holds for the backward in time adjoint equation. Using 
properties of this adjoint propagator to create test functions for \eqref{eq1}, 
our main result towards uniqueness is an interior reproducing 
formula for local weak solutions under a certain control.

\begin{theorem}
\label{thm:intro}
Let $u$ be a local weak solution of \eqref{eq1} on $(a,b)\times {\mathbb{R}}^n$. 
Assume
$$
M:=\int_{{\mathbb{R}}^{n}} \Bigl(\int_a^b \int_{B(x,\sqrt{b})} |u(t,y)|^{2}
\,{\rm d}y\,{\rm d}t\Bigr)^{\frac{1}{2}}e^{-\gamma |x|^2}\,{\rm d}x<\infty
$$
for some $0<\gamma<\gamma(a,b,\lambda,\Lambda)$. Then 
$u(t,\cdot) = \Gamma(t,s)u(s,\cdot)$ for every $a<s\leq t<b$, in the following sense:
$$
\int_{{\mathbb{R}}^n} u(s,x)\,\overline{\Gamma(t,s)^*h(x)}\,{\rm d}x 
= \int_{{\mathbb{R}}^n} u(t,x)\, \overline{h(x)}\,{\rm d}x 
\quad \forall h \in {\mathscr{C}}_c({\mathbb{R}}^n).
$$
\end{theorem}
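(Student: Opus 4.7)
The natural strategy is a duality argument, using the backward adjoint propagator as a test function. Fix $a<s\leq t<b$ and $h\in\mathscr{C}_c(\mathbb{R}^n)$, and set $\tilde v(\tau,x):=\Gamma(t,\tau)^*h(x)$ for $\tau\in[a,t]$. By the construction of the adjoint propagator already set up in the paper, $\tilde v$ is an energy solution of the backward adjoint equation $-\partial_\tau \tilde v={\rm div}(A^*\nabla\tilde v)$ on $(a,t)\times\mathbb{R}^n$ with terminal value $\tilde v(t,\cdot)=h$, and it satisfies the Gaffney-type $L^2$ Gaussian off-diagonal bounds with constants depending only on $\lambda,\Lambda$. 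The formal calculation
$$
\frac{d}{d\tau}\int u(\tau,\cdot)\,\overline{\tilde v(\tau,\cdot)}\,{\rm d}x
=\int \partial_\tau u\,\overline{\tilde v}+u\,\overline{\partial_\tau\tilde v}
=-\int A\nabla u\cdot\overline{\nabla\tilde v}+\int A\nabla u\cdot\overline{\nabla\tilde v}=0
$$
gives, after integrating from $s$ to $t$, the desired identity $\int u(t)\,\overline{h}=\int u(s)\,\overline{\tilde v(s)}$, which is exactly the statement $u(t,\cdot)=\Gamma(t,s)u(s,\cdot)$ tested against $h$.

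To make this rigorous I would apply two regularizations. First, since $u$ is only in $L^2_{\rm loc}(a,b;H^1_{\rm loc})$, pointwise-in-time identities on $\{s\}$ and $\{t\}$ require mollifying $u$ in time (e.g.\ Steklov averages), using that the equation forces $\partial_\tau u\in L^2_{\rm loc}(a,b;H^{-1}_{\rm loc})$, hence $u$ represents a weakly continuous family in a suitable local space. Second, since $\tilde v$ is not compactly supported in $x$, one inserts a radial cutoff $\chi_R\in \mathscr{C}_c^\infty(\mathbb{R}^n)$ equal to $1$ on $B(0,R)$ and supported in $B(0,2R)$. Combining the weak formulations for $u$ (with test function $\chi_R\tilde v$, appropriately truncated in time) and for $\tilde v$ (with test function $\chi_R u$, similarly truncated), the bulk divergence-form terms cancel, and one is left with commutator terms concentrated on the annulus $R\leq |x|\leq 2R$ of schematic form
$$
\int_s^t\!\!\int A\nabla u\cdot\overline{\tilde v}\,\nabla\chi_R\,{\rm d}x\,{\rm d}\tau
-\int_s^t\!\!\int u\,A^*\nabla\tilde v\cdot\overline{\nabla\chi_R}\,{\rm d}x\,{\rm d}\tau,
$$
together with boundary-in-time contributions that converge to the desired pairings $\int u(t)\overline{h}$ and $\int u(s)\overline{\tilde v(s)}$.

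The crux is to show these commutators vanish as $R\to\infty$. I would decompose the annulus $\{R\leq |x|\leq 2R\}$ into dyadic pieces around points $x_k$ at scale $\sqrt{b}$, estimate the local $L^2$ norms of $\tilde v$ and $\nabla\tilde v$ on each piece by the Gaffney bound $\|h\|_{L^2}\,e^{-c|x_k|^2/(t-\tau)}$ with $c=c(\lambda,\Lambda)$, and use a Caccioppoli-type inequality for $u$ (standard for local weak solutions) to bound the $\nabla u$ factor by a constant times the local $L^2$ norm of $u$ on a slightly enlarged ball. Summing against the weight $e^{-\gamma|x|^2}$ hidden in $M$ via Cauchy--Schwarz produces a convergent sum exactly when the Gaffney Gaussian dominates $e^{-\gamma|x|^2}$, i.e.\ when $\gamma<c/(b-a)$, which identifies $\gamma(a,b,\lambda,\Lambda)$. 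The main obstacle is precisely this Gaffney--weight bookkeeping: one needs the off-diagonal estimates for $\tilde v$ and $\nabla\tilde v$ in the form required for the dyadic summation, with constants uniform on the whole window $[a,t]$, and one must verify that Caccioppoli for $u$ interfaces correctly with the hypothesis $M<\infty$ (which involves $u$ but not $\nabla u$) after absorbing a fraction of the energy into slightly larger balls. Once this estimate is in place, the time-integration-by-parts step is a relatively mechanical consequence of combining the two weak equations so that neither $\partial_\tau u$ nor $\partial_\tau \tilde v$ appears separately.
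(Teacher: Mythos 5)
Your proposal is correct and follows essentially the same route as the paper's proof of Theorem~\ref{thm:THE_THEOREM}: pairing against $\phi(\tau,\cdot)=\Gamma(t,\tau)^*h$, cutting off in space and time, cancelling the bulk divergence terms, and controlling the annulus commutators by the $L^2$ off-diagonal (Gaffney) bounds of Proposition~\ref{prop:OD} combined with local energy (Caccioppoli-type) estimates for $u$ and $\phi$ and the weighted hypothesis $M<\infty$, which is exactly how the threshold $\gamma<\alpha/(4(b-a))$ arises, with the boundary values at $s$ and $t$ recovered by mollification in time. The only caveat is that the off-diagonal estimate for $\nabla\Gamma(t,\tau)^*h$ is not available pointwise in $\tau$ but only in time-integrated form (Caccioppoli for the backward solution, Proposition~\ref{prop:nrjloc2}, composed with the Gaffney bound on $\phi$), which is precisely what the $\tau$-integrated commutator terms require and what the paper uses.
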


Note that the control is in terms of local $L^2$ estimates on $u$. This is the only 
available information. Also the presence of the square root in the control turns out 
to be very useful.   

Once this is proved the matter reduces to controlling $u$ near the boundary 
$t=0$ to be able to take a limit as $s$ tends to $0$ in  
$u(t,\cdot) = \Gamma(t,s)u(s,\cdot)$. 

We thus need to work with solution spaces for which the hypothesis of this 
result can be checked. A natural choice is to use a modification of the maximal 
function $u^*$, adapting the one introduced by Kenig-Pipher \cite{KePi93} in the 
context of elliptic equations: 
$$
\tilde N(F)(x):=\sup_{\delta>0}\Bigl(\fint_{\frac{\delta}{2}}^\delta
\fint_{B(x,\sqrt{\delta})}|F(t,y)|^2\,{\rm d}y\,{\rm d}t\Bigr)^{\frac{1}{2}}.
$$
However, note that the space of all measurable functions 
with $\|\tilde N(F)\|_{L^p}<\infty$ does not seem to have a trace space at $t=0$, 
even allowing limits in the weakest possible sense. Hence, finding the initial 
value relies on the equation as well, using the interior representation above. 
When $2<p\le \infty$, we prove well-posedness of global weak solutions of 
\eqref{eq1} in the class $X^p = \{u \in L^2_{\rm loc}({\mathbb{R}}_{+}^{n+1}) 
\;;\; \|\tilde N(u)\|_{L^p}<\infty\}$ with arbitrary data in $L^p$. In particular, 
when $p=\infty$, we establish the conservation property 
$$
\Gamma(t,0)1\!{\rm l}=1\!{\rm l}
$$
in $L^2_{\rm loc}({\mathbb{R}}^n)$, for all $t>0$. This seems to be new under 
the sole ellipticity assumption.  

For $p=2$, we also establish, via a different argument, that both 
$L^\infty(L^2)$ and $X^2$ are well-posedness classes for $L^2$ data. 
The corresponding solutions agree with the energy solutions. In particular, 
for any given global weak solution, we have the a priori equivalences
$$ 
\|u\|_{L^\infty(L^2)}\sim \|\nabla u \|_{L^2(L^2)} \sim \|\tilde N(u)\|_{L^2}.
$$

The above results can also be considered for local solutions or for global 
solutions with growth when $t\to \infty$. Combining this with the interior
representation, we obtain a representation for classes containing 
global weak solutions having arbitrary growth as $t\to \infty$ (but still 
controlled as $|x|\to \infty$). This is quite new as well. 

Imposing more properties on the propagators, such as uniform $L^p$ 
boundedness in some range of $p$, allows us to consider the classes 
$L^\infty(L^p)$ as above when $p\neq 2$. This is true for small perturbations 
of autonomous equations (coefficients independent of $t$) or when the
coefficients are of bounded variation in time. This is far less demanding than 
the usual H\"older regularity assumption. We expect that this will give 
substantial improvements to maximal regularity results for the associated
inhomogeneous non-autonomous problem. 

Another consequence is that a pointwise upper Gaussian bound condition 
on the propagator kernel (as obtained by Aronson for real equations) yields 
unique determination of weak solutions from their traces at $t=0$ in the classes
$L^\infty(L^p)$, when $1<p\le \infty$. Note that this pointwise upper Gaussian bound condition has been 
characterized in \cite{HK04} in terms of local $L^2-L^\infty$ 
estimates of weak solutions of \eqref{eq1} and of the dual backward equation. 
For $p=1$, we obtain, under this assumption, two criteria to decide whether
or not a weak solution in $L^\infty(L^1)$ is determined by its trace in $L^1$ or 
in the space of Radon measures. This requires some further regularity on the 
propagators.  

Our work also includes a non-autonomous analog of the Fefferman-Stein 
equivalence \eqref{FS}. Namely we prove that, for all weak solutions of 
\eqref{eq1} of the form $u(t,\cdot)=\Gamma(t,0)f$ with $f\in L^2 \cap L^p$, 
we have the a priori comparison 
\begin{equation}
\label{FS1}
\|\tilde{N}(u)\|_{L^p} \sim \| \nabla u \|_{T^{p,2}},  \quad 1 \le p < \infty,
\end{equation}
In fact, the control of $\| \nabla u \|_{T^{p,2}}$ by $\|\tilde{N}(u)\|_{L^p}$ is 
valid for any global weak solution and $0<p\le \infty$ and it is only for the 
converse that we use the form of the solution.   

Finally, we prove Fatou type result on non-tangential almost everywhere 
convergence at the boundary. To do so, since solutions may not be locally 
bounded, we replace pointwise values by averages on Whitney regions. 

\medskip

The paper is organised as follows.
In Section~\ref{sec2}, we recall the definitions of various function spaces and 
operators used in this article. We also recall results from \cite{Au07,AKMP12} 
that play a key role here.

In Section~\ref{sec3}, we develop a new approach to the $L^2$ theory, 
including well-posedness in the largest possible energy space and, as a 
consequence, the existence of a contraction operator $L^2({\mathbb{R}}^n)$, 
called propagator, that maps the data $h$ to our solution $u$ at time $t$.   
By restriction, this propagator gives both Lions' energy solution and 
Aronson's energy solution.   

In Section~\ref{sec4}, we prove the fundamental a priori estimates for 
weak solutions (either general weak solutions or energy solutions given by 
the propagator), including reverse H\"older estimates, and appropriate 
integrated off-diagonal bounds. The latter are a replacement for the pointwise 
heat kernel bounds available in the case of real coefficients.

In Section~\ref{sec5}, we prove our existence and uniqueness results. 
This includes the key interior representation result, Theorem \ref{thm:intro},  
well-posedness in $L^\infty(L^2)$ and in  $X^p$ for $p > 2$, and the 
conservation property. Under an additional assumption on the $L^p$ behaviour 
of the propagators, we prove well posedness in $L^\infty(L^p)$ for all 
$p \in (1,\infty]$. 

In Section~\ref{sec6}, we show that this additional assumption is satisfied 
for a range of values of $p$ in two important situations: when $A$ is a small 
$L^\infty$ perturbation of a $t$-independent matrix, and when $A$ is of 
bounded variation in time. We also show a local result when the dependency with respect to $t$ is continuous.

In Section~\ref{sec7}, we complete the picture by showing an $L^p$ analogue, 
for $p\in (1,\infty)$, of the norm estimates available for energy solutions when 
$p=2$. This is an analogue of Fefferman-Stein's equivalence of maximal function 
norms and square function norms in Hardy space theory. 

In Section~\ref{sec8}, we show non-tangential convergence results to the initial 
data for our weak solutions.

In Section~\ref{sec9}, we focus on $p=1$, assuming that our propagators have 
pointwise kernel bounds (as in the case of real coefficients). We then get a 
complete theory for Radon mea\-sures as data and solutions in 
$L^\infty(L^1)$, or $L^1$ data and solutions in a subspace of $L^\infty(L^1)$.

Finally, in Section~\ref{sec10}, we mention an easy extension of our results: 
similar well-posedness results hold for global weak solutions $u$ such that 
norms (in the corresponding solution space) of 
$(t,x)\mapsto 1\!{\rm l}_{(0,T)}(t)u(t,x)$ can grow as $T$ tends to $\infty$. 
A posteriori, we show that this growth is bounded.  
  
\subsection{Acknowledgments}

This work was supported by the Australian Research Council through
Discovery Project  DP120103692, and Portal's Future Fellowship FT130100607.
A long term visit of Monniaux at the Australian National University played a key 
role in the completion of the project. This visit was possible thanks to the 
``Laboratoire International Associ\'e en Analyse et G\'eom\'etrie" agreement 
between the French Conseil National de la Recherche Scientifique and the
ANU. Portal would also like to thank the Laboratoire de 
Math\'ematiques de Besan\c con, for hosting him as part of its 2014 trimester 
in ``Geometric and Noncommutative Methods in Functional Analysis", during 
which part of this work was conducted.  Auscher and Monniaux were also 
partially supported by the ANR project ``Harmonic analysis at its boundaries'' 
ANR-12-BS01-0013.

\section{Preliminaries}
\label{sec2}

\subsection{Function spaces}
\label{subsec:functions}

\subsubsection*{Vector valued spaces}

When dealing with function spaces over $(a,b) \times {\mathbb{R}}^n$, we 
write $L^p(X)$ for the Bochner space of  $X({\mathbb{R}}^n)$ valued 
$L^p$ functions  $L^p(a,b;X({\mathbb{R}}^n))$ or
$L^p(a,b;X({\mathbb{R}}^n;{\mathbb{C}}^n))$ (as long as no confusion can occur).

We denote by ${\mathscr{D}}$ the space
${\mathscr{C}}_c^\infty((0,\infty)\times{\mathbb{R}}^n)$ and by
${\mathscr{D}}'$ the  space of distributions on $(0,\infty)\times{\mathbb{R}}^n$.
We denote by ${\mathscr{C}}_0(L^p)$ the space of 
$L^p({\mathbb{R}}^n)$-valued continuous functions on $[0,\infty)$ that tend 
to $0$ at infinity. 

\subsubsection*{The homogeneous Sobolev spaces $\dot H^1({\mathbb{R}}^n)$} 

There are many ways to define the homogeneous space 
$\dot H^1({\mathbb{R}}^n)$.  We depart a little bit from tradition of having 
this space as a space of distributions modulo constants, as this simplifies 
its use in \eqref{eq1}. 

We denote by $H^s({\mathbb{R}}^n)$ the standard inhomogeneous Sobolev 
space for $s \in {\mathbb{R}}$, and we equip $L^2({\mathbb{R}}^n;{\mathbb{C}}^k)$ 
with the standard complex inner product, which we denote by 
$\langle\cdot,\cdot\rangle$ or ${}_{L^2}\langle \cdot,\cdot\rangle_{L^2}$.

We set $\dot{H}^1({\mathbb{R}}^n) 
= \bigl\{u \in {\mathscr{D}}'({\mathbb{R}}^n) \;;\; 
\nabla u \in L^2({\mathbb{R}}^n;{\mathbb{C}}^n)\bigr\}$, 
and equip this space with the seminorm $u\mapsto \|\nabla u\|_{L^2}$. 
With this definition, the following properties hold:
\begin{enumerate}
\item
$H^1({\mathbb{R}}^n) \subset \dot{H}^1({\mathbb{R}}^n)
\subset L^2_{\rm loc}({\mathbb{R}}^n)$ (set inclusions).
\item
${\mathscr{D}}({\mathbb{R}}^n)$ is dense in $\dot{H}^1({\mathbb{R}}^n)$: 
for all $u \in \dot{H}^1({\mathbb{R}}^n)$ there exists a sequence 
$(\phi_j)_{j \in {\mathbb{N}}}$ of functions in ${\mathscr{D}}({\mathbb{R}}^n)$ 
such that $\|\nabla \phi_j- \nabla u\|_2 \xrightarrow[j \to \infty]{} 0$.
\item
$\dot{H}^1({\mathbb{R}}^n)/ {\mathbb{C}}$ is a Banach space equipped with its 
quotient norm.
\item
$\dot{H}^1({\mathbb{R}}^n) \subset {\mathscr{S}}'({\mathbb{R}}^n)$ (set inclusion).
\item
The dual of $\dot{H}^1({\mathbb{R}}^n)$ can be identified with the dual of 
$\dot{H}^1({\mathbb{R}}^n)/{\mathbb{C}}$, and with
$\dot{H}^{-1}({\mathbb{R}}^n) = \bigl\{{\rm div}\,g \;;\; 
g \in L^2({\mathbb{R}}^n;{\mathbb{C}}^n)\bigr\}$
equipped with the norm 
$f \mapsto \|f\|_{\dot{H}^{-1}}= \inf\bigl\{\|g\|_{L^2} \;;\; f={\rm div}\,g\bigr\}$. 
Moreover, for all $u \in \dot{H}^1({\mathbb{R}}^n)$, all 
$g \in L^2({\mathbb{R}}^n,{\mathbb{C}}^n)$, and $f={\rm div}\, g$, 
we have that 
$$
{}_{\dot{H}^{-1}}\langle f,u\rangle_{\dot{H}^1}
= - {}_{L^2}\langle g,\nabla u\rangle_{L^2}
= {}_{\dot{H}^{-1}}\langle f,[u]\rangle_{\dot{H}^1/ \mathbb{C}}.
$$
In particular, 
$\dot{H}^{-1}({\mathbb{R}}^n) \subset H^{-1}({\mathbb{R}}^n) 
\subset {\mathscr{S}}'({\mathbb{R}}^n)$ 
(embeddings), and, if $u \in \dot{H}^1({\mathbb{R}}^n)\cap L^2({\mathbb{R}}^n) 
= H^1({\mathbb{R}}^n)$ and 
$f \in \dot{H}^{-1}({\mathbb{R}}^n)\cap L^2({\mathbb{R}}^n)$ then
$$
{}_{\dot{H}^{-1}}\langle f,u\rangle_{\dot{H}^1}
= \int_{{\mathbb{R}}^n}f(x)\overline{u(x)}\,{\rm d}x = 
{}_{L^2}\langle f, u\rangle_{L^2}.
$$
\end{enumerate}
These properties are well known. We shall often write
${}_{\dot{H}^1}\langle u,f\rangle_{\dot{H}^{-1}}$ to mean
${}_{\dot{H}^{-1}}\overline{\langle f,u\rangle}_{\dot{H}^1}$.
Having this in hand, we have that, for $A$ satisfying \eqref{ell} and almost 
every $t>0$, 
$$
L(t) = -{\rm div}\,A(t,\cdot) \nabla
$$
defines a bounded operator from $\dot{H}^1({\mathbb{R}}^n)$ to 
$\dot{H}^{-1}({\mathbb{R}}^n)$, 
which is onto and has ${\mathbb{C}}$ as its null space (if one uses 
$\dot{H}^1({\mathbb{R}}^n)/{\mathbb{C}}$, we thus have an isomorphism).
More precisely, for all $u,v \in \dot{H}^1({\mathbb{R}}^n)$,
${}_{\dot{H}^1}\langle L(t)u,v\rangle_{\dot{H}^{-1}}=
{}_{L^2}\langle A(t,.)\nabla u,\nabla v\rangle_{L^2}$, and
$$
\lambda \|u\|_{\dot{H}^1} \le \|L(t)u\|_{\dot{H}^{-1}} 
\le \Lambda \|u\|_{\dot{H}^1}.
$$
Now assume that $A$ is constant in $t$, and set $L=-{\rm div}\, A \nabla$ and 
$D(L) = \bigl\{u \in H^1 \;;\; Lu \in L^2\bigr\}$. Then $L$ is the maximal 
accretive operator on $L^2({\mathbb{R}}^n)$ associated with the form 
$(u,v) \mapsto {}_{L^2}\langle A\nabla u, \nabla v \rangle_{L^2}$ on 
$H^1({\mathbb{R}}^n)$. In particular, it is sectorial and $-L$ generates an 
analytic semigroup of contractions $(e^{-tL})_{t>0}$. Also, the solution of 
Kato's square root conjecture in \cite{AHLMcIT02} implies that
\begin{align*}
\sup_{t>0}\|\nabla e^{-tL}u\|_{L^2} &\lesssim 
\sup_{t>0}\|L^{\frac{1}{2}} e^{-tL}u\|_{L^2} 
\lesssim \|L^{\frac{1}{2}}u\|_{L^2} \lesssim
\|\nabla u\|_{L^2} \quad \forall u \in H^1({\mathbb{R}}^n).
\end{align*}
Therefore, as  $e^{-tL}1\!{\rm l}=1\!{\rm l}$ in $L^2_{\rm loc}$ 
(see \cite[\S2.5]{Au07}), we have that $\{e^{-tL} \;;\; t>0\}$ extends to a 
uniformly bounded family of bounded operators on $\dot{H}^1({\mathbb{R}}^n)$. 
Finally, we use the space $L^2(a,b;\dot{H}^1({\mathbb{R}}^n))$ for 
$-\infty\le a <b \le +\infty$, endowed with the seminorm
$u \mapsto \bigl(\int_a^b \|\nabla u(t,.)\|_{L^2}^2\,{\rm d}t\bigr)^{\frac{1}{2}}$.
It follows from the above discussion that 
${\mathscr{C}}_c^\infty((a,b)\times {\mathbb{R}}^n)$ 
is dense in $L^2(a,b;\dot{H}^1({\mathbb{R}}^n))$, that 
$L^2(a,b;\dot{H}^1({\mathbb{R}}^n)) 
\subset L^2(a,b;L^2_{\rm loc}({\mathbb{R}}^n)\cap {\mathscr{S}}'({\mathbb{R}}^n))$,
and that its dual can be identified with $L^2(a,b;\dot{H}^{-1}({\mathbb{R}}^n))$ 
through the pairing
\begin{align*}
\label{}
 {}_{L^2(a,b;\dot{H}^{-1})} \langle f,u
\rangle_{L^2(a,b;\dot{H}^1)}   &  = \int_a^b 
{}_{\dot{H}^{-1}}\langle f(t,.),u(t,.)\rangle_{\dot{H}^1}\,{\rm d}t
\\
& = -\int_a^b {}_{L^{2}}\langle \psi(t,.),\nabla u(t,.) \rangle_{L^{2}}\,{\rm d}t,
\end{align*}
for any $\psi \in L^2(a,b;L^2({\mathbb{R}}^n))$ such that $f={\rm div}\,\psi$, 
and $u \in L^2(a,b;\dot{H}^1({\mathbb{R}}^n))$.

\subsubsection*{Homogeneous Lions spaces $\dot W(0,\infty)$}

We define the following spaces that are variants of the solution spaces used by 
Lions in \cite[spaces ${\mathscr{A}}(\Omega)$ and ${\mathscr{B}}(\Omega)$ 
p.\,147]{Li57} (see also \cite[Chap.\,XVIII]{DL88}).
\begin{align*}
\dot W(0,\infty)&:=\bigl\{u\in{\mathscr{D}}' ;
u\in L^2(\dot H^1) \mbox{ and }\partial_tu\in L^2(\dot H^{-1})\bigr\}
\\
\mbox{and}\qquad&
\\
W(0,\infty)&:=\dot W(0,\infty)\cap{\mathscr{C}}_0(L^2),
\end{align*}
and the corresponding spaces on a time interval $(a,b)$, $0\le a<b<\infty$
$$
\dot{W}(a,b):=\bigl\{u\in({\mathscr{C}}_c^\infty((a,b)\times{\mathbb{R}}^n))' ;
u\in L^2(a,b;\dot H^1) \mbox{ and }\partial_tu\in L^2(a,b;\dot H^{-1})\bigr\},
$$
and $W(a,b)= \dot{W}(a,b) \cap {\mathscr{C}}([a,b]; L^2)$. 
An important result of Lions \cite[Proposition~3.1]{Li57} states that 
inhomogeneous versions of these spaces (replacing $\dot{H}^1$ and 
$\dot{H}^{-1}$ by $H^1$ and $H^{-1}$) embed into 
${\mathscr{C}}([a,b];L^2)$, (see also \cite[Chap.\,XVIII]{DL88}), that is, 
into $W(a,b)$. 
With quite a different proof, we prove, in Section~\ref{subsec:struct}, a version 
of this result for $\dot W(0,\infty)$.

\subsubsection*{Tent spaces $T^{p,2}$}

The tent spaces introduced by Coifman, Meyer, and Stein in \cite{CMS85} play a 
key role in our work. For $p\in(0,\infty]$, the (parabolic) tent space $T^{p,2}$ is 
the set of measurable  functions $u$ on ${\mathbb{R}}^{n+1}_+$ such that
\begin{align*}
x\mapsto \Bigl(\int_0^\infty \fint_{B(x,\sqrt{t})} |u(t,y)|^2
\,{\rm d}y\,{\rm d}t\Bigr)^{\frac{1}{2}} \in L^p({\mathbb{R}}^n),
\; \text{if} \; p<\infty,\\
x\mapsto \sup_{B\ni x}\Bigl(\int_0^{r_B^2} 
\fint_B |u(t,y)|^2
\,{\rm d}y\,{\rm d}t\Bigr)^{\frac{1}{2}} \in L^\infty({\mathbb{R}}^n),
\; \text{if} \; p=\infty,
\end{align*}
where we denote by $r_B$ the radius of a ball $B$.
Note that $T^{p,2}$ is contained in  $L^2_{\rm loc}({\mathbb{R}}^{n+1}_+)$.
As shown in \cite{CMS85}, these spaces are Banach spaces when 
$1\le p\le \infty$, reflexive when $p\in(1,\infty)$, and the dual of $T^{p,2}$ 
is $T^{p',2}$ for the duality given by 
$\int_{{\mathbb{R}}^{n+1}_+} f(t,y) {g(t,y)}\,{\rm d}y\,{\rm d}t$.  
Their importance for us has two origins. One is elliptic boundary value 
problems including the Laplace equation, where tent spaces, along with closely 
related objects such as Hardy spaces and Carleson measures, are already used 
extensively. Since we consider equation \eqref{eq1} weakly in space and time, 
it is natural to use such norms rather than the $L^\infty(L^p)$ norms which would 
correspond to treating \eqref{eq1} as an (non-autonomous) evolution equation 
in $L^p$.  
The other reason why tent spaces are so important in our work comes from 
the recent extension of Calder\'on-Zygmund theory to rough settings, i.e.
the application of Calder\'on-Zygmund ideas to operators such as 
$e^{-tL}$ with $L= - {\rm div}\, A \nabla$, 
$A \in L^\infty({\mathbb{R}}^n;{\mathscr{M}}_n({\mathbb{C}}))$ 
satifying \eqref{ell}, that do not, in general, have Calder\'on-Zygmund kernels 
(see \cite{Au07} and the references therein). In such a setting, integral 
operators such as 
$$
f\mapsto \Bigl[ (t,x) \mapsto \int_0^t \nabla e^{-(t-s)L }
{\rm div} f(s,\cdot)(x) {\rm d}s \Bigr]
$$
are often unbounded on Bochner spaces $L^p(L^q)$ but bounded on 
$T^{p,2}$. This is the subject of our paper \cite{AKMP12}. The results we 
use here are recalled in Section~\ref{subsec:maxreg}. Keeping in mind that 
$T^{2,2} = L^2(L^2)$, we then use the condition $\nabla u \in T^{p,2}$ 
(here, we mean that each component of $\nabla u$ is in $T^{p,2}$; in general, 
we shall not distinguish the notation as this will be clear from the context) as 
a replacement for the condition $\nabla u \in L^2(L^2)$ to attack $L^p$ theory. 
For uniqueness, however, maximal function estimates  on solutions are more 
suitable than square function estimates.

\subsubsection*{Kenig-Pipher modified $T^{p,\infty}$ space $X^p$}

Coifman-Meyer-Stein's tent space theory also includes maximal function 
estimates via the tent spaces $T^{p,\infty}$ defined as spaces of continuous 
functions on $(0,\infty)\times{\mathbb{R}}^n$ with $u^*\in L^p$ and with 
non-tangential limit, where $u^*$ is the non-tangential maximal function defined by 
$$
u^*:x\mapsto \sup_{\underset{|x-y|< \sqrt{t}}{(t,y)\in(0,\infty)
\times {\mathbb{R}}^n}} |u(t,y)|.
$$ 
This maximal function, however, is not appropriate for us because of the lack 
of pointwise bounds on our solutions. We thus use a modified version of the 
non-tangential maximal function, introduced by Kenig and Pipher for elliptic 
equations in \cite{KePi93}, and used extensively in \cite{AuAx11} (see also 
\cite{HM09,AMcIR08} and further development in the theory of Hardy 
spaces associated with operators without Gaussian bounds).

\begin{definition}
\label{def:tildeN}
For $F\in L^2_{\rm loc}({\mathbb{R}}^{n+1}_+)$, we define the
following maximal function $\tilde N(F)$ by 
$$
\tilde N(F)(x):=\sup_{\delta>0}\Bigl(\fint_{\frac{\delta}{2}}^\delta
\fint_{B(x,\sqrt{\delta})}|F(t,y)|^2\,{\rm d}y\,{\rm d}t\Bigr)^{\frac{1}{2}},  
\quad \forall x \in {\mathbb{R}}^n.
$$
\end{definition}

The corresponding modification of $T^{p,\infty}$ is defined as follows.

\begin{definition}
\label{def:Xp}
Let $0< p \le \infty$.
The space $X^p$ is the subspace of functions
$F\in L^2_{\rm loc}({\mathbb{R}}^{n+1}_+)$ such that
$$
\|F\|_{X^p}:=\|\tilde N(F)\|_p<\infty.
$$
\end{definition}

This space has been defined in \cite{KePi93}.  For $1\le p \le \infty$, it is a Banach space. 
Duality and interpolation is studied in \cite{HyRo12, Huang}.  

Note that, given a parameter $\beta > 1$, the maximal function 
$\tilde{N}(F)$ in the definitions above can be replaced by
$$
{\mathcal{N}}_\beta(F)(x) = \sup_{\delta>0}
\Bigl(\fint_{\delta^2}^{\beta^2\delta^2} \fint_{B(x,\beta\delta)}|F(t,y)|^2 
\,{\rm d}y\,{\rm d}t\Bigr)^{\frac{1}{2}}, \quad \forall x \in {\mathbb{R}}^n,
$$
since a  simple covering argument yields 
$\|\mathcal{N}_{\beta}(F)\|_p \sim \|F\|_{X^p}$.

A difficulty with this norm compared to the one with $u^*$ is the lack of stability 
by translation: one can check that if $\tau_s F(t,x)=F(t+s,x)$, then there is 
neither pointwise nor $L^p$ control of $\tilde N(F_s)$ by $\tilde N(F)$ for any 
$p$. The same difficulty appears with the tent spaces $T^{p,2}$ above except 
when $p=2$. 
 
\subsubsection*{Slice spaces $E_\delta^p$}

While integral operators such as 
$$
f\mapsto \Bigr[ (t,x) \mapsto \int_0^t \nabla e^{-(t-s)L} 
{\rm div}\, f(s,\cdot)(x)\,{\rm d}s \Bigr]
$$
act on $T^{p,2}$ (see Section \ref{subsec:maxreg}), their (operator-valued) 
kernels $\nabla e^{-t L} {\rm div}\,$ (for a fixed $t>0$) do not act, 
in general, on $L^p({\mathbb{R}}^n)$. Appropriate substitutes for 
$L^p({\mathbb{R}}^n)$ are  the following spaces.

\begin{definition}
\label{def:slice}
Let $p\in [1,\infty]$ and $\delta>0$. The (parabolic) slice space $E_\delta^p$ 
is the subspace of functions $g\in L^2_{\rm loc}({\mathbb{R}}^n)$
such that
$$
\|g\|_{E_\delta^p}:=\Bigl(\int_{{\mathbb{R}}^n}\Bigl(\fint_{B(x,\sqrt{\delta})}
|g(y)|^2\,{\rm d}y\Bigr)^{\frac{p}{2}}\,{\rm d}x\Bigr)^{\frac{1}{p}} <\infty.
$$
\end{definition}

This space can also be seen as one of Wiener amalgam spaces, which have been 
studied for a long time. However, \cite[\S 3]{AM14}  points out that these spaces
are retracts of tent spaces, and thus inherit many of their key properties: 
$\bigl(E_\delta^p\bigr)^*=E_\delta^{p'}$ 
with $\|\ell\|_{\bigl(E_\delta^p\bigr)^*}\sim \|\ell\|_{E_\delta^{p'}}$, with 
implicit constants uniform in $\delta >0$,  for all $p\in [1,\infty)$ under the 
duality pairing $\int_{{\mathbb{R}}^n}f(x)\overline{g(x)}\,{\rm d}x$. In particular, 
slice spaces are reflexive Banach spaces when $p \in (1,\infty)$.
The following result is \cite[Lemma~3.5]{AM14} and compares
the norms in $E_\delta^p$ and in $E_{\delta'}^p$ for $\delta'\neq\delta$.

\begin{lemma}
\label{lem:4.7}
Let $p\in[1,\infty]$ and $\delta,\delta'>0$. For all $f\in E_{\delta'}^p$, one
has $f\in E_\delta^p$ and
$$
\min\Bigl\{1,\bigl(\textstyle{\frac{\delta'}{\delta}}\bigr)^{\frac{1}{2}
(\frac{n}{2}-\frac{n}{p})}\Bigr\}
\|f\|_{E_{\delta'}^p} \lesssim \|f\|_{E_{\delta}^p} \lesssim
\max\Bigl\{1,\bigl(\textstyle{\frac{\delta'}{\delta}}\bigr)^{\frac{1}{2}
(\frac{n}{2}-\frac{n}{p})}\Bigr\}
\|f\|_{E_{\delta'}^p}.
$$
\end{lemma}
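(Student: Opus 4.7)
My plan is to reduce to $\delta\le\delta'$ (the other case following by symmetry) and then to prove separately a ``direct'' inequality by a covering argument and its companion by duality, using the fact recalled just above that $E_s^p$ and $E_s^{p'}$ are in duality via the $L^2$ pairing with constants uniform in $s>0$. The sharp exponent $\frac{n}{4}-\frac{n}{2p}$ will emerge from combining an $L^q$ integration with the right form of Jensen's inequality, depending on whether $q=p/2$ lies above or below $1$.

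For the direct step, set $g=|f|^2\ge 0$, $M_s g(x):=\fint_{B(x,\sqrt{s})} g$ and $q=p/2$, so that $\|f\|_{E_s^p}^p=\int (M_s g)^q\,dx$. Fix a finite set $Z\subset\mathbb{R}^n$ of cardinality $|Z|\lesssim (\delta'/\delta)^{n/2}$ (for instance $\sqrt\delta\,\mathbb{Z}^n\cap B(0,\sqrt{\delta'})$) such that the translates $\{B(x+z,c_n\sqrt\delta)\}_{z\in Z}$ cover $B(x,\sqrt{\delta'})$ for every $x\in\mathbb{R}^n$. Summing over this covering and dividing by $|B(0,\sqrt{\delta'})|$ gives the pointwise inequality
$$
M_{\delta'} g(x)\le C_n\,(\delta/\delta')^{n/2}\sum_{z\in Z} M_{c_n^2\delta}\,g(x+z).
$$
Now I take $L^q$-norms; translation invariance yields $\int M_{c_n^2\delta}g(x+z)^q\,dx=\|M_{c_n^2\delta}g\|_q^q$ for each fixed $z$. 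If $q\ge 1$ (hence $p\ge 2$) I use the convex form $(\sum_z a_z)^q\le |Z|^{q-1}\sum_z a_z^q$, producing an overall factor $(\delta/\delta')^{nq/2}|Z|^q\sim 1$ and thus $\|M_{\delta'}g\|_q\lesssim\|M_{c_n^2\delta}g\|_q$. If $q\le 1$ (hence $p\le 2$) I use the concave form $(\sum_z a_z)^q\le\sum_z a_z^q$, producing the factor $(\delta/\delta')^{nq/2}|Z|\sim (\delta'/\delta)^{n(1-q)/2}$, which in $f$-variables reads $\|f\|_{E_{\delta'}^p}\lesssim (\delta'/\delta)^{n/(2p)-n/4}\|f\|_{E_\delta^p}$. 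Passing from $M_{c_n^2\delta}$ to $M_\delta$ in either line is a further covering comparison at the fixed ratio $c_n^2$, contributing only a constant depending on $n$ and $q$. This is exactly the $\min$-side of the Lemma.

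To obtain the $\max$-side I exploit duality. I write
$$
\|f\|_{E_\delta^p}\sim \sup_{\|h\|_{E_\delta^{p'}}\le 1}\Bigl|\int f\bar h\,dx\Bigr|
\le \sup_{\|h\|_{E_\delta^{p'}}\le 1}\|f\|_{E_{\delta'}^p}\,\|h\|_{E_{\delta'}^{p'}},
$$
and apply the direct step to $h$ with exponent $p'$: for $p\ge 2$ we have $p'\le 2$, hence the second case above gives $\|h\|_{E_{\delta'}^{p'}}\lesssim (\delta'/\delta)^{n/(2p')-n/4}\|h\|_{E_\delta^{p'}}$, and the identity $n/(2p')-n/4=n/4-n/(2p)$ produces the desired factor. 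For $p\le 2$ we have $p'\ge 2$ and the first case gives $\|h\|_{E_{\delta'}^{p'}}\lesssim \|h\|_{E_\delta^{p'}}$, matching the max $=1$ required here. Taking the supremum closes the argument.

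The main obstacle is obtaining the sharp exponent $\frac{n}{4}-\frac{n}{2p}$: the naive pointwise bound $M_\delta g\le (\delta'/\delta)^{n/2}M_{\delta'}g$ only yields exponent $n/4$ after a $p$-th root and is already false at $p=2$ (where the true constant is $1$ by Fubini). This is what forces the $L^q$-plus-Jensen split described above, and makes duality at the conjugate exponent the natural device for closing the half that the direct argument does not handle in its strongest form. A minor technical point is that the displacements $z$ in the covering bound must come from a set $Z$ independent of $x$ so that Fubini yields $\|M_{c_n^2\delta}g\|_q^q$ uniformly in $z$; the lattice choice of $Z$ above takes care of this.
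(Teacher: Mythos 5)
Your proof is correct, and it is worth pointing out that the paper itself gives no argument for this statement: Lemma~\ref{lem:4.7} is simply quoted from \cite[Lemma~3.5]{AM14}, where slice spaces are handled through their identification as retracts of tent spaces. Your argument is therefore a genuinely self-contained alternative: the covering inequality $M_{\delta'}g(x)\le C_n(\delta/\delta')^{n/2}\sum_{z\in Z}M_{c_n^2\delta}g(x+z)$ together with the convex/concave forms of Jensen's inequality gives the sharp exponent on one side, and the duality $(E_\delta^p)^*=E_\delta^{p'}$ with constants uniform in $\delta$ (recalled in the paper just before the lemma) transfers it to the other side, the identity $\frac{n}{2p'}-\frac n4=\frac n4-\frac{n}{2p}$ being exactly what makes the two halves match; note also that after taking $p$-th roots the factors $|Z|^{q}$ and $|Z|$ reduce to dimensional constants, so the comparison is uniform in $p$. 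Three small points should be made explicit in a written-up version: (i) for $p=\infty$ (i.e.\ $q=\infty$) the direct step must be run with suprema directly, since the displayed inequality $(\sum_z a_z)^q\le |Z|^{q-1}\sum_z a_z^q$ has no meaning at $q=\infty$ (the sup version is immediate from the same pointwise bound), and the duality characterization of $\|f\|_{E_\delta^\infty}$ should be read off from $(E_\delta^1)^*=E_\delta^\infty$ rather than from a predual of $E^\infty_\delta$; (ii) the lattice $Z=\sqrt\delta\,\mathbb{Z}^n\cap B(0,\sqrt{\delta'})$ needs to be enlarged slightly, e.g.\ to $\sqrt\delta\,\mathbb{Z}^n\cap B(0,\sqrt{\delta'}+\sqrt n\,\sqrt\delta)$, so that the translated balls really cover $B(x,\sqrt{\delta'})$; this does not change the cardinality bound; (iii) before invoking the duality characterization of $\|f\|_{E^p_\delta}$ one should know qualitatively that $f\in E_\delta^p$, but this is exactly what your ``naive'' pointwise bound $M_\delta g\le(\delta'/\delta)^{n/2}M_{\delta'}g$ gives when $\delta\le\delta'$, so there is no circularity. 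With these remarks your proof is complete and matches the stated two-sided estimate.
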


\subsection{Maximal regularity operators}
\label{subsec:maxreg}

Given $A\in L^\infty({\mathbb{R}}^n;{\mathscr{M}}_n({\mathbb{C}}))$ satisfying 
\eqref{ell}, recall that  $L=-{\rm div}\,A\nabla$ denotes the maximal accretive
operator with domain $D(L) = \{u \in H^1({\mathbb{R}}^n) \;;\; 
A\nabla u \in D({\rm div})\}$. 
Recall also that $\langle Lu,v \rangle = \langle A\nabla u, \nabla v \rangle$ for all 
$u \in D(L)$ and $v \in H^1({\mathbb{R}}^n)$.
See \cite{Au07, Ou05}, for more background on the operator theory of divergence 
form elliptic operators. 

We consider the associated maximal regularity operator ${\mathcal{M}}_L$ 
initially defined as a bounded operator from $L^1(D(L))$ to 
$L^\infty_{\rm loc}(L^2)$ by
\begin{equation}
\label{eq:maxreg}
{\mathcal{M}}_Lf(t,x)=\int_0^t Le^{-(t-s)L}f(s,\cdot)(x)\,{\rm d}s
\end{equation}
for almost every $(t,x)\in (0,\infty)\times{\mathbb{R}}^n$ and all
$f\in L^1(D(L))$. A classical result by De Simon \cite{deS64} states that
${\mathcal{M}}_L$ extends to a bounded operator on $L^2(L^2)$. 

De Simon's result can be extended in several directions, including
$L^p(L^p)$ boundedness, $L^p({\mathbb{R}}^n;L^2(0,\infty))$ boundedness,
and $T^{p,2}$ boundedness.

The $L^p(L^p)$ extension is the most well-known. Lutz Weis proved
in \cite{W01} that the maximal regularity operator
${\mathcal{M}}_L$ belongs to ${\mathscr{L}}(L^p(L^p))$ if and
only if $(e^{-tL})_{t\ge0}$ is $R$-analytic in $L^p({\mathbb{R}}^n)$. This
holds in a range $(p_-(L),p_+(L))$ around $2$ as shown in 
\cite[Theorem~5.1]{Au07} (com\-bined with \cite[Theorem~5.3]{KW01}). 
Note that, for $p$ outside of $[p_-(L),p_+(L)]$, $-L$ does not generate a 
${\mathscr{C}}_0$-semigroup on $L^p$.

The $L^p({\mathbb{R}}^n;L^2(0,\infty))$ extension has recently been 
considered in \cite{vNVW15}. Again 
${\mathcal{M}}_L\in{\mathscr{L}}(L^p({\mathbb{R}}^n;L^2(0,\infty)))$ when
$p\in(p_-(L),p_+(L))$ by a combination of \cite[Theorem~5.1]{Au07} and
\cite[Theorem~3.3]{vNVW15}.

The $T^{p,2}$ extension is the subject of our work \cite{AKMP12}. In
\cite[Proposition~1.6]{AKMP12}, we prove that 
${\mathcal{M}}_L\in{\mathscr{L}}(T^{p,2})$ for a range of values of $p$ that
can be strictly larger than $(p_-(L),p_+(L))$ (recall that, for all 
$\varepsilon>0$, there exists $-L$ that does not generate a semigroup on
$L^p({\mathbb{R}}^n)$ for $p<\frac{2n}{n+2}-\varepsilon$).

In this paper, however, we need to use a variant $\tilde{\mathcal{M}}_L$  of 
${\mathcal{M}}_L$ for which the $T^{p,2}$ boundedness has still a large 
range of exponents while the $L^p(L^p)$ theory would hold on an even 
smaller range  than for ${\mathcal{M}}_L$. 

\begin{proposition}
The integral  
\begin{equation}
\label{eq:modmaxreg}
\tilde{\mathcal{M}}_Lf(t,\cdot)=\int_0^t \nabla e^{-(t-s)L}{\rm div}\,f(s,\cdot)\,{\rm d}s
\end{equation}
defines a bounded operator from $L^1(H^2)$, where 
$H^2=H^2({\mathbb{R}}^n;{\mathbb{C}}^n))$, to $L^\infty_{\rm loc}(L^2)$. 
This operator extends to a bounded operator on $L^2(L^2)$.
\end{proposition}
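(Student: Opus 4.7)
The plan is to prove the two claims separately. The $L^1(H^2)\to L^\infty_{\rm loc}(L^2)$ bound follows directly from the uniform boundedness of $(e^{-\tau L})_{\tau>0}$ on $\dot H^1$ recalled earlier in Section~\ref{subsec:functions}; the $L^2(L^2)\to L^2(L^2)$ extension is obtained by factorising $\tilde{\mathcal M}_L$ through the classical maximal regularity operator ${\mathcal M}_L$ and invoking De Simon's theorem.

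For the first claim, fix $f\in L^1(H^2)$. For almost every $s>0$, $\mathrm{div}\,f(s)\in H^1({\mathbb R}^n)$ with $\|\nabla\mathrm{div}\,f(s)\|_{L^2}\le \|f(s)\|_{H^2}$. Using the uniform bound of $e^{-\tau L}$ on $\dot H^1$ and the identification of its seminorm with $\|\nabla\cdot\|_{L^2}$, we get
$$
\|\nabla e^{-(t-s)L}\mathrm{div}\,f(s)\|_{L^2}\lesssim \|\nabla \mathrm{div}\,f(s)\|_{L^2}\lesssim \|f(s)\|_{H^2}
$$
uniformly in $0<s<t$. Integration in $s\in(0,t)$ then gives $\|\tilde{\mathcal M}_L f(t)\|_{L^2}\lesssim \|f\|_{L^1(H^2)}$ uniformly in $t>0$, which is more than enough to define $\tilde{\mathcal M}_L f\in L^\infty_{\rm loc}(L^2)$.

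For the second claim, the key is the factorisation
$$
\tilde{\mathcal M}_L=(\nabla L^{-\frac12})\circ {\mathcal M}_L \circ (L^{-\frac12}\mathrm{div}),
$$
which is obtained by inserting $L^{-1/2}L^{1/2}$ on each side of $e^{-(t-s)L}$ in \eqref{eq:modmaxreg} and using the functional-calculus identity $L^{1/2}e^{-\tau L}L^{1/2}=Le^{-\tau L}$. By Kato's square root identity $\|L^{1/2}u\|_{L^2}\sim \|\nabla u\|_{L^2}$ from \cite{AHLMcIT02}, the operator $\nabla L^{-1/2}$ extends to a bounded operator on $L^2$; applying the same identity to $L^*$ and dualising, $L^{-1/2}\mathrm{div}$ also extends to a bounded operator on $L^2$. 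Since De Simon's theorem \cite{deS64} ensures that ${\mathcal M}_L$ is bounded on $L^2(L^2)$, composing the three maps yields the desired $L^2(L^2)$ bound, and the extension from the dense subspace $L^1(H^2)\cap L^2(L^2)$ to all of $L^2(L^2)$ is then routine.

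The main technical obstacle is giving rigorous meaning to $\nabla L^{-1/2}$ and $L^{-1/2}\mathrm{div}$ as bounded operators on $L^2$, since $0\in\sigma(L)$ forces some care. The cleanest route is to define $\nabla L^{-1/2}$ on the range of $L^{1/2}$ by $\nabla L^{-1/2}(L^{1/2}u):=\nabla u$ for $u\in H^1$ and extend by density using the Kato estimate, and to define $L^{-1/2}\mathrm{div}$ by the dual recipe. An equivalent alternative is to carry out the argument for the invertible regularisation $L_\varepsilon=L+\varepsilon$ (whose Kato bounds are uniform in $\varepsilon>0$ because the ellipticity constants are unchanged) and let $\varepsilon\to 0$. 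Once the factorisation is justified on a dense subclass where every factor is classically defined, the $L^2(L^2)$ boundedness of $\tilde{\mathcal M}_L$ and the compatibility with the definition on $L^1(H^2)$ are immediate.
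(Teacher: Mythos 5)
Your argument is correct and follows essentially the same route as the paper: the $L^1(H^2)\to L^\infty_{\rm loc}(L^2)$ bound via the uniform $\dot H^1$ bound for $e^{-\tau L}$ (a consequence of the Kato square root estimates), and the $L^2(L^2)$ extension via the factorisation $\tilde{\mathcal M}_L=\nabla L^{-\frac12}\,{\mathcal M}_L\,L^{-\frac12}{\rm div}$ combined with De Simon's theorem and the $L^2$ boundedness of $\nabla L^{-\frac12}$ and of its dual counterpart $L^{-\frac12}{\rm div}$. Your additional care in defining these Riesz-type operators (density on the range of $L^{\frac12}$, or regularisation $L+\varepsilon$) is a welcome justification of a step the paper handles by citing \cite{AHLMcIT02}, but it does not change the approach.
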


\begin{proof}
To see that $\tilde{\mathcal{M}}_L$ is well defined, remark that, for all 
$\tau> 0$ and all $g\in H^2$,
$$
\|\nabla e^{-\tau L}{\rm div}\,g\|_{L^2} 
\lesssim \|\nabla{\rm div}\,g\|_{L^2}\lesssim \|g\|_{H^2}.
$$
Next, we turn to the extension. Remark that for such $g$, 
$h=L^{-\frac{1}{2}} {\rm div}\,g \in D(L)$. First, 
$g \in L^2({\mathbb{R}}^n; {\mathbb{C}}^n)$ and by the solution of the Kato 
square root problem \cite{AHLMcIT02}, $h\in L^2({\mathbb{R}}^n)$. Secondly, 
$Lh= L^{\frac{1}{2}} {\rm div}\,g \in L^2({\mathbb{R}}^n)$ as 
${\rm div}\,g \in H^1= D(L^{\frac{1}{2}})$ by \cite{AHLMcIT02}. Using 
$L^2$ boundedness of $\nabla L^{-\frac{1}{2}}$, \cite{AHLMcIT02}, we have 
the  equality in $L^2$
$$
\nabla e^{-\tau L}{\rm div}\,g= \nabla L^{-\frac{1}{2}} Le^{-\tau L} L^{-\frac 1 2} 
{\rm div}\,g
$$
for all such $g$ and  all $\tau>0$. It follows  
$$
\tilde{\mathcal{M}}_Lf=\nabla L^{-\frac{1}{2}}{\mathcal{M}}_L
L^{-\frac{1}{2}}{\rm div}\, f
$$
for all $f \in L^1(H^2)$ and that  $\tilde{\mathcal{M}}_L$  extends by density 
to a bounded operator on $L^2(L^2)$. 
\end{proof}

The adjoint $\tilde{\mathcal{M}}_L^*\in {\mathscr{L}}(L^2(L^2))$ is given as follows.

\begin{lemma}
\label{lem:tildeM_L^*}
For all $f,g\in {\mathscr{D}}$,
$$
\int_{\mathbb{R}}\langle \tilde{\mathcal{M}}_Lf(t,\cdot),g(t,\cdot) \rangle\,{\rm d}t
=\int_{\mathbb{R}} \Bigl\langle f(t,\cdot),
\int_0^\infty \nabla (e^{-sL})^*{\rm div}\,g(t+s,\cdot)\,{\rm d}s\Bigr\rangle
\,{\rm d}t.
$$
\end{lemma}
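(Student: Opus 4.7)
The plan is to unfold the definition of $\tilde{\mathcal{M}}_L f$, apply Fubini to exchange the order of integration in $(s,t)$, perform the change of variable $\tau = t-s$, and then take the adjoint of the fixed-$\tau$ operator $\nabla e^{-\tau L}{\rm div}$ inside the spatial inner product.

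First, since $f,g\in{\mathscr{D}}$, in particular $f\in L^1(H^2)$, so $\tilde{\mathcal{M}}_L f$ is given by the integral \eqref{eq:modmaxreg}, and moreover both $f(s,\cdot)$ and $g(t,\cdot)$ are supported in a compact subset $K\subset(0,\infty)\times{\mathbb{R}}^n$. I would expand
\begin{equation*}
\int_{\mathbb{R}}\langle \tilde{\mathcal{M}}_L f(t,\cdot), g(t,\cdot)\rangle\,{\rm d}t
= \int_{\mathbb{R}}\int_0^t \langle \nabla e^{-(t-s)L}{\rm div}\,f(s,\cdot),g(t,\cdot)\rangle\,{\rm d}s\,{\rm d}t,
\end{equation*}
and justify Fubini using that for every fixed $\tau=t-s>0$, the operator $\nabla e^{-\tau L}{\rm div}$ is bounded on $L^2$ (via the factorization $\nabla L^{-1/2}\cdot Le^{-\tau L}\cdot L^{-1/2}{\rm div}$ established in the preceding proposition) with a uniform bound on any compact range of $\tau$, while the time cutoff provided by the supports of $f,g$ makes the double integral absolutely convergent. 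Swapping the order of integration and then setting $\tau=t-s$ yields
\begin{equation*}
\int_{\mathbb{R}}\int_0^\infty \langle \nabla e^{-\tau L}{\rm div}\,f(s,\cdot), g(s+\tau,\cdot)\rangle\,{\rm d}\tau\,{\rm d}s.
\end{equation*}

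Next, I would take adjoints pointwise in $\tau$. Since ${\rm div}^*=-\nabla$ and $\nabla^*=-{\rm div}$ on the relevant spaces, the Hilbert space adjoint of $\nabla e^{-\tau L}{\rm div}$ on $L^2({\mathbb{R}}^n)$ is $\nabla (e^{-\tau L})^*{\rm div}$. Applying this inside the $L^2$ pairing gives
\begin{equation*}
\int_{\mathbb{R}}\int_0^\infty \langle f(s,\cdot), \nabla(e^{-\tau L})^*{\rm div}\,g(s+\tau,\cdot)\rangle\,{\rm d}\tau\,{\rm d}s.
\end{equation*}
Finally, a second Fubini, justified exactly as before (the integrand is compactly supported in $\tau$ since $g(s+\tau,\cdot)$ vanishes for $s+\tau$ outside the support of $g$, and is $L^1$ near $\tau=0$ by the same factorization argument), lets me bring the $\tau$-integral inside the pairing, which gives the claimed identity after renaming $s\mapsto t$ and $\tau\mapsto s$.

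The main technical point, and the only real obstacle, is the justification of Fubini and of the pointwise adjoint identity for $\nabla e^{-\tau L}{\rm div}$: one must ensure that the $L^2$-valued integrands are measurable and absolutely integrable in $(s,\tau)$. This is taken care of by the Kato square root identity used in the proposition above, which turns $\nabla e^{-\tau L}{\rm div}$ into a composition of bounded operators on $L^2$, together with the compact support and smoothness of $f,g\in{\mathscr{D}}$.
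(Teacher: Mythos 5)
Your proposal is correct and follows essentially the same route as the paper: unfold the definition of $\tilde{\mathcal{M}}_L$, change variables $\sigma=t-s$ in the double integral, and move the adjoint $\nabla(e^{-\sigma L})^*{\rm div}$ onto $g$ inside the $L^2$ pairing. One minor remark: the absolute integrability near $\tau=0$ should be justified by the uniform bound $\|\nabla e^{-\tau L}{\rm div}\,h\|_{L^2}\lesssim\|h\|_{H^2}$ (the first display in the proposition defining $\tilde{\mathcal{M}}_L$, valid since $f,g\in{\mathscr{D}}\subset L^1(H^2)$) rather than by the $L^{-\frac{1}{2}}$-factorization, whose operator norm on $L^2$ grows like $\tau^{-1}$ as $\tau\to0$.
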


\begin{proof}
Let $f,g\in {\mathscr{D}}$. We have that
\begin{align*}
\int_{\mathbb{R}}\langle \tilde{\mathcal{M}}_Lf(t,\cdot),g(t,\cdot)\rangle\,{\rm d}t
=&\int_{\mathbb{R}}\int_{\mathbb{R}} 1\!{\rm l}_{(0,\infty)}(t-s)
\langle\nabla e^{-(t-s)L}{\rm div}\,f(s,\cdot),g(t,\cdot)\rangle\,{\rm d}s\,{\rm d}t
\\
=&\int_{\mathbb{R}}\int_{\mathbb{R}} 1\!{\rm l}_{(0,\infty)}(\sigma)
\langle f(s,\cdot),\nabla (e^{-\sigma L})^*{\rm div}\,g(\sigma+s,\cdot)\rangle\,
{\rm d}\sigma\,{\rm d}s
\end{align*}
where we have made the change of variables $s=s$ and $\sigma=t-s$
on ${\mathbb{R}}\times{\mathbb{R}}$.
\end{proof}

\begin{remark}
\label{rem:tildeM_L^*}
The operator, initially defined for 
$g\in {\mathscr{D}}$ by
$$
\tilde{\mathcal{M}}_L^*g(s,x)=\int_0^\infty
\nabla (e^{-\sigma L})^*{\rm div}\,g(\sigma+s,\cdot)(x)\,{\rm d}\sigma,
\quad (s,x)\in(0,\infty)\times{\mathbb{R}}^n
$$ 
thus extends to a bounded linear operator on $L^2(L^2)$.
\end{remark}

\begin{proposition}
\label{prop:AKMP}
Let $q \in [1,2)$ be such that 
$\sup_{t>0}\|\sqrt{t}\nabla e^{-tL^*}\|_{{\mathscr{L}}(L^s)}<\infty$
for all $s\in[2,q')$. Then $\tilde{\mathcal{M}}_L$ extends to a bounded operator 
on $T^{p,2}$ for all $p\in (p_c,\infty]$ where $p_c=\max\bigl\{\frac {nq}{n+q},
\frac{2n}{n+q'}\bigr\}$. 
\end{proposition}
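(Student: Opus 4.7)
The plan is to deduce the proposition from the tent-space extrapolation theory developed in \cite{AKMP12}, by reducing to the main $T^{p,2}$ boundedness theorem for maximal regularity operators (cited above as Proposition~1.6 of that paper). That theorem converts quantitative off-diagonal estimates on the semigroup and its gradient into a range of exponents $p$, and the anchoring $p=2$ case is already given by the preceding proposition since $T^{2,2}=L^2(L^2)$.

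First I would translate the hypothesis into the off-diagonal estimates required by the framework. The assumption $\sup_{t>0}\|\sqrt{t}\nabla e^{-tL^*}\|_{{\mathscr{L}}(L^s)}<\infty$ for $s\in[2,q')$ is precisely the statement $q_+(L^*)\ge q'$ in the notation of \cite{Au07}. Combined with the $L^2$ Gaffney bounds satisfied by $e^{-tL^*}$ and its gradient, and a standard Riesz--Thorin interpolation, this delivers $L^s$--$L^2$ off-diagonal bounds with arbitrary polynomial decay for $\sqrt{t}\nabla e^{-tL^*}$ across the same range. Dualising via Lemma~\ref{lem:tildeM_L^*}, where the adjoint is expressed as a forward-time integral of $\nabla (e^{-\sigma L})^*\,{\rm div}$, one obtains matching off-diagonal bounds for the operator-valued kernel driving $\tilde{\mathcal{M}}_L^*$, which is the natural object to work with.

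Second, these off-diagonal bounds feed into the AKMP extrapolation machinery, which produces $T^{p,2}$ boundedness for $p>p_c$, including the endpoint $p=\infty$ via a Carleson/BMO argument dual to the $T^{1,2}$ atomic decomposition. The critical exponent $p_c$ arises as the maximum of two quantities corresponding to two independent constraints in the extrapolation: the Sobolev-type exponent $\frac{nq}{n+q}$, controlling when tent-space atoms can absorb the loss in passing from $L^q$ gradient bounds to size estimates via Sobolev embedding, and $\frac{2n}{n+q'}$, which balances the $L^{q'}$ gradient bound against an $L^2$ off-diagonal estimate in the atomic pairing.

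The main obstacle is bookkeeping. One must verify that the ``future semigroup'' form of $\tilde{\mathcal{M}}_L^*$ from Lemma~\ref{lem:tildeM_L^*}, in which the time variable appears as $\sigma+s$ rather than $t-s$, still fits the abstract hypotheses of the AKMP theorem (a reflection in time together with a change of variable in $\sigma$ should handle this), and that the quantitative off-diagonal constants produced from the hypothesis are strong enough to cover the full range $p>p_c$ up to and including $p=\infty$. Once this translation is made, the conclusion is a direct application of \cite[Proposition~1.6]{AKMP12}.
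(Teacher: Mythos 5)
Your overall instinct (reduce to the tent-space machinery of \cite{AKMP12} by converting the hypothesis into quantitative decay estimates for the kernel) is the right one, but the route through the adjoint has genuine gaps. First, \cite[Proposition~1.6]{AKMP12} is a statement about ${\mathcal{M}}_L$, not about $\tilde{\mathcal{M}}_L$, so it cannot be applied ``directly''; what is needed are the abstract results of that paper (Theorem~3.1 for $p\le 2$, Proposition~4.2 for $p\ge 2$) fed with specific operator norm estimates on the kernel $\nabla e^{-\tau L}{\rm div}$. Second, passing through $\tilde{\mathcal{M}}_L^*$ and dualising is problematic: tent-space duality $(T^{p,2})^*=T^{p',2}$ only covers $1\le p\le\infty$, whereas the claimed range $(p_c,\infty]$ contains exponents $p<1$ whenever $q'>n$ (then $p_c=\frac{nq}{n+q}<1$), so those cannot be reached this way; moreover $\tilde{\mathcal{M}}_L^*$ is anticausal (the time variable appears as $\sigma+s$), and a reflection in time does \emph{not} reduce it to the causal setting, because the parabolic tent-space geometry couples the time variable to the spatial scale $\sqrt{t}$ and is destroyed by $t\mapsto T-t$. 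Indeed, if a purely $L^2$-based argument for the adjoint in the range $p'\ge 2$ could be transferred back by duality, the proposition would hold for all $p>1$ with no hypothesis on $L^*$ at all, which is not the case; the asymmetry between causal and anticausal operators is exactly where the hypothesis enters.

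Third, and most importantly, the quantitative step that produces $p_c$ is missing. Your sketch only extracts off-diagonal bounds for $\sqrt{t}\,\nabla e^{-tL^*}$ on $L^s$, $s\in[2,q')$. That is not enough: to run the $p\le 2$ extrapolation one needs
$\sup_{t>0}\bigl\|t^{1+\frac{n}{2}(\frac{1}{\tilde q}-\frac{1}{2})}\nabla e^{-tL}{\rm div}\bigr\|_{{\mathscr{L}}(L^{\tilde q},L^2)}<\infty$ for $\tilde q\in(q,2]$, which in the paper is obtained by factoring $t^{1+\frac{n}{2}(\frac{1}{\tilde q}-\frac{1}{2})}\nabla e^{-tL}{\rm div}=A_tB_tC_t$ with $C_t=t^{\frac12}e^{-\frac t3 L}{\rm div}$ bounded on $L^{\tilde q}$ (this is the only place duality with the hypothesis is used), $B_t=t^{\frac n2(\frac1{\tilde q}-\frac12)}e^{-\frac t3 L}$ bounded $L^{\tilde q}\to L^2$ via \cite[Proposition~3.9]{Au07} and the uniform $L^r$-boundedness of $e^{-tL}$ for $r\in(p_-(L),2]$ recalled from \cite[Section~3.4]{Au07}, and $A_t=t^{\frac12}\nabla e^{-\frac t3 L}$ bounded on $L^2$. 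Your proposal never invokes the $L^{\tilde q}\to L^2$ smoothing of the semigroup itself (the $B_t$ factor), which is precisely what brings in $p_-(L)\le\max\{1,\frac{nq}{n+q}\}$ and hence the first component of $p_c$; asserting that the extrapolation ``produces'' $p_c=\max\{\frac{nq}{n+q},\frac{2n}{n+q'}\}$ without deriving these kernel estimates leaves the heart of the proof unproved.
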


\begin{proof} 
We first recall, from \cite[Section 3.4]{Au07}, that
there exist an exponent $q$ as above (denoted by $q_+(L^{*})'$ in \cite{Au07}), 
and another one $p_{-}(L) \ge 1$ with $p_{-}(L) \le\max\bigl\{1,\frac {nq}{n+q}\bigr\}$ 
such that
\begin{equation}
\label{eq:p-}
\sup_{t\ge0}\|e^{-tL}\|_{{\mathscr{L}}(L^r)}<\infty,\quad\forall\,r\in(p_{-}(L),2].
\end{equation}
To prove the result for $p\le 2$, we apply \cite[Theorem~3.1]{AKMP12} 
with $m=2, \beta=0$. To do so, we only have to show that 
$$
\sup_{t>0}\bigl\|t^{1+\frac{n}{2}(\frac{1}{\tilde q}-\frac{1}{2})}
\nabla e^{-tL}{\rm div}\,\bigr\|_{{\mathscr{L}}(L^{\tilde q},L^2)}<\infty
$$
for all $\tilde q\in(q,2]$ and compute the exponents. Indeed, this estimate and 
$L^2-L^2$ off diagonal estimates imply the $L^r-L^2$ decay with $r\in(\tilde q,2)$. 
See for example \cite[Proposition 3.2]{Au07}. Write
$$
t^{1+\frac{n}{2}(\frac{1}{\tilde q}-\frac{1}{2})}
\nabla e^{-tL}{\rm div}= A_tB_tC_t
$$
with 
$A_t= t^{\frac{1}{2}}\nabla e^{-\frac{t}{3} L}$, 
$B_t= t^{\frac{n}{2}(\frac{1}{\tilde q}-\frac{1}{2})}e^{-\frac{t}{3}L}$ and 
$C_t= t^{\frac{1}{2}}  e^{-\frac{t}{3} L}{\rm div}$. Observe that  $C_t$ is 
uniformly bounded on $L^{\tilde q}$ using $\tilde q > q$ and duality. 
Next, $B_t$ is uniformly bounded from $L^{\tilde q}$ to $L^2$ by 
\cite[Proposition~3.9]{Au07} and \eqref{eq:p-}. Finally $A_{t}$ is uniformly 
bounded on $L^2$. For $p\ge 2$, we apply \cite[Proposition~4.2]{AKMP12} 
with $m=2$, $\beta=0$ and $q=2$.
\end{proof}

\begin{remark} 
If we were to use maximal regularity results in $L^p(L^p)$ or 
$L^p({\mathbb{R}}^n;L^2(0,\infty))$ as in \cite{vNVW15} instead of this result, 
we would need the family $\{\nabla e^{-tL}{\rm div} \;;\; t>0\}$ 
to be $R$-bounded on $L^p({\mathbb{R}}^n)$. As shown in \cite{Au07}, this is 
false for $p<q$, and $q$ can be arbitrarily close to $2$. In the above 
proposition, however, we allow, at least, $p \in [\frac{2n}{n+2},\infty]$ 
(see \cite{Au07, AHM12}). 
\end{remark}

\begin{remark} 
\label{rem:huang}  
If  $q'>n$, then $p_c=\frac {nq}{n+q}<1$. When $q'\le n$, $p_c=\frac{2n}{n+q'}$. 
Actually, we have learned from Yi Huang (personal communication) that in this 
case, the exponent $p_c$ can be taken to be the smaller value $\frac {nq}{n+q}$,
using an improved version of \cite[Theorem 3.1]{AKMP12}. This value is in 
agreement with the number $p_{-}(L)$ above.  
\end{remark}
 
\begin{remark}
\label{rem:ell} 
We remark that given the ellipticity constants $\lambda,\Lambda$, 
there is $\varepsilon(\lambda,\Lambda)\in (0,\infty]$ such that 
$q_+(L^*)\ge 2+\varepsilon(\lambda,\Lambda)$ whenever $A$ satisfies
\eqref{ell}. This implies that 
$p_{-}(L) \le\max\bigl\{1,\frac {2n}{n+2}-\varepsilon'(\lambda,\Lambda,n)\bigr\}$ 
for such $L$. See again \cite[Section 3.4]{Au07}. 
\end{remark}   

We also consider the integral operator ${\mathcal{R}}_L$ 
initially defined as a bounded operator from $L^1(H^1)$, with 
$H^1=H^1({\mathbb{R}}^n; {\mathbb{C}}^n)$, to $L^\infty_{\rm loc}(L^2)$ by
\begin{equation}
\label{eq:semimaxreg}
{\mathcal{R}}_L f(t,x)=\int_0^t e^{-(t-s)L}{\rm div}\, f(s,\cdot)(x)\,{\rm d}s.
\end{equation}
Note that ${\mathscr{C}}_{c}({\mathbb{R}}^{n+1}_+;{\mathbb{C}}^n)$, 
the space of compactly supported continuous functions on 
${\mathbb{R}}^{n+1}_+$ into ${\mathbb{C}}^n$ is contained in 
$L^1(H^1)$ and is dense in $T^{p,2}$ (of ${\mathbb{C}}^n$-valued functions) 
for all $p\in (0,\infty)$. 

{\bf Remark (erratum):} 
The proof of Proposition \ref{prop:subMR} below has a gap. However it, was recently proved in \cite[Theorem 5.1]{ah}
that the result nonetheless holds in an interval $p_{-}(L)<p\leq \infty$. See \cite[Remark 5.4]{ah}, and note that, in the present paper, Proposition \ref{prop:subMR} is only used (in Lemma \ref{lem:babyDuhamel}, Theorem \ref{thm:pert} , Theorem \ref{thm:cont}, Proposition \ref{prop:NbyS}, and Theorem \ref{thm:fatou81})
for $L=-\Delta$ (in which case $p_{-}(-\Delta)= \frac{n}{n+1}$) when $1<p$, or $p=2$ for general $L$.
Hence all the results are valid. \\
It is also worth noting at this stage that \cite{ah} improves some related exponents. 
In particular, it confirms the validity of the above Remark \ref{rem:huang}. This, in turn, improves the ranges of Lemma \ref{lem:Aescalier}, Theorem \ref{thm:pert}, and Theorem \ref{thm:cont} to $p\in (\max(1, \frac{qn}{n+q}), 2)$.

\begin{proposition}
\label{prop:subMR}
Let $p \in (0,\infty]$.
The operator ${\mathcal{R}}_L$ extends to a bounded operator 
from $T^{p,2}$ to $X^{p}$.
\end{proposition}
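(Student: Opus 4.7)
The plan is to prove the $X^p$ bound by combining $L^2$--$L^2$ off-diagonal (Gaffney) estimates for $e^{-\tau L}{\rm div}$ with the atomic structure of tent spaces. First I would establish the main kernel input: the factorisation ${\rm div}=L^{\frac{1}{2}}\circ(L^{-\frac{1}{2}}{\rm div})$, together with Kato's estimate $\|L^{-\frac{1}{2}}{\rm div}\|_{L^2\to L^2}\lesssim 1$ from \cite{AHLMcIT02} and analyticity of $e^{-\tau L}$, yields $\|e^{-\tau L}{\rm div}\|_{L^2\to L^2}\lesssim \tau^{-\frac{1}{2}}$. Combined with the standard $L^2$ off-diagonal bounds for $e^{-\tau L}$ (see \cite[Chap.\,3]{Au07}), this gives, for all Borel $E,F\subset{\mathbb{R}}^n$,
$$
\|1\!{\rm l}_E\,e^{-\tau L}{\rm div}(1\!{\rm l}_F g)\|_{L^2}\lesssim \tau^{-\frac{1}{2}}\,e^{-c\,{\rm dist}(E,F)^2/\tau}\|g\|_{L^2(F)},
$$
which is the key technical input.

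For $p\in(0,2]$, I would use the Coifman--Meyer--Stein atomic decomposition of $T^{p,2}$. Fix a $T^{p,2}$-atom $a$ supported in a parabolic tent over a ball $B=B(x_{0},r)$, with $\|a\|_{T^{2,2}}\le |B|^{\frac{1}{2}-\frac{1}{p}}$. It suffices to prove $\|\tilde N({\mathcal R}_L a)\|_{L^p}\lesssim 1$ uniformly. Decomposing $\int_{{\mathbb R}^n}=\int_{4B}+\sum_{k\ge 2}\int_{2^{k+1}B\setminus 2^k B}$, the local piece is handled by H\"older together with the $L^2(L^2)\to L^2(L^2)$ boundedness of ${\mathcal R}_L$ (a consequence of the $\tau^{-1/2}$ bound and a Schur-type argument). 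For the annular pieces, I would fix $x\notin 2^k B$ and $\delta>0$, then estimate the average of $|{\mathcal R}_L a|^2$ on $(\frac{\delta}{2},\delta)\times B(x,\sqrt\delta)$ by splitting the time integral defining ${\mathcal R}_L a$ dyadically in $t-s$ and applying the Gaffney bound. Since the spatial distance from $B(x,\sqrt\delta)$ to the support of $a(s,\cdot)$ is at least $c\,2^k r$, this produces decay $e^{-c\,4^k r^{2}/(t-s)}$ sufficient to sum in $k$.

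For $p=\infty$ I would verify the Carleson characterisation of $X^\infty$ directly: for every ball $B$ of radius $r$,
$$
\frac{1}{|B|}\int_0^{r^{2}}\int_B|{\mathcal R}_L f(t,y)|^{2}\,{\rm d}y\,{\rm d}t \lesssim \|f\|_{T^{\infty,2}}^{2},
$$
by splitting $f=f\,1\!{\rm l}_{T(2B)}+f\,1\!{\rm l}_{T(2B)^c}$ and using the Carleson control of $\|f\|_{T^{\infty,2}}$ on the local part together with the Gaffney decay on the far part. The remaining range $p\in(2,\infty)$ then follows by complex interpolation, since the retraction structure of $X^p$ from tent spaces recorded in \cite{HyRo12, Huang} allows interpolation between the $p=2$ and $p=\infty$ endpoints.

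The hardest step will be the annular estimate: the supremum defining $\tilde N$ is taken over all parabolic Whitney regions at $x$, so the three scales $\delta$, $r^{2}$ and $|x-x_{0}|^{2}$ can appear in any relative order. One must split into cases according to which scale dominates and extract sufficient exponential decay in $k$ from the Gaffney bound in each case; this is where the parabolic geometry specific to $\tilde N$ and to the tent spaces enters essentially, and where the bookkeeping is most delicate.
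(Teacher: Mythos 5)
Your scheme (atoms for small $p$, endpoints at $p=2$ and $p=\infty$, interpolation in between) is genuinely different from the paper's proof, which uses neither atoms nor interpolation: there one writes ${\mathcal{R}}_Lf(t,\cdot)=\sum_{k\ge0}e^{-(1-2^{-k})tL}K_Lf(2^{-k}t,\cdot)$ with $K_Lf(t,\cdot)=\int_{t/2}^{t}e^{-(t-s)L}{\rm div}\,f(s,\cdot)\,{\rm d}s$, expands over dyadic annuli, and dominates $\tilde N({\mathcal{R}}_Lf)(x)$ pointwise by a rapidly convergent sum of conical square functions of $f$ at the same point $x$ with enlarged apertures $\sim 2^{j+\ell+k/2}$; the change-of-angle estimate in tent spaces \cite{Au11} then gives the bound simultaneously for all $p\in(0,\infty]$. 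Your proposal, as written, has gaps. The most serious is the $p=2$ endpoint, on which both your local atomic estimate and your interpolation for $p\in(2,\infty)$ lean: it does not follow from ``H\"older plus the $L^2(L^2)\to L^2(L^2)$ boundedness of ${\mathcal{R}}_L$''. The $X^2$ norm carries a supremum over Whitney scales, and $\|\tilde N(F)\|_{L^2}$ is not controlled by $\|F\|_{L^2(L^2)}$: take $F(t,y)=g(y)h(t)^{1/2}$ with $g\in L^2({\mathbb{R}}^n)$ and $h\in L^1(0,\infty)$ spiky enough that $\sup_{\delta>0}\fint_{\delta/2}^{\delta}h=\infty$; then $F\in T^{2,2}$ while $\tilde N(F)=\infty$ on a set of positive measure. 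So the bound $\|\tilde N({\mathcal{R}}_La)\|_{L^2}\lesssim\|a\|_{T^{2,2}}$ is itself the crux, and proving it already requires the scale-by-scale Gaffney argument that constitutes the paper's proof; it cannot be quoted as an ingredient.

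Two further points. The Coifman--Meyer--Stein atomic decomposition of $T^{p,2}$ is available only for $p\le 1$, so your atomic argument cannot cover $1<p<2$ as claimed; that range would again have to come from interpolation, hence again through the missing $p=2$ case. And your $p=\infty$ target is the wrong inequality: membership in $X^\infty$ means $\sup_{x,\delta}\fint_{\delta/2}^{\delta}\fint_{B(x,\sqrt{\delta})}|{\mathcal{R}}_Lf|^2\lesssim\|f\|_{T^{\infty,2}}^2$, a Whitney-box bound, whereas the Carleson-type estimate $\frac{1}{|B|}\int_0^{r^2}\int_B|{\mathcal{R}}_Lf|^2\lesssim\|f\|_{T^{\infty,2}}^2$ is dimensionally inconsistent (under parabolic dilations the correct right-hand side would carry a factor $r^2$) and does not imply the $X^\infty$ bound, since converting a tent average into a Whitney average costs a factor $1/\delta$ that blows up at small scales. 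Your local/far splitting with Gaffney decay can certainly be redirected at the Whitney-box quantity, but once you do that uniformly in the three scales $\delta$, $r^2$, $|x-x_0|^2$ you are essentially reproducing the paper's computation, at which point the atoms and the interpolation become superfluous.
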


In the proof below, and throughout the paper, we use dyadic annuli defined 
as follows. For $x\in {\mathbb{R}}^n$, $r>0$, set $S_1(x,r)=B(x,2r)$, and 
$S_j(x,r) = B(x,2^{j+1}r)\setminus B(x,2^j r)$ for $j\ge 2$.  

\begin{proof}
Let $f \in {\mathscr{C}}_{c}({\mathbb{R}}^{n+1}_+;{\mathbb{C}}^n)$. 
We have that, for almost every $(t,x) \in {\mathbb{R}}^{n+1}_+$,
$$
{\mathcal{R}}_L f(t,x) = \sum_{k=0}^{\infty} e^{-(1-2^{-k})tL}K_L f(2^{-k}t,x),
$$
where $K_L f(t,x)=\int_{\frac{t}{2}} ^t e^{-(t-s)L}{\rm div}\, f(s,\cdot)(x)\,{\rm d}s$.
Fix $x\in {\mathbb{R}}^n$ and $k \in {\mathbb{N}}\setminus \{0\}$.  
Since $\{e^{-tL} \;;\; t\geq 0\}$ satisfies 
Gaffney-Davies estimates (see \cite[\S2.3]{Au07}), we have that for any $\delta >0$, 
\begin{align*}
\Bigl(\fint_{\frac{\delta}{2}}^\delta &
\fint_{B(x,\sqrt{\delta})}|e^{-(1-2^{-k})tL} K_{L} f(2^{-k}t,y)|^2
\,{\rm d}y\,{\rm d}t\Bigr)^{\frac{1}{2}} 
\\ 
& \le
\sum_{j=1}^{\infty}
\Bigl(\fint_{\frac{\delta}{2}}^\delta
\fint_{B(x,\sqrt{\delta})}|e^{-(1-2^{-k})tL}(1\!{\rm l}_{S_{j}(x,\sqrt{\delta})}
K_{L} f(2^{-k}t,\cdot))(y)|^2\,{\rm d}y\,{\rm d}t\Bigr)^{\frac{1}{2}}
\\
& \lesssim 
\sum_{j=1}^{\infty} 2^{j\frac{n}{2}} e^{-c4^j}
\Bigl(\fint_{\frac{\delta}{2}}^\delta
\fint_{B(x,2^{j+1}\sqrt{\delta})}|K_{L} f(2^{-k}t,y)|^2
\,{\rm d}y\,{\rm d}t\Bigr)^{\frac{1}{2}}
\\ 
& \le
\sum_{j=1}^{\infty} 2^{j\frac{n}{2}} e^{-c4^j} \sup_{\delta'>0}
\Bigl(\fint_{\frac{2^{k}\delta'}{2}}^{2^{k}\delta'}
\fint_{B(x,2^{j+1+\frac{k}{2}}\sqrt{\delta'})}|K_{L} f(2^{-k}t,y)|^2
\,{\rm d}y\,{\rm d}t\Bigr)^{\frac{1}{2}} 
\\
& = \sum_{j=1}^{\infty} 2^{j\frac{n}{2}} e^{-c4^j} \sup_{\delta'>0}
\Bigl(\fint_{\frac{\delta'}{2}}^{\delta'}
\fint_{B(x,2^{j+1+\frac{k}{2}}\sqrt{\delta'})}|K_{L} f(t,y)|^2
\,{\rm d}y\,{\rm d}t\Bigr)^{\frac{1}{2}}.
\end{align*}
Note that this estimate also holds for $k=0$.
Now with $\delta'>0$,  and $j\ge 1$, we have that
\begin{align*}
\Bigl(\fint_{\frac{\delta'}{2}}^{\delta'} &
\fint_{B(x,2^{j+1+\frac{k}{2}}\sqrt{\delta'})}|K_{L} f(t,y)|^2
\,{\rm d}y\,{\rm d}t\Bigr)^{\frac{1}{2}}
\\
& \leq  \sum_{\ell=1}^{\infty} \Bigl(\fint_{\frac{\delta'}{2}}^{\delta'}
\fint_{B(x,2^{j+1+\frac{k}{2}}\sqrt{\delta'})}\Bigl|
\int_{\frac{t}{2}} ^t e^{-(t-s)L}{\rm div}\,
(1\!{\rm l}_{S_{\ell}(x,2^{j+1+\frac{k}{2}}\sqrt{\delta'})}f(s,\cdot))(y)\,{\rm d}s\Bigr|^2
\,{\rm d}y\,{\rm d}t\Bigr)^{\frac{1}{2}}.
\end{align*}
For $\ell=1$, and $t \in (\frac{\delta'}{2},\delta')$, we have that
\begin{align*}
\Bigl\|\int_{\frac{t}{2}}^t  e^{-(t-s)L}{\rm div}\,
(1\!{\rm l}_{S_{1}(x,2^{j+1+\frac{k}{2}}\sqrt{\delta'})}f(s,\cdot))(y)\,{\rm d}s\Bigr\|_2
\leq \int_{\frac{t}{2}}^{t} \frac{1}{\sqrt{t-s}} 
\bigl\|1\!{\rm l}_{B(x,2^{j+2+\frac{k}{2}}\sqrt{\delta'})}f(s,\cdot)\bigr\|_2 {\rm d}s,
\end{align*}
and thus
\begin{align*}
\Bigl(\fint_{\frac{\delta'}{2}}^{\delta'} &
\fint_{B(x,2^{j+1+\frac{k}{2}}\sqrt{\delta'})}\Bigl|\int_{\frac{t}{2}}^t 
e^{-(t-s)L}{\rm div}\,(1\!{\rm l}_{S_{1}(x,2^{j+1+\frac{k}{2}}\sqrt{\delta'})}
f(s,\cdot))(y)\,{\rm d}s\Bigr|^2\,{\rm d}y\,{\rm d}t\Bigr)^{\frac{1}{2}}
\\ 
& \lesssim
\Bigl(\int_{\frac{\delta'}{2}}^{\delta'} 
\Bigl(\int_{\frac{t}{2}}^{t} \frac{1}{\sqrt{\delta'}}\frac{1}{\sqrt{t-s}} 
\bigl\|(2^{j+1+\frac{k}{2}}\sqrt{\delta'})^{-\frac{n}{2}} 
1\!{\rm l}_{B(x,2^{j+2+\frac{k}{2}}\sqrt{\delta'})}f(s,\cdot)\bigr\|_2 {\rm d}s\Bigr)^2
\,{\rm d}t\Bigr)^{\frac{1}{2}}.
\end{align*}
By Schur's Lemma, we thus have that
\begin{align*}
\Bigl(\fint_{\frac{\delta'}{2}}^{\delta'} &
\fint_{B(x,2^{j+1+\frac{k}{2}}\sqrt{\delta'})}\Bigl|\int_{\frac{t}{2}}^t 
e^{-(t-s)L}{\rm div}\,(1\!{\rm l}_{S_{1}(x,2^{j+1+\frac{k}{2}}\sqrt{\delta'})}
f(s,\cdot))(y)\,{\rm d}s\Bigr|^2\,{\rm d}y\,{\rm d}t\Bigr)^{\frac{1}{2}}
\\ & \lesssim
\Bigl(\int_{\frac{\delta'}{2}}^{\delta'}
\fint_{B(x,2^{j+2+\frac{k}{2}}\sqrt{\delta'})}|f(t,y)|^2
\,{\rm d}y\,{\rm d}t\Bigr)^{\frac{1}{2}}
\le
\Bigl(\int_0^\infty
\fint_{B(x,2^{j+3+\frac{k}{2}}\sqrt{t})}|f(t,y)|^2\,{\rm d}y\,{\rm d}t\Bigr)^{\frac{1}{2}}.
\end{align*}
Let us now consider $\ell \ge 2$. We have that
\begin{align*}
\Bigl(\fint_{\frac{\delta'}{2}}^{\delta'} &
\fint_{B(x,2^{j+1+\frac{k}{2}}\sqrt{\delta'})}\Bigl|\int_{\frac{t}{2}}^t 
e^{-(t-s)L}{\rm div}\,(1\!{\rm l}_{S_{\ell}(x,2^{j+1+\frac{k}{2}}\sqrt{\delta'})}
f(s,\cdot))(y)\,{\rm d}s\Bigr|^2\,{\rm d}y\,{\rm d}t\Bigr)^{\frac{1}{2}}
\\ 
& \lesssim
\Bigl(\int_{\frac{\delta'}{2}}^{\delta'}\Bigl(
\int_{\frac{t}{2}}^t  \frac{1}{\sqrt{\delta'}}\frac{1}{\sqrt{t-s}}
2^{\ell n}e^{-c\frac{4^{\ell+j}2^{k}\delta'}{t-s}}
(\fint_{B(x,2^{j+\ell+2+\frac{k}{2}}\sqrt{\delta'})} |f(s,y)|^2\,{\rm d}y)^{\frac{1}{2}}
\,{\rm d}s\Bigr)^{2}
\,{\rm d}t\Bigr)^{\frac{1}{2}}
\\ &
\lesssim
2^{\frac{\ell n}{2}}e^{-\frac{c}{2}4^{\ell+j}2^k}
\Bigl(\int_{\frac{\delta'}{4}}^{\delta'}
\fint_{B(x,2^{j+\ell+2+\frac{k}{2}}\sqrt{\delta '})}|f(s,y)|^2
\,{\rm d}y\,{\rm d}s\Bigr)^{\frac{1}{2}}.
\end{align*}
For $p=\infty$, summing in $j,k,\ell$, and using the change of angle lemma \cite{Au11} in 
$T^{p,2}$, we have that
\begin{align*}
\|{\mathcal{R}}_L f\|_{X^p} 
&\lesssim \sum \limits _{j,k,\ell } 2^{\frac{(j+\ell)n}{2}}
e^{-\frac{c}{2}4^{\ell+j}2^k} 
\Bigl\|x\mapsto \Bigl(\int_0^\infty
\fint_{B(x,2^{j+2+\ell+\frac{k}{2}}\sqrt{s})}|f(s,y)|^2\,{\rm d}y\,{\rm d}s
\Bigr)^{\frac{1}{2}}\Bigr\|_{L^p} 
\\
&\lesssim \sum_{j,k,\ell } 2^{\frac{(j+\ell)n}{2}}
e^{-\frac{c}{2}4^{\ell+j}2^k} 2^{(j+\ell+\frac k 2 )\tau}\|f\|_{T^{p,2}} 
\lesssim \|f\|_{T^{p,2}},
\end{align*}
the number $\tau$ depending on $n$ and $p$. This suffices to sum. For $p=\infty$, we argue similarly. We note that the proof applies directly to any $f\in T^{\infty,2}$ and gives a meaning to 
${\mathcal{R}}_L f$.  \end{proof}

The operators ${\mathcal{R}}_L$ and $\tilde{\mathcal{M}}_L$ are related 
in the following way.

\begin{proposition}
\label{prop:RvM}
Let $p \in (p_c,\infty)$ as in Proposition \ref{prop:AKMP} and $f \in T^{p,2}$.
Then $\nabla {\mathcal{R}}_Lf \in T^{p,2}$ and
$\nabla {\mathcal{R}}_Lf = \tilde{\mathcal{M}}_L f$
in $T^{p,2}$.
\end{proposition}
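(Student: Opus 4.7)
The plan is to prove the identity $\nabla \mathcal{R}_L f = \tilde{\mathcal{M}}_L f$ first on a smooth dense subclass of $T^{p,2}$ where both sides make classical sense, and then extend by density, using that $\tilde{\mathcal{M}}_L$ is bounded on $T^{p,2}$ for $p\in(p_c,\infty)$ by Proposition~\ref{prop:AKMP}, together with $\mathcal{R}_L : T^{p,2}\to X^p$ bounded for all $p\in(0,\infty]$ by Proposition~\ref{prop:subMR}. Since $p<\infty$, a convenient dense subclass is $\mathscr{D}((0,\infty)\times {\mathbb{R}}^n;{\mathbb{C}}^n)$.

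For such a smooth, compactly supported $f$, the map $s\mapsto f(s,\cdot)$ is $H^2$-valued with compact support in time, and so ${\rm div}\, f(s,\cdot)\in H^1\subset \dot H^1$. Using the uniform boundedness of $\{e^{-\tau L}\;;\;\tau>0\}$ on $\dot{H}^1({\mathbb{R}}^n)$ recalled in Section~\ref{subsec:functions}, one obtains
$$
\|\nabla e^{-(t-s)L}{\rm div}\, f(s,\cdot)\|_{L^2} \lesssim \|{\rm div}\, f(s,\cdot)\|_{\dot H^1} \lesssim \|f(s,\cdot)\|_{H^2}
$$
uniformly in $(t-s)>0$. Combined with the integrability of $s\mapsto \|f(s,\cdot)\|_{H^2}$ on $(0,t)$, this justifies differentiating under the integral sign in
$$
\mathcal{R}_L f(t,\cdot) = \int_0^t e^{-(t-s)L}{\rm div}\,f(s,\cdot)\,{\rm d}s,
$$
yielding $\nabla \mathcal{R}_L f(t,\cdot) = \tilde{\mathcal{M}}_L f(t,\cdot)$ in $L^2({\mathbb{R}}^n)$ for a.e.~$t>0$, and hence an equality in $L^2_{\rm loc}({\mathbb{R}}^{n+1}_+)$.

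For a general $f\in T^{p,2}$, I would pick a sequence $(f_k)$ in $\mathscr{D}$ with $f_k\to f$ in $T^{p,2}$. The two boundedness results then give $\mathcal{R}_L f_k\to \mathcal{R}_L f$ in $X^p$ and $\tilde{\mathcal{M}}_L f_k\to \tilde{\mathcal{M}}_L f$ in $T^{p,2}$. Both $X^p$ and $T^{p,2}$ embed continuously into $L^2_{\rm loc}({\mathbb{R}}^{n+1}_+)$, and in particular into $\mathscr{D}'((0,\infty)\times {\mathbb{R}}^n)$. Applying the distributional gradient to $\nabla \mathcal{R}_L f_k = \tilde{\mathcal{M}}_L f_k$ (valid by the first step) and passing to the limit in $\mathscr{D}'$ produces $\nabla \mathcal{R}_L f = \tilde{\mathcal{M}}_L f$ in $\mathscr{D}'$. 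Since the right-hand side belongs to $T^{p,2}$, so does $\nabla \mathcal{R}_L f$, and the identity holds in $T^{p,2}$.

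The main technical point is the first step, namely exchanging the spatial derivative and the time integral defining $\mathcal{R}_L f$ on the smooth class. This is manageable precisely because the $H^2$-regularity of the test $f$ renders the integrand $\nabla e^{-(t-s)L}{\rm div}\,f(s,\cdot)$ uniformly controlled in $L^2({\mathbb{R}}^n)$ by an integrable function of $s$, absorbing any potential singular behaviour of $\nabla e^{-\tau L}{\rm div}$ as $\tau\to 0^+$. Once this classical identity is in hand, the density extension is routine given the two boundedness propositions.
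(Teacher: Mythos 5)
Your proposal is correct and mirrors the paper's proof: both arguments reduce, via the boundedness of $\tilde{\mathcal{M}}_L$ on $T^{p,2}$ (Proposition~\ref{prop:AKMP}) and of ${\mathcal{R}}_L$ from $T^{p,2}$ to $X^p$ (Proposition~\ref{prop:subMR}), to checking $\nabla {\mathcal{R}}_L f = \tilde{\mathcal{M}}_L f$ for $f \in {\mathscr{D}}$ and then pass to the limit in ${\mathscr{D}}'$. The only (harmless) difference lies in how the smooth-class identity is verified: you commute $\nabla$ with the Bochner integral using the uniform bound $\|\nabla e^{-\tau L}{\rm div}\,f(s,\cdot)\|_{L^2} \lesssim \|f(s,\cdot)\|_{H^2}$ together with the closedness of the gradient, whereas the paper pairs $\tilde{\mathcal{M}}_L f$ with a test function $g \in {\mathscr{D}}$, integrates by parts to move the gradient onto $g$, and recognizes the distributional gradient of ${\mathcal{R}}_L f$.
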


\begin{proof}
Given Propositions~\ref{prop:AKMP} and \ref{prop:subMR}, we only have to 
show that, for $f \in {\mathscr{D}}$,
$\nabla {\mathcal{R}}_Lf = \tilde{\mathcal{M}}_L f$ in ${\mathscr{D}}'$. 
Let $g \in {\mathscr{D}}$.
As in the proof of Lemma \ref{lem:tildeM_L^*}, we have that 
(where $\langle\cdot,\cdot\rangle$ is the $L^2$ inner product)
\begin{align*}
\int_{\mathbb{R}}\langle \tilde{\mathcal{M}}_Lf(t,\cdot),g(t,\cdot)\rangle\,{\rm d}t
=&\int_{\mathbb{R}}\int_{\mathbb{R}} 1\!{\rm l}_{(0,\infty)}(t-s)
\langle\nabla e^{-(t-s)L}{\rm div}\,f(s,\cdot),g(t,\cdot)\rangle\,{\rm d}s\,{\rm d}t
\\
=&-\int_{\mathbb{R}}\int_{\mathbb{R}} 1\!{\rm l}_{(0,\infty)}(t-s)
\langle e^{-(t-s)L}{\rm div}\,f(s,\cdot),{\rm div}\,g(t,\cdot)\rangle\,{\rm d}s\,{\rm d}t
\\
=&-\int_{\mathbb{R}}\langle {\mathcal{R}}_Lf(t,\cdot),{\rm div}\,g(t,\cdot)\rangle
\,{\rm d}t
=\int_{\mathbb{R}}\langle \nabla {\mathcal{R}}_Lf(t,\cdot),g(t,\cdot)\rangle\,{\rm d}t.
\qedhere
\end{align*}
\end{proof}

\section{$L^2$-theory and energy solutions}
\label{sec3}

\subsection{The space $\dot W(0,\infty)$}
\label{subsec:struct}

We start with a structural lemma about distributions 
$u \in \dot{W}(0,\infty)$.
Note that it is not restricted to solutions of our problem. 

\begin{lemma}
\label{lem:struct}
For all $u\in\dot W(0,\infty)$ there exist a unique 
$v \in \dot W(0,\infty)\cap{\mathscr{C}}_0(L^2({\mathbb{R}}^n))$ 
and $c \in {\mathbb{C}}$ such that $u=v+c$.  Moreover, 
$$
\|v\|_{L^\infty(L^2)} \leq 
\sqrt{2\|u\|_{L^2(\dot H^1)}
\|\partial_t u\|_{L^2(\dot H^{-1})}}\ .
$$
\end{lemma}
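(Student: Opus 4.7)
The plan combines a reduction to smooth-in-$t$ representatives, an energy identity, and two key ingredients: the Fourier–Cauchy–Schwarz embedding $\dot H^1 \cap \dot H^{-1} \hookrightarrow L^2$ with
$$\|f\|_{L^2}^2 \le \|f\|_{\dot H^1}\,\|f\|_{\dot H^{-1}},$$
and the observation that $\langle \partial_t u, c\rangle_{\dot H^{-1},\dot H^1} = 0$ for every constant $c \in \mathbb{C}$ (since $\partial_t u = \mathrm{div}\,g$ and $\nabla c = 0$), which makes the formal time derivative of $\|u(t)-c\|_{L^2}^2$ independent of the chosen $c$. Uniqueness is then automatic from $L^2(\mathbb{R}^n) \cap \mathbb{C} = \{0\}$.

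First I would mollify $u$ in the time variable; since the bounds sought are quadratic in $u$ and linear in the norms on the right-hand side, it suffices to prove the estimate for smooth approximants and then pass to the limit. I would then write $\partial_t u = \mathrm{div}\,g$ with $g \in L^2((0,\infty); L^2(\mathbb{R}^n;\mathbb{C}^n))$ and $\|g\|_{L^2(L^2)} = \|\partial_t u\|_{L^2(\dot H^{-1})}$. For $0 < s < t$, the identity $u(t) - u(s) = \mathrm{div}\int_s^t g(\tau)\,d\tau$ shows that $u(t) - u(s) \in \dot H^{-1}$; combined with $u(t) - u(s) \in \dot H^1$ by subtraction, the embedding gives $u(t) - u(s) \in L^2$. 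In the smooth case the energy identity then reads
$$\|u(t)-u(s)\|_{L^2}^2 = \int_s^t \varphi(\tau)\,d\tau - 2\Re\langle u(t)-u(s),\,u(s)\rangle_{\dot H^{-1},\dot H^1},$$
where $\varphi(\tau) := -2\Re\langle g(\tau), \nabla u(\tau)\rangle_{L^2}$ is in $L^1(0,\infty)$ with $\|\varphi\|_{L^1}\le 2\|u\|_{L^2(\dot H^1)}\|\partial_t u\|_{L^2(\dot H^{-1})}$, and the second term arises from rewriting $\Re\langle \int_s^t g\,d\tau, \nabla u(s)\rangle_{L^2}$ through the duality $\dot H^{-1}/\dot H^1$.

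To construct $v$ and $c$, I would select a sequence $t_n \to \infty$ with $\|\nabla u(t_n)\|_{L^2} \to 0$ (available since $\|\nabla u\|_{L^2} \in L^2(0,\infty)$), apply the identity with $s = t_n$ and $t = t_m$, and bootstrap via the embedding $\|f\|_{L^2}^2\le\|f\|_{\dot H^1}\|f\|_{\dot H^{-1}}$ applied to $f = u(t_m)-u(t_n)$ to conclude that $\{u(t_n)\}_n$ is Cauchy in $L^2_{\mathrm{loc}}$ modulo a single constant $c \in \mathbb{C}$. Setting $v(s) := u(s) - c$, taking $t = t_n \to \infty$ in the energy identity (the cross term vanishes since $\|\nabla u(t_n)\|_{L^2}\to 0$) yields
$$\|v(s)\|_{L^2}^2 \le \Big|\int_s^\infty\varphi(\tau)\,d\tau\Big| \le 2\|u\|_{L^2(\dot H^1)}\|\partial_t u\|_{L^2(\dot H^{-1})}$$
uniformly in $s$, while the integrability of $\varphi$ at infinity gives both continuity of $s\mapsto v(s)$ into $L^2$ and the decay $v(s)\to 0$ as $s\to\infty$, i.e. $v \in \mathscr{C}_0(L^2)$. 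A final mollification argument handles the general $u \in \dot W(0,\infty)$ since all constants are uniform in the approximation.

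The main obstacle is the Cauchy-in-$L^2$-modulo-constants step: because $u(t)$ is not in $L^2$ in general, the classical Lions embedding $L^2(H^1)\cap H^1(H^{-1}) \hookrightarrow \mathscr{C}(L^2)$ cannot be invoked directly, and the naive estimate $\|\int_s^t g(\tau)\,d\tau\|_{L^2}\le (t-s)^{1/2}\|g\|_{L^2(s,t;L^2)}$ blows up as $t-s\to\infty$. The $c$-invariance of the energy derivative is precisely what circumvents this: it ensures that the essential energy control depends only on the integrable quantities $\varphi$ and $\|\nabla u(t_n)\|_{L^2}^2$, whose decay at infinity supplies the Cauchy property and produces the desired quantitative bound $\sqrt{2\|u\|_{L^2(\dot H^1)}\|\partial_t u\|_{L^2(\dot H^{-1})}}$.
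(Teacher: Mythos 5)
Your strategy is genuinely different from the paper's (which builds $v(t)=-\int_t^\infty e^{(s-t)\Delta}(\partial_t u+\Delta u)(s)\,{\rm d}s$, identifies $u-v$ as a constant by a Fourier-transform argument, and obtains the constant $\sqrt2$ by a scaling trick), and your intrinsic route — energy identity for differences, the bound $\|f\|_{L^2}^2\le\|\nabla f\|_{L^2}\|f\|_{\dot H^{-1}}$, and a well-chosen sequence $t_n\to\infty$ — can be made to work. But as written there is a genuine gap at the central step, the limit $t=t_n\to\infty$. In your displayed identity the cross term is $-2\Re\,{}_{\dot H^{-1}}\langle u(t)-u(s),u(s)\rangle_{\dot H^1}$, i.e.\ it is paired with $\nabla u(s)$ at the \emph{fixed} time $s$; it cannot ``vanish because $\|\nabla u(t_n)\|_{L^2}\to0$'', since the only available bound is $\|u(t_n)-u(s)\|_{\dot H^{-1}}\|\nabla u(s)\|_{L^2}\le (t_n-s)^{1/2}\|g\|_{L^2(s,t_n;L^2)}\|\nabla u(s)\|_{L^2}$, which grows with $t_n$. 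If you reorient the identity (expand $\tau\mapsto\|u(\tau)-u(t_n)\|_{L^2}^2$) so that the cross term becomes $2\Re\langle\int_s^{t_n}g(\tau)\,{\rm d}\tau,\nabla u(t_n)\rangle_{L^2}$, its bound is $(t_n-s)^{1/2}\|g\|_{L^2(s,t_n;L^2)}\|\nabla u(t_n)\|_{L^2}$, and the choice ``$\|\nabla u(t_n)\|_{L^2}\to0$'' is still insufficient because of the factor $(t_n-s)^{1/2}$. The same $(t_m-t_n)^{1/2}$ growth of $\|u(t_m)-u(t_n)\|_{\dot H^{-1}}$ defeats the ``bootstrap'' you propose for the Cauchy-modulo-constants step: the product $\|\nabla u(t_m)-\nabla u(t_n)\|_{L^2}\,\|u(t_m)-u(t_n)\|_{\dot H^{-1}}$ contains the uncontrolled term $t_m^{1/2}\|\nabla u(t_n)\|_{L^2}$ when $m\gg n$.

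The repair needs to be stated: since $\tau\mapsto\|\nabla u(\tau)\|_{L^2}^2$ is integrable on $(0,\infty)$, its essential liminf times $\tau$ is zero, so you may choose $t_n\to\infty$ with $t_n\|\nabla u(t_n)\|_{L^2}^2\to0$. With this refined sequence and the reoriented identity, for $m>n$ one gets $\|u(t_m)-u(t_n)\|_{L^2}^2\le\int_{t_n}^\infty|\varphi|+2\,t_m^{1/2}\|\nabla u(t_m)\|_{L^2}\|g\|_{L^2(L^2)}\to0$, the limit is identified as a constant because $\nabla u(t_n)\to0$ in $L^2$ while $u(t_n)-u(t_{n_0})$ converges in $L^2$, and the same identity with fixed $s$ and $t=t_n$ yields $\|v(s)\|_{L^2}^2\le\int_s^\infty|\varphi|\le2\|u\|_{L^2(\dot H^1)}\|\partial_t u\|_{L^2(\dot H^{-1})}$ (taking $g$ of minimal norm), which is exactly the stated bound and also gives the decay at infinity. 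Two smaller points should be filled in as well: continuity of $s\mapsto v(s)$ in $L^2$ does not follow from ``integrability of $\varphi$'' alone but from the same cross-term device run locally (e.g.\ pairing against $\nabla u(\sigma)$ at an intermediate time $\sigma\in(s,s')$ chosen with $\|\nabla u(\sigma)\|_{L^2}^2$ below its average on $(s,s')$), and the justification of the energy identity for $u\in\dot W(0,\infty)$ (mollification in time, identity at Lebesgue points, passage to the limit through $\dot H^1\cap\dot H^{-1}\hookrightarrow L^2$) deserves more than the one sentence you give it, since $u(t)\notin L^2$ and Lions' embedding is not available here.
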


\begin{proof}
Set $w=\partial_t u+\Delta u$, and let $g\in L^2(L^2)$ be such 
that $w={\rm div}\,g$.
Given $t\geq0$, we denote by $\tau_t g$ the time translation of $g$ defined 
by $\tau_t g(s,.)=g(s+t,.)$ for all $s>0$. We now set, for all $t\geq 0$,
$$
v(t) = -\int_t^\infty e^{(s-t)\Delta}w(s)\,{\rm d}s
= -\int_0^\infty e^{s\Delta}{\rm div}(\tau_t g)(s)\,{\rm d}s,
$$
where the integral is defined weakly as shown below. Indeed,
for $f \in L^2({\mathbb{R}}^n)$ and $t\geq 0$, 
we have that
$$
\int_0^\infty |\langle \tau_t g(s),\nabla e^{s\Delta} f\rangle|\,{\rm d}s
\le \|\tau_t g\|_{L^2(L^2)} \|(s,x)\mapsto \nabla e^{s\Delta}f(x)\|_{L^2(L^2)}
\le \textstyle{\frac{1}{\sqrt{2}}}\,\|\tau_t g\|_{L^2(L^2)} \|f\|_{L^2},
$$
where the last inequality follows from a simple Fourier multiplier estimate. 
The argument also gives
$$
\|v(t)-v(t')\|_{L^2} \le \textstyle{\frac{1}{\sqrt{2}}}\,\|\tau_t g-\tau_{t'} g\|_{L^2(L^2)} 
\quad \forall t,t'>0,
$$
and therefore $v \in {\mathscr{C}}([0,\infty);L^2)$ as well as 
$\displaystyle{\lim_{t \to \infty}\|v(t)\|_{L^2} = 0}$ 
as $\|\tau_{t} g\|_{L^2(L^{2})} \xrightarrow[\tau \to \infty]{} 0$ for all $g \in L^{2}(L^2)$.
We now prove that $u-v$ is equal to a constant.
By Remark~\ref{rem:tildeM_L^*} we have that
\begin{align*}
&\|\nabla v\|_{L^2(L^2)}=\|\tilde{\mathcal{M}}_{-\Delta}^*g\|_{L^2(L^2)}
\lesssim\|g\|_{L^2(L^2)},
\\[4pt]
\mbox{hence}\qquad&\|\Delta v\|_{L^2(\dot H^{-1})} \le
\|\nabla v\|_{L^2(L^2)}\lesssim\|g\|_{L^2(L^2)}.
\end{align*}
Moreover, $\partial_tv\in L^2(\dot H^{-1})$ and
$\partial_tv+\Delta v=w$ in $L^2(\dot H^{-1})$. Indeed, for all 
$\phi\in{\mathscr{D}}$ we have that
\begin{align*}
\langle \partial_tv,\phi\rangle=
-\langle v,\partial_t\phi\rangle
=&\int_0^\infty\Bigl\langle\int_t^\infty e^{(s-t)\Delta}w(s)\,{\rm d}s,
\partial_t\phi(t)\Bigr\rangle\,{\rm d}t
\\
=&\int_0^\infty\Bigl\langle w(s),\int_0^se^{(s-t)\Delta}\partial_t\phi(t)\,{\rm d}t
\Bigr\rangle\,{\rm d}s
\\
=&\int_0^\infty\Bigl\langle w(s),\int_0^s\bigl[\partial_t\bigl(e^{(s-t)\Delta}\phi(t)\bigr)
+e^{(s-t)\Delta}\Delta\phi(t)\bigr]\,{\rm d}t\Bigr\rangle\,{\rm d}s
\\[4pt]
=&\langle w,\phi\rangle -\langle v,\Delta\phi\rangle.
\end{align*}
Consider the distribution $h:=u-v \in \dot W(0,\infty)$; we have that
$\partial_th+\Delta h=0$ in $L^2(\dot H^{-1})$. 
Since $h \in L^2(\dot{H}^1)$, we have that $h\in L^2({\mathscr{S}}')$. 
We can thus take the partial Fourier transform ${\mathcal{F}}_x$ in the 
${\mathbb{R}}^n$ variable, and obtain that the distribution
$\phi={\mathcal{F}}_xh\in L^2({\mathscr{S}}')$
satisfies
$$
\partial_t\phi-|\xi|^2\phi=0\quad\mbox{in }{\mathscr{D}}'
$$
where $m(t,\xi)T$ denotes the multiplication of 
$T\in {\mathscr{D}}'$ by the function $m$, here the polynomial 
$(t,\xi)\mapsto|\xi|^2$.
Solving the first order differential equation away from $\xi=0$, there exists 
$\alpha\in{\mathscr{D}}'({\mathbb{R}}^n\setminus\{0\})$ 
such that 
$$
\phi=  e^{t|\xi|^2}\alpha  \quad\mbox{in  } 
{\mathscr{D}}'((0,\infty)\times{(\mathbb{R}}^n\setminus\{0\})).
$$
Since $\xi\phi\in L^2(L^2)$ we have that 
$\xi\alpha e^{t|\xi|^2}\in L^2(L^2({\mathbb{R}}^n\setminus\{0\}))$. But for any 
compact set $K\subset {\mathbb{R}}^n\setminus\{0\}$, Fubini's theorem tells us that
$$
\int_0^\infty\int_K |\xi\alpha(\xi)e^{t|\xi|^2}|^2\, \,{\rm d}\xi\,{\rm d}t =\infty
$$
unless $\alpha=0$ almost everywhere on $K$. 
Thus $\alpha=0$ in ${\mathscr{D}}'({\mathbb{R}}^n\setminus\{0\})$.
This implies that $\phi$ is supported in $(0,\infty)\times\{0\}$, and
hence there exists $\tilde c\in{\mathscr{D}}'(0,\infty)$ such that
$\phi=\tilde c\otimes\delta_0$. But $\partial_t\phi \in L^2(\dot H^{-1})$ so 
$\tilde{c}$ is constant. Taking the inverse partial Fourier transform, we have 
shown that there exists a constant $c\in{\mathbb{C}}$ such that $u=v+c$.

\medskip

\noindent
To prove uniqueness, let $v_1,v_2\in W(0,\infty)$ be
such that there exists $c_1,c_2\in {\mathbb{C}}$ with
$u=v_1+c_1=v_2+c_2$ and define $w=v_1-v_2$. We have that
$w\in {\mathscr{C}}_0(L^2)$ and $w=c_2-c_1$. Therefore,
$w=0$, hence $c_1=c_2$ and the decomposition is unique.

\medskip

\noindent
We now prove the norm estimate. We have already shown that
\begin{align*}
\sup_{t \geq 0} \|v(t)\|_{L^2} &
\le \frac{1}{\sqrt{2}}\, \|w\|_{L^{2}(\dot H^{-1})}
\le \frac{1}{\sqrt{2}}\,\bigl(\|\partial_{t} u\|_{L^2(\dot H^{-1})}
+\|\Delta u\|_{L^2(\dot H^{-1})}\bigr)
\\
&\le \frac{1}{\sqrt{2}}\,\bigl(\|\partial_{t} u\|_{L^2(\dot H^{-1})}
+\|u\|_{L^2(\dot{H}^1)}\bigr).
\end{align*}
We now apply the result to the scaled functions 
$u_a:(t,x)\mapsto a^{\frac{n}{2}}u(t,ax)$, and obtain that
$$
\sup_{t \ge 0} \|v(t)\|_{L^2} \le \frac{1}{\sqrt{2}}\,
\Bigl(\frac{1}{a}\,\|\partial_t u\|_{L^2(\dot H^{-1})}
+a\|u\|_{L^2(\dot{H}^1)}\Bigr),
$$
for all $a>0$. Optimising in $a$ gives that
\begin{equation*}
\sup_{t \ge 0} \|v(t)\|_{L^2} \le 
\sqrt{2\,\|\partial_t u\|_{L^2(\dot H^{-1})}
\|u\|_{L^2(\dot{H}^1)}}.
\qedhere
\end{equation*}
\end{proof}

\begin{remark}
For each $u \in \dot{W}(0,\infty)$, the above lemma gives the existence of the limit
$\displaystyle{\lim_{t \to 0}\,u(t,\cdot)}$ in $\mathscr{D}'({\mathbb{R}}^n)$, 
equal to $v(0)+c$. We call this limit the trace of $u$, and denote it by ${\rm Tr}(u)$.
\end{remark}

\begin{remark}
\label{rem:<v,dtv>}
It is a well-known fact that for $0\le a<b<\infty$, and 
$u,v\in \dot W(a,b)\cap {\mathscr{C}}([a,b];L^2)$, we 
have that $t\mapsto \langle u(t),v(t)\rangle\in W^{1,1}(a,b)$ and
$$
\bigl({}_{L^2}\langle u(\cdot),v(\cdot)\rangle_{L^2}\bigr)'
={}_{\dot H^{-1}}\langle u'(\cdot),v(\cdot)\rangle_{\dot H^1}
+{}_{\dot H^1}\langle u(\cdot),v'(\cdot)\rangle_{\dot H^{-1}} \in L^1(a,b)
$$
See, e.g., \cite[\S14]{iSem18}.
\end{remark}

\begin{remark}
Lemma~\ref{lem:struct} is wrong if one replaces $\dot{W}(0,\infty)$ by 
$\dot{W}(a,b)$ for some finite $a<b$. To see this, take 
$f \in \dot{H}^1({\mathbb{R}}^n)\setminus L^2({\mathbb{R}}^n)$ 
and set $u(t,x) = f(x)$ for all $(t,x) \in (a,b)\times {\mathbb{R}}^n$.
\end{remark}

\subsection{A priori energy estimates}

As a corollary of Lemma~\ref{lem:struct}, we obtain the following a priori 
energy estimate.

\begin{corollary}
\label{cor:equiv}
Let $u\in {\mathscr{D}}'$ be a global weak solution of \eqref{eq1}
such that $\nabla u \in L^2(L^2)$. Then there exists 
a constant $c\in{\mathbb{C}}$ such that $v:=u-c\in{\mathscr{C}}_0(L^2)$ 
and is norm decreasing,    
$\nabla v=\nabla u\in L^2(L^2)$, $v$ is a weak solution of \eqref{eq1} and
$$
\|v(0)\|_{L^2} = \|v\|_{L^{\infty}(L^2)}
\le \sqrt{2\Lambda}\,\|\nabla v\|_{L^2(L^2)}
\le \textstyle{\sqrt{\frac{\Lambda}{\lambda}}}\,\|v(0)\|_{L^2},
$$
where $v(0)=v(0,.)$, and 
$\lambda, \Lambda$ are the ellipticity constants from \eqref{ell}.
\end{corollary}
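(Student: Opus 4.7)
The plan is to reduce the statement to Lemma~\ref{lem:struct} plus a standard energy identity obtained from Remark~\ref{rem:<v,dtv>}. I proceed in four steps.

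First, I would verify that the hypothesis of Lemma~\ref{lem:struct} holds for $u$. Since $\nabla u \in L^2(L^2)$, the ellipticity bound in \eqref{ell} gives $A\nabla u \in L^2(L^2)$ with $\|A\nabla u\|_{L^2(L^2)} \le \Lambda \|\nabla u\|_{L^2(L^2)}$. The weak formulation \eqref{eq:weak} then reads $\partial_t u = \operatorname{div}(A\nabla u)$ in ${\mathscr{D}}'$, which immediately gives $\partial_t u \in L^2(\dot H^{-1})$ with $\|\partial_t u\|_{L^2(\dot H^{-1})} \le \Lambda \|\nabla u\|_{L^2(L^2)}$. Hence $u \in \dot W(0,\infty)$ and Lemma~\ref{lem:struct} applies, producing a unique decomposition $u = v + c$ with $v \in \dot W(0,\infty)\cap {\mathscr{C}}_0(L^2)$, $c \in {\mathbb{C}}$, and
$$
\|v\|_{L^\infty(L^2)} \le \sqrt{2\,\|u\|_{L^2(\dot H^1)}\|\partial_t u\|_{L^2(\dot H^{-1})}} \le \sqrt{2\Lambda}\,\|\nabla v\|_{L^2(L^2)},
$$
since $\nabla v = \nabla u$ (constants vanish under $\nabla$), and the same identity shows $v$ is still a weak solution of \eqref{eq1}.

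Second, I would establish the energy identity for $v$ on any finite subinterval $[0,T]$. Since $v \in \dot W(0,T)\cap {\mathscr{C}}([0,T];L^2)$, Remark~\ref{rem:<v,dtv>} applied with $u=v$ yields $t \mapsto \|v(t)\|_{L^2}^2 \in W^{1,1}(0,T)$ with derivative $2\Re\, {}_{\dot H^{-1}}\langle \partial_t v(t), v(t)\rangle_{\dot H^1}$. Using $\partial_t v = \operatorname{div}(A\nabla v)$ and the $\dot H^{-1}/\dot H^1$ pairing recalled in Section~\ref{sec2}, this derivative equals $-2\Re\, \langle A(t,\cdot)\nabla v(t), \nabla v(t)\rangle_{L^2}$. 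Integration gives
$$
\|v(0)\|_{L^2}^2 - \|v(T)\|_{L^2}^2 = 2\Re\int_0^T \langle A(s,\cdot)\nabla v(s),\nabla v(s)\rangle\,{\rm d}s.
$$
The ellipticity lower bound in \eqref{ell} makes the right-hand side non-negative, so $t\mapsto \|v(t)\|_{L^2}$ is non-increasing, whence $\|v\|_{L^\infty(L^2)} = \|v(0)\|_{L^2}$.

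Third, I would pass to $T\to\infty$. Because $v \in {\mathscr{C}}_0(L^2)$, we have $\|v(T)\|_{L^2}\to 0$, and the integral on the right-hand side converges by monotone convergence (the integrand is non-negative and bounded in $s$ by $\Lambda |\nabla v(s,\cdot)|^2$ which is integrable on $(0,\infty)\times{\mathbb{R}}^n$). This yields the global energy equality
$$
\|v(0)\|_{L^2}^2 = 2\Re\int_0^\infty \langle A(s,\cdot)\nabla v(s),\nabla v(s)\rangle\,{\rm d}s \ge 2\lambda\,\|\nabla v\|_{L^2(L^2)}^2,
$$
again by \eqref{ell}. Combining this lower bound with the upper bound from Step~1 gives the chain
$$
\|v(0)\|_{L^2} = \|v\|_{L^\infty(L^2)} \le \sqrt{2\Lambda}\,\|\nabla v\|_{L^2(L^2)} \le \textstyle{\sqrt{\Lambda/\lambda}}\,\|v(0)\|_{L^2}.
$$

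The only non-routine point I anticipate is ensuring the chain rule of Remark~\ref{rem:<v,dtv>} is legitimately applied on the unbounded interval $(0,\infty)$; this is handled cleanly by working on $(0,T)$ and passing to the limit using $v \in {\mathscr{C}}_0(L^2)$ and $\nabla v \in L^2(L^2)$, as above. Everything else is bookkeeping with the ellipticity constants.
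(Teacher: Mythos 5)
Your proposal is correct and follows essentially the same route as the paper: verify $\partial_t u \in L^2(\dot H^{-1})$ from ellipticity so that Lemma~\ref{lem:struct} applies, then use Remark~\ref{rem:<v,dtv>} together with \eqref{ell} to get the energy identity, monotonicity of $t\mapsto\|v(t)\|_{L^2}$, and the bound $2\lambda\|\nabla v\|_{L^2(L^2)}^2\le\|v(0)\|_{L^2}^2$ in the limit. The only cosmetic difference is that the paper integrates on $(a,b)$ with $a>0$ and lets $a\to0$, $b\to\infty$, while you work directly on $[0,T]$ and let $T\to\infty$, which is equally legitimate since Remark~\ref{rem:<v,dtv>} allows $a=0$ and $v\in{\mathscr{C}}_0(L^2)$.
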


\begin{proof}
Since $\partial_t u={\rm div}\,g$ in $\mathscr{D}'$ for 
$g = A\nabla u \in L^2(L^2)$, we have that
$$
|\langle \partial_tu,\overline{\phi} \rangle| \le
\Lambda \|\nabla u\|_{L^2(L^2)}\|\nabla \phi\|_{L^2(L^2)},
$$
hence $\partial_t u \in L^{2}(\dot{H}^{-1})$. Thus $u \in \dot{W}(0,\infty)$ 
and Lemma~\ref{lem:struct} imply that there exists a constant $c\in{\mathbb{C}}$ 
such that $v:=u-c\in \dot W(0,\infty)\cap {\mathscr{C}}_0(L^2)$,  and
\begin{align*}
\|v\|_{L^\infty(L^2)}&
\le 
\sqrt{2\,\|\partial_t u\|_{L^2(\dot H^{-1})}\|\nabla u\|_{L^2(L^2)}}
\\
&\le \sqrt{2\,\|g\|_{L^2(L^2)} \|\nabla u\|_{L^2(L^2)}} 
\le \sqrt{2\Lambda}\,\|\nabla u\|_{L^2(L^2)}
=\sqrt{2\Lambda}\,\|\nabla v\|_{L^2(L^2)}.
\end{align*}
Moreover, as constants are trivial weak solutions of \eqref{eq1}, so is $v$.  
Let $b>a>0$. For all $U \in L^2(a,b;\dot{H}^{1}({\mathbb{R}}^n))$, we have 
that
$$
\int_a^b
{}_{\dot H^{-1}}\langle \partial_{s}v(s,\cdot), U(s,\cdot) \rangle_{\dot H^1}\,{\rm d}s 
= -\int_a^b \int_{{\mathbb{R}}^n} A(s,x)\nabla v(s,x)
\cdot\overline{\nabla U(s,x)}\,{\rm d}x\,{\rm d}s.
$$
For  $U=v$, Remark~\ref{rem:<v,dtv>} and ellipticity give that
\begin{align*}
\|v(a,\cdot)\|_{L^2}^2-\|v(b,\cdot)\|_{L^2}^2 & =  
- 2\,\Re e\int_a^b {}_{\dot H^{-1}} 
\langle\partial_sv(s,\cdot),v(s,\cdot)\rangle_{\dot H^1}\,{\rm d}s
\\
& = 2\,\Re e \int_a^b \int_{{\mathbb{R}}^n} A(s,x)\nabla v(s,x)\cdot
\overline{\nabla v(s,x)} \,{\rm d}x\,{\rm d}s
\\
& \ge 2\lambda\,\|\nabla v\|_{L^2(a,b;L^2)}^2.
\end{align*}
This gives the norm decreasing property and letting $a\to 0$ and $b\to \infty$, yields 
$2\lambda\|\nabla v\|_{{L^2(L^2)}}^2 \le\|v(0,\cdot)\|_{L^2}^2$.
This completes the proof of Corollary~\ref{cor:equiv}.
\end{proof}

These a priori estimates can be localised. This is well-known, but we include an 
argu\-ment for the convenience of the reader, and to record some explicit 
constants for later use.  

\begin{proposition}
\label{prop:nrjloc}
Let $(a,b)\subset(0,\infty)$, $x \in {\mathbb{R}}^n$, $r>0$.
Let $u \in L^2(a,b;H^1(B(x,2r)))$ be a local weak solution of \eqref{eq1} 
on $(a,b)\times B(x,2r)$. Then $u \in {\mathscr{C}}([a,b];L^2(B(x,r)))$ and 
there exists $\kappa>0$ such that for all $c\in (a,b]$, we have
\begin{align*}
\|u(b,\cdot)\|_{L^2(B(x,r))}^2 & \leq
\Bigl(\frac{4\kappa^2\Lambda^2}{\lambda r^2}+\frac{1}{b-a}\Bigr)
\int_a^b \|u(s,\cdot)\|_{L^2(B(x,2r))}^2\,{\rm d}t,
\\[4pt]
\int_c^b \|\nabla u(s,\cdot)\|_{L^2(B(x,r))}^2\,{\rm d}s 
&\le\frac{1}{\lambda(c-a)}\,
\Bigl(1+(b-a)\frac{4\kappa^2\Lambda^2}{\lambda r^2}\Bigr)
\int_a^b \|u(s,\cdot)\|_{L^2(B(x,2r))}^2 \,{\rm d}s.
\end{align*}
\end{proposition}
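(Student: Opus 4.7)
The plan is the classical Caccioppoli-type energy argument. Fix a spatial cut-off $\chi\in\mathscr{C}_c^\infty(B(x,2r))$ with $\chi\equiv 1$ on $B(x,r)$, $0\le\chi\le 1$, and $\|\nabla\chi\|_\infty\le\kappa/r$ for an absolute constant $\kappa$. The heart of the argument is to derive the differential inequality
$$
\frac{d}{ds}\|\chi u(s,\cdot)\|_{L^2}^2 + \lambda\|\chi\nabla u(s,\cdot)\|_{L^2}^2 \le \frac{4\kappa^2\Lambda^2}{\lambda r^2}\|u(s,\cdot)\|_{L^2(B(x,2r))}^2,
$$
obtained formally by testing \eqref{eq1} against $\chi^2 u$, expanding $\nabla(\chi^2 u)=\chi^2\nabla u+2\chi u\nabla\chi$, using the ellipticity \eqref{ell}, and absorbing the cross term $4\Lambda\chi|\nabla u|\,|u|\,|\nabla\chi|$ via Young's inequality with weights $\lambda$ and $4\Lambda^2/\lambda$.

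The chief difficulty is to justify this rigorously, since $\chi^2 u$ is not an admissible $\mathscr{C}_c^\infty$ test function in the definition of a weak solution. To handle it, I first observe that $\chi u\in L^2(a,b;H^1({\mathbb{R}}^n))$ (extended by zero) and, applying the weak formulation to products $\chi\varphi$ for $\varphi\in\mathscr{C}_c^\infty((a,b)\times{\mathbb{R}}^n)$, that $\partial_t(\chi u)\in L^2(a,b;H^{-1}({\mathbb{R}}^n))$, with norm controlled by $\Lambda\|u\|_{L^2(a,b;H^1(B(x,2r)))}$ plus the contribution from the commutator $[\chi,{\rm div}]$. The inhomogeneous Lions--Magenes embedding (the standard variant of Lemma~\ref{lem:struct} for $W(a,b)$) then yields $\chi u\in\mathscr{C}([a,b];L^2({\mathbb{R}}^n))$, hence the claimed continuity $u\in\mathscr{C}([a,b];L^2(B(x,r)))$. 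With Remark~\ref{rem:<v,dtv>} applied to $\chi u$, the differentiation of $\|\chi u\|_{L^2}^2$ becomes rigorous, the time derivative being realised as the duality pairing ${}_{\dot H^{-1}}\langle\partial_s(\chi u),\chi u\rangle_{\dot H^1}$, and the displayed identity follows.

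It remains to integrate the differential inequality. For the first estimate, integrate from $\tau\in(a,b)$ to $b$ and average over $\tau$:
$$
\|\chi u(b,\cdot)\|_{L^2}^2 \le \frac{1}{b-a}\int_a^b\|\chi u(\tau,\cdot)\|_{L^2}^2\,{\rm d}\tau + \frac{4\kappa^2\Lambda^2}{\lambda r^2}\int_a^b\|u(s,\cdot)\|_{L^2(B(x,2r))}^2\,{\rm d}s,
$$
which gives the first bound since $\chi\equiv 1$ on $B(x,r)$ and $0\le\chi\le 1$. For the second, integrate from $\tau\in(a,c)$ to $b$, yielding
$$
\lambda\int_c^b\|\chi\nabla u(s,\cdot)\|_{L^2}^2\,{\rm d}s \le \|\chi u(\tau,\cdot)\|_{L^2}^2 + \frac{4\kappa^2\Lambda^2}{\lambda r^2}\int_a^b\|u(s,\cdot)\|_{L^2(B(x,2r))}^2\,{\rm d}s,
$$
then average over $\tau\in(a,c)$, bound $\frac{1}{c-a}\int_a^c\|\chi u(\tau,\cdot)\|_{L^2}^2\,{\rm d}\tau$ by $\frac{1}{c-a}\int_a^b\|u(s,\cdot)\|_{L^2(B(x,2r))}^2\,{\rm d}s$, and use $(c-a)\le(b-a)$ in the coefficient of the last integral to recover the stated second bound.
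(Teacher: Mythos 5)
Your proposal is correct and follows essentially the same route as the paper: the same real cut-off with $\|\nabla\chi\|_\infty\le\kappa/r$, the same rigorous justification via $\partial_t(\chi u)\in L^2(a,b;H^{-1})$ together with Lions' embedding into $\mathscr{C}([a,b];L^2)$, the same Young's inequality absorbing the cross term, and an averaging over the initial time $\tau$ in place of the paper's integration in $a'$ with Fubini (a cosmetic difference that in fact gives a slightly sharper constant for the gradient bound, which implies the stated one since $c-a\le b-a$). The only small imprecision is phrasing the product rule through the homogeneous pairing ${}_{\dot H^{-1}}\langle\cdot,\cdot\rangle_{\dot H^1}$ of Remark~\ref{rem:<v,dtv>}: since $\partial_t(\chi u)=\mathrm{div}(\chi A\nabla u)-A\nabla u\cdot\nabla\chi$ need not lie in $L^2(a,b;\dot H^{-1})$ in low dimensions, one should use the inhomogeneous $H^{-1},H^1$ (equivalently $H^{-1}(\Omega),H^1_0(\Omega)$) version of the Lions--Magenes integration-by-parts formula, which is exactly what your computation of $\partial_t(\chi u)$ supplies and what the paper uses.
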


\begin{proof}
Let $\eta\in{\mathscr{C}}_c^\infty({\mathbb{R}}^n)$ be a 
real-valued function supported in $B(x,2r)$, such that $\eta(y)=1$ for all 
$y \in B(x,r)$, $\|\eta\|_{\infty} \le 1$, and $\|\nabla \eta\|_{\infty} \le \frac{\kappa}{r}$. 
We have that 
$$
\|\nabla(\eta u)\|_{L^2((a,b)\times{\mathbb{R}}^n)}
\le\frac{2\kappa}{r}\, \|u\|_{L^2((a,b)\times B(x,2r))}
+\|\nabla u\|_{L^2((a,b)\times B(x,2r))}
<\infty.
$$ 
Therefore, $\eta u \in L^2((a,b),H^1_0(B(x,2r)))$. Note that this space is the 
closure of ${\mathscr{C}}_c^\infty((a,b)\times B(x,2r))$ in $L^2((a,b),H^1(B(x,2r)))$. 
Let $\phi \in {\mathscr{C}}_c^\infty((a,b)\times B(x,2r))$. Since $u$ is a local 
weak solution, we have 
\begin{equation*}
\Bigl|\iint_{(a,b)\times B(x,2r)}  u(t,y) \overline{\partial_t \phi(t,y)} 
\,{\rm d}y\,{\rm d}t  \Bigr|
\le \Lambda \|\nabla u\|_{L^2((a,b)\times B(x,2r))} 
\|\nabla \phi\|_{L^2((a,b)\times B(x,2r))}.
\end{equation*}
Using the known duality between $H^1_0(\Omega)$ and $H^{-1}(\Omega)$ 
for any open subset $\Omega$ of ${\mathbb{R}}^n$, this shows that 
$\partial_t u \in L^2((a,b),H^{-1}(B(x,2r)))$ 
and the same holds for $\partial_t(\eta u)$. Moreover, the integral on the left is 
$-{}_{L^2(a,b;H^{-1}(B(x,2r)))}\langle\partial_tu, 
\phi\rangle_{L^2(a,b;H^1_0(B(x,2r)))}$.
By Lions' result \cite[Proposition~3.1]{Li57}, 
$\eta u \in {\mathscr{C}}([a,b];L^2(B(x,2r)))$
(see also \cite[Theorem 1, Chapter XVIII]{DL88}). Calculating for all $a' \in (a,b)$:
\begin{align*}
&\|\eta u(b,\cdot)\|_{L^2}^2 - \|\eta u(a',\cdot)\|_{L^2}^2  
= 2\,\Re e \int_{a'}^b {}_{H^{-1}(B(x,2r))}\langle
\partial_t (\eta u)(t,\cdot),\eta u(t,\cdot)\rangle_{H^1_{0}(B(x,2r))}\,{\rm d}t
\\ 
= &\,2\,\Re e \int_{a'}^b {}_{H^{-1}(B(x,2r))}\langle
\partial_t u(t,\cdot),\eta^{2} u(t,\cdot)\rangle_{H^1_{0}(B(x,2r))}\,{\rm d}t
\\ 
=&-2\,\Re e \int_{a'}^b \int_{B(x,2r)} 
\eta(y)A(t,y)\nabla u(t,y)\cdot\eta(y)\overline{\nabla u(t,y)}\,{\rm d}y\,{\rm d}t
\\
&+4\,\Re e \int_{a'}^b \int_{B(x,2r)} \eta(y)A(t,y)\nabla u(t,y)
\cdot \overline{u(t,y)}\nabla\eta(y)\,{\rm d}y\,{\rm d}t.
\end{align*}
Therefore,
\begin{align*}
\|\eta u(b,\cdot)\|_{L^2}^2 &+2\lambda \int_{a'}^b 
\|\eta \nabla u(s,\cdot)\|_{L^2}^2 \,{\rm d}s 
\\
\leq &
\|\eta u(a',\cdot)\|_{L^2}^2 
+\lambda \int_{a'}^b \Bigl(\|\eta\nabla u(s,\cdot)\|_2^2
+\frac{4\kappa^2\Lambda^2}{\lambda r^2} 
\|u(s,\cdot)\|_{L^2(B(x,2r))}^2 \Bigr) \,{\rm d}s,
\end{align*}
and thus
\begin{equation}
\label{eq:control}
\|\eta u(b,\cdot)\|_2^2 +\lambda \int_{a'}^{b} 
\|\eta \nabla u(s,\cdot)\|_2^2\,{\rm d}s 
\leq \|\eta u(a',\cdot)\|_2^2
+\int_{a'}^{b} \frac{4\kappa^2\Lambda^2}{\lambda r^2} 
\|u(s,\cdot)\|_{L^{2}(B(x,2r))}^2\,{\rm d}s.
\end{equation}
Integrating in $a'$ between $a$ and $b$ gives the inequalities:
\begin{align*}
\|u(b,\cdot)\|_{L^2(B(x,r))}^2
&\le \Bigl(\frac{1}{b-a}+\frac{4\kappa^2\Lambda^2}{\lambda r^2}
\Bigr)
\int_a^b \|u(s,\cdot)\|_{L^2(B(x,2r))}^2\,{\rm d}s,
\\
\lambda (c-a) \int_c^b \|\nabla u(s,\cdot)\|_{L^2(B(x,r))}^2\,{\rm d}s 
&\le \lambda\int_a^b (s-a) \|\nabla u(s,\cdot)\|_{L^2(B(x,r))}^2\,{\rm d}s
\\
&\le
\Bigl(1+(b-a)\frac{4\kappa^2\Lambda^2}{\lambda r^2}\Bigr)
\int_a^b \|u(s,\cdot)\|_{L^2(B(x,2r))}^2\,{\rm d}s.
\qedhere
\end{align*}
\end{proof}

\begin{remark} 
\label{rem:local}
The above proof shows that whenever $u$ is a weak solution on 
$(a,b)\times \Omega$ with $u\in L^2(a,b; H^1(\Omega))$ then 
$\partial_t u\in L^2(a,b; H^{-1}(\Omega))$. One can thus take any 
$\varphi\in L^2(a,b; H^1_0(\Omega))$ as a test function in \eqref{eq1} and the 
integral $\iint u\overline{\partial_t\varphi}$ can be reinterpreted as  
$- \int \langle \partial_t u(t,\cdot), \varphi(t,\cdot)\rangle \,{\rm d}t$, where the 
brackets correspond to the  $H^{-1}(\Omega), H^1_0(\Omega)$ duality. Also 
$u\in {\mathscr{C}}([a,b];  L^2(\Omega'))$ for any $\Omega' $ with 
$\overline{\Omega'}\subset \Omega$.
\end{remark}
  
Similar estimates hold for the backward equation up to a time $T >0$:
\begin{equation}
\label{eq2}
\partial_s \phi(s,x)=-{\rm div}\,A(s,\cdot)^*\nabla \phi(s,x),
\quad 0\leq s \leq T, \quad x\in {\mathbb{R}}^n.\\
\end{equation}
Again a weak solution  to this equation on $(a,b)\times \Omega$ is a function 
$\phi \in L^2_{\rm loc}(a,b; H^1_{\rm loc}(\Omega))$ 
such that for all $\psi\in{\mathscr{C}}_c^\infty((a,b)\times\Omega)$,
\begin{equation}
\label{weak-backwards}
-\int_a^b \int_{\Omega}\phi(s,x)
\overline{\partial_s\psi(s,x)}\,{\rm d}x\,{\rm d}s
=\int_a^b\int_{\Omega}A(s,x)^{*}\nabla \phi(s,x)\cdot 
\overline{\nabla\psi(s,x)}\,{\rm d}x\,{\rm d}s.
\end{equation}

\begin{lemma}
\label{lem:link-backward-forward}
Let $\phi$ be a weak solution of \eqref{eq2} on $(0,T)\times \Omega$. Then 
$u:(t,x)\mapsto \phi(T-t,x)$ is a local weak solution on $(0,T)\times \Omega$ 
of \eqref{eq1} in which the matrices $A(t,x)$ are replaced by $A(T-t,x)^*$, 
$t\in[0,T]$, $x\in\Omega$.
\end{lemma}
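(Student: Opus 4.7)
The plan is a direct time-reversal change of variables in the weak formulation, combined with the observation that $\nabla_x$ commutes with the substitution $s=T-t$.

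First I would verify the regularity: since $\phi \in L^2_{\mathrm{loc}}(0,T; H^1_{\mathrm{loc}}(\Omega))$ and the map $s \mapsto T-s$ is a smooth involution of $(0,T)$ with Jacobian of modulus $1$, the function $u(t,x)=\phi(T-t,x)$ inherits the regularity $u \in L^2_{\mathrm{loc}}(0,T; H^1_{\mathrm{loc}}(\Omega))$, with $\nabla_x u(t,x) = \nabla_x \phi(T-t,x)$ almost everywhere.

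Next, given a test function $\varphi \in {\mathscr{C}}_c^\infty((0,T)\times\Omega)$, I define $\psi(s,x) := \varphi(T-s,x)$, which also lies in ${\mathscr{C}}_c^\infty((0,T)\times\Omega)$, and note
$$
\partial_s \psi(s,x) = -\partial_t\varphi(T-s,x), \qquad \nabla_x \psi(s,x) = \nabla_x \varphi(T-s,x).
$$
Plugging $\psi$ into the weak backward equation \eqref{weak-backwards}, the left-hand side becomes
$$
-\int_0^T \int_\Omega \phi(s,x)\,\overline{-\partial_t\varphi(T-s,x)}\,{\rm d}x\,{\rm d}s,
$$
and performing the change of variables $t=T-s$ (which preserves the interval $(0,T)$ and the measure) rewrites it as $\displaystyle\int_0^T\int_\Omega u(t,x)\,\overline{\partial_t\varphi(t,x)}\,{\rm d}x\,{\rm d}t$. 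The right-hand side transforms, under the same change of variables, into
$$
\int_0^T\int_\Omega A(T-t,x)^*\,\nabla u(t,x)\cdot\overline{\nabla\varphi(t,x)}\,{\rm d}x\,{\rm d}t,
$$
which is exactly the weak formulation \eqref{eq:weak} of \eqref{eq1} with coefficient matrix $\tilde A(t,x):=A(T-t,x)^*$.

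There is essentially no obstacle here: the only point requiring a brief check is that the substitution $\psi(s,x)=\varphi(T-s,x)$ indeed produces an admissible test function with compact support in $(0,T)\times\Omega$, and that the sign flip from $\partial_s\psi=-\partial_t\varphi(T-s,\cdot)$ matches the sign difference between \eqref{eq:weak} and \eqref{weak-backwards}. Both are immediate.
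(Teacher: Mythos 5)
Your proof is correct and follows essentially the same route as the paper: substitute the time-reversed test function into the weak formulation \eqref{weak-backwards} and perform the change of variables $s=T-t$, checking the sign from $\partial_s\psi=-\partial_t\varphi(T-s,\cdot)$. The explicit remark that $u$ inherits the $L^2_{\rm loc}(0,T;H^1_{\rm loc}(\Omega))$ regularity with $\nabla_x u(t,x)=\nabla_x\phi(T-t,x)$ is a harmless addition the paper leaves implicit.
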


\begin{proof}
Let $\psi\in{\mathscr{C}}_c^\infty((0,T)\times\Omega)$. Then
$\tilde\psi:(t,x)\mapsto\psi(T-t,x)\in{\mathscr{C}}_c^\infty((0,T)\times\Omega)$
and $\partial_t\tilde\psi(t,x)=-(\partial_t\psi)(T-t,x)$ for all $t\in[0,T]$ and
all $x\in\Omega$. Therefore, we have
\begin{align*}
\int_0^T\int_{\Omega}u(t,x)\overline{\partial_t\psi(t,x)}\,{\rm d}x\,{\rm d}t
&=\int_0^T\int_{\Omega}\phi(T-t,x)\overline{\partial_t\psi(t,x)}
\,{\rm d}x\,{\rm d}t
\\[4pt]
&=-\int_0^T\int_{\Omega}\phi(s,x)\overline{\partial_s\tilde\psi(s,x)}
\,{\rm d}x\,{\rm d}s
\\[4pt]
&=\int_0^T\int_{\Omega}A(s,x)^*\nabla\phi(s,x)\cdot
\overline{\nabla\tilde\psi(s,x)}\,{\rm d}x\,{\rm d}s
\\[4pt]
&=\int_0^T\int_{\Omega}A(T-t,x)^*\nabla u(t,x)\cdot\overline{\nabla\psi(t,x)}
\,{\rm d}x\,{\rm d}t
\end{align*}
where we have made the change of variable $s:=T-t$ twice and we have used 
\eqref{weak-backwards}.
\end{proof}

\begin{proposition}
\label{prop:nrjloc2}
Let $\phi\in L^2(a,b;H^1(B(x,2r)))$ be a weak solution of \eqref{eq2} on  
$(a,b)\times B(x,2r)$. Then $\phi \in {\mathscr{C}}([a,b];L^2(B(x,r)))$ 
and there exists $\kappa>0$ such that for all $d\in [a, b)$, we have
\begin{align*}
\|\phi(a,.)\|_{L^2(B(x,r))}^2 & \le
\Bigl(\frac{4\kappa^2}{\lambda r^2}+\frac{1}{b-a}\Bigr)
\int_a^b \|\phi(s,\cdot)\|_{L^2(B(x,2r))}^2,{\rm d}s,
\\
\int_a^d \|\nabla \phi(s,\cdot)\|_{L^2(B(x,r))}^2\,{\rm d}s 
&\le \frac{1}{\lambda(b-d)}\,
\Bigl(1+(b-a)\frac{4\kappa^2\Lambda^2}{\lambda r^2}\Bigr)
\int_a^b\|\phi(s,\cdot)\|_{L^2(B(x,2r))}^2\,{\rm d}s.
\end{align*}
\end{proposition}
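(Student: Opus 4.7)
The plan is to reduce this to Proposition~\ref{prop:nrjloc} by time reversal, using the link between backward and forward solutions established in Lemma~\ref{lem:link-backward-forward}. Since that lemma is stated for the interval $(0,T)$, I would first observe that the same change of variable $s \mapsto a+b-s$ in the weak formulation \eqref{weak-backwards} shows that if $\phi$ is a weak solution of \eqref{eq2} on $(a,b) \times B(x,2r)$, then the function
$$
u(t,y) := \phi(a+b-t,y),\qquad (t,y)\in (a,b)\times B(x,2r),
$$
is a local weak solution on $(a,b)\times B(x,2r)$ of \eqref{eq1} with the coefficient matrix $A$ replaced by $\tilde A(t,y) := A(a+b-t,y)^*$. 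Moreover, $\tilde A$ satisfies \eqref{ell} with the same constants $\lambda,\Lambda$: the boundedness bound is invariant under adjoint, and $\Re e \langle A^*\xi,\xi\rangle = \Re e \langle A\xi,\xi\rangle \ge \lambda|\xi|^2$.

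Next, I would apply Proposition~\ref{prop:nrjloc} to $u$. This immediately gives $u\in\mathscr{C}([a,b]; L^2(B(x,r)))$, so $\phi\in\mathscr{C}([a,b]; L^2(B(x,r)))$ as well. The first bound of Proposition~\ref{prop:nrjloc} yields
$$
\|u(b,\cdot)\|_{L^2(B(x,r))}^2 \le \Bigl(\frac{4\kappa^2\Lambda^2}{\lambda r^2}+\frac{1}{b-a}\Bigr)\int_a^b \|u(s,\cdot)\|_{L^2(B(x,2r))}^2\,{\rm d}s,
$$
and since $u(b,\cdot)=\phi(a,\cdot)$ and $\|u(s,\cdot)\|_{L^2(B(x,2r))}=\|\phi(a+b-s,\cdot)\|_{L^2(B(x,2r))}$, the change of variable $\sigma=a+b-s$ on the right gives the first stated inequality (absorbing any factor of $\Lambda$ into the constant $\kappa$).

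For the gradient estimate, I would set $c := a+b-d$ in the second bound of Proposition~\ref{prop:nrjloc}; note $d\in [a,b)$ corresponds to $c\in(a,b]$, and $c-a = b-d$. The substitution $\sigma = a+b-s$ converts $\int_c^b \|\nabla u(s,\cdot)\|_{L^2(B(x,r))}^2\,{\rm d}s$ into $\int_a^d \|\nabla \phi(\sigma,\cdot)\|_{L^2(B(x,r))}^2\,{\rm d}\sigma$, and the same substitution leaves the right-hand side integral invariant. This yields exactly the second claimed estimate with the prefactor $\frac{1}{\lambda(b-d)}\bigl(1+(b-a)\tfrac{4\kappa^2\Lambda^2}{\lambda r^2}\bigr)$.

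There is no real obstacle: the content is already contained in Proposition~\ref{prop:nrjloc} together with Lemma~\ref{lem:link-backward-forward}. The only points to verify carefully are that the adjoint matrix preserves the ellipticity constants, that the time-reversal argument of Lemma~\ref{lem:link-backward-forward} extends from $(0,T)$ to the general interval $(a,b)$ (which is immediate from the same computation), and that the change of variable correctly identifies the endpoints $c = a+b-d$.
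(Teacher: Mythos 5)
Your proposal is correct and is essentially the paper's own proof: the paper also applies Proposition~\ref{prop:nrjloc} to $u(t,x):=\phi(a+b-t,x)$ with $c:=a+b-d$, invoking Lemma~\ref{lem:link-backward-forward} for the time reversal. Your extra remarks (the adjoint preserving the ellipticity constants, the harmless shift of the lemma from $(0,T)$ to $(a,b)$, and absorbing the $\Lambda$ factor into $\kappa$ in the first bound) are sound and only make explicit what the paper leaves implicit.
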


\begin{proof}
Thanks to Lemma~\ref{lem:link-backward-forward}, we may apply the 
result of Proposition~\ref{prop:nrjloc} to $u(t,x):=\phi(a+b-t,x)$ for
$t\in (a,b)$, $x\in{\mathbb{R}}^n$ and $c:=a+b-d\in(a,b]$.
\end{proof}

\subsection{Well-posedness of energy solutions}
\label{subsec:wellposedness}

\begin{definition}
\label{def:wellposed}
Let $u_0\in L^2({\mathbb{R}}^n)$. The problem
$$
\partial_tu={\rm div}\,A\nabla u,
\quad
u\in \dot{W}(0,\infty),
\quad 
{\rm Tr}(u)=u_0
$$
is said to be well-posed if there exists a unique $u\in \dot{W}(0,\infty)$ 
global weak solution of \eqref{eq1} such that ${\rm Tr}(u)=u_0$.
\end{definition}

\begin{theorem}
\label{thm:wellposed}
For all $u_0\in L^2({\mathbb{R}}^n)$, the problem
$$
\partial_tu={\rm div}\,A\nabla u,
\quad
u\in \dot{W}(0,\infty),
\quad 
{\rm Tr}(u)=u_0
$$
is well-posed. Moreover, $u \in {\mathscr{C}}_0([0,\infty);L^2)$,  
$\|u(t,\cdot)\|_{L^2}$ is non increasing and 
$$
\|u_0\|_{L^2} = \|u\|_{L^\infty(L^2)}
\le \sqrt{2\Lambda} \|\nabla u\|_{L^2(L^2)}\le 
\textstyle{\sqrt{\frac{\Lambda}{\lambda}}}\,\|u_0\|_{L^2}.
$$
\end{theorem}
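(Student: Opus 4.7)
The plan is to derive existence from the classical variational theory on finite intervals and to obtain the remaining properties from the tools already proved in this section. For each $T>0$, I would apply Lions' representation theorem (as recalled in \cite{Li57,DL88}) to the sesquilinear form $(u,v)\mapsto\int_{\mathbb{R}^n}A(t,x)\nabla u\cdot\overline{\nabla v}\,dx$ on $H^1({\mathbb{R}}^n)$ to produce a unique $u_T\in L^2(0,T;H^1)\cap\mathscr{C}([0,T];L^2)$ with $\partial_tu_T\in L^2(0,T;H^{-1})$, satisfying $\partial_tu_T={\rm div}(A\nabla u_T)$ weakly on $(0,T)\times\mathbb{R}^n$ and $u_T(0,\cdot)=u_0$. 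By the uniqueness part of that construction on each subinterval, the family $(u_T)_{T>0}$ is consistent and glues to a function $u\in L^2_{\rm loc}(0,\infty;H^1)$. Testing the equation against $u_T$ itself (via Remark \ref{rem:<v,dtv>}) yields the energy identity
$$
\|u_T(T,\cdot)\|_{L^2}^2+2\lambda\int_0^T\|\nabla u_T(s,\cdot)\|_{L^2}^2\,ds\le\|u_0\|_{L^2}^2,
$$
so $\nabla u\in L^2(L^2)$ with $\|\nabla u\|_{L^2(L^2)}\le(2\lambda)^{-1/2}\|u_0\|_{L^2}$, and this control is independent of $T$.

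Next, I would check that this $u$ already belongs to $\dot W(0,\infty)$ and has the correct trace. Since $A\nabla u\in L^2(L^2)$, the equation gives $\partial_tu={\rm div}(A\nabla u)\in L^2(\dot H^{-1})$, so $u\in\dot W(0,\infty)$ is a global weak solution. The Lions solutions satisfy $u_T\in\mathscr{C}([0,T];L^2)$ with $u_T(0,\cdot)=u_0$, hence $u(t,\cdot)\to u_0$ in $L^2$ as $t\to 0^+$, which forces the distributional trace provided by Lemma \ref{lem:struct} to equal $u_0$. Applying Corollary \ref{cor:equiv} now yields a constant $c\in\mathbb{C}$ and $v=u-c\in\mathscr{C}_0(L^2)$; however, since $u(t,\cdot)\in L^2$ for $t$ near $0$, we must have $c\in L^2\cap\mathbb{C}=\{0\}$, so $u=v\in\mathscr{C}_0([0,\infty);L^2)$. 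The norm-decreasing property and the chain
$$
\|u_0\|_{L^2}=\|u\|_{L^\infty(L^2)}\le\sqrt{2\Lambda}\,\|\nabla u\|_{L^2(L^2)}\le\textstyle\sqrt{\Lambda/\lambda}\,\|u_0\|_{L^2}
$$
are then exactly the content of Corollary \ref{cor:equiv}.

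For uniqueness, let $u_1,u_2$ be two solutions and set $w=u_1-u_2\in\dot W(0,\infty)$; this is a global weak solution with ${\rm Tr}(w)=0$. Lemma \ref{lem:struct} provides $v\in\mathscr{C}_0(L^2)$ and $c\in\mathbb{C}$ with $w=v+c$ and distributional trace $v(0,\cdot)+c=0$, which forces $c=0$ (as any nonzero constant fails to be in $L^2$), hence $v(0,\cdot)=0$. Corollary \ref{cor:equiv} applied to $w=v$ then gives $\|v\|_{L^\infty(L^2)}=\|v(0,\cdot)\|_{L^2}=0$, so $w=0$ and uniqueness follows. The main subtlety I anticipate is the matching of the two notions of trace at $t=0$: the purely distributional one furnished by Lemma \ref{lem:struct} must be identified with the $L^2$-valued initial datum coming from the finite-interval Lions solutions. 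Once this identification is made, both existence and uniqueness follow cleanly from the structural lemma and its corollary, and no further argument is required for the stated norm estimates.
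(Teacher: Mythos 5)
Your argument is correct, but it takes a genuinely different route from the paper for the existence part. The paper deliberately avoids invoking Lions' finite-interval theorem and instead gives a self-contained constructive proof: it approximates $A$ by coefficients that are piecewise constant in time, builds the solution on each piece by composing the semigroups $e^{-(t_{k+1}-t_k)L_k}$, and passes to the limit via a weak$^*$ compactness lemma (Lemma \ref{lem:Ak}); uniqueness and the norm estimates are then obtained, as in your proposal, from Lemma \ref{lem:struct} and Corollary \ref{cor:equiv}. You instead import Lions' well-posedness on each $(0,T)$ (valid here since the form satisfies a G\aa rding inequality, handled by the standard exponential shift in \cite{Li57,DL88}), glue by finite-interval uniqueness, get the global bound $\|\nabla u\|_{L^2(L^2)}\le (2\lambda)^{-1/2}\|u_0\|_{L^2}$ from the energy identity (legitimate, since $\partial_t u_T={\rm div}(A\nabla u_T)\in L^2(0,T;\dot H^{-1})$ so Remark \ref{rem:<v,dtv>} applies), and then upgrade to ${\mathscr{C}}_0([0,\infty);L^2)$ and the stated chain of inequalities via Corollary \ref{cor:equiv}, killing the constant $c$ because a nonzero constant is not in $L^2$. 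Your uniqueness argument and trace identification coincide with the paper's Step 5. What your route buys is brevity and reliance on classical theory (the paper itself notes that adapting Lions ``with some care'' is possible); what the paper's route buys is an explicit product formula for the propagator of piecewise-constant-in-time coefficients and uniform dependence on the ellipticity constants, both of which are reused later (e.g.\ in Lemma \ref{lem:Aescalier} and Theorem \ref{thm:BV}), so with your proof those later arguments would need the content of Step 1 of the paper's construction to be verified separately.
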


With some care because we are dealing with an unbounded time interval, 
it is possible to adapt the proof of Lions \cite[Theorem~5.1]{Li57} for the existence 
in order to construct a solution in $W(0,\infty)$. 
Nevertheless, we give a constructive approach to the $L^2$-existence theory, 
that plays a key role in the $L^p$ theory developed in later sections. 
The approach relies on approximations of $A$ and on taking weak$^*$ limits of 
the corresponding sequences of approximate solutions. We thus need the 
following lemma.

\begin{lemma}
\label{lem:Ak}
Let $A_k\in L^\infty
\bigl((0,\infty);L^\infty({\mathbb{R}}^n;{\mathscr{M}}_n({\mathbb{C}}))\bigr)$
for $k\in {\mathbb{N}}$ be such that \eqref{ell} holds uniformly in $k$ and
$$
A_k(t,x)\xrightarrow[k\to\infty]{} A(t,x)\quad\mbox{for almost every }
(t,x)\in(0,\infty)\times{\mathbb{R}}^n.
$$
Let $u_k$ be a global weak solution of $\partial_tu={\rm div}\,A_k\nabla u$ for 
all $k\in{\mathbb{N}}$, and assume that
$$
\sup_{k\in{\mathbb{N}}}\bigl(\|u_k\|_{L^\infty(L^2)}+\|\nabla u_k\|_{L^2}\bigr)<\infty.
$$
Then there exists a subsequence $(u_{k_j})_{j\in{\mathbb{N}}}$ such that
$(u_{k_j})_{j\in{\mathbb{N}}}$ weak$^*$ converges to $u$ in
$L^\infty(L^2)$ and 
$(\nabla u_{k_j})_{j\in{\mathbb{N}}}$ weak$^*$ converges in
$L^2(L^2)$. The limit $u\in L^\infty(L^2)$
is then a global weak solution of \eqref{eq1} such that $\nabla u\in L^2(L^2)$.
\end{lemma}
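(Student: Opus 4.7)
My plan is to extract the convergent subsequences by Banach--Alaoglu, identify the limits of the derivatives as the derivative of the limit, and then pass to the limit in the weak formulation. The only delicate point is handling the product $A_{k_j}\nabla u_{k_j}$ where both factors depend on~$k_j$.

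\emph{Step 1: extraction of subsequences.} Since $L^\infty(L^2) = L^\infty(0,\infty;L^2(\mathbb{R}^n))$ is isometric to the dual of the separable Banach space $L^1(0,\infty;L^2(\mathbb{R}^n))$, and $L^2(L^2)$ is a (separable) Hilbert space, Banach--Alaoglu gives, after extracting a common subsequence still denoted $(u_{k_j})$, some $u \in L^\infty(L^2)$ and $v \in L^2(L^2;\mathbb{C}^n)$ such that $u_{k_j} \xrightharpoonup{*} u$ in $L^\infty(L^2)$ and $\nabla u_{k_j} \xrightharpoonup{*} v$ in $L^2(L^2)$.

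\emph{Step 2: identification $v = \nabla u$.} Weak$^*$ convergence in $L^\infty(L^2)$ tested against any $\phi \in \mathscr{C}_c^\infty((0,\infty)\times \mathbb{R}^n)$ (which belongs to $L^1(L^2)$) shows that $u_{k_j} \to u$ in $\mathscr{D}'$. Since distributional differentiation is continuous on $\mathscr{D}'$, we have $\nabla u_{k_j} \to \nabla u$ in $\mathscr{D}'$, and by uniqueness of distributional limits combined with the weak$^*$ convergence of $\nabla u_{k_j}$ to $v$ in $L^2(L^2)$, we conclude $v=\nabla u$. In particular $\nabla u \in L^2(L^2)$.

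\emph{Step 3: passage to the limit in the weak formulation.} Fix $\varphi \in \mathscr{C}_c^\infty((0,\infty)\times \mathbb{R}^n)$. For every $k$,
$$
\iint u_{k_j}\,\overline{\partial_t\varphi}\,\mathrm{d}x\,\mathrm{d}t
= \iint A_{k_j}\nabla u_{k_j}\cdot \overline{\nabla\varphi}\,\mathrm{d}x\,\mathrm{d}t
= \iint \nabla u_{k_j}\cdot \overline{A_{k_j}^{*}\nabla\varphi}\,\mathrm{d}x\,\mathrm{d}t.
$$
The left-hand side converges to $\iint u\,\overline{\partial_t\varphi}$ because $\partial_t\varphi \in L^1(L^2)$ (compact support) and $u_{k_j} \xrightharpoonup{*} u$ in $L^\infty(L^2)$. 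For the right-hand side, this is the key point: $A_{k_j}^*\nabla\varphi \to A^*\nabla\varphi$ almost everywhere on the compact support of $\varphi$ (by hypothesis on $A_k$), and is dominated by $\Lambda|\nabla\varphi|\,\mathbf{1}_{\mathrm{supp}\,\varphi}\in L^2(L^2)$ uniformly in $j$. Hence by dominated convergence, $A_{k_j}^*\nabla\varphi \to A^*\nabla\varphi$ strongly in $L^2(L^2)$. Combining this strong convergence with the weak convergence $\nabla u_{k_j} \rightharpoonup \nabla u$ in $L^2(L^2)$ yields
$$
\iint \nabla u_{k_j}\cdot \overline{A_{k_j}^{*}\nabla\varphi}
\;\xrightarrow[j\to\infty]{}\;
\iint \nabla u\cdot \overline{A^{*}\nabla\varphi}
= \iint A\nabla u\cdot \overline{\nabla\varphi}.
$$
This gives \eqref{eq:weak} for $u$ and completes the proof.

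\emph{Expected obstacle.} The only non-routine point is the joint passage to the limit in $A_{k_j}\nabla u_{k_j}$ since a product of two merely weakly convergent sequences need not converge to the product of the limits. The resolution, as above, is to shift $A_{k_j}$ onto $\nabla\varphi$ and exploit the compact support of $\varphi$ together with the uniform $L^\infty$ bound on $A_{k_j}$ to upgrade a.e.\ convergence to strong $L^2(L^2)$ convergence, so that the standard weak--strong pairing applies.
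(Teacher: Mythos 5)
Your proof is correct. The compactness step (Banach--Alaoglu in $L^\infty(L^2)$ and $L^2(L^2)$) and the key mechanism for the product $A_{k_j}\nabla u_{k_j}$ --- moving $A_{k_j}^*$ onto $\nabla\varphi$, upgrading a.e.\ convergence of the bounded coefficients to strong $L^2(L^2)$ convergence by dominated convergence, and pairing strong against weak --- are exactly the analytic content of the paper's argument. Where you diverge is in how the equation is passed to the limit: you work directly with the space--time weak formulation \eqref{eq:weak} and identify $v=\nabla u$ by uniqueness of distributional limits, which makes the proof self-contained. The paper instead first invokes Lemma~\ref{lem:struct} to place each $u_k$ in ${\mathscr{C}}_0(L^2)$, extracts in addition a weak limit $u_0$ of the traces $u_{k}(0,\cdot)$, passes to the limit for each fixed $t$ in the time-integrated identity $\int u_{k_j}(t)\overline{\phi}=\int u_{k_j}(0)\overline{\phi}-\int_0^t\int A_{k_j}\nabla u_{k_j}\cdot\overline{\nabla\phi}$ tested against spatial test functions $\phi\in{\mathscr{D}}({\mathbb{R}}^n)$, and then differentiates in $t$ to get $\partial_t u={\rm div}\,A\nabla u$ in $L^2(\dot H^{-1})$. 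Your route is leaner and proves precisely the stated lemma; the paper's route costs the structural lemma but yields extra information (a pointwise-in-time representation of $u(t)$ involving the limit of the initial traces) that is convenient when the lemma is applied in the proof of Theorem~\ref{thm:wellposed} to keep track of the initial datum. One cosmetic addition you might make: record that $u\in L^\infty(L^2)$ with $\nabla u\in L^2(L^2)$ gives $u\in L^2_{\rm loc}(0,\infty;H^1_{\rm loc}({\mathbb{R}}^n))$, so $u$ is indeed admissible in the definition of a weak solution; this is immediate but worth a line.
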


\begin{proof} 
Let $k\in {\mathbb{N}}$. Note first that $u_k \in \dot{W}(0,\infty)$ since it is a weak 
solution of $\partial_tu={\rm div}\,A_k\nabla u$ such that 
$\nabla u_k \in L^{2}(L^2)$. Since $u_k\in L^\infty(L^2)$,  
Lemma \ref{lem:struct} gives us that
$u_k \in{\mathscr{C}}_0(L^2)$. Therefore $(u_k(0,\cdot))_{k\in{\mathbb{N}}}$ 
is uniformly bounded in $L^2({\mathbb{R}}^n)$.
Moreover $(\nabla u_k)_{k\in{\mathbb{N}}}$ is uniformly bounded in $L^2(L^2)$.
We can thus extract a subsequence $(u_{k_j})_{j\in{\mathbb{N}}}$ using 
Banach-Alaoglu's theorem for which there exists $u\in L^\infty(L^2)$
and $u_0\in L^2({\mathbb{R}}^n)$ with
$$
\begin{array}{rcll}
u_{k_j}&\xrightarrow[j\to\infty]{}&u&\quad \mbox{weak$^*$ in }L^\infty(L^2),
\\
\nabla u_{k_j}&\xrightarrow[j\to\infty]{}&\nabla u&\quad \mbox{weak$^*$ in }
L^2(L^2),
\\
u_{k_j}(0,\cdot)&\xrightarrow[j\to\infty]{}&u_0&\quad \mbox{weak$^*$ in }L^2.
\end{array}
$$
For all $\phi\in{\mathscr{D}}({\mathbb{R}}^n)$ and all $t\ge 0$, 
Remark~\ref{rem:<v,dtv>} and the fact that $u_k \in {\mathscr{C}}_0(L^2)$ for all 
$k \in {\mathbb{N}}$ give that
$$
\int_{{\mathbb{R}}^n}u_{k_j}(t,y)\overline{\phi(y)}\,{\rm d}y=
\int_{{\mathbb{R}}^n}u_{k_j}(0,y)\overline{\phi(y)}\,{\rm d}y
-\int_0^t\int_{{\mathbb{R}}^n}A_{k_j}(s,y)\nabla u_{k_j}(s,y)
\cdot \overline{\nabla \phi(y)}\,{\rm d}y\,{\rm d}s.
$$
Since the right hand side converges to 
$\int_{{\mathbb{R}}^n}u_0(y)\overline{\phi(y)}\,{\rm d}y-
\int_0^t\int _{{\mathbb{R}}^n}A(s,y)\nabla u(s,y)\cdot \overline{\nabla \phi(y)}
\,{\rm d}y\,{\rm d}s$, the left hand side converges and its limit is equal to
$\int_{{\mathbb{R}}^n}u(t,y)\overline{\phi(y)}\,{\rm d}y$ for almost every $t>0$. 
Modifying $u$ for almost no $t>0$, we can assume that the equality holds 
everywhere. Differentiating in $t$ proves that
$\partial_tu(t,\cdot)={\rm div}\,A(t,\cdot)\nabla u(t,\cdot)$ in $\dot H^{-1}$
for almost every $t> 0$. Therefore $\partial_tu={\rm div}\,A\nabla u$ in
$L^2(\dot H^{-1})$ and thus $u$ is a weak solution of \eqref{eq1}.
\end{proof}

\begin{remark}
It is even possible to show strong convergence if $u_k(0, \cdot) $ are 
independent of~$k$. 
\end{remark}

\begin{proof}[Proof of Theorem~\ref{thm:wellposed}]
We start with the proof of existence of a solution $u\in W(0,\infty)$
satisfying \eqref{eq1} and $u(0,\cdot)=u_0$.

\medskip

\noindent
{\tt Step 0:} We first consider $A$ independent of $t$. We let $L=-{\rm div}\,A\nabla$ 
and $u(t)= e^{-tL}u_0.$ From semigroup theory, we know that 
$u \in {\mathscr{C}}_0([0,\infty);L^2({\mathbb{R}}^n))\cap 
\mathscr{C}^\infty(0,\infty; D(L))$ is a (strong) solution of $\partial_t u + Lu=0$. 
Moreover, $\nabla u\in L^2(L^2)$ and 
$$
2\lambda\|\nabla u\|_{L^2(L^2)}^2  
\le 2 \Re e \int_{0}^\infty {}_{L^2}\langle A\nabla u(t), \nabla u(t) \rangle_{L^2} 
\, {\rm d}t 
= -\int_{0}^\infty (\|u(t)\|_{L^2}^2)'\,{\rm d}t=\|u_{0}\|_{L^2}^2.
$$
Finally, one easily checks that $u$ is a global weak solution as well. 

\medskip

\noindent
{\tt Step 1:}
We next consider $A$ of the form
$$
A(t,x)=\sum_{k=0}^N 1\!{\rm l}_{[t_k,t_{k+1})}(t)A_k(x)+
1\!{\rm l}_{[t_{N+1},+\infty)}(t)A_{N+1}(x)
$$
for some $N\in{\mathbb{N}}$, $(t_k)_{0\le k\le N+1}$ an increasing sequence
in $[0,\infty)$ with $t_0=0$ and $(A_k)_{0\le k\le N+1}$ satisfying
\eqref{ell} uniformly. It is convenient to set $t_{N+2}=\infty$. For $j=0,...,N+1$, 
let $L_j=-{\rm div}\,A_j\nabla$ and define 
$$
\Gamma_A(t,s):=e^{-(t-t_j)L_j} e^{-(t_j-t_{j-1})L_{j-1}}
\dots e^{-(t_{i+1}-s)L_i}
$$
for $t\in[t_j,t_{j+1})$ and $s\in[t_i,t_{i+1})$. We define 
$u:t\mapsto \Gamma_{A}(t,0)u_0, t\ge 0$. That 
$u \in {\mathscr{C}}_0([0,\infty);L^{2}({\mathbb{R}}^n))$ is easily established 
using the properties of the semigroups $\bigl(e^{-tL_j}\bigr)_{t\ge0}$. 
We proceed inductively on $k$ to check the desired properties on $u$. Since
$-L_0$ generates an ana\-ly\-tic semigroup of contractions
$$
\bigl\|(t,x)\mapsto 1\!{\rm l}_{(0,t_1)}\Gamma_A(t,0)u_0(x)
\bigr\|_{L^{\infty}(L^2)}\leq \|u_0\|_{L^2}.
$$
Therefore
\begin{equation*}
\bigl\|(t,x)\mapsto 1\!{\rm l}_{(0,t_1)}(t)\nabla u(t,x) \bigr\|_{L^2(L^2)} 
\le 
\bigl\|(t,x)\mapsto 1\!{\rm l}_{(0,t_1)}(t)\nabla e^{-tL_{0}} u_0(x) \bigr\|_{L^2(L^2)}
\lesssim
\|u_0\|_{L^2},
\end{equation*}
Moreover $\partial_tu(t,\cdot)\in L^2({\mathbb{R}}^n)$ for all $t\in (0,t_1)$ and
$\partial_t(u(t,\cdot))=L_0u(t,\cdot)=L(t)u(t,\cdot)$ in $L^2({\mathbb{R}}^n)$ for
all $t\in(0,t_1)$. Now let $k\le N+1$ and assume that the following holds:
\begin{align*}
&\bigl\|(t,x)\mapsto 1\!{\rm l}_{(0,t_k)}(t)\Gamma_A(t,0)u_0(x)
\bigr\|_{L^{\infty}(L^2)}
\leq \|u_0\|_{L^2},
\\[4pt]
&\bigl\|(t,x)\mapsto 1\!{\rm l}_{(0,t_k)}(t)\nabla \Gamma_A(t,0)u_0(x)
\bigr\|_{L^2(L^2)}
\lesssim \|u_0\|_{L^2},
\\[4pt]
\mbox{and}\quad& 
\partial_tu(t,\cdot)=L(t)u(t,\cdot) \mbox{ in }L^2({\mathbb{R}}^n) 
\mbox{ for all }
t\in(0,t_k)\setminus\bigl\{t_0,\dots,t_{k-1}\bigr\}.
\end{align*}
Here, the implicit constants may depend on $N$ but we are inducting on a finite 
number of steps and we will get the dependence only on the ellipticity constants 
in \eqref{ell} eventually. We want to extend all this to $t_{k+1}$. 
For $t\in[t_k,t_{k+1})$ we have that
$$
u(t,\cdot)=\Gamma_A(t,s)u(s,\cdot)=e^{-(t-t_k)L_k}e^{-(t_k-s)L_{k-1}}u(s,\cdot)
$$
for all $s\in(t_{k-1},t_k)$. Therefore
$$
\bigl\|(t,x)\mapsto 1\!{\rm l}_{(0,t_{k+1})}(t)u(t,x)\bigr\|_{L^\infty(L^2)}
\leq \bigl\|(t,x)\mapsto 1\!{\rm l}_{(0,t_k)}(t)u(t,x)\bigr\|_{L^\infty(L^2)}
\leq \|u_0\|_{L^2}.
$$
Using $u(t,\cdot) = e^{-(t-t_k)L_k}u(t_k,\cdot)$, we have
$$
\bigl\|(t,x)\mapsto 1\!{\rm l}_{(t_k,t_{k+1})}(t)\nabla u(t,x)\bigr\|_{L^2(L^2)}
\lesssim \bigl\|u(t_k,\cdot)\bigr\|_{L^2}
\le \|u_0\|_{L^2}.
$$
We also have that $\partial_tu(t,\cdot)=-L_ku(t,\cdot)=-L(t)u(t,\cdot)$ in
$L^2({\mathbb{R}}^n)$ for all $t\in(t_k,t_{k+1})$.
This concludes the induction, which proves that 
$u\in L^\infty(L^2)\cap L^{2}(\dot{H}^1)$, and that $u$ satisfies
$$
\partial_tu(t,\cdot)=-L(t)u(t,\cdot) \quad 
\forall t \in (0,\infty)\setminus \bigl\{t_k \;;\; k \in {\mathbb{N}}\bigr\}.
$$
We now show that $u$ is a global weak solution of \eqref{eq1}. Let 
$\phi\in{\mathscr{D}}$, and pick $M>t_{N+1}$ such that 
${\rm supp}\,\phi \subset (0,M)\times {\mathbb{R}}^n$. 
For $j=0,\dots,N+1$, $t \mapsto \langle u(t,\cdot),\phi(t,\cdot)\rangle$ (where 
$\langle\cdot,\cdot\rangle$ denotes the $L^2$ duality) is ${\mathscr{C}}^1$ on 
$(t_j,t_{j+1})$ and continuous on $[t_j,t_{j+1}]$, hence
$$
\int_{t_j}^{t_{j+1}} \langle u(t,\cdot),\partial_t\phi(t,\cdot)\rangle\;{\rm d}t 
= \langle u(t_{j+1},\cdot),\phi(t_{j+1},\cdot) \rangle 
- \langle u(t_j,\cdot),\phi(t_j,\cdot) \rangle + 
\int_{t_j}^{t_{j+1}} \langle L_ju(t,.),\phi(t,.) \rangle\,{\rm d}t.
$$
Summing in $j$ and using $\langle L_j u(t,\cdot),\phi(t) \rangle 
= -\langle A_j \nabla u(t,\cdot),\nabla \phi(t) \rangle$ for all $t \in (t_j,t_{j+1})$, 
and the fact that ${\rm supp}\,\phi \subset (0,M)\times {\mathbb{R}}^n$, we have that
$$
\int_0^\infty\int_{{\mathbb{R}}^n}u(t,y)\,\overline{\partial_t\phi(t,y)}\,{\rm d}y\,{\rm d}t
=\int_0^\infty\int_{{\mathbb{R}}^n}A(t,y)\nabla u(t,y)\cdot\overline{\nabla\phi(t,y)}
\,{\rm d}y\,{\rm d}t,
$$
i.e.
$u$ is a weak solution of \eqref{eq1}. 

Therefore, by Corollary~\ref{cor:equiv},
$\|\nabla u\|_{L^2(L^2)}\sim\|u_0\|_{L^2}$
with constants depending only on $\lambda$ and $\Lambda$ from \eqref{ell}.

\medskip

\noindent
{\tt Step 2:}
We now consider $A$ of the form
$$
A:(t,x)\mapsto \sum_{k=0}^\infty 1\!{\rm l}_{[t_k,t_{k+1})}(t)A_k(x)
$$
for some increasing sequence $(t_k)_{k\in{\mathbb{N}}}$ with $t_0=0$ and
$\displaystyle{\lim_{k\to\infty}t_k=+\infty}$ and $(A_k)_{t\in{\mathbb{N}}}$
satisfying \eqref{ell} uniformly. Define 
$$
{\mathcal A}_N:(t,x)\mapsto \sum_{k=0}^N 1\!{\rm l}_{[t_k,t_{k+1})}(t)A_k(x)
+1\!{\rm l}_{[t_{N+1},+\infty)}(t)A_{N+1}(x)
$$
for all $N\in{\mathbb{N}}$. Then ${\mathcal A}_N(t,x)\xrightarrow[N\to\infty]{} A(t,x)$
for almost every $(t,x)\in(0,\infty)\times{\mathbb{R}}^n$. Let
$(u_N)_{N\in{\mathbb{N}}}$ be the corresponding sequence of weak solutions
to $\partial_tu={\rm div}{\mathcal A}_N\nabla u$ obtained in the previous step. By
Lemma~\ref{lem:Ak}, there exists a subsequence $(u_{N_j})_{j\in{\mathbb{N}}}$
converging to $u\in L^\infty(L^2)$ in the weak$^*$ topology, with
$u$ a weak solution of $\partial_tu={\rm div}A\nabla u$ and
$\|u\|_{L^\infty(L^2)}+\|\nabla u\|_{L^2(L^2)}\lesssim \|u_0\|_{L^2}$
with constants depending only on the ellipticity constants.

\medskip

\noindent
{\tt Step 3:}
We now turn to the case where $A\in{\mathscr{C}}\bigl([0,\infty);
L^\infty({\mathbb{R}}^n;{\mathscr{M}}_n({\mathbb{C}}))\bigr)$. Approximating
$A$ almost everywhere by matrices of the form $\displaystyle{
(t,x)\mapsto \sum_{k=0}^\infty 1\!{\rm l}_{[t_k,t_{k+1})}(t)A_k(x)}$,
with $A_k = A(t_k,\cdot)$, which satisfy \eqref{ell} uniformly in $k$, 
we obtain from Step~2 a family of weak solutions $(u_j)_{j\in{\mathbb{N}}}$,
uniformly bounded in $L^\infty(L^2)$ and such that
$\displaystyle{\sup_{j\in{\mathbb{N}}} \|\nabla u_j\|_{L^2(L^2)}<\infty}$.
Using Lemma~\ref{lem:Ak} again we obtain a weak solution $u$ of \eqref{eq1}
such that $\|u\|_{L^\infty(L^2)}+\|\nabla u\|_{L^2(L^2)}\lesssim \|u_0\|_{L^2}$.

\medskip

\noindent
{\tt Step 4:}
Finally, for a general $A\in L^\infty\bigl((0,\infty);
L^\infty({\mathbb{R}}^n;{\mathscr{M}}_n({\mathbb{C}}))\bigr)$ we can use
the approximations
$$
\Bigl(\tilde A_j:(t,x)\mapsto j\int_t^{t+\frac{1}{j}}A(s,x)\,{\rm d}s\Bigr)
\in \mathscr{C}\bigl([0,\infty);
L^\infty({\mathbb{R}}^n;{\mathscr{M}}_n({\mathbb{C}}))\bigr),
$$
for $j\ge 1$ and use Step~3 together with Lemma~\ref{lem:Ak} one more time.

\medskip

\noindent
{\tt Step 5:} 
Let us now prove uniqueness of solutions.
Let $u,v\in \dot W(0,\infty)$ be solutions of \eqref{eq1} with 
${\rm Tr}(u)={\rm Tr}(v)=u_0$. The function $w:=u-v\in \dot W(0,\infty)$ is 
a global weak solution of \eqref{eq1} such that ${\rm Tr}(w)=0$. By 
Corollary~\ref{cor:equiv}, we have that there exists $c \in {\mathbb{C}}$, and 
$\tilde{w} \in {\mathscr{C}}_0(L^2)$ such that
$w=\tilde{w}+c$. Since ${\rm Tr}(w)=0$, we have that $c=0$ and 
$\displaystyle{\lim_{t\to 0} \tilde{w}(t,\cdot)=0}$ in $L^2({\mathbb{R}}^n)$.
Corollary~\ref{cor:equiv} thus yields
$\|w\|_{L^\infty(L^2)}=\|\tilde{w}\|_{L^\infty(L^2)}=0$.
\end{proof}

\subsection{Propagators}
\label{subsec:propa}

\begin{lemma}
\label{lem:propag}
There exists a family of contractions 
$\{\Gamma(t,s) \;;\; 0 \le s \le t <\infty\}\subset {\mathscr{L}}(L^2)$ such that
\begin{enumerate}[{\rm (1)}]
\item
$\Gamma(t,t) = I \quad \forall t\geq 0$.
\item
$\Gamma(t,s)\Gamma(s,r) = \Gamma(t,r) \quad \forall t\geq s \geq r\ge 0$.
\item
For all $h \in L^2({\mathbb{R}}^n)$, and $s\geq 0$, 
$t\mapsto \Gamma(t,s)h \in {\mathscr{C}}_0([s,\infty);L^2({\mathbb{R}}^n))$.
\item
For all $u_0 \in L^2({\mathbb{R}}^n)$, $(t,x)\mapsto \Gamma(t,0)u_0(x)$ 
is a global weak solution of \eqref{eq1}.
\end{enumerate}
\end{lemma}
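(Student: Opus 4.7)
The natural approach is to define $\Gamma(t,s)$ directly from Theorem~\ref{thm:wellposed} applied to time-shifted coefficients. Given $s \geq 0$, set $A_s(\tau,x):=A(s+\tau,x)$. Since $A_s$ satisfies \eqref{ell} with the same constants, Theorem~\ref{thm:wellposed} provides, for every $h \in L^2({\mathbb{R}}^n)$, a unique $w \in \dot W(0,\infty)$ that is a global weak solution of $\partial_\tau w = {\rm div}\, A_s \nabla w$ with ${\rm Tr}(w)=h$, and moreover $w \in {\mathscr{C}}_0([0,\infty);L^2)$ with $\|w(\tau,\cdot)\|_{L^2}$ non-increasing and bounded by $\|h\|_{L^2}$. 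Define
$$
\Gamma(t,s)h := w(t-s,\cdot), \qquad 0 \le s \le t < \infty.
$$
Linearity in $h$ follows from uniqueness, and contractivity is immediate from the norm-decreasing property.

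Properties (1), (3), (4) then fall out readily. For (1), the trace property gives $\Gamma(s,s)h = w(0,\cdot)=h$. For (3), fix $s$ and observe that by construction $t \mapsto \Gamma(t,s)h = w(t-s,\cdot)$ inherits the ${\mathscr{C}}_0([s,\infty);L^2)$ regularity of $w$. For (4), taking $s=0$ gives $\Gamma(t,0)u_0 = w(t,\cdot)$ where $w \in \dot W(0,\infty)$ is a global weak solution of \eqref{eq1} with trace $u_0$, as required.

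The main work, and the step I expect to need most care, is the cocycle property (2). Fix $0 \le r \le s \le t$ and $h \in L^2$. Let $w \in \dot W(0,\infty)$ be the solution from Theorem~\ref{thm:wellposed} for the shifted matrix $A_r$ with trace $h$, so that $\Gamma(t,r)h = w(t-r,\cdot)$ and $\Gamma(s,r)h = w(s-r,\cdot)$. Now define $v(\sigma,x):=w(\sigma + (s-r),x)$ for $\sigma \ge 0$. A direct verification against test functions in ${\mathscr{D}}((0,\infty)\times {\mathbb{R}}^n)$, combined with the structural Lemma~\ref{lem:struct} (to place $v$ in $\dot W(0,\infty)$) and with the trace definition via the decomposition of Lemma~\ref{lem:struct}, shows that $v$ is the unique element of $\dot W(0,\infty)$ solving $\partial_\sigma v = {\rm div}\, A_s \nabla v$ with ${\rm Tr}(v)=w(s-r,\cdot)=\Gamma(s,r)h$. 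By the definition of $\Gamma$ with base point $s$, this forces $v(\sigma,\cdot) = \Gamma(s+\sigma,s)\Gamma(s,r)h$ for every $\sigma \ge 0$. Specializing to $\sigma = t-s$ yields
$$
\Gamma(t,r)h = w(t-r,\cdot) = v(t-s,\cdot) = \Gamma(t,s)\Gamma(s,r)h,
$$
which is (2). The only delicate point is checking that the translated solution $v$ inherits membership in $\dot W(0,\infty)$ and has the correct trace; this is where the characterization of the trace via the decomposition $u=\tilde{u}+c$ from Lemma~\ref{lem:struct} and the continuity of $w$ into $L^2$ are essential.
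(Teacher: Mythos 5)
Your proposal is correct and follows essentially the same route as the paper: the propagator is obtained by applying Theorem~\ref{thm:wellposed} with the time origin shifted to $s$ (the paper phrases this as solving on $(s,\infty)$ with data at time $s$, you phrase it via the shifted coefficients $A_s$), properties (1), (3), (4) hold by construction, and (2) follows from uniqueness in the energy class. The only difference is that you spell out the time-translation and trace verification behind the one-line claim ``(2) follows from uniqueness'', which is a faithful expansion rather than a different argument.
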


\begin{proof}
Let $u_0 \in L^2({\mathbb{R}}^n)$. Let $u$ be the solution of the Cauchy problem 
in Theorem~\ref{thm:wellposed}. We have 
$u\in{\mathscr{C}}_0(L^2)\cap L^2(\dot H^1)$, 
with $\|u(t,\cdot)\|_{L^2}\le \|u_{0}\|_{L^2}$ and we define $\Gamma(t,0)$ as the 
contraction on $L^2$ that maps $u_0$ to $u(t,\cdot)$. Similarly, we can start from 
any time $s\ge 0$ and from any data $h\in L^2$, and obtain a unique solution 
$v \in \dot W(s,\infty)$ with $u(s,\cdot)=h$. We define $\Gamma (t,s)$ 
as the operator mapping $h$ to $u(t,\cdot)$ when $t\geq s$.
Then (1), (3) and (4) follow by construction, while (2) follows from uniqueness.
\end{proof}

\begin{definition}\label{def:propa}
We call $\{\Gamma(t,s) \;;\; 0 \le s \le t <\infty\}$ the family of propagators for 
\eqref{eq1}.
\end{definition}

The restriction $s\ge 0$ only comes from the fact that we work on 
$(0,\infty)\times {\mathbb{R}}^n$. This means that, provided that $A$ 
is defined on ${\mathbb{R}}^{n+1}$ and satisfies \eqref{ell}, one can define 
$\Gamma(t,s)$ for $-\infty<s\le t <\infty$ and we have the 
same properties on the full range of $s$ and $t$. One works on $(s,\infty)$ 
for arbitrary $s$ and by uniqueness, any two families are consistent on the 
common time intervals. There is a similar family for the backward equation 
\eqref{eq2}.

\begin{lemma}
\label{lem:4.2}
Let $T > 0$. There exists a family of contractions 
$\{\tilde{\Gamma}(t,T) \;;\; t\in (-\infty,T]\}\subset {\mathscr{L}}(L^2)$ such that
\begin{enumerate}
\item
$\tilde{\Gamma}(T,T) = I$.
\item
For all $h \in L^2({\mathbb{R}}^n)$,  $t\mapsto \tilde{\Gamma}(t,T)h 
\in {\mathscr{C}}_0((-\infty,T];L^2)$.
\item
For all $h \in L^2({\mathbb{R}}^n)$, $(t,x)\mapsto \tilde{\Gamma}(t,T)h(x)$ 
is a global weak solution of \eqref{eq2} on $(-\infty,T)$.
\end{enumerate}
\end{lemma}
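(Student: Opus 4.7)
The plan is to reduce the backward problem for \eqref{eq2} to the forward problem for \eqref{eq1} with time-reversed adjoint coefficients, and then invoke Lemma~\ref{lem:propag}. Throughout, I use the extension of $A$ to ${\mathbb{R}}^{n+1}$ discussed right after Definition~\ref{def:propa}, which preserves \eqref{ell}.

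First, I define $\tilde A(t,x) := A(T-t,x)^*$ for $(t,x)\in{\mathbb{R}}^{n+1}$. The ellipticity conditions \eqref{ell} are invariant under pointwise adjunction of the matrix, since $|\langle A^*\xi,\eta\rangle|=|\langle \xi, A\eta\rangle|$ and $\Re\langle A^*\xi,\xi\rangle=\Re\langle A\xi,\xi\rangle$; hence $\tilde A$ satisfies \eqref{ell} with the same constants $\lambda$ and $\Lambda$. Applying Lemma~\ref{lem:propag} (in the version valid for $\tau\in{\mathbb{R}}$ noted after Definition~\ref{def:propa}) to $\tilde A$, I obtain the associated forward propagators $\tilde\Gamma_{\tilde A}(\sigma,\tau)$, $\tau\le\sigma$, which are $L^2$-contractions such that $\sigma\mapsto\tilde\Gamma_{\tilde A}(\sigma,0)h\in{\mathscr{C}}_0([0,\infty);L^2)$ and is a global weak solution of $\partial_\sigma v={\rm div}\,\tilde A\nabla v$.

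I then set $\tilde\Gamma(t,T):=\tilde\Gamma_{\tilde A}(T-t,0)$ for $t\le T$. Each operator is an $L^2$-contraction. Property (1) is immediate from $\tilde\Gamma_{\tilde A}(0,0)=I$, and property (2) follows from the change of variable $\sigma=T-t$, which turns ${\mathscr{C}}_0([0,\infty);L^2)$-continuity of $\sigma\mapsto\tilde\Gamma_{\tilde A}(\sigma,0)h$ into ${\mathscr{C}}_0((-\infty,T];L^2)$-continuity of $t\mapsto\tilde\Gamma(t,T)h$. For property (3), let $v(\sigma,x):=\tilde\Gamma_{\tilde A}(\sigma,0)h(x)$ and $\phi(s,x):=v(T-s,x)=\tilde\Gamma(s,T)h(x)$. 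Substituting the test function $\tilde\psi(\sigma,x):=\psi(T-\sigma,x)$ into the weak formulation satisfied by $v$ and then changing variables $s=T-\sigma$---using $\tilde A(\sigma,x)=A(s,x)^{**}{}^{*}=A(s,x)^*$ after the substitution, together with $\partial_\sigma\tilde\psi(\sigma,\cdot)=-\partial_s\psi(s,\cdot)$---yields \eqref{weak-backwards} for $\phi$ on any bounded subinterval of $(-\infty,T)\times{\mathbb{R}}^n$, establishing (3).

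The only mildly subtle point is the last computation, which is the converse of the direction explicitly proved in Lemma~\ref{lem:link-backward-forward}. Since both the time reversal $t\leftrightarrow T-t$ and the adjunction $B\mapsto B^*$ are involutions, one may equivalently invoke Lemma~\ref{lem:link-backward-forward} with the pair $(\tilde A,{\mathbb{R}}^n)$ in place of $(A,\Omega)$, or simply repeat its short change-of-variable argument as above. Everything else---the contraction property, the continuity in time, the decay at $-\infty$, and the ellipticity of $\tilde A$---is an automatic transfer from the forward theory already developed in Subsection~\ref{subsec:propa}.
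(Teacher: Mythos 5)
Your proof is correct and follows essentially the same route as the paper: define the time-reversed adjoint coefficients $\tilde A(s,x)=A^*(T-s,x)$ (with an arbitrary elliptic extension beyond the relevant range), apply the forward construction of Lemma~\ref{lem:propag} to $\tilde A$, set $\tilde\Gamma(t,T)$ equal to the resulting propagator evaluated at $T-t$, and transfer points (1)--(3) by the change of variables of Lemma~\ref{lem:link-backward-forward}. Your explicit remark that one needs the converse direction of that lemma (harmless, since the computation is a reversible chain of equalities) is a point the paper glosses over, but otherwise the arguments coincide.
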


\begin{proof}
Define 
$$
\tilde A(s,x) =
\begin{cases} 
A^*(T-s ,x) \quad \mbox{if } \; (s,x)\in (-\infty,T] \times {\mathbb{R}}^n,
\\
A^*(0,x)  \quad \mbox{if } \; (s,x)\in (T,\infty) \times {\mathbb{R}}^n. 
\end{cases}
$$
Applying Theorem~\ref{thm:wellposed} on $(0,\infty)$ 
with $A$ replaced by $\tilde A$ we get the conclusion of Lemma \ref{lem:propag}. 
Denoting the corresponding family of propagators by 
$\{{\underset{\widetilde{ }}{\Gamma}}(t,s) \;;\; 0\le s\le t <\infty \}\subset 
{\mathscr{L}}(L^2)$, we define
$$
\tilde{\Gamma}(t,T):= {\underset{\widetilde{ }}{\Gamma}}(T-t,0) 
\quad \forall t\in (-\infty,T].
$$
It is immediate that $\tilde{\Gamma}$ satisfies points 1 and 2. By 
Lemma~\ref{lem:link-backward-forward}, we have that 
$(t,x)\mapsto \tilde{\Gamma}(t,T)h(x)$ is a weak solution of 
\eqref{eq2} on $(-\infty,T)\times {\mathbb{R}}^n$, which proves point 3. 
\end{proof}

\begin{proposition}
\label{prop:adjointGamma}
Let $T>0$. The families of propagators for \eqref{eq1} and \eqref{eq2} 
(up to time $T$) are related by
$$
\tilde{\Gamma}(t,T) = \Gamma(T,t)^* \quad \forall t \in [0,T].
$$
In particular, for all $h\in L^2({\mathbb{R}}^n)$, $t\mapsto \Gamma(T,t)^*h$ 
is strongly continuous from $[0,T]$ into $L^2({\mathbb{R}}^n)$ and 
$t\mapsto \Gamma(T,t)h$ is weakly continuous from $[0,T]$ into 
$L^2({\mathbb{R}}^n)$. 
\end{proposition}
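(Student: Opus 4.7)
The plan is to prove the identity $\tilde\Gamma(t,T)=\Gamma(T,t)^*$ by showing that, for any $f,g\in L^2({\mathbb R}^n)$ and $0\le t\le T$, the function
$$
\tau\mapsto \langle \Gamma(\tau,t)f,\tilde\Gamma(\tau,T)g\rangle_{L^2}
$$
is constant on $[t,T]$. Evaluating at $\tau=t$ and $\tau=T$ gives $\langle f,\tilde\Gamma(t,T)g\rangle=\langle \Gamma(T,t)f,g\rangle$, which is the desired adjoint identity.

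First I would fix $t\in[0,T]$, $f,g\in L^2({\mathbb R}^n)$, and set $u(\tau,\cdot)=\Gamma(\tau,t)f$ for $\tau\in[t,T]$ and $\phi(\tau,\cdot)=\tilde\Gamma(\tau,T)g$ for the same range. By Theorem~\ref{thm:wellposed} applied to the forward problem with initial time $t$, we have $u\in \dot W(t,T)\cap \mathscr{C}([t,T];L^2)$, and $u$ is a weak solution of \eqref{eq1} on $(t,T)\times{\mathbb R}^n$. By Lemma~\ref{lem:4.2} and the construction in its proof (applying Theorem~\ref{thm:wellposed} to the time-reversed matrix $A^*(T-\cdot,\cdot)$), we similarly have $\phi\in\dot W(t,T)\cap \mathscr{C}([t,T];L^2)$, and $\phi$ is a weak solution of \eqref{eq2} on $(t,T)\times{\mathbb R}^n$. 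In particular $\partial_\tau u\in L^2(t,T;\dot H^{-1})$ with $\partial_\tau u={\rm div}\,A\nabla u$, and $\partial_\tau \phi\in L^2(t,T;\dot H^{-1})$ with $\partial_\tau \phi=-{\rm div}\,A^*\nabla \phi$.

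Next I would apply the integration-by-parts identity recorded in Remark~\ref{rem:<v,dtv>} to the pair $(u,\phi)$ on $[t,T]$. This gives that $\tau\mapsto \langle u(\tau,\cdot),\phi(\tau,\cdot)\rangle$ is absolutely continuous, with distributional derivative
$$
{}_{\dot H^{-1}}\langle \partial_\tau u,\phi\rangle_{\dot H^1}+{}_{\dot H^1}\langle u,\partial_\tau \phi\rangle_{\dot H^{-1}}
=-\langle A\nabla u,\nabla\phi\rangle_{L^2}+\langle \nabla u,A^*\nabla\phi\rangle_{L^2}.
$$
The identity $\langle \nabla u,A^*\nabla\phi\rangle_{L^2}=\langle A\nabla u,\nabla\phi\rangle_{L^2}$ (valid for the complex inner product) makes the two terms cancel, so $\tau\mapsto \langle u(\tau,\cdot),\phi(\tau,\cdot)\rangle$ is constant. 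Comparing the values at $\tau=t$ and $\tau=T$ yields $\langle f,\tilde\Gamma(t,T)g\rangle=\langle \Gamma(T,t)f,g\rangle$, hence $\tilde\Gamma(t,T)=\Gamma(T,t)^*$.

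Finally, the continuity statements follow for free: strong continuity of $t\mapsto \Gamma(T,t)^*h=\tilde\Gamma(t,T)h$ from $[0,T]$ into $L^2$ is exactly point~(2) of Lemma~\ref{lem:4.2}, while weak continuity of $t\mapsto \Gamma(T,t)h$ follows because, for every $g\in L^2$, $\langle \Gamma(T,t)h,g\rangle=\langle h,\tilde\Gamma(t,T)g\rangle$ is continuous in $t$ by the same lemma. The only subtle point in the argument is verifying the hypotheses of Remark~\ref{rem:<v,dtv>} (in particular $\dot W(t,T)$-regularity of both $u$ and $\phi$); this is the main obstacle, but it is handled by the fact that the solutions constructed in Theorem~\ref{thm:wellposed} and Lemma~\ref{lem:4.2} lie in the energy spaces by construction.
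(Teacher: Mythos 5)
Your proof is correct and follows essentially the same route as the paper: both arguments show that $\tau\mapsto\langle \Gamma(\tau,t)f,\tilde\Gamma(\tau,T)g\rangle$ is constant on $[t,T]$ by combining the two weak formulations with the integration-by-parts identity of Remark~\ref{rem:<v,dtv>}, the bilinear terms cancelling since $\langle\nabla u,A^*\nabla\phi\rangle=\langle A\nabla u,\nabla\phi\rangle$, and then evaluate at the endpoints. Your explicit treatment of the continuity statements via Lemma~\ref{lem:4.2}(2) and duality matches what the paper leaves implicit.
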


\begin{proof}
Let $g,h \in L^2({\mathbb{R}}^n)$, and $0\le t \le s \le T$.
Let $u(s,x) = \tilde{\Gamma}(s,T)h(x) = 
{\underset{\widetilde{ }}{\Gamma}}(T-s,0)h(x)$, 
and $v(s,x) = \Gamma(s,t)g(x)$ for all $(s,x) \in [t,T]\times {\mathbb{R}}^n$. 
Since $u,v \in \dot{W}(t,T)$, we have, for almost every $s \in [t,T]$,
\begin{align*}
\langle \partial_s u(s,\cdot), v(s,\cdot) \rangle-
\langle A^*(s,\cdot)\nabla u(s,\cdot),\nabla v(s,\cdot) \rangle = 0,
\\
\langle u(s,\cdot),\partial_s v(s,\cdot) \rangle+
\langle\nabla u(s,\cdot), A(s,\cdot)\nabla v(s,\cdot) \rangle = 0.
\end{align*}
We therefore have (see Remark~\ref{rem:<v,dtv>}):
\begin{align*}
0 &= \int_t^T \partial_s \langle u(s,.),v(s,.) \rangle \,{\rm d}s =
\langle u(T,.), v(T,.) \rangle - \langle u(t,.), v(t,.) \rangle
\\
&= \langle h,\Gamma(T,t)g\rangle - \langle \tilde{\Gamma}(t,T)h,g \rangle.
\qedhere
\end{align*}
\end{proof}

\begin{remark}
\label{rem:back} 
The restriction $T>0$ is irrelevant in the previous results and is only made 
because we study \eqref{eq1} on $(0,\infty)$. The adjoint formula is independent 
of the choice of the extension of $A^*(T-t,x)$ for $t>T$ to construct 
$\tilde \Gamma$ in Lemma~\ref{lem:4.2}.  It follows from this adjoint formula 
that any result we obtain for \eqref{eq1} involving the propagators 
$\Gamma(t,s)$ has its counterpart for the adjoint backward equation \eqref{eq2} 
globally on $(-\infty,T)$ or locally on $(S,T)$,  with the propagators $\Gamma(t,s)^*$, 
provided the hypotheses made on the coefficients are stable under taking adjoints. 
\end{remark}

A key property of $\Gamma$ is that it satisfies the following $L^2-L^2$ 
off-diagonal bounds.  

\begin{proposition}
\label{prop:OD}
For all Borel sets 
$E,F\subset {\mathbb{R}}^n$, all $f \in L^2({\mathbb{R}}^n)$ and all 
$0\le s<t<\infty$, 
$$
\bigl\|1\!{\rm l}_E\Gamma(t,s)(1\!{\rm l}_Ff)\bigr\|_{L^2} 
\le e^{-\alpha\frac{d(E,F)^{2}}{t-s}}\,\bigl\|1\!{\rm l}_Ff\bigr\|_{L^2},
$$
with $\alpha=\frac{\lambda}{4\Lambda^2}$,  where $\Lambda,\lambda$ are the 
ellipticity constants from \eqref{ell} and $d(E,F)$ denotes the Hausdorff distance 
between $E$ and $F$ with Euclidean norm. 
\end{proposition}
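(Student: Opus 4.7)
The plan is to establish these Gaffney-Davies off-diagonal bounds via an exponential perturbation (Davies' trick) applied to the energy solution $u(r,\cdot):=\Gamma(r,s)(1\!{\rm l}_F f)$ provided by Theorem~\ref{thm:wellposed} and Lemma~\ref{lem:propag}. Fix parameters $\rho>0$ and $N>0$, and introduce the bounded Lipschitz weight $\varphi(x):=\min(d(x,F),N)$, which satisfies $\varphi\equiv 0$ on $F$ and $|\nabla\varphi|\le 1$ almost everywhere. Since $\varphi$ is bounded and Lipschitz, multiplication by $e^{\rho\varphi}$ preserves both $L^2$ and $\dot H^1$ (and, by duality, $\dot H^{-1}$), so $v:=e^{\rho\varphi}u$ again belongs to $\dot W(s,T)\cap\mathscr{C}([s,T];L^2)$ for every $T>s$.

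Applying Remark~\ref{rem:<v,dtv>} to the pair $(v,v)$ and using $v'=e^{\rho\varphi}u'$ in $\dot H^{-1}$, I would rewrite $\partial_r\|v(r)\|_{L^2}^2=2\Re e\,{}_{\dot H^{-1}}\langle u'(r,\cdot),e^{2\rho\varphi}u(r,\cdot)\rangle_{\dot H^1}$. The weak formulation of \eqref{eq1}, together with the expansion $\nabla(e^{2\rho\varphi}u)=2\rho e^{2\rho\varphi}(\nabla\varphi)u+e^{2\rho\varphi}\nabla u$, then gives
\begin{equation*}
\partial_r\|v(r)\|_{L^2}^2=-2\Re e\int e^{2\rho\varphi}A\nabla u\cdot\overline{\nabla u}\,{\rm d}x-4\rho\Re e\int e^{2\rho\varphi}A\nabla u\cdot\overline{(\nabla\varphi)\,u}\,{\rm d}x.
\end{equation*}
Using ellipticity \eqref{ell}, the bound $|\nabla\varphi|\le 1$, and Young's inequality in the form $4\rho\Lambda|\nabla u||u|\le 2\lambda|\nabla u|^2+\frac{2\rho^2\Lambda^2}{\lambda}|u|^2$, the gradient term is absorbed and one obtains the differential inequality
\begin{equation*}
\partial_r\|v(r)\|_{L^2}^2\le \frac{2\rho^2\Lambda^2}{\lambda}\,\|v(r)\|_{L^2}^2.
\end{equation*}

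Gronwall's lemma on $(s,t)$ yields $\|v(t)\|_{L^2}\le e^{\rho^2\Lambda^2(t-s)/\lambda}\|v(s)\|_{L^2}$. Since $\varphi=0$ on $F$, we have $\|v(s)\|_{L^2}=\|1\!{\rm l}_F f\|_{L^2}$, while on $E$ one has $\varphi\ge \min(d(E,F),N)$, so restricting the left-hand side to $E$ gives
\begin{equation*}
\|1\!{\rm l}_E\Gamma(t,s)(1\!{\rm l}_F f)\|_{L^2}\le e^{-\rho\min(d(E,F),N)}\,e^{\rho^2\Lambda^2(t-s)/\lambda}\,\|1\!{\rm l}_F f\|_{L^2}.
\end{equation*}
Letting $N\to\infty$ (monotone convergence on the left) and then optimizing in $\rho>0$ with the choice $\rho=\frac{\lambda\,d(E,F)}{2\Lambda^2(t-s)}$ produces the announced exponent $\alpha=\frac{\lambda}{4\Lambda^2}$.

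The computation itself is classical; the main technical point I expect to require care is justifying each identity in the rough class $\dot W(0,\infty)$ developed in Section~\ref{sec3}. The truncation $\varphi_N=\min(d(\cdot,F),N)$ is what makes this legitimate: its boundedness keeps $e^{2\rho\varphi}$ a bounded Lipschitz multiplier, hence $e^{2\rho\varphi}u(r,\cdot)\in \dot H^1$ for a.e.\ $r$, which is what is needed both to use it as a test function in the $\dot H^{-1}/\dot H^1$ pairing and to apply Remark~\ref{rem:<v,dtv>} to $v$. Once this truncated identity is secured, the final passage $N\to\infty$ is automatic.
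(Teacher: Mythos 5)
Your proposal is correct and follows essentially the same route as the paper: the paper also uses Davies' exponential perturbation $e^{\psi}\Gamma(t,0)(e^{-\psi}f)$ with $\psi=\inf(\gamma\,d(\cdot,F),N)$, differentiates $\|e^{\psi}u(t)\|_{L^2}^2$ via Remark~\ref{rem:<v,dtv>}, absorbs the cross term by ellipticity and Young's inequality to get the same Gronwall bound $e^{\kappa\gamma^2 t}$ with $\kappa=\Lambda^2/\lambda$, and then optimizes in the Lipschitz parameter to obtain $\alpha=\frac{\lambda}{4\Lambda^2}$. Your truncation-and-limit in $N$ and your choice $\rho=\frac{\lambda\,d(E,F)}{2\Lambda^2(t-s)}$ are just notational variants of the paper's argument, and your justification of the pairing in $\dot H^{-1}/\dot H^1$ is adequate.
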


\begin{proof} 
This result is already in \cite{Ar67}. The simple proof with this constant is 
taken from \cite{HK04}. There, $A$ was assumed 
to be smooth but this is not necessary. It also adapts to systems with 
G\aa rding inequality instead of pointwise lower bounds. We reproduce the 
argument for the convenience of the reader. 
It is enough to assume $s=0$ as one can translate the origin of time to~$s$. 
Let $\psi$ be a non negative, Lipschitz and bounded function on 
${\mathbb{R}}^n$ with $|\nabla \psi|\le \gamma$. 
For $f\in L^2({\mathbb{R}}^n)$, set 
$\Gamma^\psi(t,0)f=e^\psi\Gamma(t,0)(e^{-\psi}f)
\in L^2({\mathbb{R}}^n)$ as $\psi$ is bounded. Observe that 
$u(t)= e^{-\psi}\Gamma^\psi(t,0)f= \Gamma(t,0)(e^{-\psi}f)$ 
is a global energy solution of \eqref{eq1}. Using Remark \ref{rem:<v,dtv>}, 
we have the chain of equalities and inequalities for almost every $t>0$:
\begin{align*}
 \frac{{\rm d}}{{\rm d}t} \|\Gamma^\psi(t,0)f\|_{L^2}^2  & 
 =  \frac{{\rm d}}{{\rm d}t} \|e^\psi u(t)\|_{L^2}^2  
 \\
& = 2\,\Re e\ {}_{\dot H^{-1}}\langle \partial_t(e^\psi u(t)),
e^\psi u(t)\rangle_{\dot H^1} 
\\
& = 2\,\Re e\  {}_{\dot H^{-1}}\langle \partial_t u(t),e^{2\psi} u(t)\rangle_{\dot H^1} 
\\
& = -  2\,\Re e\  {}_{ L^2}\langle A(t)\nabla u(t),\nabla (e^{2\psi} u(t))\rangle_{ L^2}
\\
& = -  2\,\Re e\  {}_{ L^2}\langle A(t)e^\psi\nabla u(t),e^\psi\nabla  u(t)\rangle_{L^2}  
- 4\,\Re e\  {}_{ L^2}\langle A(t)e^\psi\nabla u(t),e^\psi u(t) \nabla  \psi\rangle_{L^2} 
\\
& \le -2\lambda \|e^\psi \nabla u(t)\|_{L^2}^2 +4\Lambda\gamma  
\|e^\psi \nabla u(t)\|_{L^2}  \|e^\psi  u(t)\|_{L^2}
\\
& \le \frac{2\Lambda^2}{\lambda}\gamma^2 \|e^\psi u(t)\|_{L^2}^2.
\end{align*} 
As $\Gamma^\psi(t,0)f\to f $ in $L^2({\mathbb{R}}^n)$ as $t\to 0$, we get
$$
\|\Gamma^\psi(t,0)f\|_{L^2} \le e^{\kappa \gamma^2 t} \|f\|_{L^2}, 
\quad \kappa=\frac{\Lambda^2}{\lambda}.
$$
Assume now that ${\rm supp}\,f \subset F$ and let $\psi(x)=\inf (\gamma d(x,F), N)$ 
for a large $N>\gamma d(E,F)$. We obtain
$$
\|\Gamma(t,0)f\|_{L^2(E)} \le e^{-\gamma d(E,F)} \ \|\Gamma^\psi(t,0)f\|_{L^2} 
\le e^{\kappa \gamma^2 t - \gamma d(E,F)}\|f\|_{L^2}.
$$
Optimizing with $\gamma= \frac{d(E,F)}{2\kappa t}$ completes the proof. 
\end{proof}

\subsection{Connection with earlier constructions}

Suppose we have constructed $\Gamma(t,s)$ for all 
$-\infty < s\leq t <\infty$ as explained above after Definition~\ref{def:propa}. 

\begin{proposition} 
Fix $T>0$, let $u_0\in L^2({\mathbb{R}}^n)$ and $u(t,\cdot)=\Gamma(t,0)u_0$ 
for $t\ge 0$. Then $u$ agrees with Aronson's energy solution on 
$(0,T)\times {\mathbb{R}}^n$ and with Lions' energy solution on 
$(0,T)\times {\mathbb{R}}^n$ of \eqref{eq1}. In particular, 
for $0\le s\le  t\le T$, $\Gamma(t,s)$ agrees with both Aronson's and 
Lions' propagators. 
\end{proposition}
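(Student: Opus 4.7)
The plan is to reduce this to the uniqueness statements already known for Aronson's and Lions' classes. Both constructions produce a weak solution belonging to $L^\infty(0,T;L^2(\mathbb{R}^n))\cap L^2(0,T;H^1(\mathbb{R}^n))$ (Lions' space $W(0,T)$ in fact, after the standard embedding into $\mathscr{C}([0,T];L^2)$) with trace $u_0$ at $t=0$, and each is known to be uniquely determined in this class by the datum $u_0$. So the whole point reduces to checking that the restriction of our $u=\Gamma(\cdot,0)u_0$ to $(0,T)\times\mathbb{R}^n$ falls into this common class with the correct trace.

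First I would collect the regularity. By Theorem~\ref{thm:wellposed}, $u\in\mathscr{C}_0([0,\infty);L^2)$ with $\|u\|_{L^\infty(L^2)}=\|u_0\|_{L^2}$ and $\nabla u\in L^2(0,\infty;L^2)$, whence the restriction $u|_{(0,T)}$ is in $L^\infty(0,T;L^2)\cap L^2(0,T;H^1)\subset W(0,T)$. It is clearly a weak solution of \eqref{eq1} on $(0,T)\times\mathbb{R}^n$, since weak solutions on $(0,\infty)\times\mathbb{R}^n$ remain weak solutions on any subinterval (test functions in $\mathscr{C}_c^\infty((0,T)\times\mathbb{R}^n)$ extend by zero). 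Finally, $u(t,\cdot)\to u_0$ in $L^2$ as $t\to 0$ by the continuity statement, so the trace in the Aronson/Lions sense coincides with $u_0$.

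Now I would invoke uniqueness. The difference $w$ of any two solutions in $W(0,T)$ with zero trace at $t=0$ belongs to $\dot W(0,T)\cap \mathscr{C}([0,T];L^2)$, and by Remark~\ref{rem:<v,dtv>} the energy identity
\begin{equation*}
\|w(T)\|_{L^2}^2+2\,\Re e\int_0^T\!\int_{\mathbb{R}^n}A(s,x)\nabla w\cdot\overline{\nabla w}\,{\rm d}x\,{\rm d}s=\|w(0)\|_{L^2}^2=0
\end{equation*}
holds, forcing $w\equiv 0$ by \eqref{ell}. This is precisely the uniqueness argument used both in \cite{Li57} and \cite{Ar68}, so $u|_{(0,T)}$ coincides with each of the two energy solutions, and the statement for propagators follows by applying the same reasoning with starting time $s\in[0,T]$ and initial data $\Gamma(s,0)u_0\in L^2$.

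The only mild subtlety (and the step I would scrutinise most carefully) is the trace identification: one must make sure that the notion of trace used by Aronson and Lions—via the Lions embedding $W(0,T)\hookrightarrow \mathscr{C}([0,T];L^2)$—is compatible with the trace provided by our construction. This follows because $u|_{(0,T)}\in W(0,T)$, so both traces are just $u(0,\cdot)$ in $L^2$, which equals $u_0$ by Theorem~\ref{thm:wellposed}. No further obstacle is expected.
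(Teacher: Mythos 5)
Your proposal is correct and takes essentially the same route as the paper: one checks via Theorem~\ref{thm:wellposed} that the restriction of $u(t,\cdot)=\Gamma(t,0)u_0$ to $(0,T)$ lies in the common energy class $L^\infty(0,T;L^2)\cap L^2(0,T;H^1)$ (indeed in $W(0,T)$) with trace $u_0$, and then invokes uniqueness in that class, translating the time origin to $s$ for the propagator statement. The only difference is cosmetic: the paper simply cites the well-posedness results of Lions and Aronson, whereas you re-derive the uniqueness step through the energy identity of Remark~\ref{rem:<v,dtv>}, which is the same underlying argument.
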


\begin{proof} 
We begin with Lions's construction \cite[Theorem~5.1]{Li57} 
(see also \cite[Chap.\,XVIII, \S3]{DL88}). 
He proves well-posedness of \eqref{eq1}  in the class $W(0,T)$ with data $u_0$. 
By our construction, we have that $u\in W(0,\infty)$, hence its restriction to 
$(0,T)$ belongs to $W(0,T)$. Thus, $u$ agrees with Lions's energy solution 
on $(0,T)$.

We turn to Aronson's construction \cite{Ar67}. This particular 
part of his article does not use the specificity of real coefficients. He proves 
well-posedness in the class 
$A_T=L^\infty(0,T; L^2({\mathbb{R}}^n))\cap L^2(0,T; H^1({\mathbb{R}}^n))$ 
with data $u_0$. By our construction, we have that $u\in W(0,\infty)$, hence its 
restriction to $(0,T)$ belongs to $A_T$. Thus, $u$ agrees with 
Aronson's energy solution on $(0,T)$. 

The consequence for the propagator $\Gamma(t,0)$ is immediate. A translation 
of the origin of time to $s$ proves the result for $\Gamma(t,s)$.
\end{proof}

It follows from this lemma that our propagators are universal for any local in 
time problem. 
This is particularly noticeable for Aronson's work with real coefficients as he 
con\-s\-tructs the kernel of $\Gamma(t,s)$ by using approximations by the 
propagators obtained by the standard parametrix constructions for equations with 
smooth coefficients on bounded cylinders. Our approach is totally opposite 
as we construct the ``largest'' possible object and restrict it. It will be useful to 
have shown uniqueness in the largest possible energy class $\dot W(0,\infty)$ 
later on.   

\section{A priori estimates}
\label{sec4}

We first prove a priori estimates for arbitrary weak solutions.
We then turn to solutions of the form $(t,x)\mapsto \Gamma(t,0)f(x)$ for $f$ 
in an $L^p$ space.

\subsection{Reverse H\"older estimates and consequences}
\label{subsec:reverseHolder}

We consider the parabolic quasi-distance on $(0,\infty)\times{\mathbb{R}}^n$
defined by 
$$
d((t,x),(s,y))=\max\bigl\{\sqrt{|t-s|},|x-y|\bigr\},\quad 
(t,x),(s,y)\in (0,\infty)\times{\mathbb{R}}^n
$$
and denote by ${\mathcal B}((t,x),R)=[t-R^2,t+R^2]\times B(x,R)$ the corresponding
ball of radius $R$. Remark that $(0,\infty)\times{\mathbb{R}}^n$ with this
parabolic quasi-distance and the Lebesgue measure is a doubling quasi-metric 
measure space. The following lemma is a particular case of well-known 
$L^p(L^q)$ estimates for weak solutions. See \cite{Ar67}.  

\begin{lemma} 
\label{lem:5.1}
Let $q:=2+\frac{4}{n}$. There is a constant $C>0$ depending on dimension and 
the ellipticity constants in \eqref{ell},  such that for all $u$ global weak solution 
of \eqref{eq1}, for all $(t,x)\in (0,\infty)\times{\mathbb{R}}^n$, 
and all $r \in (0, \frac{\sqrt{t}}{4})$, we have 
\begin{equation}
\label{reverseHolder}
\Bigl(\fint_{{\mathcal{B}}((t,x), r)}|u(s,y)|^q
\,{\rm d}y\,{\rm d}s\Bigr)^{\frac{1}{q}}
\le C
\Bigl(\fint_{{\mathcal{B}}((t,x),4 r)}|u(s,y)|^2
\,{\rm d}y\,{\rm d}s\Bigr)^{\frac{1}{2}}.
\end{equation}
\end{lemma}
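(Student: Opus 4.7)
The plan is to follow the classical parabolic Moser/Aronson-type self-improvement of integrability: combine a Caccioppoli energy inequality with the parabolic Sobolev embedding. The exponent $q = 2 + \tfrac{4}{n}$ arises as the precise exponent for which $L^\infty_t L^2_x \cap L^2_t \dot{H}^1_x \hookrightarrow L^q_{t,x}$ on a parabolic cylinder, by H\"older interpolation. The hypothesis $r < \sqrt{t}/4$ guarantees that $\mathcal{B}((t,x),4r) \subset (0,\infty) \times {\mathbb{R}}^n$, so $u$ is indeed defined on every cylinder appearing below (note that only local solvability on $\mathcal{B}((t,x),4r)$ is really used; ``global'' is a convenience).

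The first step is a time-symmetric variant of the Caccioppoli inequality already carried out in Proposition~\ref{prop:nrjloc}. Testing the weak formulation \eqref{eq:weak} against $\chi^2 \zeta^2 u$, where $\zeta \in {\mathscr{C}}_c^\infty(B(x,4r))$ equals $1$ on $B(x,2r)$ with $|\nabla\zeta| \lesssim 1/r$, and $\chi \in {\mathscr{C}}_c^\infty(t-16r^2,\,t+16r^2)$ equals $1$ on $[t-4r^2, t+4r^2]$ with $|\chi'|\lesssim 1/r^2$, and then applying the ellipticity \eqref{ell} exactly as in the derivation of \eqref{eq:control}, I would obtain
\begin{equation*}
\sup_{s \in [t-4r^2, t+4r^2]} \|u(s,\cdot)\|_{L^2(B(x,2r))}^2 + \int_{\mathcal{B}((t,x),2r)} |\nabla u|^2\,{\rm d}y\,{\rm d}s \le \frac{C}{r^2}\int_{\mathcal{B}((t,x),4r)} |u|^2\,{\rm d}y\,{\rm d}s.
\end{equation*}

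The second step is the parabolic Sobolev embedding. Using the slicewise Gagliardo--Nirenberg inequality $\|u(s,\cdot)\|_{L^{2n/(n-2)}(B(x,2r))} \lesssim \|\nabla u(s,\cdot)\|_{L^2(B(x,2r))} + r^{-1}\|u(s,\cdot)\|_{L^2(B(x,2r))}$ (with the obvious substitutes $L^p$, $p$ large, when $n \le 2$), together with H\"older's inequality in the pair $(|u|^{4/n}, |u|^2)$ and then integration in time, I would derive
\begin{equation*}
\int_{\mathcal{B}((t,x),2r)} |u|^q\,{\rm d}y\,{\rm d}s \le C\Bigl(\sup_s \|u(s,\cdot)\|_{L^2(B(x,2r))}^2\Bigr)^{2/n} \Bigl(\int_{\mathcal{B}((t,x),2r)} |\nabla u|^2 + r^{-2}|u|^2 \,{\rm d}y\,{\rm d}s\Bigr).
\end{equation*}
Substituting the Caccioppoli bounds from the first step yields $\int_{\mathcal{B}((t,x),2r)} |u|^q \le C\,r^{-(2+4/n)}\bigl(\int_{\mathcal{B}((t,x),4r)} |u|^2\bigr)^{q/2}$. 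Since $|\mathcal{B}(\cdot,r)| \sim r^{n+2}$, the exponents of $r$ cancel exactly after converting to averages, giving $\bigl(\fint_{\mathcal{B}((t,x),2r)}|u|^q\bigr)^{1/q} \le C\bigl(\fint_{\mathcal{B}((t,x),4r)} |u|^2\bigr)^{1/2}$, and the monotonicity $\fint_{\mathcal{B}((t,x),r)}|u|^q \le 2^{n+2} \fint_{\mathcal{B}((t,x),2r)}|u|^q$ absorbs the change from $r$ to $2r$ on the left.

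The only step requiring a little care is the time-symmetric Caccioppoli inequality, which is not literally the statement of Proposition~\ref{prop:nrjloc} (the latter controls a single time slice $u(b,\cdot)$ on one side). However, it is obtained by the same energy identity with an added time cut-off, so no new ideas are needed. Everything else is the classical parabolic Sobolev--H\"older interpolation, and the precise exponent $q = 2+\tfrac{4}{n}$ is forced by the homogeneity in $r$.
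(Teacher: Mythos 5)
Your argument is correct and is essentially the paper's proof: a local energy (Caccioppoli) estimate combined with the slicewise Gagliardo--Nirenberg inequality $\|v\|_q^q \lesssim \|\nabla v\|_2^2\,\|v\|_2^{4/n}$ applied to $u\varphi$ with a spatial cut-off, the exponent $q=2+\frac{4}{n}$ coming from the exact cancellation of powers of $r$. The paper does not need your time-symmetric Caccioppoli step (nor the case distinction for $n\le 2$): it invokes Proposition~\ref{prop:nrjloc} with final time $b=\sigma$ for each slice $\sigma$ of the smaller cylinder, and once more for the gradient term, which gives precisely the two bounds you obtain with the explicit time cut-off.
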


\begin{proof}
Let $(t,x)\in(0,\infty)\times{\mathbb{R}}^n$, and $r \in (0,\frac{\sqrt{t}}{4})$. Pick
$\varphi\in{\mathscr{C}}_c^\infty({\mathbb{R}}^n)$ supported in 
$B(x,2r)$ such that $0\le \varphi\le 1$, $\varphi=1$ on 
$B(x,r)$ and $\|\nabla\varphi\|_\infty\lesssim\frac{1}{r}$.
Let $\sigma\in[t-r^2,t+r^2]$. By Gagliardo-Nirenberg's inequality 
(see \cite[(2.2)]{Ni59}), we have that
\begin{align*}
&\int_{B(x,r)}|u(\sigma,y)|^q\,{\rm d}y
\le \int|u(\sigma,y)\varphi(y)|^q\,{\rm d}y
\lesssim \bigl\|\nabla\bigl(u(\sigma,\cdot)\varphi\bigr)\bigr\|_2^2\,
\bigl\|u(\sigma,\cdot)\varphi\bigr\|_2^{\frac{4}{n}}
\\[4pt]
&= \bigl\|\nabla\bigl(u(\sigma,\cdot)\varphi\bigr)\bigr\|_2^2\,
\Bigl(\int_{B(x,2r)}|u(\sigma,y)|^2\,{\rm d}y\Bigr)^{\frac{2}{n}}
\\[4pt]
&\lesssim\bigl\|\nabla\bigl(u(\sigma,\cdot)\varphi\bigr)\bigr\|_2^2\,
\Bigl(\frac{1}{r^2}\int_{t-r^2}^{t+r^2}\int_{B(x,4r)}
|u(s,y)|^2\,{\rm d}y\,{\rm d}s\Bigr)^{\frac{2}{n}},
\end{align*}
where we have used Proposition~\ref{prop:nrjloc} in the last step. Now let
$$
A:=\int_{{\mathcal{B}}((t,x),4r)} |u(s,y)|^2\,{\rm d}y\,{\rm d}s.
$$
We thus have that
\begin{align*}
&\int_{{\mathcal{B}}((t,x),r)}|u(\sigma,y)|^q\,{\rm d}y\,{\rm d}\sigma 
\\[4pt]
\lesssim&\Bigl(\frac{A}{r^2}\Bigr)^{\frac{2}{n}}
\Bigl[\int_{t-r^2}^{t+r^2}\int_{B(x,2r)}|\nabla u(\sigma,y)|^2
\,{\rm d}y\,{\rm d}\sigma
+\int_{t-r^2}^{t+r^2}\int_{B(x,2r)}
|u(\sigma,y)|^2 \frac{1}{r^2}
\,{\rm d}y\,{\rm d}\sigma\Big]
\\[4pt]
\lesssim&\Bigl(\frac{A}{r^2}\Bigr)^{\frac{2}{n}}
\Bigl[\frac{1}{r^2}\int_{t-16r^2}^{t+16r^2}\int_{B(x,4r)}
|u(\sigma,y)|^2\,{\rm d}y\,{\rm d}\sigma \Bigr]
= \Bigl(\frac{A}{r^2}\Bigr)^{\frac{q}{2}},
\end{align*}
where we have used Proposition~\ref{prop:nrjloc} again and $q=2+\frac{4}{n}$.
This proves \eqref{reverseHolder}.  
\end{proof}

Observe that the proof applies to any ball ${\mathcal{B}}((t,x),r)$ provided 
$t-16r^2>0$.  Hence we may apply Gerhing's lemma in the context of a space 
of homogeneous type. See for example a proof in \cite{bb}. As the constant 
$C$ is independent of $u$ and the radius of the ball, we obtain an improvement 
of $q$ to some $\tilde q>q$ that depends only on dimension and the ellipticity 
constants. Also, the exponent 2 can be lowered. See \cite[Theorem~2]{IN85} for 
the original euclidean proof, and \cite[Theorem~B1]{BCF15} for a proof valid in 
spaces of homogeneous type. 

\begin{corollary}
\label{cor:4.4}
There exist $C>0$ and $\tilde{q}>2+\frac{4}{n}$, depending on dimension and 
the ellipticity constants in \eqref{ell},  such that for all $u$ global weak solution 
of \eqref{eq1}, for all $(t,x)\in (0,\infty)\times{\mathbb{R}}^n$, and all 
$r \in (0, \frac{\sqrt{t}}{4})$, we have, for all $p \in [1,2]$,
\begin{equation}
\label{self-improve-1}
\Bigl(\fint_{{\mathcal{B}}((t,x),r)}|u(s,y)|^2
\,{\rm d}y\,{\rm d}s\Bigr)^{\frac{1}{2}}
\lesssim 
\Bigl(\fint_{{\mathcal{B}}((t,x),r)}|u(s,y)|^{\tilde{q}}
\,{\rm d}y\,{\rm d}s\Bigr)^{\frac{1}{\tilde{q}}}
\lesssim 
\Bigl(\fint_{{\mathcal{B}}((t,x),4r)}|u(s,y)|^{p}\,{\rm d}y\,{\rm d}s\Bigr)^{\frac{1}{p}}.
\end{equation}
\end{corollary}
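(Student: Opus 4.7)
The first inequality in \eqref{self-improve-1} is trivial: since $\tilde q > 2$ and the integrals are normalized averages, Jensen's inequality gives
$$
\Bigl(\fint_{\mathcal{B}((t,x),r)}|u|^2\Bigr)^{\frac{1}{2}} \le \Bigl(\fint_{\mathcal{B}((t,x),r)} |u|^{\tilde q}\Bigr)^{\frac{1}{\tilde q}}.
$$
The substance of the corollary is the second inequality, which is a two-sided self-improvement of the reverse H\"older estimate \eqref{reverseHolder} from Lemma~\ref{lem:5.1}.

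My plan is to invoke Gehring's lemma in the space of homogeneous type $(0,\infty)\times{\mathbb{R}}^n$ equipped with the parabolic quasi-distance $d$ and Lebesgue measure; doubling is clear from the product structure. As the text observes immediately after Lemma~\ref{lem:5.1}, the proof of that lemma applies verbatim to any parabolic ball $\mathcal{B}((t,x),r)$ whose four-fold enlargement is contained in $(0,\infty)\times{\mathbb{R}}^n$, that is, whenever $t-16r^2>0$. This yields \eqref{reverseHolder} uniformly over all admissible base points and radii, with a constant depending only on the dimension and the ellipticity constants $\lambda,\Lambda$ in \eqref{ell}, and in particular \emph{independent} of the solution $u$. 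This is precisely the hypothesis of Gehring's lemma in the homogeneous-space formulation, so applying it (for instance in the form of \cite[Theorem~B1]{BCF15}) produces an exponent $\tilde q>q=2+\frac{4}{n}$, depending only on dimension and ellipticity, together with an improved inequality
$$
\Bigl(\fint_{\mathcal{B}((t,x),r)} |u|^{\tilde q}\Bigr)^{\frac{1}{\tilde q}} \lesssim \Bigl(\fint_{\mathcal{B}((t,x),4r)}|u|^{2}\Bigr)^{\frac{1}{2}}.
$$

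To reach the full claim with an arbitrary $p\in[1,2]$ on the right-hand side, I would exploit the fact that Gehring's lemma simultaneously lowers the exponent on the other side. Once the upper exponent has been pushed strictly above $q$, one gains room on the lower side, and the resulting inequality can be iterated or combined with a standard interpolation argument to push the lower exponent arbitrarily close to $0$, as in \cite[Theorem~2]{IN85} and again in the homogeneous setting in \cite[Theorem~B1]{BCF15}. This yields $(\fint_{\mathcal{B}((t,x),r)}|u|^{\tilde q})^{1/\tilde q}\lesssim (\fint_{\mathcal{B}((t,x),4r)}|u|^{p})^{1/p}$ with a constant that is uniform for $p$ ranging over the compact set $[1,2]$.

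The only point requiring attention is that the reverse H\"older input is conditional on the admissibility constraint $r<\sqrt{t}/4$, which rules out parabolic balls crossing the initial time boundary. This could in principle interfere with the iterative dilations built into Gehring's argument. The issue is harmless because the Gehring-type iteration stays within balls of the form $\mathcal{B}((t,x),r')$ with $r\le r'\lesssim r$, so a preliminary rescaling (or equivalently, running the argument on balls with a slightly smaller admissibility threshold and then covering) absorbs the loss. Since all constants produced are scale-invariant, this technical adjustment does not affect the final statement, and both $\tilde q$ and $C$ depend only on $n$, $\lambda$, and $\Lambda$.
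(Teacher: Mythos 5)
Your proposal is correct and follows essentially the same route as the paper: uniform validity of the reverse H\"older estimate \eqref{reverseHolder} over all admissible parabolic balls, Gehring's lemma in the doubling quasi-metric space $((0,\infty)\times{\mathbb{R}}^n,d,{\rm d}y\,{\rm d}s)$ to raise the exponent to some $\tilde q>2+\frac{4}{n}$ depending only on $n,\lambda,\Lambda$, and the self-improvement lowering the right-hand exponent to any $p\in[1,2]$ via \cite[Theorem~2]{IN85} and \cite[Theorem~B1]{BCF15}. Your extra remark on the admissibility constraint $t-16r^2>0$ only makes explicit a point the paper leaves implicit.
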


These reverse H\"older inequalities are useful, among other things, to control 
the potential growth of $L^2_{\rm loc}$ norms for solutions in $L^\infty(L^p)$. 

\begin{proposition}
\label{prop:L2poids}
Let $p\in[1,\infty]$. Let $u$ be a global weak solution of \eqref{eq1}. Assume
that $u\in L^\infty(L^p)$. Then for all $b>a>0$, and all 
$w \in L^{p'}({\mathbb{R}}^n)$,
$$
\int_{{\mathbb{R}}^n}\Bigl(\int_a^b\int_{B(x,\sqrt{b})} |u(t,y)|^2\,{\rm d}y\,{\rm d}t
\Bigr)^{\frac{1}{2}}w(x)\,{\rm d}x<\infty.
$$
\end{proposition}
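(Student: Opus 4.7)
The plan is to show that
$$F(x) := \Bigl(\int_a^b \int_{B(x,\sqrt{b})} |u(t,y)|^2 \,{\rm d}y\,{\rm d}t\Bigr)^{\frac{1}{2}}$$
belongs to $L^p({\mathbb{R}}^n)$ with norm bounded by a constant (depending on $a,b,n,p$) times $\|u\|_{L^\infty(L^p)}$. Once this is established, an application of H\"older's inequality with $w \in L^{p'}$ gives finiteness of the integral in the statement.

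For $p \in [2,\infty]$ the equation is not needed. H\"older's inequality in $y$ yields
$$\int_{B(x,\sqrt{b})} |u(t,y)|^2\,{\rm d}y \le |B(x,\sqrt{b})|^{1-\frac{2}{p}}\Bigl(\int_{B(x,\sqrt{b})} |u(t,y)|^p\,{\rm d}y\Bigr)^{\frac{2}{p}}.$$
Applying Minkowski's integral inequality in $L^{p/2}_x$ (valid since $p/2\ge 1$) and then Fubini gives
$$\|F\|_{L^p}^2 \lesssim |B(0,\sqrt{b})|^{1-\frac{2}{p}}\int_a^b |B(0,\sqrt{b})|^{\frac{2}{p}}\|u(t,\cdot)\|_{L^p}^2\,{\rm d}t \lesssim (b-a)|B(0,\sqrt{b})|\,\|u\|_{L^\infty(L^p)}^2.$$
The case $p=\infty$ degenerates to a uniform pointwise bound on $F$.

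For $p \in [1,2)$, H\"older goes the wrong way and the equation enters via the reverse H\"older estimate of Corollary~\ref{cor:4.4}. Set $r_0 := \tfrac{1}{16}\sqrt{a}$ so that $4r_0 < \tfrac{1}{4}\sqrt{t}$ for every $t\in(a,b)$. Then $(a,b)\times B(x,\sqrt{b})$ can be covered by a number $N=N(a,b,n)$, independent of $x$, of parabolic balls ${\mathcal{B}}((t_i,x_i(x)),r_0)$ whose fourfold enlargements are all contained in $(a/2, 2b)\times B(x,2\sqrt{b})$. Applying Corollary~\ref{cor:4.4} on each ball, using $\sqrt{\sum a_i}\le\sum\sqrt{a_i}$, and summing yields
$$F(x) \lesssim_{a,b,n,p} G(x) := \Bigl(\int_{a/2}^{2b}\int_{B(x,2\sqrt{b})}|u(t,y)|^p\,{\rm d}y\,{\rm d}t\Bigr)^{\frac{1}{p}}.$$
A direct Fubini computation then gives
$$\|G\|_{L^p}^p = |B(0,2\sqrt{b})|\int_{a/2}^{2b}\|u(t,\cdot)\|_{L^p}^p\,{\rm d}t \le C(a,b,n)\,\|u\|_{L^\infty(L^p)}^p.$$

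The main obstacle is the subcritical regime $p<2$, where no elementary argument upgrades local $L^2$ averages to local $L^p$ averages; Corollary~\ref{cor:4.4} supplies exactly this upgrade, at the price of requiring $r < \sqrt{t}/4$. This is why the hypothesis $a>0$ is essential and dictates the admissible choice of $r_0$, and hence the implicit constants in the final estimate.
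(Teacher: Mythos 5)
Your proof is correct and follows essentially the same route as the paper: H\"older against $w\in L^{p'}$ reduces the problem to an $L^p_x$ bound on the local $L^2$ averages, which for $p<2$ is obtained from the reverse H\"older estimate of Corollary~\ref{cor:4.4} together with a covering argument (you simply make explicit, via $r_0=\tfrac{1}{16}\sqrt{a}$ and the enlarged cylinder $(a/2,2b)\times B(x,2\sqrt{b})$, what the paper leaves as ``a covering argument''), and for $p\ge 2$ by elementary H\"older/Fubini manipulations. Your Minkowski step in the range $p\in[2,\infty]$ is a harmless variant of the paper's direct H\"older bound over the cylinder, and your treatment of $p=\infty$ matches the paper's remark that this case is trivial.
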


\begin{proof}
We first remark that the case $p=\infty$ is trivial, and assume from now on 
that $p<\infty$. By H\"older inequality, since $w\in L^{p'}({\mathbb{R}}^n)$, we have
\begin{align*}
I:=&
\int_{{\mathbb{R}}^n}\Bigl(\int_a^b\int_{B(x,\sqrt{b})} |u(t,y)|^2\,{\rm d}y\,{\rm d}t
\Bigr)^{\frac{1}{2}}w(x)\,{\rm d}x
\\
&\lesssim \Bigl(\int_{{\mathbb{R}}^n}
\Bigl(\int_a^b\int_{B(x,\sqrt{b})} |u(t,y)|^2\,{\rm d}y\,{\rm d}t\Bigr)^{\frac{p}{2}}
\,{\rm d}x\Bigr)^{\frac{1}{p}}.
\end{align*}
If $p\le 2$, then by Corollary~\ref{cor:4.4}, and a covering argument, we have 
the following for all $a' \in (0,a)$ and $b' \in (b,\infty)$:
$$
I \lesssim
\Bigl(\int_{a'}^{b'}\|u(t,\cdot)\|_p^p\,{\rm d}t\Bigr)^{\frac{1}{p}}
\lesssim \|u\|_{L^\infty(L^p)}.
$$
If $p>2$, then by H\"older inequality
$$
\Bigl(\int_a^b\int_{B(x,\sqrt{b})} |u(t,y)|^2\,{\rm d}y\,{\rm d}t\Bigr)^{\frac{1}{2}}
\lesssim
\Bigl(\int_a^b\int_{B(x,\sqrt{b})} |u(t,y)|^p\,{\rm d}y\,{\rm d}t\Bigr)^{\frac{1}{p}},
$$
We conclude as in the case where $p\le 2$ to obtain
$I\lesssim \|u\|_{L^\infty(L^p)}$.
\end{proof}

\begin{proposition}
\label{prop:tildeNinLp}
Let $p\in[1,\infty]$ and $u$ be a global weak solution of \eqref{eq1} such that
$\tilde N(u)\in L^p({\mathbb{R}}^n)$. Then for all $b>a> 0$, and all 
$w \in L^{p'}({\mathbb{R}}^n)$,
$$
\int_{{\mathbb{R}}^n}\Bigl(\int_a^b\int_{B(x,\sqrt{b})} |u(t,y)|^2\,{\rm d}y\,{\rm d}t
\Bigr)^{\frac{1}{2}}w(x)\,{\rm d}x<\infty.
$$
\end{proposition}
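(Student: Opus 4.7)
The plan is to observe that the bracketed quantity is itself essentially a single value of a non-tangential maximal function of $u$ with a suitably enlarged aperture, and then to invoke the $L^p$ equivalence between $\tilde N(u)$ and $\mathcal N_\beta(u)$ recorded just after Definition~\ref{def:Xp}.

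Concretely, the first step is to choose the aperture $\beta:=\sqrt{b/a}>1$. With $\delta:=\sqrt{a}$ in the definition of $\mathcal N_\beta$, one has $\delta^2=a$, $\beta^2\delta^2=b$ and $\beta\delta=\sqrt b$, so that
$$
\mathcal N_\beta(u)(x)^2 \,\ge\, \fint_a^b\fint_{B(x,\sqrt b)} |u(t,y)|^2\,{\rm d}y\,{\rm d}t
\,=\, \frac{1}{(b-a)\,|B(0,\sqrt b)|}\,\int_a^b\!\int_{B(x,\sqrt b)}|u(t,y)|^2\,{\rm d}y\,{\rm d}t.
$$
Rearranging, there is a constant $C_{a,b,n}$ (depending only on $a$, $b$ and $n$) such that
$$
\Bigl(\int_a^b\int_{B(x,\sqrt b)}|u(t,y)|^2\,{\rm d}y\,{\rm d}t\Bigr)^{\frac12} \,\le\, C_{a,b,n}\,\mathcal N_\beta(u)(x) \quad \text{for every } x\in{\mathbb R}^n.
$$

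Second, I would invoke the equivalence $\|\mathcal N_\beta(u)\|_{L^p}\sim \|\tilde N(u)\|_{L^p}=\|u\|_{X^p}$ (valid for all $p\in(0,\infty]$, with implicit constants depending on $\beta$, $n$ and $p$) noted right after Definition~\ref{def:Xp}. Combined with the hypothesis $\tilde N(u)\in L^p({\mathbb R}^n)$, this shows that the function
$$F\colon x\mapsto \Bigl(\int_a^b\int_{B(x,\sqrt b)}|u(t,y)|^2\,{\rm d}y\,{\rm d}t\Bigr)^{\frac12}$$
belongs to $L^p({\mathbb R}^n)$. Finally H\"older's inequality with $w\in L^{p'}({\mathbb R}^n)$ yields
$$
\int_{{\mathbb R}^n} F(x)\,w(x)\,{\rm d}x \,\le\, \|w\|_{L^{p'}}\,\|F\|_{L^p} \,<\,\infty,
$$
which is precisely the claim. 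There is no real obstacle: the whole argument hinges on recognising that the parabolic slab $(a,b)\times B(x,\sqrt b)$ is exactly one of the averaging regions defining $\mathcal N_\beta(u)(x)$ for the choice $\beta=\sqrt{b/a}$, and on the fact (already recorded in Section~\ref{subsec:functions}) that enlarging the aperture does not change the $L^p$ norm of the maximal function up to constants.
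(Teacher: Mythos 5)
Your proof is correct. The one-line observation that the slab $(a,b)\times B(x,\sqrt b)$ is exactly the averaging region of ${\mathcal N}_\beta$ at scale $\delta=\sqrt a$ with $\beta=\sqrt{b/a}$ is right, and since only finiteness is claimed it is harmless that the resulting constants (through $\beta$) depend on $a,b$. The difference with the paper's own argument is purely one of packaging: the paper does not invoke the aperture-change remark following Definition~\ref{def:Xp}, but instead covers the slab directly, splitting $(a,b)$ into the dyadic pieces $(2^ja,2^{j+1}a)$ and covering $B(0,\sqrt b)$ by finitely many balls $B(z_k,\sqrt a)$, which bounds the left-hand side pointwise by a finite sum of translates $\sum_k \tilde N(u)(x+z_k)$; it then concludes by translation invariance of the $L^p$ norm and H\"older. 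Your route reaches the same endpoint by leaning on the equivalence $\|{\mathcal N}_\beta(u)\|_{p}\sim\|\tilde N(u)\|_{p}$, which the paper asserts but leaves as ``a simple covering argument''---so in effect you have relocated, rather than avoided, the covering step; the paper's version has the minor advantage of being self-contained (it needs only translation invariance, no aperture-change lemma), while yours is shorter and makes transparent why the statement is really just a fixed-scale instance of the maximal function hypothesis.
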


\begin{proof}
Given $b>a>0$, there exists $M \in {\mathbb{N}}$ such that $b \leq 2^{M+1}a < 2b$,
and there exists $N \in {\mathbb{N}}$ and 
$\{z_k\;;\; k=1,\dots,N\} \subset B(0,\sqrt{b})$ such that 
$B(0,\sqrt{b}) \subset \bigcup\limits_{k=1}^N B(z_k,\sqrt{a})$.
Therefore, for all $x\in{\mathbb{R}}^n$,
\begin{align*}
\Bigl(\int_a^b\int_{B(x,\sqrt{b})} |u(t,y)|^2\,{\rm d}y\,{\rm d}t
\Bigr)^{\frac{1}{2}}
&\leq \sum_{j=0}^M \sum_{k=1}^N 
\Bigl(\int_{2^ja}^{2^{j+1}a} \int_{B(x+z_k,\sqrt{a})} |u(t,y)|^2\,{\rm d}y\,{\rm d}t
\Bigr)^{\frac{1}{2}}\\
&\lesssim \sum_{k=1}^N \tilde{N}(u)(x+z_k).
\end{align*}
Since $w\in L^{p'}({\mathbb{R}}^n)$, this gives 
$$
\int_{{\mathbb{R}}^n}\Bigl(\int_a^b\int_{B(x,\sqrt{b})} |u(t,y)|^2\,{\rm d}y\,{\rm d}t
\Bigr)^{\frac{1}{2}}w(x)\,{\rm d}x
\lesssim
\bigl\|\sum_{k=1}^N \tilde{N}(u)(\cdot+z_k)\bigr\|_p\|w\|_{p'}
\lesssim \|\tilde N(u)\|_p,
$$
using the invariance by translation of the $L^p$ norm.
\end{proof}

\begin{remark}
\label{rk:esssup1}
For $p \in [1,\infty]$, note that, if $u$ is a global weak solution of \eqref{eq1} 
such that $\displaystyle{\esssup_{t>0} \|u(t,.)\|_{L^p}=M}$, then 
$\displaystyle{\sup_{t>0} \|u(t,.)\|_{L^p}=M}$. This follows from the continuity 
of $t\mapsto u(t,\cdot)$ in $L^2_{\rm loc}({\mathbb{R}}^n)$ and easy density 
arguments.
\end{remark}

\subsection{Estimates for the propagators}
\label{subsec:slice-spaces}

\begin{lemma}
\label{lem:slice-tildeN}
Let $p \in (2,\infty]$.
\begin{enumerate}[{\rm (1)}]
\item
For all $g \in L^2({\mathbb{R}}^n)$ supported in a ball $B(0,M)$, and all 
$t \in [0,\infty)$, 
$$
\|\Gamma(t,0)^*g\|_{L^{p'}} \lesssim_{M,t} \|g\|_{L^2}.
$$ 
Consequently, for all $h\in L^p({\mathbb{R}}^n)$, $\Gamma(t,0)h$ can be defined
in $L^2_{\rm loc}({\mathbb{R}}^n)$.
\item
For all $h \in L^p({\mathbb{R}}^n)$, 
$\|(t,x) \mapsto (\Gamma(t,0)h)(x)\|_{X^p} \sim \|h\|_{L^p}$.
\end{enumerate}
\end{lemma}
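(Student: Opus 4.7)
The plan is to prove part (1) by combining the $L^2$ off-diagonal bound of Proposition~\ref{prop:OD}---which by Remark~\ref{rem:back} holds equally for the adjoint propagator $\Gamma(t,0)^*$---with Hölder's inequality on dyadic annuli. For $g \in L^2$ supported in $B(0,M)$ and the annuli $S_j = S_j(0,M)$, the off-diagonal bound gives $\|1\!{\rm l}_{S_j}\Gamma(t,0)^* g\|_{L^2} \lesssim e^{-\alpha 4^j M^2/t}\|g\|_{L^2}$. Since $p' \le 2$, Hölder on each $S_j$ (of volume $\lesssim (2^jM)^n$) yields an $L^{p'}$ bound with polynomial growth in $j$ compensated by the Gaussian factor, and Minkowski ($p' \ge 1$) sums the pieces to produce $\|\Gamma(t,0)^* g\|_{L^{p'}} \lesssim C(M,t)\|g\|_{L^2}$. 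For the existence of $\Gamma(t,0) h$ with $h \in L^p$, the linear form $\varphi \mapsto \langle h, \Gamma(t,0)^*\varphi\rangle$ is bounded on $L^2(B(0,M))$ by $\|h\|_p\,\|\Gamma(t,0)^*\varphi\|_{L^{p'}} \lesssim C(M,t)\,\|h\|_p\,\|\varphi\|_{L^2}$, so Riesz representation produces compatible $L^2$ functions on each ball that patch to yield $\Gamma(t,0) h \in L^2_{\rm loc}({\mathbb{R}}^n)$.

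For the upper bound in (2), off-diagonal bounds for $\Gamma$ itself give, for $t \in (\delta/2,\delta)$,
$$
\Bigl(\fint_{B(x,\sqrt{\delta})}|\Gamma(t,0) h(y)|^2\,{\rm d}y\Bigr)^{\frac{1}{2}} \lesssim \sum_{j\ge 1} 2^{jn/2}\, e^{-c 4^j} \Bigl(\fint_{B(x,2^{j+1}\sqrt{\delta})}|h|^2\Bigr)^{\frac{1}{2}}.
$$
Taking sup in $\delta$ dominates $\tilde N(u)(x)$ by ${\mathcal M}(|h|^2)(x)^{1/2}$, where ${\mathcal M}$ is the Hardy-Littlewood maximal function, and since $p/2 > 1$, the maximal inequality yields $\|\tilde N(u)\|_p \lesssim \|h\|_p$ for $h \in L^2 \cap L^p$ when $p < \infty$; for $p = \infty$ one has the trivial bound $\tilde N(u)(x) \le \|h\|_{\infty}$. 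Extension to general $h \in L^p$ uses truncations $h_N = 1\!{\rm l}_{B(0,N)} h \in L^2 \cap L^p$ (valid since $p \ge 2$), the weak $L^2_{\rm loc}$ convergence $u_N \to u$ guaranteed by part (1), and weak lower semicontinuity of local $L^2$ norms.

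For the lower bound, the $p = 2$ case follows directly from Fubini: for every $\delta > 0$,
$$
\|\tilde N(u)\|_{L^2}^2 \ge \int_{{\mathbb{R}}^n}\fint_{\delta/2}^\delta \fint_{B(x,\sqrt{\delta})} |u(t,y)|^2\,{\rm d}y\,{\rm d}t\,{\rm d}x = \fint_{\delta/2}^\delta \|u(t,\cdot)\|_{L^2}^2 \,{\rm d}t,
$$
which tends to $\|h\|_{L^2}^2$ as $\delta \to 0$ by the $L^2$ continuity at $t=0$ from Theorem~\ref{thm:wellposed}. To promote this to $L^p$ for $2 < p < \infty$, I aim for the a.e.\ pointwise bound $\tilde N(u)(x) \ge c|h(x)|$ via Jensen $\tilde N(u)(x)^2 \ge \bigl|\fint_{\delta/2}^\delta \fint_{B(x,\sqrt{\delta})} u(t,y)\,{\rm d}y\,{\rm d}t\bigr|^2$ combined with Lebesgue differentiation at Lebesgue points of $h$ and a controlled limit, then transfer to $L^p$ via density of $L^2 \cap L^p$ and the $X^p$ upper bound already proved. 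The endpoint $p = \infty$ demands a weak-$*$ variant, replacing norm density by duality against $L^1$ test functions.

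The main obstacle is the pointwise lower bound: the only input at the boundary $t=0$ is $\|u(t,\cdot) - h\|_{L^2} \to 0$, while the local averaging is over balls $B(x,\sqrt{\delta})$ whose radius shrinks with $\delta$, so the diverging factor $|B(x,\sqrt{\delta})|^{-1/2}$ obstructs a direct Cauchy-Schwarz estimate. The natural remedy is to convert $L^2$ convergence into a.e.\ pointwise control through a subsequence argument combined with maximal function estimates, or to exploit the reverse Hölder inequalities of Section~\ref{subsec:reverseHolder} to enhance the interior regularity of $u$ near the boundary.
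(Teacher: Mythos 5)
Parts (1) and the upper bound in (2) of your proposal follow the paper's proof essentially verbatim: the $L^2$ off-diagonal bounds of Proposition~\ref{prop:OD} plus H\"older over the annuli $S_j(0,M)$ for (1), duality to define $\Gamma(t,0)h$ in $L^2_{\rm loc}$, and, for the upper bound, the annular decomposition of $h$ around $B(x,\sqrt{\delta})$, domination by $2^{jn/2}e^{-c4^j}\bigl(M_{HL}|h|^2\bigr)^{1/2}(x)$, and the maximal theorem with exponent $p/2>1$. The genuine gap is the lower bound $\|h\|_{L^p}\lesssim\|\Gamma(\cdot,0)h\|_{X^p}$, which you do not prove: you reduce it to the pointwise claim $\tilde N(u)(x)\gtrsim |h(x)|$ a.e., correctly identify that $L^2$ convergence $u(t,\cdot)\to h$ is defeated by the factor $|B(x,\sqrt{\delta})|^{-1/2}$, and then only gesture at remedies that are not carried out and would not work as stated: a.e.\ convergence along a subsequence of times does not control averages over Whitney boxes whose time slice moves with $\delta$, and the reverse H\"older estimates of Section~\ref{subsec:reverseHolder} are interior estimates (they require $r<\sqrt{t}/4$) and give no information at the boundary $t=0$. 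Your $p=2$ Fubini identity is beside the point, since the lemma assumes $p>2$ and a norm identity yields no pointwise bound. The paper's route is to localize the data: fix $z$, write $h=\sum_j 1\!{\rm l}_{S_j(z,1)}h$, and use, for each compactly supported $L^2$ piece, the a.e.\ convergence of Whitney averages
$$
\Bigl(\fint_{\delta/2}^{\delta}\fint_{B(x,\sqrt{\delta})}|\Gamma(t,0)g(y)|^2\,{\rm d}y\,{\rm d}t\Bigr)^{\frac{1}{2}}
\xrightarrow[\delta\to0]{}|g(x)| \quad \mbox{a.e.}, \qquad g\in L^2,
$$
applied with $g=1\!{\rm l}_{S_j(z,1)}h$ (limit $0$ on $B(z,1)$ when $j\ge2$), combined with the uniform-in-$\delta$ off-diagonal bound $2^{jn/2}e^{-c4^j}\bigl(M_{HL}|h|^2\bigr)^{1/2}(x)$ which permits summation in $j$; by difference the Whitney averages of $u$ converge a.e.\ to $|h(x)|$, and Fatou's lemma (or the essential supremum when $p=\infty$) concludes. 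This Fatou-type ingredient for $L^2$ data is the key fact you are missing (the paper attributes it to $L^2$-continuity at $t=0$ plus Lebesgue differentiation; the detailed mechanism, via the Duhamel comparison with $e^{t\Delta}$ and the $T^{2,2}\to X^2$ bound of Proposition~\ref{prop:subMR}, is spelled out in Section~\ref{sec8}). Your density transfer would indeed work once the pointwise statement is available for $h\in L^2\cap L^p$, but that statement is precisely what is absent.

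A secondary error: for $p=\infty$ you call $\tilde N(u)(x)\le\|h\|_{L^\infty}$ a ``trivial bound''; this presupposes $\|\Gamma(t,0)h\|_{L^\infty}\lesssim\|h\|_{L^\infty}$, i.e.\ a maximum principle, which is exactly what is unavailable for complex coefficients (the paper only obtains $L^\infty$ bounds under extra hypotheses in Section~\ref{sec5}). The $X^\infty$ bound is nevertheless true, but you should derive it from your own displayed annular estimate, since each average $\bigl(\fint_{B(x,2^{j+1}\sqrt{\delta})}|h|^2\bigr)^{1/2}\le\|h\|_{L^\infty}$ and the factors $2^{jn/2}e^{-c4^j}$ are summable.
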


\begin{proof}
(1) 
Let $t\ge 0$ and $g \in L^2({\mathbb{R}}^n)$ supported in a ball  $B(0,M)$. 
Using Proposition~\ref{prop:OD}, for some  $c>0$, we have
\begin{align*}
\|\Gamma(t,0)^*g\|_{L^{p'}}
&\leq \sum_{j=1}^{\infty} \|1\!{\rm l}_{S_j(0,M)}\Gamma(t,0)^*g\|_{L^{p'}}
\lesssim_M \sum_{j=1}^{\infty} 2^{jn(\frac{1}{p}-\frac{1}{2})} 
\|1\!{\rm l}_{S_j(0,M)}\Gamma(t,0)^*g\|_{L^2}
\\
&\lesssim_M \|g\|_{L^2}+
\sum_{j=2}^{\infty} 2^{jn(\frac{1}{p}-\frac{1}{2})}e^{-c\frac{4^{j}}{t}}
\|g\|_{L^2}\lesssim_{M,t} \|g\|_{L^2}.
\end{align*}
(2)
Let $\delta>0$, $x\in{\mathbb{R}}^n$. Let  $h\in L^p({\mathbb{R}}^n)$, and
$j\ge 1$. Using Proposition~\ref{prop:OD} again, we have that
\begin{align*}
\Bigl(\fint_{\frac{\delta}{2}} ^{\delta} \fint_{B(x,\sqrt{\delta})}
|\Gamma(t,0)(1\!{\rm l}_{S_j(x,\sqrt{\delta})}h)(y)|^{2}
\,{\rm d}y\,{\rm d}t\Bigr)^{\frac{1}{2}}
& \lesssim  
2^{\frac{jn}{2}} e^{-c4^j}\Bigl(\fint_{\frac{\delta}{2}}^\delta
\fint_{B(x,2^{j+1}\sqrt{\delta})}
|h(y)|^2\,{\rm d}y\,{\rm d}t\Bigr)^{\frac{1}{2}}
\\
&\lesssim 2^{\frac{jn}{2}} e^{-c4^j}\bigl(M_{HL}|h|^2\bigr)^{\frac{1}{2}}(x),
\end{align*}
where $M_{HL}$ denote the uncentered Hardy-Littlewood maximal function. 
Therefore, 
$$
\bigl\|\tilde N\bigl((t,x)\mapsto\Gamma(t,0)h(x)\bigr)\bigr\|_{L^p}
\lesssim \sum_{j\ge 1}2^{\frac{jn}{2}} e^{-c4^j}
\bigl\|\bigl(M_{HL}|h|^2\bigr)^{\frac{1}{2}}\bigr\|_{L^p}
\lesssim\|h\|_{L^p}.
$$
We next  prove the reverse inequality. Fix $z\in {\mathbb{R}}^n$. We first 
remark that the same reasoning as above gives us
\begin{align*}
\Bigl(\fint_{\frac{\delta}{2}}^\delta \fint_{B(x,\sqrt{\delta})}
|\Gamma(t,0)(1\!{\rm l}_{S_j(z,1)}h)(y)|^2
\,{\rm d}y\,{\rm d}t\Bigr)^{\frac{1}{2}}
\lesssim  
2^{\frac{jn}{2}} e^{-c4^j}\bigl(M_{HL}|h|^2\bigr)^{\frac{1}{2}}(x),
\end{align*}
for all $j\ge 1$ and $\delta\in (0,1)$ and $x\in B(z,1)$.
Moreover, by continuity of $t\mapsto\Gamma(t,0)(1_{S_{j}(z,1)}h)$ in 
$L^2({\mathbb{R}}^n)$  and Lebesgue's differentiation theorem, 
we have that for all $j\ge 1$, 
$$
\Bigl(\fint_{\frac{\delta}{2}}^\delta\fint_{B(x,\sqrt{\delta})}
|\Gamma(t,0)(1_{S_{j}(z,1)}h)(y)|^2\,{\rm d}y\,{\rm d}t
\Bigr)^{\frac{1}{2}}\xrightarrow[\delta\to0]{}|(1\!{\rm l}_{S_j(z,1)}h)(x)|,
$$
for almost every $x\in B(z,1)$. As the right hand side is zero for $j\ge 2$ and 
$x\in B(z,1)$, we deduce by summing that 
$$
\Bigl(\fint_{\frac{\delta}{2}}^\delta\fint_{B(x,\sqrt{\delta})}
|\Gamma(t,0)(1_{B(z,2)^c}h)(y)|^2\,{\rm d}y\,{\rm d}t
\Bigr)^{\frac{1}{2}}\xrightarrow[\delta\to0]{}0
$$
almost everywhere for $x\in B(z,1)$ and by difference, 
$$
\Bigl(\fint_{\frac{\delta}{2}}^\delta\fint_{B(x,\sqrt{\delta})}
|\Gamma(t,0)h(y)|^2\,{\rm d}y\,{\rm d}t
\Bigr)^{\frac{1}{2}}\xrightarrow[\delta\to0]{}|h(x)|,
$$ 
almost everywhere on $B(z,1)$.  Hence this holds on ${\mathbb{R}}^n$ as $z$ 
is arbitrary. Since $(t,x)\mapsto (\Gamma(t,0)h)(x) \in X^p$, we are done if 
$p=\infty$, and, if $p<\infty$, Fatou's lemma gives us
\begin{equation*}
\|h\|_{L^p}\lesssim\|(t,x)\mapsto\Gamma(t,0)h(x)\|_{X^p}.
\qedhere
\end{equation*}
\end{proof}

\begin{lemma}
\label{lem:cont-in-slice}
\begin{enumerate}[{\rm (1)}]
\item
Let $p\in[1,\infty)$ and $\delta>0$. We have that 
$\displaystyle{\sup_{t\in [0,\delta]}\|\Gamma(t,0)\|_{\mathcal{L}(E_\delta^p)}
<\infty}$.
\item
Let $p\in[1,\infty)$. For all $\delta>0$ and all $f\in E_\delta^p$, one has 
$\displaystyle{\lim_{t\to0}\Gamma(t,0)f=f}$ in $E_\delta^p$.

\noindent
For all $t>0$, one also has
$\displaystyle{\lim_{s\to0}\Gamma(t,s)^{*}f=\Gamma(t,0)^{*}f}$ in $E_\delta^p$.
\item
For all $h \in L^{\infty}({\mathbb{R}}^n)$, we have that
$\displaystyle{\lim_{t\to0}\Gamma(t,0)h=h}$ in $L^{2}_{\rm loc}({\mathbb{R}}^n)$.
\end{enumerate}
\end{lemma}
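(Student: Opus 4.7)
The plan is to base all three parts on the Gaffney $L^2$-$L^2$ off-diagonal bounds of Proposition~\ref{prop:OD}, combined with the scale-comparison Lemma~\ref{lem:4.7} for the slice spaces. Parts (2) and (3) will then follow from (1) via standard density and splitting arguments.

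For (1), I will fix $f \in L^2 \cap E_\delta^p$ (a dense subclass of $E_\delta^p$ on which $\Gamma(t,0)f$ is a priori defined in $L^2$), $x \in {\mathbb{R}}^n$, and $t \in (0,\delta]$, and decompose $f = \sum_{j \ge 1} 1\!{\rm l}_{S_j(x,\sqrt{\delta})} f$. Proposition~\ref{prop:OD} gives
$$
\bigl\|1\!{\rm l}_{B(x,\sqrt{\delta})}\Gamma(t,0)(1\!{\rm l}_{S_j(x,\sqrt{\delta})}f)\bigr\|_{L^2}
\lesssim e^{-c 4^j \delta/t}\bigl\|1\!{\rm l}_{S_j(x,\sqrt{\delta})}f\bigr\|_{L^2},
$$
and, converting $L^2$ norms into averages,
$$
\Bigl(\fint_{B(x,\sqrt{\delta})}|\Gamma(t,0)f|^2\Bigr)^{\frac12}
\lesssim \sum_{j\ge 1} e^{-c 4^j \delta/t}\,2^{jn/2}\,
\Bigl(\fint_{B(x,2^{j+1}\sqrt{\delta})}|f|^2\Bigr)^{\frac12}.
$$
For $t\le \delta$ the exponential is bounded by $e^{-c 4^j}$. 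Taking the $L^p$-norm in $x$ and invoking Lemma~\ref{lem:4.7} (with $\delta'=4^{j+1}\delta$) dominates the larger-scale average by $2^{j\max\{0,\frac{n}{p}-\frac{n}{2}\}}\|f\|_{E_\delta^p}$, so the resulting series converges since $e^{-c 4^j}$ beats any polynomial in $2^j$. Density of $L^2\cap E_\delta^p$ in $E_\delta^p$ (obtained by cut-off and mollification) then yields the claim, noting that $t=0$ is trivial.

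For (2), I will first prove convergence on the dense subset ${\mathscr{D}}({\mathbb{R}}^n)$. For $f \in {\mathscr{D}}({\mathbb{R}}^n)$ supported in $B(0,M)$, Lemma~\ref{lem:propag} gives $\Gamma(t,0)f \to f$ in $L^2$, so the contribution of $x \in B(0,R)$ to $\|\Gamma(t,0)f - f\|_{E_\delta^p}$ vanishes by dominated convergence for any fixed $R$. For the tail $|x|>R$, Proposition~\ref{prop:OD} gives
$$
\Bigl(\fint_{B(x,\sqrt{\delta})}|\Gamma(t,0)f|^2\Bigr)^{\frac12}
\lesssim \delta^{-n/4}\,e^{-\alpha (|x|-M-\sqrt{\delta})^2/t}\,\|f\|_{L^2},
$$
which integrates to an arbitrarily small tail uniformly in $t\in(0,\delta]$ as $R\to\infty$. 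Combined with the uniform bound of (1), this yields $\Gamma(t,0)f\to f$ in $E_\delta^p$ for all $f\in E_\delta^p$. For the second limit, I use Proposition~\ref{prop:adjointGamma} to identify $\Gamma(t,s)^* = \tilde{\Gamma}(s,t)$; since $\tilde{\Gamma}$ satisfies the same off-diagonal bounds, the analog of (1) gives uniform $E_\delta^p$-boundedness of $\tilde{\Gamma}(s,t)$ for $s\in[0,t/2]$, and $L^2$-continuity in $s$ from Lemma~\ref{lem:4.2} provides convergence on the dense set.

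For (3), fix a compact set $K \subset {\mathbb{R}}^n$ and a large $M$ with $K\subset B(0,M/2)$, and split $h = h_1 + h_2$ with $h_1 := h\,1\!{\rm l}_{B(0,M)}\in L^2$. By Lemma~\ref{lem:propag}, $\Gamma(t,0)h_1 \to h_1$ in $L^2$, hence in $L^2(K)$. For the tail $h_2\in L^\infty$, decompose along dyadic annuli $A_k := B(0,2^{k+1}M)\setminus B(0,2^k M)$ with each piece lying in $L^2$ and $\|h_2 1\!{\rm l}_{A_k}\|_{L^2} \lesssim \|h\|_\infty (2^k M)^{n/2}$; Proposition~\ref{prop:OD} then yields
$$
\bigl\|1\!{\rm l}_K \Gamma(t,0)(h_2 1\!{\rm l}_{A_k})\bigr\|_{L^2}
\lesssim e^{-\alpha 4^{k-1} M^2/t}\,\|h\|_\infty\,(2^k M)^{n/2}.
$$
Summing in $k$ gives a bound that vanishes as $t\to 0$, since the Gaussian factor dominates the polynomial growth in $k$, so the decomposition also unambiguously defines $\Gamma(t,0)h$ in $L^2_{\rm loc}$. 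The main technical obstacle is setting up (1) correctly; once the off-diagonal-plus-rescaling scheme there is in place, (2) and (3) reduce to routine splitting and density.
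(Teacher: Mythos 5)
Your argument is correct, but it is not the route the paper takes for parts (1) and (2): there the paper simply invokes \cite[Propositions~4.2 and~4.4]{AM14}, which apply verbatim given the off-diagonal bounds of Proposition~\ref{prop:OD} and the $L^2$ continuity statements of Lemma~\ref{lem:propag} and Proposition~\ref{prop:adjointGamma}, whereas you reprove the slice-space boundedness and continuity from scratch via the annular decomposition, Proposition~\ref{prop:OD}, the scale-comparison Lemma~\ref{lem:4.7}, and density of compactly supported $L^2$ functions in $E_\delta^p$ (this density is where $p<\infty$ is used, consistently with the statement). Your part (3) is essentially identical to the paper's proof. What your route buys is a self-contained argument with explicit constants; what the citation buys is brevity and the uniformity in $\delta$ already recorded in \cite{AM14}. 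Two small points to tighten if you write this up: the displayed bound $e^{-c4^j\delta/t}$ in (1) only holds for $j\ge 2$ (for $j=1$ the sets overlap and you must use the trivial bound $1$, which is harmless); and for the limit $\Gamma(t,s)^*f\to\Gamma(t,0)^*f$ the "analog of (1)" gives uniform boundedness on $E_{t-s}^p$ only when the time lapse is at most the slice parameter, so for fixed $t>\delta$ you should either pass from $E_{t-s}^p$ to $E_\delta^p$ via Lemma~\ref{lem:4.7} (the ratio $(t-s)/\delta$ being bounded for $s\in[0,t/2]$) or note that your series still sums with a constant depending on $t/\delta$; likewise the convergence on the dense class needs the same spatial tail control by off-diagonal bounds that you already set up in the first half of (2), not only the $L^2$ continuity from Lemma~\ref{lem:4.2}. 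Both fixes are immediate with the tools you have on the table.
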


\begin{proof}
(1)  
\cite[Proposition 4.2]{AM14} applies using Proposition~\ref{prop:OD}.

\noindent
(2)  
\cite[Proposition 4.4]{AM14} applies using
Proposition~\ref{prop:OD} and the continuity results proven in 
Proposition~\ref{prop:adjointGamma} and Proposition~\ref{lem:propag}.

\noindent
(3) 
Let $h \in L^\infty({\mathbb{R}}^n)$, and $M>0$. For $t>0$, as in 
Lemma~\ref{lem:slice-tildeN} using Proposition~\ref{prop:OD}, we see that 
${\sum_{j\ge 1} 1\!{\rm l}_{B(0,M)} (\Gamma(t,0)-I)(1\!{\rm l}_{S_{j}(0,M)}h)}$ 
converges in $L^2({\mathbb{R}}^n)$ to $1\!{\rm l}_{B(0,M)} (\Gamma(t,0)-I)h$, 
and moreover,
\begin{align*}
\Bigl(\int _{B(0,M)}  |(\Gamma(t,0)-I)h(y)|^2\,{\rm d}y\Bigr)^{\frac{1}{2}}
& \lesssim_M
\|(\Gamma(t,0)-I)(1_{S_{1}(0,M)}h)\|_{L^2}
+\sum_{j=2} ^\infty e^{-c\frac{4^{j}}{t}}\|h\|_{L^2},
\end{align*}
for some constant $c>0$ depending on $M$. We conclude using that  
$\|(\Gamma(t,0)-I)g\|_{L^{2}} \xrightarrow[t \to 0]{} 0$ for all 
$g \in L^2({\mathbb{R}}^n)$. 
\end{proof}

\begin{proposition}
\label{prop:propsol}
Let $p \in (2,\infty]$. For all $h \in L^p({\mathbb{R}}^n)$,
$u_h:(t,x) \mapsto (\Gamma(t,0)h)(x) \in X^p$ is a global weak solution 
of \eqref{eq1}.
\end{proposition}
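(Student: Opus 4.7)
The norm equivalence $\|u_h\|_{X^p}\sim\|h\|_{L^p}$ is already provided by Lemma~\ref{lem:slice-tildeN}(2); in particular $u_h \in L^2_{\rm loc}((0,\infty)\times{\mathbb{R}}^n)$. What remains to prove is that $u_h$ satisfies the weak formulation \eqref{eq:weak} together with $\nabla u_h \in L^2_{\rm loc}$. The plan is to approximate by truncation and pass to the limit. Set $h_k := 1\!{\rm l}_{B(0,k)}h$. Since $h\in L^p$ and $p>2$, $h_k\in L^2\cap L^p$, so Theorem~\ref{thm:wellposed} yields that $u_{h_k}(t,x):=\Gamma(t,0)h_k(x)$ is a global weak solution of \eqref{eq1} with $u_{h_k}\in{\mathscr{C}}_0(L^2)$ and $\nabla u_{h_k}\in L^2(L^2)$, in particular satisfying \eqref{eq:weak} against any $\varphi\in\mathscr{C}_c^\infty((0,\infty)\times{\mathbb{R}}^n)$.

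First I would establish $u_{h_k}\to u_h$ in $L^2_{\rm loc}((0,\infty)\times{\mathbb{R}}^n)$. Fix compact $K\subset B(0,M)$ and $[a,b]\subset(0,\infty)$; for $k>2M$ decompose $B(0,k)^c=\bigcup_{j\ge 0}A_j^k$ with $A_j^k=B(0,2^{j+1}k)\setminus B(0,2^jk)$, so that $d(K,A_j^k)\ge 2^{j-1}k$. Applying Proposition~\ref{prop:OD},
\[
\|1\!{\rm l}_K\Gamma(t,0)(1\!{\rm l}_{A_j^k}h)\|_{L^2}
\le e^{-\alpha 4^{j-1}k^2/(b)}\,\|1\!{\rm l}_{A_j^k}h\|_{L^2},
\qquad t\in[a,b].
\]
For $p<\infty$ one bounds $\|1\!{\rm l}_{A_j^k}h\|_{L^2}\lesssim(2^jk)^{n(\frac12-\frac1p)}\|h\|_{L^p}$ by H\"older, while for $p=\infty$ one uses the crude estimate $\|1\!{\rm l}_{A_j^k}h\|_{L^2}\lesssim(2^jk)^{n/2}\|h\|_{L^\infty}$. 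In either case, summation in $j$ is trivial and the Gaussian factor drives the result to zero as $k\to\infty$, uniformly for $t\in[a,b]$.

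Next I would upgrade this to a local gradient bound. Proposition~\ref{prop:nrjloc} applied on any parabolic cylinder $[a,b]\times B(x,2r)\subset(0,\infty)\times{\mathbb{R}}^n$ controls $\|\nabla u_{h_k}\|_{L^2([c,b]\times B(x,r))}$ purely in terms of $\|u_{h_k}\|_{L^2([a,b]\times B(x,2r))}$; the latter is uniformly bounded in $k$ thanks to the $L^2_{\rm loc}$ convergence. By Banach--Alaoglu, a subsequence of $\nabla u_{h_k}$ converges weakly in $L^2_{\rm loc}$. Its limit coincides with the distributional gradient $\nabla u_h$ by uniqueness of distributional limits, so $\nabla u_h\in L^2_{\rm loc}$ and $\nabla u_{h_{k'}}\rightharpoonup\nabla u_h$ weakly in $L^2_{\rm loc}$. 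Passing to the limit on both sides of \eqref{eq:weak} for $u_{h_{k'}}$ with any $\varphi\in\mathscr{C}_c^\infty$ then yields the identity for $u_h$.

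The principal difficulty lies in the case $p=\infty$, where compactly supported functions are not dense in $L^p$ and a straightforward density argument fails; this is exactly where one exploits the Gaussian off-diagonal bounds of Proposition~\ref{prop:OD} to control the tail $\Gamma(t,0)(1\!{\rm l}_{B(0,k)^c}h)$ against the polynomial volume growth of the dyadic annuli. A minor technical check is the passage from $L^2_{\rm loc}$ convergence of $u_{h_k}$ to the existence of $\nabla u_h$ as a locally $L^2$ function, which is supplied cleanly by Proposition~\ref{prop:nrjloc}.
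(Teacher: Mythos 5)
Your proof is correct and follows essentially the same route as the paper: decompose the data into compactly supported $L^2$ pieces, use Theorem~\ref{thm:wellposed} for each piece, control the tails with the off-diagonal bounds of Proposition~\ref{prop:OD}, control gradients locally with Proposition~\ref{prop:nrjloc}, and pass to the limit in \eqref{eq:weak}. The only cosmetic difference is that the paper sums an annular series $h=\sum_j 1\!{\rm l}_{S_j(0,M)}h$ and gets strong $L^2_{\rm loc}$ convergence of the gradients directly, whereas you truncate by balls and recover $\nabla u_h$ by Caccioppoli bounds plus weak compactness; both are fine.
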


\begin{proof}
Let $h \in L^p({\mathbb{R}}^n)$. We first show that 
$\nabla u_h \in L^2_{\rm loc}({\mathbb{R}}^{n+1}_+)$. Let $a,b,M>0$. For 
$j\ge 1$, set $h_j= 1\!{\rm l}_{S_{j}(0,M)}h\in L^2({\mathbb{R}}^n)$ and 
consider the global weak solutions $u_{h_j}$ of \eqref{eq1} with data $h_j$.
Applying Proposition~\ref{prop:nrjloc} and Proposition~\ref{prop:OD}, we obtain 
the following for some constant $\beta>0$:
\begin{align}
\label{eq:nablaL2loc}
\Bigl(\int_a^b\int_{B(0,M)}&|\nabla (\Gamma(t,0)h_j)(x)|^2
\,{\rm d}x\,{\rm d}t\Bigr)^{\frac{1}{2}} 
\nonumber\\
&
\underset{a,b,c,M} \lesssim
\Bigl(\int_{a/2}^b\int_{B(0,2M)}|(\Gamma(t,0)h_j)(x)|^2
\,{\rm d}x\,{\rm d}t\Bigr)^{\frac{1}{2}} 
\nonumber\\
& \lesssim
e^{-\beta4^j}\|1\!{\rm l}_{B(0,2^{j+1}M)}h\|_{L^2} 
\lesssim 2^{jn(\frac{1}{2}-\frac{1}{p})}e^{-\beta4^j}\|h\|_{L^p}.
\end{align}
We easily obtain from this  that  $\sum_{j\ge 1} \nabla u_{h_j}$ converges to
$\nabla u_h \in L^2(a,b;B(0,M))$. Also  $\sum_{j\ge 1}  u_{h_j}$ 
converges to $u_h$ in $L^2(a/2,b;B(0,2M))$ 

To show that $u_h$ satisfies \eqref{eq1} in the sense of distributions, let 
$\phi \in {\mathscr{D}}$  and pick $a,b,M>0$ such that
${\rm supp}\,\phi \subset [a,b]\times B(0,M)$. 
For each $j\ge 1$, 
$$
-\int_0^\infty\int_{{\mathbb{R}}^n}u_{h_j}\overline{\partial_t\phi} \,{\rm d}x\,{\rm d}t
+\int_0^\infty\int_{{\mathbb{R}}^n}A\nabla u_{h_j}\cdot\overline{\nabla\phi}\, 
\,{\rm d}x\,{\rm d}t = 0
$$
and by the above $L^2$ convergences, one can sum in $j\ge 1$ and obtain the 
conclusion for $u_h$. This shows it is a global weak solution of \eqref{eq1}.
\end{proof}

\begin{lemma}
\label{lem:ODp-2}
Let $q\in[1,2)$ and assume that 
$\displaystyle{\sup_{0\leq s\le t<\infty}\|\Gamma(t,s)\|_{{\mathscr{L}}(L^q)}<\infty}$.
Then, for all $r\in(q,2]$ there exists $\alpha>0$ such that for all
$E,F\subset{\mathbb{R}}^n$ Borel sets, for all $ 0\le s<t<\infty$ and all
$f\in L^r({\mathbb{R}}^n)$,
$$
\bigl\|1\!{\rm l}_E\Gamma(t,s)(1\!{\rm l}_Ff)\bigr\|_{L^2}\lesssim
(t-s)^{-\frac{n}{2}(\frac{1}{r}-\frac{1}{2})} e^{-\alpha\frac{d(E,F)^2}{t-s}}
\bigl\|1\!{\rm l}_Ff\bigr\|_{L^r}.
$$
\end{lemma}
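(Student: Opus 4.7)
The plan is to combine the $L^2$-$L^2$ off-diagonal bound of Proposition~\ref{prop:OD} with the assumed $L^q$ uniform boundedness, first via interpolation to obtain $L^r$-$L^r$ off-diagonal bounds, and then via the reverse H\"older inequality of Corollary~\ref{cor:4.4} coupled with the energy estimate of Proposition~\ref{prop:nrjloc} to upgrade to $L^r$-$L^2$ smoothing with the required $(t-s)^{-\frac{n}{2}(\frac{1}{r}-\frac{1}{2})}$ decay. A Littlewood--Paley type dyadic decomposition of the source on annuli, combined with a lattice covering of $E$, will then assemble the global bound with the Gaussian factor in $d(E,F)^2/(t-s)$.

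For Step 1, fix Borel sets $E,F$, $0 \le s < t$, and consider $T: g \mapsto 1\!{\rm l}_E \Gamma(t,s)(1\!{\rm l}_F g)$. Proposition~\ref{prop:OD} gives $\|T\|_{{\mathscr{L}}(L^2)} \le e^{-\alpha_2 d(E,F)^2/(t-s)}$ with $\alpha_2 = \lambda/(4\Lambda^2)$, while the hypothesis gives $\|T\|_{{\mathscr{L}}(L^q)} \lesssim 1$ uniformly. Riesz--Thorin interpolation then yields, for each $r\in (q,2]$ with $\theta_r := \frac{1/r - 1/q}{1/2 - 1/q} \in (0,1]$,
$$\|T\|_{{\mathscr{L}}(L^r)} \lesssim e^{-\alpha_r d(E,F)^2/(t-s)}, \qquad \alpha_r := \theta_r \alpha_2 > 0.$$

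For Step 2, assuming first that $f \in L^r \cap L^2$, set $u(\tau, y) := \Gamma(\tau, s)(1\!{\rm l}_F f)(y)$ and $\rho := \sqrt{t-s}/16$. For a fixed $x_0 \in {\mathbb{R}}^n$, decompose $1\!{\rm l}_F f = \sum_{j\ge 0} g_j^{(x_0)}$ with $g_0^{(x_0)}$ supported in $B(x_0, 16\rho)$ and $g_j^{(x_0)}$ supported in the annulus $A_j^{(x_0)} := B(x_0, 2^{j+4}\rho) \setminus B(x_0, 2^{j+3}\rho)$ for $j \ge 1$. Applied to each piece $u_j^{(x_0)} := \Gamma(\cdot, s) g_j^{(x_0)}$, the chain Proposition~\ref{prop:nrjloc} on $(t-\rho^2,t) \times B(x_0, 2\rho)$, then Corollary~\ref{cor:4.4} with $p = r$ on ${\mathcal{B}}((t,x_0), 2\rho)$ (valid since $8\rho < \sqrt{t-s}/2$), and finally Step 1 on each time slice of ${\mathcal{B}}((t,x_0), 8\rho)$ (noting $\tau - s \sim \rho^2 \sim t-s$) yields, after the exponent bookkeeping $(n+2)(1 - 2/r) + 4/r - 2 = -2n(1/r - 1/2)$,
$$\|u_j^{(x_0)}(t,\cdot)\|_{L^2(B(x_0,\rho))} \lesssim (t-s)^{-\frac{n}{2}(\frac{1}{r}-\frac{1}{2})} e^{-\beta 4^j} \|g_j^{(x_0)}\|_{L^r}$$
for some $\beta > 0$ independent of $j, x_0, s, t$.

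For Step 3, cover $E$ by a bounded-overlap lattice $\{B(x_k, \rho)\}_k$ with $x_k \in E$. Subadditivity, Cauchy--Schwarz in $j$ (using $\sum_j e^{-\beta 4^j} < \infty$), the bounded overlap of the dilated annuli $\{A_j^{(x_k)}\}_k$ with multiplicity $\lesssim 2^{jn}$, and the power-mean inequality (valid since $2/r \ge 1$) give
$$\|1\!{\rm l}_E u(t,\cdot)\|_{L^2}^2 \lesssim (t-s)^{-n(\frac{1}{r}-\frac{1}{2})} \|f\|_{L^r}^2 \sum_{j \ge J} 2^{2jn/r} e^{-\beta 4^j},$$
where $J \sim \log_2(d(E,F)/\rho)$ is the smallest index for which any $A_j^{(x_k)}$ meets $F$. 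The Gaussian weight absorbs the polynomial factor, so the tail series is dominated by $e^{-\alpha d(E,F)^2/(t-s)}$ for some $\alpha > 0$. Taking square roots and extending from $L^r \cap L^2$ to $L^r$ by density (using $L^r$ boundedness from Step 1) completes the proof. The main technical obstacle lies precisely in this last accounting: verifying that the truncated tail $\sum_{j \ge J} 2^{2jn/r} e^{-\beta 4^j}$ recovers a Gaussian in $d(E,F)^2/(t-s)$ with a constant $\alpha > 0$ uniform in $E, F, s, t$.
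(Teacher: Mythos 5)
Your argument is correct, and all the estimates you invoke (Proposition~\ref{prop:OD}, Proposition~\ref{prop:nrjloc}, Corollary~\ref{cor:4.4}, Riesz--Thorin) are exactly the tools the paper uses, but you arrange them in a genuinely different order. You first interpolate the uniform $L^q$ bound against the $L^2$--$L^2$ Gaussian off-diagonal bound to get $L^r$--$L^r$ off-diagonal decay, then prove a ball-localised $L^r\to L^2$ smoothing estimate with reverse H\"older at exponent $r$ applied to annular pieces of the data, and finally reassemble the Gaussian in $d(E,F)^2/(t-s)$ by covering $E$ with a $\rho$-lattice, counting overlaps of the dilated annuli, and summing the tail $\sum_{j\ge J}2^{2jn/r}e^{-\beta 4^j}$. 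The paper instead runs the local energy estimate and reverse H\"older at the endpoint exponent $q$ to get a ball-localised $L^q\to L^2$ bound with factor $(t-s)^{-\frac n2(\frac1q-\frac12)}$, interpolates cube-wise against Proposition~\ref{prop:OD} on a lattice of side $\sqrt{t-s}$ to obtain cube-localised $L^r\to L^2$ off-diagonal bounds, sums using $\ell^r\subset\ell^2$ to get the \emph{global} bound $\|\Gamma(t,s)\|_{{\mathscr L}(L^r,L^2)}\lesssim (t-s)^{-\frac n2(\frac1r-\frac12)}$, and then recovers the Gaussian for arbitrary Borel sets $E,F$ by one more Riesz--Thorin interpolation of $1\!{\rm l}_E\Gamma(t,s)1\!{\rm l}_F$ between this uniform $L^{r_1}\to L^2$ bound ($q<r_1<r$) and the $L^2$--$L^2$ Gaussian bound; this last interpolation replaces your annulus decomposition, overlap counting and tail summation, so no covering of $E$ is ever needed, and it also isolates the global $L^r\to L^2$ smoothing estimate as a clean intermediate statement. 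Your route buys a more constructive, single-pass derivation of the Gaussian (and a localised smoothing estimate with explicit $e^{-\beta 4^j}$ decay that is reusable elsewhere), at the cost of the bookkeeping you flag at the end; note that this ``main technical obstacle'' is in fact routine, since $g_j^{(x_k)}=0$ forces $2^{j+4}\rho\ge d(E,F)$, hence $4^J\gtrsim d(E,F)^2/(t-s)$, and $\sum_{j\ge J}2^{2jn/r}e^{-\beta 4^j}\le e^{-\frac\beta2 4^J}\sum_{j\ge0}2^{2jn/r}e^{-\frac\beta2 4^j}\lesssim e^{-\alpha d(E,F)^2/(t-s)}$ with $\alpha$ depending only on $n,r,q$, the ellipticity constants and the hypothesis, uniformly in $E,F,s,t$.
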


\begin{proof}
Let $f\in L^2({\mathbb{R}}^n)\cap L^q({\mathbb{R}}^n)$, $x\in{\mathbb{R}}^n$,
$t>s\ge 0$. By Proposition~\ref{prop:nrjloc} we have that
$$
\Bigl(\fint_{B(x,\sqrt{t-s})}|\Gamma(t,s)f(y)|^2\,{\rm d}y\Bigr)^{\frac{1}{2}}
\lesssim \Bigl(\fint_s^t\fint_{B(x,2\sqrt{t-s})}|\Gamma(\sigma,s)f(y)|^2
\,{\rm d}y\,{\rm d}\sigma\Bigr)^{\frac{1}{2}}.
$$
Covering $B(x,2\sqrt{t-s})$ by a finite collection of balls
$\bigl\{B\bigl(x_j,\frac{\sqrt{t-s}}{2}\bigr);j=1,\dots,M\bigr\}$ with $M$
depending only on $n$ and 
$B\bigl(x_j,\frac{\sqrt{t-s}}{2}\bigr)\subset B(x,4\sqrt{t-s})$ for all 
$j=1,\dots,M$, we can apply Corollary~\ref{cor:4.4} to obtain
\begin{align*}
\Bigl(\fint_{B(x,\sqrt{t-s})}|\Gamma(t,s)f(y)|^2\,{\rm d}y\Bigr)^{\frac{1}{2}}
\lesssim&
\Bigl(\fint_s^t\fint_{B(x,4\sqrt{t-s})}|\Gamma(\sigma,s)f(y)|^q
\,{\rm d}y\,{\rm d}\sigma\Bigr)^{\frac{1}{q}}
\\
\lesssim&(t-s)^{-\frac{n}{2q}}\Bigl(\fint_s^t\|f\|_q^q\,{\rm d}\sigma\Bigr)^{\frac{1}{q}}
=(t-s)^{-\frac{n}{2q}}\|f\|_q.
\end{align*}
Therefore, for all $x\in{\mathbb{R}}^n$, $t>s\ge 0$ and $h\in L^q({\mathbb{R}}^n)$
\begin{equation}
\label{eq:lql2bdd}
\bigl\|1\!{\rm l}_{B(x,\sqrt{t-s})}\Gamma(t,s)h\bigr\|_{L^2}
\lesssim (t-s)^{-\frac{n}{2}(\frac{1}{q}-\frac{1}{2})}\|h\|_{L^q}.
\end{equation}
Let $\delta=\sqrt{t-s}$.
Consider the family of disjoint cubes 
${\mathscr{D}}_\delta:=\bigl\{\delta\,[0,1[^n+k\delta,k\in{\mathbb{Z}}^n\bigr\}$. 
We denote by $c_Q$ the center of a cube $Q\in{\mathscr{D}}_\delta$. We have that
$$
(t-s)^{\frac{n}{2}(\frac{1}{q}-\frac{1}{2})}1\!{\rm l}_Q\Gamma(t,s)1\!{\rm l}_R
\in {\mathscr{L}}(L^q({\mathbb{R}}^n),L^2({\mathbb{R}}^n))
$$
with norm independent of $t>s \ge 0$ and $Q,R\in{\mathscr{D}}_\delta$. 
Using Proposition~\ref{prop:OD} with  Riesz-Thorin interpolation, we have that, 
for all $r \in (q,2]$, there exists 
$\alpha_r>0$ such that for all $h\in L^r({\mathbb{R}}^n)$
$$
\bigl\|1\!{\rm l}_Q\Gamma(t,s)(1\!{\rm l}_Rh)\bigr\|_{L^2}
\lesssim (t-s)^{-\frac{n}{2}(\frac{1}{r}-\frac{1}{2})}e^{-\alpha_r\frac{d(Q,R)^2}{t-s}}
\|1\!{\rm l}_Rh\|_{L^r},
$$
for all $Q,R\in{\mathscr{D}}_\delta$, for all $t>s \ge 0$.
Therefore, there exists $c_r'>0$ such that
\begin{align*}
\|\Gamma(t,s)h\|_{L^2}
=&
\Bigl(\sum_{Q\in{\mathscr{D}}_\delta}\|1\!{\rm l}_Q\Gamma(t,s)h\|_{L^2}^2
\Bigr)^{\frac{1}{2}}
\\
\lesssim&
\sum_{k\in{\mathbb{Z}}^n}\Bigl(\sum_{Q\in{\mathscr{D}}_\delta}
\|1\!{\rm l}_{Q+k\delta}\Gamma(t,s)(1\!{\rm l}_Q h)\|_{L^2}^2\Bigr)^{\frac{1}{2}}
\\
\lesssim&
\sum_{k\in{\mathbb{Z}}^n}e^{-c_r'|k|}
(t-s)^{-\frac{n}{2}(\frac{1}{r}-\frac{1}{2})}\Bigl(\sum_{Q\in{\mathscr{D}}_\delta}
\|1\!{\rm l}_Q h\|_{L^r}^2\Bigr)^{\frac{1}{2}}
\\
\lesssim&
(t-s)^{-\frac{n}{2}(\frac{1}{r}-\frac{1}{2})}\Bigl(\sum_{Q\in{\mathscr{D}}_\delta}
\|1\!{\rm l}_Q h\|_{L^r}^r\Bigr)^{\frac{1}{r}}
=(t-s)^{-\frac{n}{2}(\frac{1}{r}-\frac{1}{2})}\|h\|_{L^r},
\end{align*}
where we have used that $\ell_r\subset\ell_2$ since $r\le 2$. Therefore
$$
\|\Gamma(t,s)\|_{{\mathscr{L}}(L^r,L^2)}
\lesssim (t-s)^{-\frac{n}{2}(\frac{1}{r}-\frac{1}{2})}
$$ 
uniformly for $0 \le s<t<\infty$. Using Riesz-Thorin interpolation again to interpolate
this uniform bound with the $L^2-L^2$ off diagonal bound from
Proposition~\ref{prop:OD} gives the result.
\end{proof}

\begin{lemma}
\label{lem:propsol2}
Let $q \in [1,2)$, and assume that $\displaystyle{\sup_{0\le s \le t <\infty}
\|\Gamma(t,s)\|_{\mathcal{L}(L^{q})}<\infty}$. Then, for all 
$h \in L^q({\mathbb{R}}^n)$, $u_h:(t,x)\mapsto (\Gamma(t,0)h)(x)$ is a 
global weak solution of \eqref{eq1}.
\end{lemma}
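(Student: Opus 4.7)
The plan is to adapt the strategy of Proposition~\ref{prop:propsol} to a datum in $L^q$ with $q<2$. The main obstacle compared to the case $p>2$ is that one cannot decompose $h \in L^q$ into compactly supported pieces that automatically belong to $L^2$, so a dyadic annular decomposition followed by $L^2$--$L^2$ off-diagonal estimates is unavailable. The key replacement is the ultra-local $L^q$--$L^2$ bound
$$
\bigl\|1\!{\rm l}_{B(x,\sqrt{t})}\Gamma(t,0)g\bigr\|_{L^2}
\lesssim t^{-\frac{n}{2}(\frac{1}{q}-\frac{1}{2})}\|g\|_{L^q},
\qquad g\in L^q({\mathbb{R}}^n),\ x\in{\mathbb{R}}^n,\ t>0,
$$
which appears as \eqref{eq:lql2bdd} in the proof of Lemma~\ref{lem:ODp-2} and is available under the $L^q$-boundedness hypothesis on the propagators.

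First I would set $h_k:=1\!{\rm l}_{B(0,k)}h\in L^q\cap L^2$, so that $h_k\to h$ in $L^q$ by dominated convergence, and introduce $u_k:=u_{h_k}\in W(0,\infty)$ via Theorem~\ref{thm:wellposed}. Each $u_k$ is a global weak solution of \eqref{eq1}. Fixing $0<a<b$ and $M>0$, I would cover $B(0,M)$ by at most $1+\lceil M/\sqrt{t}\rceil^n$ balls of radius $\sqrt{t}$ and apply the displayed bound to $h-h_k$, obtaining
$$
\int_a^b \bigl\|1\!{\rm l}_{B(0,M)}\bigl(u_h(t,\cdot)-u_k(t,\cdot)\bigr)\bigr\|_{L^2}^2\,{\rm d}t
\lesssim_{a,b,M}\|h-h_k\|_{L^q}^2\int_a^b\bigl(t^{-n(\frac{1}{q}-\frac{1}{2})}+t^{-\frac{n}{q}}\bigr)\,{\rm d}t,
$$
with a finite time integral since $a>0$. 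As $k\to\infty$ the right hand side tends to zero, which shows that $u_h\in L^2_{\rm loc}(0,\infty;L^2_{\rm loc}({\mathbb{R}}^n))$ and that $u_k\to u_h$ in this space.

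Next, applying the gradient estimate of Proposition~\ref{prop:nrjloc} to the weak solution $u_k-u_j$ on $(a/2,b)\times B(0,2M)$ yields
$$
\int_a^b\|\nabla(u_k-u_j)(t,\cdot)\|_{L^2(B(0,M))}^2\,{\rm d}t
\lesssim_{a,b,M}\int_{a/2}^b\|(u_k-u_j)(t,\cdot)\|_{L^2(B(0,2M))}^2\,{\rm d}t,
$$
so $(\nabla u_k)_{k\in{\mathbb{N}}}$ is Cauchy in $L^2(a,b;L^2(B(0,M)))$; the $L^2_{\rm loc}$ convergence of $u_k$ identifies its distributional limit with $\nabla u_h$. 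Therefore $u_h\in L^2_{\rm loc}(0,\infty;H^1_{\rm loc}({\mathbb{R}}^n))$, and for any $\varphi\in{\mathscr{D}}$ with support in $[a,b]\times B(0,M)$ one can pass to the limit in the weak formulation
$$
\int_0^\infty\!\int_{{\mathbb{R}}^n}u_k\,\overline{\partial_t\varphi}\,{\rm d}x\,{\rm d}t
=\int_0^\infty\!\int_{{\mathbb{R}}^n}A\nabla u_k\cdot\overline{\nabla\varphi}\,{\rm d}x\,{\rm d}t
$$
using the established $L^2_{\rm loc}$ convergences of $u_k$ and $\nabla u_k$ on the support of $\varphi$. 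The main difficulty, already resolved by the ultra-local bound, is to extract local-in-space $L^2$ information about $u_h(t,\cdot)$ from purely $L^q$ information on $h$; once that is in hand the remaining steps are parallel to the $L^2$ theory.
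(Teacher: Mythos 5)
Your proof is correct and follows essentially the same route as the paper: the key input is the ultra-local $L^q$--$L^2$ bound \eqref{eq:lql2bdd} combined with the local energy estimate of Proposition~\ref{prop:nrjloc} and approximation of $h$ by data in $L^q\cap L^2$. The only (cosmetic) difference is that you run a Cauchy-sequence argument for $u_{h_k}$ and $\nabla u_{h_k}$ in $L^2_{\rm loc}$ and then pass to the limit in the weak formulation, whereas the paper bounds the defect in the weak formulation for $u_h$ directly by $\|h-h_0\|_{L^q}$; your variant has the small advantage that Proposition~\ref{prop:nrjloc} is only ever applied to genuine energy solutions $u_{h_k}-u_{h_j}$.
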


\begin{proof}
By \eqref{eq:lql2bdd}, we have that, for all $t>0$, all $h\in L^q({\mathbb{R}}^n)$, 
and all $M>0$:
$$
\|1\!{\rm l}_{B(0,M)}\Gamma(t,0)h\|_{L^2} \lesssim_{M,t} \|h\|_{L^q}.
$$
Applying Proposition~\ref{prop:nrjloc}, we obtain the following for all 
$c \in (a,b)$ and $M>0$:
\begin{equation*}
\Bigl(\int_c^b\int_{B(0,M)}|\nabla\Gamma(t,0)h(x)|^2
\,{\rm d}x\,{\rm d}t\Bigr)^{\frac{1}{2}}
\underset{a,b,c,M}\lesssim
\Bigl(\int_a^b
\|1\!{\rm l}_{B(0,2M)}\Gamma(t,0)h\|_{L^2} ^{2} dt \Big)^{\frac{1}{2}} 
\lesssim \|h\|_{L^q}.
\end{equation*}
To show that $u_h$ satisfies \eqref{eq1} in the sense of distributions, 
let $\varepsilon >0$, and pick
$h_0\in L^q({\mathbb{R}}^n)\cap L^2({\mathbb{R}}^n)$ such that
$\|h-h_0\|_{L^q}<\varepsilon$. The function $u_{h_0}$
is a global weak solution of \eqref{eq1}, and we thus have the following.
\begin{align*}
\Bigl|&-\int_0^\infty\int_{{\mathbb{R}}^n}u_h\overline{\partial_t\phi}
+\int_0^\infty\int_{{\mathbb{R}}^n}A\nabla u_h\cdot\overline{\nabla\phi}\Bigr|
\\
&\le \int_0^\infty\int_{{\mathbb{R}}^n}|u_{h-h_0}||\partial_t\phi|
+\Lambda\int_0^\infty\int_{{\mathbb{R}}^n}|\nabla u_{h-h_0}||\nabla\phi|
\\& \lesssim
\|u_{h-h_0}\|_{L^\infty(L^q)}
+\|\nabla (u_{h-h_0})\|_{L^2({\rm supp}(\nabla \phi))}\lesssim\|h-h_0\|_{L^q}
<\varepsilon.
\end{align*}
This proves that $u_h$ is a global weak solution of \eqref{eq1}.
\end{proof}

\begin{lemma}
\label{lem:OD2-q}
Let $q\in (2,\infty]$ and assume that 
$\displaystyle{\sup_{0\leq s\le t<\infty}\|\Gamma(t,s)\|_{{\mathscr{L}}(L^q)}<\infty}$.
Then, for all $r\in[2,q)$ there exists $\alpha>0$ such that for all
$E,F\subset{\mathbb{R}}^n$ Borel sets, for all $ 0\le s < t <\infty$ and all
$f\in L^2({\mathbb{R}}^n)$,
$$
\bigl\|1\!{\rm l}_E\Gamma(t,s)(1\!{\rm l}_Ff)\bigr\|_{L^r}\lesssim
(t-s)^{-\frac{n}{2}(\frac{1}{2}-\frac{1}{r})} e^{-\alpha\frac{d(E,F)^2}{t-s}}
\bigl\|1\!{\rm l}_Ff\bigr\|_{L^2}.
$$
\end{lemma}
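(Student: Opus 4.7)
This is the dual statement of Lemma~\ref{lem:ODp-2}, and my approach is to reduce to that lemma by duality on the adjoint propagator. For $f\in L^2({\mathbb{R}}^n)$ and any $g\in L^{r'}({\mathbb{R}}^n)$, I would start from
$$
|\langle 1\!{\rm l}_E \Gamma(t,s)(1\!{\rm l}_Ff),\,g\rangle|
= |\langle 1\!{\rm l}_F f,\,\Gamma(t,s)^*(1\!{\rm l}_Eg)\rangle|
\le \|1\!{\rm l}_Ff\|_{L^2}\,\bigl\|1\!{\rm l}_F\Gamma(t,s)^*(1\!{\rm l}_Eg)\bigr\|_{L^2}.
$$
Since $\tfrac{1}{2}-\tfrac{1}{r}=\tfrac{1}{r'}-\tfrac{1}{2}$, taking the supremum over $\|g\|_{L^{r'}}=1$ reduces the target estimate to
$$
\bigl\|1\!{\rm l}_F\Gamma(t,s)^*(1\!{\rm l}_Eg)\bigr\|_{L^2}
\lesssim (t-s)^{-\frac{n}{2}(\frac{1}{r'}-\frac{1}{2})} e^{-\alpha\frac{d(E,F)^2}{t-s}}\|1\!{\rm l}_Eg\|_{L^{r'}},
$$
for $r'\in(q',2]$, where $q'\in[1,2)$.

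Next, I would observe that the hypothesis $\sup_{0\le s\le t}\|\Gamma(t,s)\|_{\mathcal{L}(L^q)}<\infty$ is equivalent, by taking adjoints on $L^{q'}$, to $\sup_{0\le s\le t}\|\Gamma(t,s)^*\|_{\mathcal{L}(L^{q'})}<\infty$. By Proposition~\ref{prop:adjointGamma}, $\Gamma(t,s)^*$ is the propagator of the backward adjoint equation \eqref{eq2}. Consequently, the displayed estimate above is precisely the conclusion of Lemma~\ref{lem:ODp-2} applied to $\Gamma(t,s)^*$ in place of $\Gamma(t,s)$, with $q$ replaced by $q'$ and $r$ by $r'$.

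It then remains to justify the transfer of Lemma~\ref{lem:ODp-2} to the adjoint backward setting. This is exactly the content of Remark~\ref{rem:back}: every result derived for \eqref{eq1} and $\Gamma(t,s)$ admits a counterpart for \eqref{eq2} and $\Gamma(t,s)^*$, since the ellipticity hypothesis \eqref{ell} and $L^{q'}$-boundedness are stable under taking adjoints. Concretely, the proof of Lemma~\ref{lem:ODp-2} relies only on the Caccioppoli-type estimate (Proposition~\ref{prop:nrjloc}, with backward analogue Proposition~\ref{prop:nrjloc2}), the parabolic reverse H\"older inequality (Corollary~\ref{cor:4.4}, transferred to solutions of \eqref{eq2} via the time-reversal Lemma~\ref{lem:link-backward-forward}), the $L^2$ Gaffney-Davies bounds (Proposition~\ref{prop:OD}, whose proof applies to $\Gamma(t,s)^*$ verbatim), and Riesz-Thorin interpolation on a dyadic decomposition. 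Each ingredient is available; the argument of Lemma~\ref{lem:ODp-2} then runs unchanged and delivers the required bound.

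The only delicate point—and the main place where things could miscarry—is to ensure that nothing in the proof of Lemma~\ref{lem:ODp-2} beyond these four ingredients is used implicitly. Since the argument is entirely local (Caccioppoli and self-improved reverse H\"older on parabolic balls) followed by Gaffney-type decay plus interpolation, the verification is routine. Combining the resulting estimate for $\Gamma(t,s)^*$ with the duality above yields the conclusion.
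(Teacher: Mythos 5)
Your argument is correct and follows essentially the same route as the paper's proof: dualize to an $L^{r'}\to L^2$ off-diagonal bound for $\Gamma(t,s)^*$, identify $\Gamma(t,s)^*$ (via Proposition~\ref{prop:adjointGamma} and the time reversal of Lemma~\ref{lem:4.2}/Lemma~\ref{lem:link-backward-forward}) with a forward propagator for coefficients satisfying \eqref{ell} and uniformly bounded on $L^{q'}$, and then run the $L^{q'}\to L^2$ plus Gaffney interpolation machinery of Lemma~\ref{lem:ODp-2}. The paper states this more compactly (reducing to the scaled $L^{q'}\to L^2$ bound and interpolating with Proposition~\ref{prop:OD} applied to $\tilde\Gamma$), but the content is the same, and your observation that only times in $[s,t]$ are used takes care of the dependence of the reversed coefficients on the final time.
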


\begin{proof}
Using Proposition~\ref{prop:OD} we only have to show that
$$
\sup_{0\le s< t<\infty}\|(t-s)^{-\frac{n}{2}(\frac{1}{q'}-\frac{1}{2})}
\Gamma(t,s)^*\|_{{\mathscr{L}}(L^{q'},L^2)}<\infty.
$$
For $0\le s\le t<\infty$, we have by Proposition~\ref{prop:adjointGamma} and the
proof of Lemma~\ref{lem:4.2}: 
$$
\Gamma(t,s)^*=\tilde{\Gamma}(s,t)={\underset{\widetilde{ }}{\Gamma}}(t-s,0),
$$
where ${\underset{\widetilde{ }}{\Gamma}}$ is the propagator for equation
\eqref{eq1} with $A$ replaced by
$$
\tilde A(s,x)=
\begin{cases}
A^*(t-s,x)\mbox{ if }(s,x)\in[0,t]\times{\mathbb{R}}^n,
\\
A^*(0,x)\mbox{ otherwise.}
\end{cases}
$$
Since $\tilde A$ satisfies \eqref{ell} with the same constants as $A$, 
Proposition~\ref{prop:OD} applies to $\tilde\Gamma$ and the result 
follows from interpolation between $q'$ and $2$.
\end{proof}

\begin{remark}
\label{rk:esssup2}
For $p \in [1,\infty]$, 
$$
{\esssup_{ 0\le s \le t <\infty} 
\|\Gamma(t,s)\|_{\mathcal{L}(L^{p})}=M}\quad  \Longrightarrow \quad
{\sup_{0\le s \le t <\infty}\|\Gamma(t,s)\|_{\mathcal{L}(L^p)}=M}.
$$
This follows, using Proposition~\ref{lem:propag} and 
Proposition~\ref{prop:adjointGamma}, from the continuity of 
$t\mapsto \langle\Gamma(t,s)f,g \rangle$ on $[s,\infty)$ and of 
$s\mapsto \langle\Gamma(t,s)f,g \rangle$ on $[0,t]$ for all $f,g \in {\mathscr{D}}$ 
(or $f \in L^\infty$, $g \in L^1$ compactly supported), a simple measure 
theoretical argument and density arguments.
\end{remark}

\subsection{Propagators with kernel  bounds}
\label{sec:kernel}

We say that the propagators $\Gamma(t,s)$, $ 0\le s <t<\infty$, have kernel 
bounds if their kernels  $k(t,s,.,.)$ are measurable functions with
\begin{align}
\label{p0}
|k(t,s,x,y)| \leq C(t-s)^{-\frac{n}{2}}e^{-c\frac{|x-y|^{2}}{4(t-s)}},
\end{align}
for some $C,c>0$, all $ 0\le s <t<\infty$, and almost all $x,y \in {\mathbb{R}}^n$.

In this case, $\Gamma(t,s)$ is an integral operator and one has the integral
representation 
$$
\Gamma(t,s)f(x) = \int_{{\mathbb{R}}^n} k(t,s,x,y)f(y)\,{\rm d}y
$$ 
for all $f \in L^2({\mathbb{R}}^n)$ and almost every $x \in {\mathbb{R}}^n$. 
Moreover, as the integral makes sense for $f\in L^p({\mathbb{R}}^n)$, 
$1\le p\le \infty$, one can extend $\Gamma(t,s)$ to a bounded operator on 
$L^p({\mathbb{R}}^n)$, uniformly in $t\ge s$ (recall that $\Gamma(s,s)=I$).   

As mentioned, Aronson's proved kernel bounds for propagators of real equations. 
At this point, it is worth pointing out that the following result, proven by Hofmann 
and Kim in \cite[Theorems 1.1 and 1.2]{HK04}, extends to our situation.   

\begin{proposition} 
\label{prop:HK}
\begin{enumerate}[{\rm (1)}]
\item
The propagators $\Gamma(t,s)$, $0\le s\le t <\infty$, have kernel bounds if 
weak solutions in $L^2_{\rm loc}(\overline{{\mathbb{R}}^{n+1}_+})$
of \eqref{eq1} and of \eqref{eq2} on ${\mathbb{R}}^{n+1}_+$ satisfy scale 
invariant local $L^2-L^\infty$ bounds of Moser type on parabolic cylinders. 
The constants $C,c$ in \eqref{p0} depend on the 
ellipticity constants in \eqref{ell} and the bounds in the local estimates. 
\item
Conversely, if the propagators $\Gamma(t,s)$, $0\le s\le t <\infty$, have kernel 
bounds then global weak solutions satisfy the scale invariant local 
$L^2-L^\infty$ bounds of Moser type on Whitney parabolic cylinders. 
\end{enumerate} 
\end{proposition}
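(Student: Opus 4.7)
The proof plan follows the strategy of \cite[Theorems 1.1 and 1.2]{HK04}, adapted to the non-autonomous setting using the machinery of Section~\ref{sec3}.

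For part (1), the plan is to combine the $L^2$--$L^2$ Gaussian off-diagonal bounds of Proposition~\ref{prop:OD} with the hypothesized Moser estimates on forward and backward solutions. First I would apply the forward Moser bound to $u(\sigma,z):=(\Gamma(\sigma,s)f)(z)$ for $f\in L^2$ supported in $B(y,r)$ with $r=\sqrt{t-s}/2$, on a parabolic cylinder of scale $r$ around $(t,x)$; Proposition~\ref{prop:OD} then controls the resulting $L^2$ average and yields
\[
|(\Gamma(t,s)f)(x)| \lesssim (t-s)^{-n/4} e^{-c|x-y|^2/(t-s)} \|f\|_{L^2}.
\]
Applying the Moser bound for the backward equation \eqref{eq2} (through Lemma~\ref{lem:link-backward-forward}) to the adjoint propagator $\Gamma(t,s)^*=\tilde{\Gamma}(s,t)$ and dualizing gives the symmetric $L^1\to L^2$ Gaussian bound. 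Factoring $\Gamma(t,s)=\Gamma(t,\tau)\Gamma(\tau,s)$ at $\tau=(s+t)/2$ and composing produces the $L^1\to L^\infty$ bound $\|\Gamma(t,s)\|_{L^1\to L^\infty}\lesssim (t-s)^{-n/2}$, which is a pointwise a.e.\ bound on the kernel. To upgrade the on-diagonal bound to the full Gaussian \eqref{p0}, I would invoke Davies' exponential perturbation method: the conjugated family $e^{\gamma\cdot x}\Gamma(t,s)e^{-\gamma\cdot z}$ corresponds to a problem with the same ellipticity constants, the above $L^1\to L^\infty$ argument runs uniformly in $\gamma$ (tracking constants through the proof of Proposition~\ref{prop:OD}), and optimization in $\gamma$ produces the exponent $c|x-y|^2/4(t-s)$.

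For part (2), the plan is to reduce the Moser estimate on a Whitney cylinder to the integral representation afforded by the kernel. Fix a Whitney cylinder $Q=(t_0-r^2,t_0+r^2)\times B(x_0,r)$ with $t_0-4r^2>0$ and a global weak solution $u$. By Proposition~\ref{prop:nrjloc}, $u(s_0,\cdot)\in L^2_{\rm loc}$ at $s_0:=t_0-4r^2$. The plan is to write, on a slight retraction $Q'\subset Q$,
\[
u(\sigma,z) = \int_{{\mathbb{R}}^n} k(\sigma,s_0,z,y)\,\chi(y)u(s_0,y)\,{\rm d}y + r(\sigma,z),
\]
where $\chi$ is a smooth cutoff equal to $1$ on $B(x_0,10r)$ and zero outside $B(x_0,20r)$. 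The first term is controlled pointwise by \eqref{p0} and an $L^2$ average of $\chi u(s_0,\cdot)$, and the remainder $r$ accounts for the localization error, with its ``far-field'' contribution suppressed on $Q'$ by the Gaussian factor in \eqref{p0} and its ``localization defect'' absorbed by a Duhamel formula whose source is supported on $\{\nabla\chi\neq 0\}$, again at distance $\gtrsim r$ from $Q'$. Combining with a second application of Proposition~\ref{prop:nrjloc} to trade the slice $L^2$ norm at $s_0$ for an $L^2$ average on the Whitney enlargement of $Q$ yields the scale-invariant $L^2\to L^\infty$ Moser bound.

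The main obstacle lies in part (2): identifying $u$ with the kernel representation on $Q'$ requires an interior uniqueness step. Since Theorem~\ref{thm:intro} is only established in Section~\ref{sec5}, one cannot appeal to it here; instead, I would work directly with $\eta u$ for a space-time cutoff $\eta$ localized to a slight enlargement of $Q$, writing the inhomogeneous equation satisfied by $\eta u$ and solving it via Duhamel against the kernel $k$. The extra source terms $(\partial_t - {\rm div}\,A\nabla)(\eta u)$ are supported where $\eta$ transitions and are thus at positive parabolic distance from $Q'$, so \eqref{p0} makes them integrate to a small quantity controlled by the $L^2$ average of $u$ on the enlargement. The bookkeeping is heavier than in the autonomous case of \cite{HK04} because $k(t,s,x,y)$ depends on two time variables, but Proposition~\ref{prop:OD} and the backward construction of Lemma~\ref{lem:4.2} furnish exactly the substitutes one needs, and no new analytical input is required beyond what has been developed in Section~\ref{sec3}.
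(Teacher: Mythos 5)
Your overall route is the one the paper intends: the paper gives no self-contained proof but defers to \cite{HK04} ``mutatis mutandis'', replacing the classical fundamental solution by the propagators, and your part (1) is a faithful reconstruction. Running the Moser bound on $(\sigma,z)\mapsto\Gamma(\sigma,s)f(z)$ inside the strip, dualizing via the backward equation, composing at the midpoint, and then upgrading by a Davies-type conjugation is sound, because the twisted $L^2$ bound $\|\Gamma^{\psi}(t,s)\|_{{\mathscr{L}}(L^2)}\le e^{\kappa\gamma^2(t-s)}$ is exactly what the proof of Proposition~\ref{prop:OD} provides, and the Moser hypothesis applies to the untwisted solution $\Gamma(\cdot,s)(e^{-\gamma\varphi}f)$ before you reinstate the weight; your cylinders stay in the strip $[s,t]\times{\mathbb{R}}^n$, as the paper's remark requires.

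In part (2), however, one step as written would fail. The localization error for $\eta u$ is $(\partial_t-{\rm div}\,A\nabla)(\eta u)=u\,\partial_t\eta-A\nabla u\cdot\nabla\eta-{\rm div}(u\,A\nabla\eta)$, and the last, divergence-form term cannot be ``integrated to a small quantity'' using \eqref{p0} alone: pairing it with the kernel produces $\iint \nabla_y k(t,s,x,y)\cdot A(s,y)u(s,y)\nabla\eta(s,y)\,{\rm d}y\,{\rm d}s$, and no pointwise bound on $\nabla_y k$ is available. You must control it in an averaged sense, i.e.\ apply Proposition~\ref{prop:nrjloc2} (Caccioppoli for the backward equation satisfied by $s\mapsto k(t,s,x,\cdot)=$ the kernel of $\Gamma(t,s)^*$) together with Proposition~\ref{prop:OD} or \eqref{p0} on an enlargement, exactly as the paper does for $\nabla\Gamma(t,s)^*h$ in Step~3 of the proof of Theorem~\ref{thm:THE_THEOREM}; the term $A\nabla u\cdot\nabla\eta$ likewise needs Proposition~\ref{prop:nrjloc} for $u$ on the transition region, not just \eqref{p0}. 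Separately, the identification of $\eta u$ with its Duhamel representation is asserted rather than justified: the paper contains no non-autonomous Duhamel principle for $\Gamma$ (Lemma~\ref{lem:babyDuhamel} is for an autonomous reference operator). The clean fix is the one the paper's own uniqueness argument suggests: since $\eta u$ is compactly supported in space and lies in the (inhomogeneous) energy class, test the weak formulation directly against $(s,y)\mapsto\eta(s,y)\,\Gamma(t,s)^*h(y)$ and let $h$ run through an approximate identity, which needs no growth hypotheses at spatial infinity and no appeal to Theorem~\ref{thm:THE_THEOREM}; alternatively, prove the Duhamel identity in the energy class by the same duality and invoke $L^\infty(L^2)$ uniqueness on a finite interval. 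With these two repairs the argument closes, and the constants depend only on \eqref{ell} and the constants in \eqref{p0}, but as stated the treatment of the localization defect is a genuine gap.
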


The proof in \cite{HK04} is done for smooth coefficients. In this case, one can 
use the classical fundamental solution. However, once we have our notion of  
propagators, we can run the argument {\it mutatis mutandi}. In particular, 
supremum is replaced by essential supremum (or even supremum in time and 
essentiel supremum in $x$) in the local bounds. Also the argument for the first part 
is done for the propagators on the full range $-\infty<s<t<\infty$, but inspection 
reveals that, to get the estimate for $k(s,t,x,y)$, only local bounds on parabolic 
cylinders contained in the strip $[s,t]\times {\mathbb{R}}^n$ are used. This 
explains our hypothesis on the weak solutions in part (1). 

The converse is stated in \cite[Theorems 1.2]{HK04} for the full range. The 
argument there does not preserve strips $[s,t]\times {\mathbb{R}}^n$ 
(a modification of the argument could probably do it) but, if we restrict to 
parabolic cylinders of Whitney type (as in the definition of the maximal function 
$\tilde N$), then the argument gives the desired local bounds.  
  
\section{Existence and uniqueness results}
\label{sec5}

\subsection{Main result}
\label{sec:main}

Here, we prove interior representation from a weak control on solutions.

\begin{theorem}
\label{thm:THE_THEOREM}
Let $u$ be a local weak solution of \eqref{eq1} on $(a,b)\times {\mathbb{R}}^n$.
Assume
$$
M:=\int_{{\mathbb{R}}^{n}} \Bigl(\int_a^b \int_{B(x,\sqrt{b})} |u(t,y)|^{2}
\,{\rm d}y\,{\rm d}t\Bigr)^{\frac{1}{2}}e^{-\gamma |x|^2}\,{\rm d}x<\infty
$$
for some $\gamma<\frac{\alpha}{4(b-a)}$ where $\alpha$ is the constant 
in Proposition~\ref{prop:OD} ($\alpha=\frac{\lambda}{4\Lambda^2}$). 
Then $u(t,\cdot) = \Gamma(t,s)u(s,\cdot)$ 
for every $a<s\leq t<b$, in the following sense:
$$
\int_{{\mathbb{R}}^n} u(s,x)\,\overline{\Gamma(t,s)^*h(x)}\,{\rm d}x 
= \int_{{\mathbb{R}}^n} u(t,x)\, \overline{h(x)}\,{\rm d}x 
\quad \forall h \in {\mathscr{C}}_c({\mathbb{R}}^n).
$$
\end{theorem}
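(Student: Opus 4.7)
The plan is to construct a test function for the weak formulation of \eqref{eq1} out of the backward propagator $\Gamma(t,\cdot)^{*}h$, and then to absorb the error produced by spatial cut-offs at infinity by balancing the Gaussian off-diagonal bounds of Proposition~\ref{prop:OD} for $\Gamma(t,r)^{*}$ against the weighted growth control $M<\infty$. The precise threshold $\gamma<\alpha/(4(b-a))$ is exactly the balance point between these two effects.

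Fix $h\in\mathscr{C}_c({\mathbb{R}}^n)$ with ${\rm supp}\,h\subset B(0,\rho)$ and $a<s<t<b$. Set $v(r,\cdot):=\Gamma(t,r)^{*}h$ for $r\in[a,t]$; by Proposition~\ref{prop:adjointGamma} and Lemma~\ref{lem:4.2}, $v\in\mathscr{C}([a,t];L^2)$ is a weak solution of \eqref{eq2} on $(a,t)\times{\mathbb{R}}^n$ with final datum $v(t,\cdot)=h$. Applying Proposition~\ref{prop:OD} to the adjoint propagator and using $d(B(x,\sqrt{b}),B(0,\rho))\ge |x|/2$ for $|x|\ge 2(\rho+\sqrt{b})$, one gets
\begin{equation*}
\|\mathbf{1}_{B(x,\sqrt{b})}v(r,\cdot)\|_{L^2}\lesssim \|h\|_{L^2}\,e^{-\alpha|x|^2/(4(t-r))},\qquad r\in[a,t).
\end{equation*}
Crucially, because $v(t,\cdot)=h$ vanishes on such balls, a localised Caccioppoli estimate for the backward equation (test against $\eta^{2}v$ with a spatial cut-off $\eta$ in $B(x,2\sqrt{b})$ and use $\|\eta h\|_{L^2}=0$) upgrades this to the gradient bound
\begin{equation*}
\int_s^t\!\!\int_{B(x,\sqrt{b})}|\nabla v|^{2}\,dy\,dr \lesssim \|h\|_{L^2}^{2}\,e^{-\alpha|x|^2/(2(t-s))},
\end{equation*}
so the ``bad endpoint'' $r=t$ of the backward equation causes no trouble away from ${\rm supp}\,h$.

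Next pick a spatial cut-off $\chi_R\in\mathscr{C}_c^\infty({\mathbb{R}}^n)$ equal to $1$ on $B(0,R)$, supported in $B(0,2R)$, with $|\nabla\chi_R|\lesssim 1/R$, and $R>2\rho$ (so $\chi_Rh=h$). Using Remark~\ref{rem:local} together with the Lions product formula (Remark~\ref{rem:<v,dtv>}) on the bounded cylinder $(s,t)\times B(0,2R)$, I pair the weak forward equation for $u$ against the admissible test function $\chi_R(x)v(r,x)$. The $\chi_R\nabla v$-terms cancel through the pointwise identity $\nabla u\cdot\overline{A^{*}\nabla v}=A\nabla u\cdot\overline{\nabla v}$, leaving the finite-scale identity
\begin{equation*}
\int u(t,x)\,\overline{h(x)}\,dx - \int u(s,x)\,\overline{\chi_R(x)v(s,x)}\,dx \;=\; \int_s^t\!\!\int_{{\mathbb{R}}^n}\!\nabla\chi_R\cdot\bigl[u\,\overline{A^{*}\nabla v}-\overline{v}\,A\nabla u\bigr]\,dx\,dr \;=:\; E(R).
\end{equation*}

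The remainder $E(R)$ is supported in the annulus $A_R:=\{R<|x|<2R\}$ and carries a factor $1/R$. Combining Proposition~\ref{prop:nrjloc} applied to $u$ (to trade $\|\nabla u\|_{L^2}$ on $A_R$ for $\|u\|_{L^2}$ on a slightly enlarged annulus $A_R'$), the two Gaussian bounds above for $v$ and $\nabla v$, and Cauchy-Schwarz on a $\sqrt{b}$-net of balls covering $A_R'$ with bounded overlap, yields
\begin{equation*}
|E(R)|\lesssim \frac{\|h\|_{L^2}}{R}\int_{A_R'}\!\Bigl(\int_a^b\!\!\int_{B(x,\sqrt{b})}|u(r,y)|^{2}\,dy\,dr\Bigr)^{1/2}e^{-\alpha|x|^2/(4(t-s))}\,dx.
\end{equation*}
Multiplying and dividing by $e^{-\gamma|x|^2}$ and invoking $\gamma<\alpha/(4(b-a))\le\alpha/(4(t-s))$, the remaining Gaussian factor is bounded by $e^{-cR^{2}}$ with $c>0$ on $A_R'$, so $|E(R)|\lesssim R^{-1}Me^{-cR^{2}}\to 0$. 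A parallel dominated-convergence argument for the second term on the left-hand side (using the Gaussian decay of $v(s,\cdot)$ and the weighted $L^{2}$ growth of $u(s,\cdot)$ deduced from $M<\infty$, the continuity $u\in\mathscr{C}([a,b];L^{2}_{\rm loc})$ of Remark~\ref{rem:local}, and the reverse H\"older estimates of Corollary~\ref{cor:4.4}) gives $\int u(s)\,\overline{\chi_Rv(s)}\to\int u(s)\,\overline{\Gamma(t,s)^{*}h}$, which combined with $E(R)\to 0$ concludes the proof. The main delicate point is the rigorous justification of the finite-scale identity above: since $v$ only lies in $L^{2}(s,t;H^{1}_{\rm loc})$ with $\partial_r v\in L^{2}(s,t;H^{-1}_{\rm loc})$, the product rule must be applied in the Lions sense on bounded cylinders (after a time mollification of $v$ if needed), while tracking that the bulk cancellation of the $A\nabla u\cdot\overline{\chi_R\nabla v}$ terms persists through the approximation.
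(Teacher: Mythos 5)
Your proposal is correct and follows the same overall strategy as the paper (test the weak formulation against $\Gamma(t,\cdot)^{*}h$, cut off in space, and beat the weight $e^{-\gamma|x|^2}$ with the Gaussian off-diagonal decay of Proposition~\ref{prop:OD}, the threshold $\gamma<\alpha/(4(b-a))$ arising exactly as in the paper since $t-s\le b-a$), but it genuinely differs in how the time variable is handled. The paper keeps a temporal cut-off $\eta\in\mathscr{C}_c^\infty(a,t)$ and recovers the endpoint values via an approximate-identity choice of $\eta'$ (its Steps 4--6); this is forced because it controls $\nabla\phi$ only through Proposition~\ref{prop:nrjloc2}, which is an interior estimate for the backward equation and degenerates at the final time $t$. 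You avoid both ingredients: (a) you evaluate the Lions pairing $r\mapsto\langle u(r),\chi_R v(r)\rangle$ directly at $r=s$ and $r=t$, which is legitimate since $u\in\mathscr{C}([s,t];L^2_{\rm loc})$ by Remark~\ref{rem:local}, $v\in\mathscr{C}([a,t];L^2)$, and the product rule of Remark~\ref{rem:<v,dtv>} localises to bounded cylinders with the cut-off inserted (the delicacy you flag is real but is exactly the computation the paper performs in its Step~2, so it is standard); and (b) you replace Proposition~\ref{prop:nrjloc2} by a Caccioppoli estimate for $v$ \emph{up to} the final time on balls disjoint from ${\rm supp}\,h$, exploiting that the final datum vanishes there -- the sign works out for the backward equation, since integrating $\frac{d}{dr}\|\eta v(r)\|_2^2\ge\lambda\|\eta\nabla v(r)\|_2^2-C\|v(r)\nabla\eta\|_2^2$ over $[s,t]$ gives $\lambda\int_s^t\|\eta\nabla v\|_2^2\le\|\eta h\|_2^2+C\int_s^t\|v\nabla\eta\|_2^2$, and $\eta h=0$ on far balls. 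What each approach buys: the paper's mollification argument needs no endpoint information about $v$ beyond its trace and no Caccioppoli estimate at $r=t$, while yours is shorter, confines the error entirely to the spatial annulus $A_R$, and dispenses with Steps 4--6 altogether. Two minor points: the invocation of Corollary~\ref{cor:4.4} for the $s$-slice is unnecessary (Proposition~\ref{prop:nrjloc} already gives $\|u(s,\cdot)\|_{L^2(B(x,\sqrt b/2))}\lesssim_{s,a,b}(\int_a^b\int_{B(x,\sqrt b)}|u|^2)^{1/2}$, which is what the paper uses in its Step~1), and the passage from balls $B(x,C\sqrt b)$ back to the radius $\sqrt b$ appearing in $M$ needs the finite covering/translation-of-weight argument as in Proposition~\ref{prop:tildeNinLp}, which your ``bounded overlap'' remark implicitly covers.
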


\begin{proof}
{\tt Step~0:}
For $h\in{\mathscr{C}}_c({\mathbb{R}}^n)$, its support being
included in $B(0,\rho)$ for some $\rho>0$, we have for all $j\ge 1$
and for all $x\in{\mathbb{R}}^n$ that
\begin{equation}
\label{eq:step0}
e^{-\beta4^j}\|1\!{\rm l}_{S_j(x,\sqrt{b})}h\|_{L^2}\lesssim
\bigl(1\!{\rm l}_{|x|\le\rho}+e^{-\beta\frac{(|x|-\rho)^2}{4b}}1\!{\rm l}_{|x|>\rho}\bigr)
\|h\|_{L^2}
\end{equation}
(recall that $S_j(x,\sqrt{b})$ denotes the annulus 
$B(x,2^{j+1}\sqrt{b})\setminus B(x,2^j\sqrt{b})$ if $j\ge 2$ and the ball 
$B(x,2\sqrt b)$ if $j=1$) and therefore, for $\gamma<\frac{\beta}{4b}$,
\begin{equation}
\label{eq:step0bis}
\sup_{x\in{\mathbb{R}}^n}\bigl(e^{\gamma|x|^2}
e^{-\beta4^j}\|1\!{\rm l}_{S_j(x,\sqrt{b})}h\|_{L^2}\bigr)\lesssim
\|h\|_{L^2}, \quad \forall\,j\in{\mathbb{N}}.
\end{equation}
If $|x|\le \rho$, then \eqref{eq:step0} is immediate. Let $|x|>\rho$:
$\|1\!{\rm l}_{S_j(x,\sqrt{b})}h\|_2\neq0$ only if 
$B(0,\rho)\cap S_j(x,\sqrt{b})\neq\emptyset$. 
Pick $y\in B(0,\rho)\cap S_j(x,\sqrt{b})$ and we have that
$$
|x|\le |x-y|+|y|\le 2^{j+1}\sqrt{b}+\rho, \quad\mbox{and then}\quad
\frac{1}{4b}(|x|-\rho)^2\le 4^j.
$$
This implies \eqref{eq:step0}. Now, for $\gamma<\frac{\beta}{4b}$, we have that 
$$
\sup_{x\in{\mathbb{R}^n}}\bigl(e^{\gamma|x|^2} e^{-\beta\frac{(|x|-\rho)^2}{4b}}\bigr)
=e^{\frac{\beta\gamma\rho^2}{\beta-4b\gamma}}<\infty\quad\mbox{and}\quad
\sup_{x\in{\mathbb{R}^n}}\bigl(1\!{\rm l}_{|x|\le\rho}e^{\gamma|x|^2} \bigr)
=e^{\gamma\rho^2}<\infty,
$$
which proves \eqref{eq:step0bis}.

\medskip

\noindent
{\tt Step~1:}
{\sl We show that for all $a<s\le t<b$ and all 
$h \in {\mathscr{C}}_c({\mathbb{R}}^n)$,
we have that $u(s,\cdot)\overline{\Gamma(t,s)^*h} \in L^1({\mathbb{R}}^n)$.}

\medskip 
 
\noindent 
Let $\rho>0$ be such that ${\rm supp}\,h \subset B(0,\rho)$. Using 
Proposition~\ref{prop:nrjloc} and Proposition~\ref{prop:OD}, we have the 
following (with constants depending on $t,s,b,\rho$):
\begin{align*}
\int_{{\mathbb{R}}^n} & |u(s,x)||\Gamma(t,s)^*h(x)|\,{\rm d}x  =
\int_{{\mathbb{R}}^n} \Bigl(\fint_{B(x,\frac{\sqrt{b}}{2})} |u(s,y)||\Gamma(t,s)^*h(y)|
\,{\rm d}y\Bigr)\,{\rm d}x 
\\ 
&\le \sum_{j=1}^\infty \int_{{\mathbb{R}}^n} 
\Bigl(\fint_{B(x,\frac{\sqrt{b}}{2})} |u(s,y)|^2\,{\rm d}y\Bigr)^{\frac{1}{2}}
\|1\!{\rm l}_{B(x,\frac{\sqrt{b}}{2})}\Gamma(t,s)^*(1\!{\rm l}_{S_j(x,\sqrt{b})}h)\|_{L^2}
\,{\rm d}x 
\\
&\lesssim  \sum_{j=1}^\infty \int_{{\mathbb{R}}^n} 
\Bigl(\int_a^b \int_{B(x,\sqrt{b})} |u(\sigma,y)|^2
\,{\rm d}y\,{\rm d}\sigma\Bigr)^{\frac{1}{2}}
e^{-\alpha\frac{4^j b}{b-a}}\|1\!{\rm l}_{S_j(x,\sqrt{b})}h\|_{L^2}\,{\rm d}x
\\
&\lesssim  M \|h\|_{L^2}
<\infty.
\end{align*}
where we have used, for any $\varepsilon>0$,  
$\alpha\frac{4^j b}{b-a}=\varepsilon 4^j+\beta 4^j$, the fact that 
$\displaystyle{\sum_{j=1}^\infty e^{-\varepsilon 4^j}<\infty}$ and
\eqref{eq:step0bis} with $\beta=\frac{\alpha b}{(b-a)}-\varepsilon$ and 
$\gamma<\frac{\beta}{4b}$ in the last line.

\medskip

\noindent
{\tt Step 2:} 
{\sl Some identities.}

\medskip

\noindent
We fix $h \in {\mathscr{C}}_c({\mathbb{R}}^n)$ and let $a<t<b$.
Define $\phi(s,x):=\Gamma(t,s)^*h(x)$ for all $s \in [0,t]$ and $x\in {\mathbb{R}}^n$. 
By construction, the function $\phi$ is a weak solution of the backward 
equation \eqref{eq2} with  $\nabla\phi\in L^2(0,t; L^2({\mathbb{R}}^n))$ 
and one has $\phi\in{\mathscr{C}}([0,t];L^2({\mathbb{R}}^n))$. 
Let $\chi \in {\mathscr{C}}_c^\infty({\mathbb{R}}^n;{\mathbb{R}})$, 
and let $\eta \in {\mathscr{C}}_c^\infty((a,t);{\mathbb{R}})$. Denote by 
$\Omega$ a bounded open set containing the support of $\chi$. Since $u$ is a 
weak solution of \eqref{eq1}, we have that
$$
{}_{L^2(a,b;H^{-1}(\Omega))}\langle \partial_su,
\phi \chi^2\eta\rangle_{L^2(a,b;H^1_0(\Omega))}
=-\int_a^b \int_\Omega A(s,x)\nabla u(s,x)\cdot
\nabla \bigl(\overline{\phi(s,x)}\chi^{2}(x)\eta(s)\bigr) \,{\rm d}x\,{\rm d}s.
$$
Since $\phi$ is a weak solution of \eqref{eq2}, $\nabla u\in L^2_{\rm loc}$ and 
$u\in{\mathscr{C}}([a,b],L^2_{\rm loc})$ by Proposition \ref{prop:nrjloc},  
we have that
$$
{}_{L^2(a,b;H^1_0(\Omega))}\langle u\chi^2\eta,
\partial_s\phi\rangle_{L^2(a,b;H^{-1}(\Omega))}
= \int_a^b \int_\Omega \nabla\bigl(u(s,x)\chi^2(x)\eta(s)\bigr)\cdot
\overline{A(s,x)^{*}\nabla \phi(s,x)}\,{\rm d}x\,{\rm d}s.
$$
Noting that 
\begin{align*}
{}_{L^2(H^{-1})}\langle \partial_su,
\phi \chi^2\eta\rangle_{L^2(H^1_0)}&+
{}_{L^2(H^1_0)}\langle u\chi^2\eta,\partial_s\phi\rangle_{L^2(H^{-1})}
+\int_a^b \int_\Omega \chi^2(x)u(s,x)\overline{\phi(s,x)}\eta'(s)\,{\rm d}x\,{\rm d}s
\\
=&\int_a^b\partial_{s}\Bigl(\int_\Omega \bigl(u(s,x)
\overline{\phi(s,x)}\chi^2(x)\eta(s)\bigr)\,{\rm d}x\Bigr)\,{\rm d}s = 0,
\end{align*}
we get, adding the three equations above,
\begin{align*}
&\int_a^b \int_\Omega \chi^2(x)u(s,x)\overline{\phi(s,x)}\eta'(s)\,{\rm d}x\,{\rm d}s 
\\
=&\int_a^b \eta(s) \int_\Omega \Bigl(A(s,x)\nabla u(s,x)\cdot
\nabla\bigl(\overline{\phi(s,x)}\chi^2(x)\bigr) -\nabla\bigl(u(s,x)\chi^2(x)\bigr)\cdot
\overline{A(s,x)^{*}\nabla\phi(s,x)}\Bigr)\,{\rm d}x\,{\rm d}s.
\end{align*}
Calculating, some terms cancel and we obtain
\begin{align}
\label{eq:step2}
&\int_a^b \int_\Omega \chi^2(x)u(s,x)\overline{\phi(s,x)}\eta'(s)\,{\rm d}x\,{\rm d}s 
\\
=&\int_{a}^{b} \eta(s) 
\int_{\Omega} \Bigl(\bigl(A(s,x)\nabla u(s,x)\cdot\nabla \chi^{2}(x)\bigr) 
\overline{\phi(s,x)}
-u(s,x)\bigl(\nabla\chi^{2}(x)\cdot \overline{A(s,x)^{*}\nabla \phi(s,x)}\bigr)\Bigr)
\,{\rm d}x\,{\rm d}s.
\nonumber
\end{align}

\noindent
{\tt Step 3:} 
{\sl We now prove that 
\begin{equation}
\label{eq:chigone}
\int_a^b \int_{{\mathbb{R}}^n} u(s,x)\overline{\phi(s,x)}\eta'(s)\,{\rm d}x\,{\rm d}s = 0.
\end{equation}
}

\noindent
We choose $\chi$ of the form 
$x\mapsto \psi\bigl(\frac{|x|}{R}\bigr)$ for $R>0$ and 
$\psi \in {\mathscr{C}}_c^\infty([0,\infty))$ supported on 
$[0,2]$, and equal to $1$ on $[0,1]$. Note that $\chi(y)=1$ for all 
$y \in B(0,R)$ and $\|\nabla \chi\|_{\infty} \lesssim R^{-1}$. 
We have already shown that $u(s,\cdot)\overline{\phi(s,\cdot)} \in 
L^1({\mathbb{R}}^n)$ for every $s\in (a, t]$. 
Thus the left hand side of \eqref{eq:step2} goes to the left hand side of
\eqref{eq:chigone} as $R$ goes to $\infty$ by dominated convergence.
To prove \eqref{eq:chigone}, it remains to show that 
$\overline{\phi}\nabla u \in L^1((c,d)\times{\mathbb{R}}^n)$ and that 
$u\overline{\nabla \phi} \in L^1((c,d)\times{\mathbb{R}}^n)$  with 
$a<c<d<t$ such that
${\rm supp}\,\eta\subset[c,d]$, so that dominated convergence applies as well as $R$  goes to $\infty$.   
Using Proposition~\ref{prop:nrjloc} and Proposition~\ref{prop:OD}, this is done 
as a simple modification of the argument used in Step~0 and Step~1.
\begin{align*}
&\int_c^{d} \int_{{\mathbb{R}}^n}  |\nabla u(s,x)||\Gamma(t,s)^*h(x)|\,{\rm d}x\,{\rm d}s  
=\int_c^{d} \int_{{\mathbb{R}}^n} \Bigl(\fint_{B(x,\frac{\sqrt{b}}{2})} 
|\nabla u(s,y)||\Gamma(t,s)^*h(y)|\,{\rm d}y\Bigr)\,{\rm d}x\,{\rm d}s 
\\ 
&\ \lesssim  \sum_{j= 1 }^\infty \int_{{\mathbb{R}}^n} \Bigl(\int_c^b \fint_{B(x,\frac{\sqrt{b}}{2})}
|\nabla u(s,y)|^2\,{\rm d}y\,{\rm d}s\Bigr)^{\frac{1}{2}}
 \Bigl(\int_c^d \|1\!{\rm l}_{B(x,\frac{\sqrt{b}}{2})} 
\Gamma(t,s)^*(1\!{\rm l}_{S_j(x,\sqrt{b})}h)\|_{L^2}^2\,{\rm d}s \Bigr)^{\frac{1}{2}} \,{\rm d}x 
\\
& \ \lesssim  \sum_{j= 1  }^\infty  \int_{{\mathbb{R}}^n} \Bigl(\int_a^b 
\int_{B(x,\sqrt{b})} |u(s,y)|^2\,{\rm d}y\,{\rm d}s\Bigr)^{\frac{1}{2}}
e^{-\frac{\alpha b}{b-a} 4^j}\|1\!{\rm l}_{S_j(x,\sqrt{b})}h\|_{L^2}\,{\rm d}x .
\end{align*}
We conclude by \eqref{eq:step0bis} with $\beta<\frac{\alpha b}{(b-a)}$ and  this gives $\overline{\phi}\nabla u \in L^1((c,d)\times{\mathbb{R}}^n)$. 
Using Proposition~\ref{prop:nrjloc2}, instead of Proposition~\ref{prop:nrjloc},
and Proposition~\ref{prop:OD}, and $d<t$:
\begin{align*}
& \int_{ c }^d \int_{{\mathbb{R}}^n}  |u(s,x)||\nabla \Gamma(t,s)^*h(x)|\,{\rm d}x\,{\rm d}s  
= \int_{ c }^d\int_{{\mathbb{R}}^n} \Bigl(\fint_{B(x,\frac{\sqrt{b}}{2})} |u(s,y)|
|\nabla \Gamma(t,s)^*h(y)|\,{\rm d}y\Bigr)\,{\rm d}x\,{\rm d}s 
\\ 
&\ \lesssim \sum_{j=1}^\infty \int_{{\mathbb{R}}^n} \Bigl(\int_a^b 
\fint_{B(x,\frac{\sqrt{b}}{2})} |u(s,y)|^2\,{\rm d}y\,{\rm d}s\Bigr)^{\frac{1}{2}}
\Bigl( \int_{ c }^d \|1\!{\rm l}_{B(x,\frac{\sqrt{b}}{2})}
\nabla \Gamma(t,s)^*(1\!{\rm l}_{S_j(x,\sqrt{b})}h)\|_{L^2}^2\,{\rm d}s
\Bigr)^{\frac{1}{2}}\,{\rm d}x 
\\
&\ \lesssim  \sum_{j=1}^\infty \int_{{\mathbb{R}}^n} 
\Bigl(\int_a^b \int_{B(x,\sqrt{b})} |u(s,y)|^2\,{\rm d}y\,{\rm d}s\Bigr)^{\frac{1}{2}}
\Bigl(\int_a^t \|1\!{\rm l}_{B(x,\sqrt{b})}
\Gamma(t,s)^*(1_{S_j(x,\sqrt{b})}h)\|_{L^2}^2\,{\rm d}s\Bigr)^{\frac{1}{2}}\,{\rm d}x 
\\
&\ \lesssim  \sum_{j=1}^\infty \int_{{\mathbb{R}}^n} 
\Bigl(\int_a^b \int_{B(x,\sqrt{b})} |u(s,y)|^2
\,{\rm d}y\,{\rm d}s\Bigr)^{\frac{1}{2}}e^{-\alpha \frac{4^jb}{b-a}}
\|1\!{\rm l}_{S_j(x,\sqrt{b})}h\|_{L^2}\,{\rm d}x.
\end{align*}
By \eqref{eq:step0bis} with $\beta<\frac{\alpha b}{(b-a)}$, this gives 
$u\overline{\nabla \phi} \in L^1((c,d)\times{\mathbb{R}}^n)$. 
We have thus established \eqref{eq:chigone}.

\medskip

\noindent
{\tt Step 4:}
{\sl Choosing a specific $\eta$.}

\medskip

\noindent
We now pick $v \in {\mathscr{C}}_c^\infty\bigl(-\frac{1}{2},\frac{1}{2}\bigr)$ 
such that $\int_{-\frac{1}{2}}^{\frac{1}{2}} v(y)dy=1$. For 
$\varepsilon\in\bigl(0,\frac{t-s}{3}\wedge \frac{s-a}{3} \bigr)$, we consider 
$\eta\in{\mathscr{C}}_c^\infty(a,t)$ such that
$$
\eta'(\sigma) = \frac{1}{2\varepsilon}\,
v\Bigl(\frac{\sigma-(s-\varepsilon)}{2\varepsilon}\Bigr)
-\frac{1}{2\varepsilon}\,
v\Bigl(\frac{\sigma-(t-2\varepsilon)}{2\varepsilon}\Bigr) \quad \forall\,\sigma \in (a,t).
$$
Remark that the support of $\eta'$ is contained in 
$[a+\varepsilon, t-\varepsilon]$ and as it has mean value 0, the same 
hold for $\eta$. 
From \eqref{eq:chigone} (with $s$ becoming $\sigma$ in the integral), 
we thus get that
\begin{align*}
&\frac{1}{2\varepsilon} \int_{s-2\varepsilon}^s v
\Bigl(\frac{\sigma-(s-\varepsilon)}{2\varepsilon}\Bigr) 
\Bigl(\int_{{\mathbb{R}}^n} u(\sigma,x)\overline{\phi}(\sigma,x)\,{\rm d}x\Bigr)
\,{\rm d}\sigma
\\
=&\frac{1}{2\varepsilon} \int_{t-3\varepsilon}^{t-\varepsilon} v
\Bigl(\frac{\sigma-(t-2\varepsilon)}{2\varepsilon}\Bigr) 
\Bigl(\int_{{\mathbb{R}}^n} u(\sigma,x)\overline{\phi}(\sigma,x)
\,{\rm d}x\Bigr)\,{\rm d}\sigma,
\end{align*}
and thus, changing variables:
\begin{align*}
&\int_{-\frac{1}{2}}^{\frac{1}{2}} v(\sigma)\Bigl(\int_{{\mathbb{R}}^n} 
u(s-\varepsilon(1-2\sigma),x)\overline{\phi}(s-\varepsilon(1-2\sigma),x)
\,{\rm d}x\Bigr)\,{\rm d}\sigma
\\
=& \int_{-\frac{1}{2}}^{\frac{1}{2}} v(\sigma)\Bigl(\int_{{\mathbb{R}}^n} 
u(t-2\varepsilon(1-\sigma),x)\overline{\phi}(t-2\varepsilon(1-\sigma),x)
\,{\rm d}x\Bigr)\,{\rm d}\sigma.
\end{align*}
Recall that $\phi(t,x)=h(x)$ and $\phi(s,x)=\Gamma(t,s)^*h(x)$.
The result will be proven once we have established that 
\begin{equation}
\label{eq:etagone1}
\lim_{\varepsilon \to 0} 
\int_{-\frac{1}{2}}^{\frac{1}{2}} v(\sigma) \int_{{\mathbb{R}}^n} 
u(t-2\varepsilon(1-\sigma),x) \overline{\phi}(t-2\varepsilon(1-\sigma),x)
\,{\rm d}x\,{\rm d}\sigma
=\int_{{\mathbb{R}}^n} u(t,x)\overline{\phi}(t,x)\,{\rm d}x
\end{equation}
and that
\begin{equation}
\label{eq:etagone2}
\lim_{\varepsilon \to 0}  
\int_{-\frac{1}{2}}^{\frac{1}{2}} v(\sigma) \int_{{\mathbb{R}}^n} 
u(s-\varepsilon(1-2\sigma),x)
\overline{\phi}(s-\varepsilon(1-2\sigma),x)\,{\rm d}x\,{\rm d}\sigma 
= \int_{{\mathbb{R}}^n} u(s,x)\overline{\phi}(s,x)\,{\rm d}x,
\end{equation}

\noindent
{\tt Step~5:} 
{\sl Proof of \eqref{eq:etagone1}.}

\medskip

\noindent
Set $f(\tau,x)=u(t-\tau,x)\overline{\phi}(t-\tau,x)$ for $\tau\in [0,t-a]$ and 
$g(\tau,x)= \fint_{B(x,\frac{\sqrt{b}}{2})} f(\tau,y)\, {\rm d}y.
$ 
After averaging, we have to show that 
$$
\lim_{\varepsilon \to 0} 
\int_{-\frac{1}{2}}^{\frac{1}{2}}  \int_{{\mathbb{R}}^n} 
v(\sigma) g(2\varepsilon(1-\sigma),x) \, {\rm d}x\, {\rm d}\sigma = 
\int_{-\frac{1}{2}}^{\frac{1}{2}} \int_{{\mathbb{R}}^n} v(\sigma) g(0,x) 
\,{\rm d}x\, {\rm d}\sigma.
$$
It follows from Proposition~\ref{prop:nrjloc} and Proposition~\ref{prop:nrjloc2} 
that for all $x\in {\mathbb{R}}^n$, 
$f \in {\mathscr{C}}([0,t-a];L^1(B(x,\frac{\sqrt{b}}{2}))$. 
Hence $g(2\varepsilon(1-\sigma),x) \to g(0,x)$ when $\varepsilon\to 0$ for all 
$(\sigma,x)$. 

For $\tau=2\varepsilon(1-\sigma)$, we have $\tau\in [0,3\varepsilon]\subset [0, t-a]$.
To apply dominated convergence, we show that  
$\sup_{\tau\in [0,t-a]}|g(\tau,x)|$ is integrable on ${\mathbb{R}}^n$. 
This is a variant of Step 1 to get uniformity. Indeed, 
for all $x\in{\mathbb{R}}^n$ and $\tau\in [0, t-a]$, by Proposition~\ref{prop:nrjloc} 
and Proposition~\ref{prop:OD} with $\tau\le b-a$
\begin{align*}
|g(\tau,x)|\le &\fint_{B(x,\frac{\sqrt{b}}{2})}
|u(t-\tau,y)||\Gamma(t,t-\tau)^*h(y)|
\,{\rm d}y
\\
\lesssim &\Bigl(\fint_{B(x,\frac{\sqrt{b}}{2})} |u(t-\tau,y)|^2
\,{\rm d}y\Bigr)^{\frac{1}{2}}
\Bigl(\sum_{j=1}^\infty 
\|1\!{\rm l}_{B(x,\frac{\sqrt{b}}{2})}\Gamma(t,t-\tau)^{*}
(1\!{\rm l}_{S_j(x,\sqrt{b})}h)\|_{L^2}\Bigr)  
\\
\lesssim & 
\Bigl(\int_a^b \int_{B(x,\sqrt{b})} |u(\sigma',y)|^2
\,{\rm d}y\,{\rm d}\sigma'\Bigr)^{\frac{1}{2}}
\Bigl(\sum_{j=1}^\infty e^{-\alpha\frac{4^jb}{b-a}}
\|1\!{\rm l}_{S_j(x,\sqrt{b})}h\|_{L^2}\Bigr).
\end{align*}
This estimate is uniform with respect to $\tau$ and we get integrability as 
in Step~1 using Step~0.

\medskip

\noindent
{\tt Step~6:}
{\sl Proof of \eqref{eq:etagone2}.}

The proof is exactly the same as that of \eqref{eq:etagone1} taking now  
$f(\tau,x)=u(s-\tau,x)\overline{\phi}(s-\tau,x)$ for $\tau\in [0,s-a]$ and using 
$\tau=\varepsilon(1-2\sigma)$. 
\end{proof}

\subsection{Results for $p \ge 2$}
\label{subsec:ex!p>2}

Our uniqueness  results will be based on the following well-know fact. Let $X$ 
be a Banach space and $Y$ its dual space.  If $(y_k)_{k\in {\mathbb{N}}}$
is a sequence weakly$^*$ converging to $y$ in $Y$, and 
$(x_k)_{k \in {\mathbb{N}}}$ is a sequence strongly converging to $x$ in $X$, 
then $(\langle y_k, x_k\rangle)_{k\in {\mathbb{N}}}$ 
converges to $\langle y, x \rangle$. Of course, when $X$ is reflexive, 
weak$^*$ and weak convergence coincide. 
 
We illustrate this principle by first proving that $L^\infty(L^2)$ is always a class 
of uniqueness for $L^2$ data. Next  we look at $L^p$ data for $p>2$ using 
non-tangential maximal estimates.  

\begin{theorem}
\label{thm:linfl2}
For $u \in {\mathscr{D}}'$, the following assertions are equivalent.
\begin{align}
&\exists !\,f\in L^2({\mathbb{R}}^n)\mbox{ such that, for all } t>0, \; 
u(t,\cdot)=\Gamma(t,0)f\mbox{ in } L^2({\mathbb{R}}^n);
\label{eq521}\\
&u \mbox{ is a global weak solution of \eqref{eq1} in }L^\infty(L^2).
\label{eq522}
\end{align}
\end{theorem}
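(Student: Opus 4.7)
The easy direction is \eqref{eq521}$\Rightarrow$\eqref{eq522}: if $u(t,\cdot)=\Gamma(t,0)f$ with $f\in L^2$, then Lemma \ref{lem:propag} gives that $u$ is a global weak solution of \eqref{eq1}, and the contractivity of $\Gamma$ yields $\|u\|_{L^\infty(L^2)}\le \|f\|_{L^2}$, so $u\in L^\infty(L^2)$.

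For the main direction \eqref{eq522}$\Rightarrow$\eqref{eq521}, the plan is to apply the interior reproducing formula of Theorem \ref{thm:THE_THEOREM} and then let the base time tend to $0$. First I would verify the weight hypothesis of Theorem \ref{thm:THE_THEOREM} on an arbitrary bounded time window $(a,b)\subset(0,\infty)$. Since $u\in L^\infty(L^2)$, one has the crude pointwise in $x$ bound
$$
\Bigl(\int_a^b\int_{B(x,\sqrt b)}|u(t,y)|^{2}\,{\rm d}y\,{\rm d}t\Bigr)^{\frac12}\le \sqrt{b-a}\,\|u\|_{L^\infty(L^2)},
$$
which, multiplied by $e^{-\gamma|x|^{2}}$ and integrated, is finite for any $\gamma>0$. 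Choosing $\gamma<\alpha/(4(b-a))$, Theorem \ref{thm:THE_THEOREM} gives the reproducing formula $u(t,\cdot)=\Gamma(t,s)u(s,\cdot)$ (tested against $h\in{\mathscr C}_c$) for every $a<s\le t<b$, and since $a,b$ are arbitrary, the identity holds for all $0<s\le t<\infty$.

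Next I would construct the datum $f$. Since $\|u(s,\cdot)\|_{L^{2}}\le \|u\|_{L^\infty(L^{2})}$ uniformly in $s>0$, Banach--Alaoglu supplies a sequence $s_{k}\downarrow 0$ and $f\in L^{2}({\mathbb R}^n)$ with $u(s_{k},\cdot)\rightharpoonup f$ weakly in $L^{2}$. For fixed $t>0$ and $h\in {\mathscr C}_c({\mathbb R}^n)$, Proposition \ref{prop:adjointGamma} ensures the strong convergence $\Gamma(t,s_{k})^{*}h\to\Gamma(t,0)^{*}h$ in $L^{2}$, while the reproducing formula gives
$$
\int u(t,x)\overline{h(x)}\,{\rm d}x=\int u(s_{k},x)\overline{\Gamma(t,s_{k})^{*}h(x)}\,{\rm d}x.
$$
The standard weak/strong pairing principle lets me pass to the limit in $k$ to obtain
$$
\int u(t,x)\overline{h(x)}\,{\rm d}x=\int f(x)\overline{\Gamma(t,0)^{*}h(x)}\,{\rm d}x=\int \Gamma(t,0)f(x)\overline{h(x)}\,{\rm d}x,
$$
for all $h\in {\mathscr C}_{c}({\mathbb R}^n)$, hence $u(t,\cdot)=\Gamma(t,0)f$ in $L^{2}$.

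Uniqueness of $f$ is then immediate: if $u(t,\cdot)=\Gamma(t,0)f_{1}=\Gamma(t,0)f_{2}$ for all $t>0$, then by Theorem \ref{thm:wellposed} the map $t\mapsto \Gamma(t,0)g$ belongs to ${\mathscr C}_{0}([0,\infty);L^{2})$ with value $g$ at $t=0$ for every $g\in L^{2}$, so letting $t\to 0$ in $L^{2}$ forces $f_{1}=f_{2}$. The main (and only) subtle point is the weak--strong limit argument above; this is where the strong continuity of the adjoint propagator from Proposition \ref{prop:adjointGamma} is essential, since one only has weak compactness of $\{u(s,\cdot)\}_{s>0}$ in $L^{2}$.
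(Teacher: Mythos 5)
Your proposal is correct and follows essentially the same route as the paper: the interior reproducing formula of Theorem \ref{thm:THE_THEOREM} (whose weighted hypothesis you verify by a direct $L^\infty(L^2)$ bound, which is exactly what Proposition \ref{prop:L2poids} gives for $p=2$), weak compactness of $\{u(s,\cdot)\}$ in $L^2$, the strong continuity of $s\mapsto\Gamma(t,s)^*h$ from Proposition \ref{prop:adjointGamma} to pass to the limit, and strong continuity at $t=0$ for the uniqueness of $f$. No gaps.
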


\begin{proof}
Proposition~\ref{lem:propag} gives us that \eqref{eq521} implies \eqref{eq522}.
We now assume \eqref{eq522}, and note that
$\displaystyle{\sup_{t>0} \|u(t,\cdot)\|_{L^2}<\infty}$ by Remark~\ref{rk:esssup1}.
Let $t>0$, and pick $(t_k)_{k\in{\mathbb{N}}}$ a decreasing sequence
of real numbers converging to $0$, with $t_0=\frac{t}{2}$, such that there exists 
$f \in L^2({\mathbb{R}}^n)$ with
$$
u(t_k,\cdot)\rightharpoonup f\ \ \mbox{ as }k\to\infty,\quad
\mbox{weakly in }L^2({\mathbb{R}}^n).
$$
By Proposition~\ref{prop:L2poids}, we can apply
Theorem~\ref{thm:THE_THEOREM}, and get that,
for $k\in{\mathbb{N}}$ and $h\in{\mathscr{C}}_c({\mathbb{R}}^n)$,  
$$
\int_{{\mathbb{R}}^n}u(t,x)\overline{h(x)}\,{\rm d}x=
\int_{{\mathbb{R}}^n}u(t_k,x)\overline{\Gamma(t,t_k)^*h(x)}\,{\rm d}x.
$$
By the continuity results in Proposition~\ref{prop:adjointGamma}, we have 
that $\|\Gamma(t,t_k)^*h-\Gamma(t,0)^*h\|_{2} \xrightarrow[k\to\infty]{}0$, 
and thus
$$
u(t,\cdot) = \Gamma(t,0)f \quad \mbox{in }L^2({\mathbb{R}}^n). 
$$
This also implies that $\displaystyle{f =\lim_{t\to 0}u(t,\cdot)}$ 
strongly in $L^2({\mathbb{R}}^n)$, and proves the uniqueness of $f$.
\end{proof}

The following corollary is an immediate consequence of 
Theorem~\ref{thm:linfl2} and Theorem~\ref{thm:wellposed}.
Recall from local estimates and Lions' result that a global weak solution of 
\eqref{eq1} in $L^\infty(L^2)$ is a priori in ${\mathscr{C}}([0,+\infty);L^2_{\rm loc})$.

\begin{corollary}
For all $u_0\in L^2({\mathbb{R}}^n)$, the problem
$$
\partial_tu={\rm div}\,A\nabla u,
\quad
u\in L^{\infty}(L^2),
\quad 
u(0,.)=u_0,
$$
is well posed. Its solution $u$ agrees with the energy solution, and, therefore, 
is such that
$$
\|u_0\|_{L^2} = \|u\|_{L^\infty(L^2)}
\le \sqrt{2\Lambda} \|\nabla u\|_{L^2(L^2)}\le 
\textstyle{\sqrt{\frac{\Lambda}{\lambda}}}\,\|u_0\|_{L^2}.
$$
\end{corollary}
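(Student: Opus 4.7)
The plan is to deduce the corollary directly by combining the characterisation of $L^\infty(L^2)$ global weak solutions in Theorem~\ref{thm:linfl2} with the existence/uniqueness result in the energy class $\dot W(0,\infty)$ from Theorem~\ref{thm:wellposed}, noting that any global weak solution $u \in L^\infty(L^2)$ admits a well-defined trace at $t=0$ in the strong $L^2$ sense, which gives the meaning of the condition $u(0,\cdot)=u_0$.

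For \emph{existence}, I take $u$ to be the unique solution produced by Theorem~\ref{thm:wellposed}, so $u \in \dot W(0,\infty)$ with ${\rm Tr}(u) = u_0$. That theorem already provides $u \in \mathscr{C}_0([0,\infty); L^2)$, the norm monotonicity, the identity $\|u_0\|_{L^2} = \|u\|_{L^\infty(L^2)}$, and the two-sided bound relating $\|u_0\|_{L^2}$ and $\|\nabla u\|_{L^2(L^2)}$. In particular $u \in L^\infty(L^2)$, and the strong $L^2$ limit at $t=0$ coincides with $u_0$, so $u$ is a solution of the stated initial value problem and satisfies all the listed estimates.

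For \emph{uniqueness}, let $u$ be any global weak solution of \eqref{eq1} in $L^\infty(L^2)$ with $u(0,\cdot) = u_0$. By Theorem~\ref{thm:linfl2}, there is a unique $f \in L^2(\mathbb{R}^n)$ such that $u(t,\cdot) = \Gamma(t,0)f$ for every $t>0$. Moreover the proof of Theorem~\ref{thm:linfl2} identifies $f$ as the strong $L^2$ limit $\lim_{t\to 0} u(t,\cdot)$; comparing with $u(0,\cdot) = u_0$ forces $f = u_0$. Hence $u(t,\cdot) = \Gamma(t,0)u_0$ is uniquely determined, and in particular agrees with the energy solution constructed in Theorem~\ref{thm:wellposed}. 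The norm identities follow from Corollary~\ref{cor:equiv} applied to that energy solution.

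The only mild subtlety, which is not an obstacle but deserves a sentence in the write-up, is the interpretation of $u(0,\cdot) = u_0$: a global weak solution in $L^\infty(L^2)$ is a priori only in $\mathscr{C}([0,\infty); L^2_{\rm loc})$ by Proposition~\ref{prop:nrjloc} and Remark~\ref{rem:local}, so the boundary condition has to be read in that sense or, equivalently after invoking Theorem~\ref{thm:linfl2}, as the strong $L^2$ limit at $t=0$. Either reading is consistent and leads to the same conclusion, because Theorem~\ref{thm:linfl2} upgrades the $L^2_{\rm loc}$ continuity to strong $L^2$ continuity at $t=0$ for solutions in $L^\infty(L^2)$.
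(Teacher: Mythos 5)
Your proposal is correct and follows the same route as the paper: the paper states the corollary as an immediate consequence of Theorem~\ref{thm:linfl2} (uniqueness and representation by the propagator, with the strong $L^2$ limit at $t=0$) and Theorem~\ref{thm:wellposed} (existence of the energy solution with the stated norm identities), together with the same remark that a global weak solution in $L^\infty(L^2)$ is a priori in ${\mathscr{C}}([0,\infty);L^2_{\rm loc})$, which fixes the meaning of $u(0,\cdot)=u_0$. Your handling of that interpretation point matches the paper's, so nothing is missing.
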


We now consider $p>2$.

\begin{theorem}
\label{thm:ex!p>2}
Let $p\in(2,\infty]$. For $u \in {\mathscr{D}}'$,
the following assertions are equivalent.
\begin{align}
&\exists !\,f\in L^p({\mathbb{R}}^n)\mbox{ such that, for all } t> 0, \; 
u(t, \cdot)=\Gamma(t,0)f \mbox{ in } L^2_{\rm loc}({\mathbb{R}}^n);
\label{ex!}\\
&u \mbox{ is a global weak solution of \eqref{eq1} with }
\tilde N(u)\in L^p({\mathbb{R}}^n).
\label{tildeNinLp}
\end{align}
\end{theorem}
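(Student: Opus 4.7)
I plan to prove the two implications separately; the forward direction is a short consequence of earlier results while the converse requires the interior representation and a careful limit argument.

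For \eqref{ex!} $\Rightarrow$ \eqref{tildeNinLp}, given $f \in L^p({\mathbb{R}}^n)$, Proposition~\ref{prop:propsol} provides that $u:(t,x)\mapsto \Gamma(t,0)f(x)$ is a global weak solution of \eqref{eq1}, and Lemma~\ref{lem:slice-tildeN}(2) supplies the equivalence $\|\tilde N(u)\|_{L^p}\sim \|f\|_{L^p}<\infty$. Uniqueness of $f$ is also immediate from Lemma~\ref{lem:slice-tildeN}(2): if $\Gamma(t,0)(f_1-f_2)=0$ in $L^2_{\rm loc}({\mathbb{R}}^n)$ for every $t>0$, then the $X^p$ norm of $(t,x)\mapsto \Gamma(t,0)(f_1-f_2)(x)$ vanishes, forcing $f_1=f_2$ in $L^p$.

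For the converse \eqref{tildeNinLp} $\Rightarrow$ \eqref{ex!}, the central tool is Theorem~\ref{thm:THE_THEOREM}. For every $b>0$ the weight $e^{-\gamma |x|^2}$ belongs to $L^{p'}({\mathbb{R}}^n)$, so Proposition~\ref{prop:tildeNinLp} gives $M<\infty$ and hence
$$
\langle u(t,\cdot),h\rangle \;=\; \langle u(s,\cdot),\Gamma(t,s)^*h\rangle,
\qquad h\in {\mathscr{C}}_c({\mathbb{R}}^n),\ 0<s\le t.
$$
I pick $s_k\to 0$ and aim to identify a weak $L^2_{\rm loc}$ limit $f$ of $u(s_k,\cdot)$ with the sought datum. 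The a priori bound at small times comes from Proposition~\ref{prop:nrjloc} applied on $(s_k/2,s_k)\times B(x,\sqrt{s_k})$, which yields
$(\fint_{B(x,\sqrt{s_k}/2)}|u(s_k,y)|^2\,{\rm d}y)^{1/2}\lesssim \tilde N(u)(x)$, i.e. $\|u(s_k,\cdot)\|_{E^p_{s_k/4}}\lesssim \|\tilde N(u)\|_{L^p}$. Since $p>2$ makes the exponent in Lemma~\ref{lem:4.7} positive, this bound transfers to $\|u(s_k,\cdot)\|_{E^p_{\delta_0}}\lesssim \|\tilde N(u)\|_{L^p}$ uniformly for every fixed $\delta_0>0$ and all $k$ with $s_k\le 4\delta_0$, giving in particular uniform $L^2$ control on compact sets. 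Banach--Alaoglu plus a diagonal argument produces $f\in L^2_{\rm loc}$ with $u(s_k,\cdot)\rightharpoonup f$ weakly in $L^2_{\rm loc}$ along a subsequence. Lower semicontinuity of $\|\cdot\|_{E^p_{\delta_0}}$ under this weak convergence, followed by Lebesgue differentiation and Fatou as $\delta_0\to 0$, upgrades this to $f\in L^p$ with $\|f\|_{L^p}\lesssim \|\tilde N(u)\|_{L^p}$.

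The main obstacle is the limit passage in the interior formula. Writing $\Gamma(t,s_k)^*h = \Gamma(t,0)^*h + \phi_k$ with $\phi_k := \Gamma(t,s_k)^*h-\Gamma(t,0)^*h$, the term $\phi_k$ tends to $0$ in $L^2$ by Proposition~\ref{prop:adjointGamma}, while the off-diagonal bounds of Proposition~\ref{prop:OD} applied to the compactly supported $h$ give it uniform Gaussian tail decay; splitting into $B(0,R_0)$ and its complement converts this into $\|\phi_k\|_{E^{p'}_1}\to 0$, which together with the uniform $E^p_1$ bound on $u(s_k,\cdot)$ and the duality $(E^p_1)^*=E^{p'}_1$ kills the $\phi_k$ contribution. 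The principal term $\langle u(s_k,\cdot),\Gamma(t,0)^*h\rangle$ is handled by the same $B(0,R_0)/B(0,R_0)^c$ split: the inner part converges by the weak $L^2_{\rm loc}$ limit, and the outer part is made arbitrarily small by choosing $R_0$ large, uniformly in $k$, using the Gaussian decay of $\Gamma(t,0)^*h$ together with the uniform local bound on $u(s_k,\cdot)$ and $f\in L^p$, $\Gamma(t,0)^*h\in L^{p'}$. The resulting identity $\langle u(t,\cdot),h\rangle = \langle f,\Gamma(t,0)^*h\rangle$ for every $h\in {\mathscr{C}}_c({\mathbb{R}}^n)$ identifies $u(t,\cdot)$ with $\Gamma(t,0)f$ in $L^2_{\rm loc}$ via the defining pairing of Lemma~\ref{lem:slice-tildeN}(1), and the uniqueness of $f$ then forces the full net $u(s,\cdot)$ to converge to the same limit. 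The delicacy lies in juggling three natural scales simultaneously -- the strong $L^2$ behaviour of $\Gamma(t,s_k)^*h$, the weak $L^2_{\rm loc}$ behaviour of $u(s_k,\cdot)$, and the vanishing scale $s_k/4$ attached to the natural slice space $E^p_{s_k/4}$ -- which is precisely why both Lemma~\ref{lem:4.7} (scale comparison of slice norms) and the Gaussian tail bookkeeping from Proposition~\ref{prop:OD} are essential.
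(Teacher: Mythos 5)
Your proposal is correct and follows the paper's strategy in all essentials: the forward implication via Proposition~\ref{prop:propsol} and Lemma~\ref{lem:slice-tildeN}, and the converse via the slice-space bounds from Proposition~\ref{prop:nrjloc} and Lemma~\ref{lem:4.7}, a compactness extraction, Lebesgue differentiation plus Fatou to place the limit in $L^p$, and the interior representation of Theorem~\ref{thm:THE_THEOREM} made applicable by Proposition~\ref{prop:tildeNinLp}. The differences are in the packaging of the limit step. The paper extracts a weak$^*$ limit of $u(t_k,\cdot)$ in $E^p_\delta$ and passes to the limit in the pairing by combining weak$^*$ convergence with the strong $E^{p'}_\delta$-continuity $\Gamma(t,t_k)^*h\to\Gamma(t,0)^*h$ supplied by Lemma~\ref{lem:cont-in-slice}; you instead take a weak $L^2_{\rm loc}$ limit and re-derive the needed continuity by hand, writing $\Gamma(t,s_k)^*h=\Gamma(t,0)^*h+\phi_k$ and controlling $\phi_k$ through the $L^2$ convergence of Proposition~\ref{prop:adjointGamma} together with Gaussian tails from Proposition~\ref{prop:OD} and a $B(0,R_0)$ truncation. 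This is in effect a self-contained proof of the relevant instance of Lemma~\ref{lem:cont-in-slice}(2), and it works, provided you note (as you implicitly do) that the tails are uniform in $k$ because $t-s_k\sim t$ and the adjoint propagators satisfy the same off-diagonal bounds, and that on bounded regions the $E^{p'}_1$ norm is dominated by the $L^2$ norm since $p'<2$. Finally, you obtain uniqueness of $f$ from the injectivity of $g\mapsto\Gamma(\cdot,0)g$ furnished by the norm equivalence in Lemma~\ref{lem:slice-tildeN}(2), whereas the paper deduces it from the strong convergence $u(t,\cdot)\to f$ in $E^p_\delta$ (in $L^2_{\rm loc}$ when $p=\infty$) as $t\to0$, which additionally shows the limit is independent of $\delta$ and of the subsequence and records the trace convergence; your shortcut is valid, and slightly cleaner for the uniqueness claim alone, at the cost of not yielding that convergence statement as a by-product.
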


\begin{proof}
Lemma~\ref{lem:slice-tildeN} and Proposition~\ref{prop:propsol} give us that
\eqref{ex!}$\implies$\eqref{tildeNinLp}. We now consider the other direction
and assume that $\tilde N(u)\in L^p({\mathbb{R}}^n)$.
Since $p>2$, Lemma~\ref{lem:4.7} and 
Proposition~\ref{prop:nrjloc} yield, for all $0<t\le\delta$:
\begin{align*}
\|u(t,\cdot)\|_{E_\delta^p}
&\lesssim
\|u(t,\cdot)\|_{E_{\frac{t}{4}}^p}=\Bigl(\int_{{\mathbb{R}}^n}
\Bigl(\fint_{B(x,\frac{\sqrt{t}}{2})}
|u(t,y)|^2\,\,{\rm d}y\Bigr)^{\frac{p}{2}}\,{\rm d}x\Bigr)^{\frac{1}{p}}
\\
&\lesssim
\Bigl(\int_{{\mathbb{R}}^n}\Bigl(\fint_{\frac{t}{2}}^{t}\fint_{B(x,\sqrt{t})}
|u(s,y)|^2\,\,{\rm d}y\,{\rm d}s\Bigr)^{\frac{p}{2}}\,{\rm d}x\Bigr)^{\frac{1}{p}}
\le\|\tilde N(u)\|_p= \|u\|_{X^p},
\end{align*}
the constants being independent of $t,\delta$ (with the usual modification if 
$p=\infty$). Fix $\delta>0$, and 
let $f_\delta\in E_\delta^p$, and $(t_k)_{k\in{\mathbb{N}}}$ be a 
decreasing sequence such that $t_k\xrightarrow[k\to\infty]{}0$, $t_0<\delta$ and
$$
u(t_k,\cdot)\xrightarrow[]{weak^*}f_\delta\quad\mbox{in }E_\delta^p.
$$
For each $j\ge 1$, as $ E_{\frac{\delta}{j}}^p= E_\delta^p$ with equivalent 
norm, the weak$^*$ convergence holds in $ E_{\frac{\delta}{j}}^p$  and
$\displaystyle{\|f_\delta\|_{E_{\frac{\delta}{j}}^p}\le 
\liminf_{k \to \infty}\|u(t_k,.)\|_{E^p_{\frac{\delta}{j}}}\lesssim \|u\|_{X^p}}$,
the constant being independent of $j\ge 1$ and $\delta >0$. Therefore
$$
\Bigl[x\mapsto \Bigl(\fint_{B(x,\frac{\delta}{j})}|f_\delta(y)|^2
\,{\rm d}y\Bigr)^{\frac{1}{2}}
\Bigr]\in L^p({\mathbb{R}}^n) \quad\forall\,j\ge 1.
$$
Moreover
$$
\Bigl(\fint_{B(x,\frac{\delta}{j})}|f_\delta(y)|^2\,{\rm d}y\Bigr)^{\frac{1}{2}}
\xrightarrow[j\to\infty]{}|f_\delta(x)|\quad\mbox{for }a.e.\ x\in{\mathbb{R}}^n
$$
by Lebesgue differentiation theorem. By Fatou's lemma, 
$f_\delta\in L^p({\mathbb{R}}^n)$ and 
$$
\|f_\delta\|_p\le \liminf_{j \to \infty}\|f_{\delta}\|_{E^p_{\frac{\delta}{j}}}
\lesssim \|u\|_{X^p}.
$$
By Proposition~\ref{prop:tildeNinLp}, we can apply 
Theorem~\ref{thm:THE_THEOREM} to obtain,
for all $k \in {\mathbb{N}}$ and $h\in{\mathscr{C}}_c({\mathbb{R}}^n)$, 
$$
\int_{{\mathbb{R}}^n} u(t_k,x)\, \overline{\Gamma(t,t_k)^*h(x)}\,{\rm d}x 
=  \int_{{\mathbb{R}}^n} u(t,x)\,\overline{h(x)}\,{\rm d}x.
$$
Applying Lemma~\ref{lem:cont-in-slice}, we have that
$$
\Gamma(t,t_k)^*h\xrightarrow[k\to\infty]{}\Gamma(t,0)^*h
\quad\mbox{in }E_\delta^{p'}.
$$
Therefore $\int_{{\mathbb{R}}^n} u(t,x)\,\overline{h(x)}\,{\rm d}x
=\int_{{\mathbb{R}}^n} f_{\delta }(x)\, \overline{\Gamma(t,0)^*h(x)}\,{\rm d}x $
for all $t>0$ and all $h\in{\mathscr{C}}_c({\mathbb{R}}^n)$, which gives us
that $u(t,\cdot)=\Gamma(t,0)f_\delta$ in $E_\delta^p$. This implies that
$\displaystyle{f_\delta=\lim_{t\to0}u(t,\cdot)}$ strongly in $E_\delta^p$ for 
$p<\infty$ and in $L^2_{\rm loc}$ for $p=\infty$ by
Lemma~\ref{lem:cont-in-slice}. Therefore, 
$\displaystyle{f_\delta=\lim_{t\to0}u(t,\cdot)}$ strongly in $L^2_{\rm loc}$ 
in all cases and $f_\delta$ is independent of $\delta$. We write $f=f_\delta$. 
This $f$ is unique as $\displaystyle{\lim_{t\to0}u(t,\cdot)}$ in $L^2_{\rm loc}$, 
and $f\in L^p({\mathbb{R}}^n)$ with $\|f\|_p\lesssim\|\tilde N(u)\|_p$ 
as proven above.
\end{proof}

The following corollary is now immediate.

\begin{corollary}
\label{thm:uniqueXp}
Let $p\in (2,\infty]$ and $u_0\in L^p({\mathbb{R}}^n)$. There exists a unique 
global weak solution $u$ of \eqref{eq1} in $X^p$  such that 
$\displaystyle{\lim_{t\to 0}u(t,\cdot)=u_0}$ in $L^{2}_{\rm loc}$. Moreover,
$\|u\|_{X^{p}} \sim \|u_{0}\|_{L^{p}}$.
\end{corollary}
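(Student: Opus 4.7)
The plan is to present this as a direct unpacking of Theorem~\ref{thm:ex!p>2} (the preceding theorem) together with Proposition~\ref{prop:propsol} and Lemma~\ref{lem:slice-tildeN}.

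For existence, I would define $u(t,x) := \Gamma(t,0)u_0(x)$. By Proposition~\ref{prop:propsol}, $u$ is a global weak solution of \eqref{eq1}, and by Lemma~\ref{lem:slice-tildeN}(2),
$$
\|u\|_{X^p} = \|(t,x)\mapsto \Gamma(t,0)u_0(x)\|_{X^p} \sim \|u_0\|_{L^p},
$$
so in particular $u\in X^p$. It remains to check that $u(t,\cdot)\to u_0$ in $L^2_{\rm loc}$ as $t\to 0$. For $p=\infty$ this is exactly Lemma~\ref{lem:cont-in-slice}(3). For $p\in(2,\infty)$, I would first observe that $L^p\hookrightarrow E_\delta^p$ for every $\delta>0$: indeed, since $p/2>1$, Jensen's inequality gives $\bigl(\fint_{B(x,\sqrt{\delta})}|u_0|^2\bigr)^{p/2}\le \fint_{B(x,\sqrt{\delta})}|u_0|^p$, and integrating over $x$ via Fubini yields $\|u_0\|_{E_\delta^p}\le \|u_0\|_{L^p}$. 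Then Lemma~\ref{lem:cont-in-slice}(2) gives $\Gamma(t,0)u_0\to u_0$ in $E_\delta^p$, which in turn implies convergence in $L^2_{\rm loc}({\mathbb{R}}^n)$, since for any ball $B(x_0,R)$ a finite cover by balls of radius $\sqrt\delta$ controls the local $L^2$-norm by the $E_\delta^p$-norm.

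For uniqueness, suppose $u\in X^p$ is any global weak solution of \eqref{eq1} with $u(t,\cdot)\to u_0$ in $L^2_{\rm loc}$ as $t\to 0$. By Theorem~\ref{thm:ex!p>2}, applied in the direction \eqref{tildeNinLp}$\Rightarrow$\eqref{ex!}, there exists a unique $f\in L^p({\mathbb{R}}^n)$ such that $u(t,\cdot)=\Gamma(t,0)f$ in $L^2_{\rm loc}$ for every $t>0$. Moreover, the proof of that theorem established that $f=\lim_{t\to 0}u(t,\cdot)$ in $L^2_{\rm loc}$. Hence $f=u_0$, and so $u(t,\cdot)=\Gamma(t,0)u_0$ coincides with the solution constructed for existence.

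The norm equivalence $\|u\|_{X^p}\sim \|u_0\|_{L^p}$ is then Lemma~\ref{lem:slice-tildeN}(2) applied to $f=u_0$. There is no genuine obstacle here; the only mildly delicate point is matching the sense of convergence to the initial data, which is why I isolate the embedding $L^p\hookrightarrow E_\delta^p$ for $p\in(2,\infty)$ and appeal to Lemma~\ref{lem:cont-in-slice}(3) separately for $p=\infty$.
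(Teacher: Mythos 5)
Your proposal is correct and follows essentially the same route as the paper, which deduces the corollary immediately from Theorem~\ref{thm:ex!p>2} together with Proposition~\ref{prop:propsol}, Lemma~\ref{lem:slice-tildeN} and Lemma~\ref{lem:cont-in-slice}; you merely spell out the convergence to the initial data (via $L^p\subset E_\delta^p$ and the slice-space continuity) that the paper's proof of Theorem~\ref{thm:ex!p>2} already contains.
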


An interesting consequence of our result in $X^{\infty}$ is the following 
conservation property of the propagators.

\begin{corollary}
\label{cor:cons} Let $t>s$. Then 
$$
\Gamma(t,s)1\!{\rm l} = 1\!{\rm l} \mbox{ in } L^2_{\rm loc}({\mathbb{R}}^n) .
$$
Similarly
$$
\Gamma(t,s)^*1\!{\rm l} = 1\!{\rm l} \mbox{ in } L^2_{\rm loc}({\mathbb{R}}^n).
$$

\end{corollary}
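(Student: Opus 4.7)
The plan is to recognize the constant function $1\!{\rm l}$ as a weak solution and invoke the uniqueness statement in Corollary~\ref{thm:uniqueXp} with $p = \infty$. First I would note that $u(t,x) \equiv 1$ is trivially a global weak solution of \eqref{eq1} on ${\mathbb{R}}^{n+1}_+$: indeed $\nabla u = 0$ and $\partial_t u = 0$, so \eqref{eq:weak} holds. Moreover, for any $x \in {\mathbb{R}}^n$ and $\delta>0$ one has
$$
\Bigl(\fint_{\frac{\delta}{2}}^\delta\fint_{B(x,\sqrt{\delta})}|1|^2\,{\rm d}y\,{\rm d}t\Bigr)^{\frac{1}{2}} = 1,
$$
so $\tilde N(1\!{\rm l}) \equiv 1 \in L^\infty({\mathbb{R}}^n)$ and $1\!{\rm l} \in X^\infty$ with $\|1\!{\rm l}\|_{X^\infty}=1$.

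Next, I would apply Corollary~\ref{thm:uniqueXp} with $p = \infty$ and initial data $u_0 = 1\!{\rm l} \in L^\infty({\mathbb{R}}^n)$. The corollary asserts that there is a unique global weak solution in $X^\infty$ with $\lim_{t\to 0} u(t,\cdot) = 1\!{\rm l}$ in $L^2_{\rm loc}$. On the one hand, the constant function $u \equiv 1\!{\rm l}$ is such a solution. On the other hand, by construction in Lemma~\ref{lem:slice-tildeN} and Proposition~\ref{prop:propsol}, the function $(t,x) \mapsto \Gamma(t,0)1\!{\rm l}(x)$ is a global weak solution in $X^\infty$ and, by Lemma~\ref{lem:cont-in-slice}(3) (applied component-wise through the argument for $L^\infty$ data), it converges to $1\!{\rm l}$ in $L^2_{\rm loc}$ as $t \to 0$. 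By uniqueness, $\Gamma(t,0) 1\!{\rm l} = 1\!{\rm l}$ in $L^2_{\rm loc}$ for every $t > 0$.

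To pass from $s = 0$ to arbitrary $0 \le s < t$, I would invoke the discussion following Definition~\ref{def:propa}: the coefficient matrix $A$ can be viewed on $(s,\infty) \times {\mathbb{R}}^n$, and the associated propagator $\Gamma(\cdot,s)$ is constructed by exactly the same procedure as $\Gamma(\cdot,0)$ after translating the origin of time; the entire argument above then yields $\Gamma(t,s)1\!{\rm l} = 1\!{\rm l}$ in $L^2_{\rm loc}$.

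For the adjoint identity, I would use Remark~\ref{rem:back}: the family $\{\Gamma(t,s)^* \;;\; s \le t\}$ is precisely the propagator family for the backward equation \eqref{eq2}, which, by Lemma~\ref{lem:link-backward-forward}, is (after time reversal) a parabolic equation in divergence form whose coefficient matrix is $A^*$, satisfying \eqref{ell} with the same constants $\lambda, \Lambda$. Hence every result used above, including Corollary~\ref{thm:uniqueXp}, applies verbatim to this adjoint/backward setting. The same reasoning then gives $\Gamma(t,s)^* 1\!{\rm l} = 1\!{\rm l}$ in $L^2_{\rm loc}$. There is no genuine obstacle in this proof; the only point requiring a small verification is the $L^2_{\rm loc}$ continuity at $t = 0$ for $\Gamma(t,0)$ acting on $1\!{\rm l} \in L^\infty$, which is exactly what Lemma~\ref{lem:cont-in-slice}(3) provides.
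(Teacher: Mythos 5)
Your proof is correct and follows essentially the same route as the paper: the paper applies Theorem~\ref{thm:ex!p>2} with $p=\infty$ directly to the constant solution $1\!{\rm l}\in X^\infty$ and identifies its trace as $1\!{\rm l}$, while you invoke the derived well-posedness statement (Corollary~\ref{thm:uniqueXp}) and compare the constant solution with $\Gamma(t,0)1\!{\rm l}$, which is the same machinery. The only detail worth stating explicitly is that uniqueness gives equality a.e.\ on ${\mathbb{R}}^{n+1}_+$, and one passes to equality in $L^2_{\rm loc}({\mathbb{R}}^n)$ for every $t>s$ by continuity of weak solutions in time with values in $L^2_{\rm loc}$, exactly as the paper notes.
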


\begin{proof} 
We may assume $s=0$ without loss of generality
The constant function $1\!{\rm l}$ on ${\mathbb{R}}^{n+1}_+$ is a global 
weak solution of \eqref{eq1} and belongs to $X^\infty$. By 
Theorem~\ref{thm:ex!p>2}, we have that, for almost every 
$(t,x) \in {\mathbb{R}}^{n+1}_+$, $1\!{\rm l} = \Gamma(t,0)f(x)$ for a unique 
$f \in L^\infty$ such that
$\displaystyle{f = \lim_{t \to 0} \Gamma(t,0)f }$ in 
$L^2_{\rm loc}({\mathbb{R}}^n)$. Thus, $f=1$ almost everywhere on 
${\mathbb{R}}^n$ and we have shown the equality in 
$L^2_{loc}({\mathbb{R}}^{n+1}_+)$. As weak solutions are continuous in time 
with values in $L^2_{\rm loc}({\mathbb{R}}^n)$, the conclusion follows. 
The formula for the adjoint is obtained similarly using that we get the same 
$X^\infty$ result for the backward equation on $(-\infty, t)$. See 
Remark~\ref{rem:back}.  
\end{proof}

We finish with a result valid in full generality, getting closer to $L^p$ estimates. 

\begin{proposition}
\label{prop:rh} 
Let $\tilde q>2+\frac{4}{n}$ be the exponent in the reverse H\"older estimates
of Corollary~\ref{cor:4.4}. Fix $p\in (2,\tilde q)$. For $u\in {\mathscr{D}}'$, 
the following assertions are equivalent.
\begin{align}
&\exists !\,f\in L^p({\mathbb{R}}^n)\mbox{ such that, for all } t> 0, \; 
u(t, \cdot)=\Gamma(t,0)f
\mbox{ in } L^2_{\rm loc}({\mathbb{R}}^n);
\label{Ex}\\
&u \mbox{ is a global weak solution of \eqref{eq1} with }
\sup_{a>0} \Big\| \fint_a^{2a} |u(t,\cdot)|\, {\rm d}t \Big\|_{L^p} <\infty.
\label{meanL^p}
\end{align}
In this case, $\displaystyle{\|f\|_{L^p}\sim \sup_{a>0} 
\Big\| \fint_{a}^{2a} |u(t,\cdot)|\, {\rm d}t \Big\|_{L^p}}$ and 
$\fint_a^{2a} u(t,\cdot)\, {\rm d}t$ converges to $f$ in $L^p({\mathbb{R}}^n)$ as 
$a\to 0$. 
\end{proposition}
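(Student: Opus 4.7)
The plan splits the proof into three parts, all exploiting reverse H\"older at exponent $p<\tilde q$. For (\ref{Ex})$\Rightarrow$(\ref{meanL^p}), I use that with $u=\Gamma(\cdot,0)f$, Lemma \ref{lem:slice-tildeN}(2) gives $\|\tilde N(u)\|_{L^p}\sim\|f\|_{L^p}$. For any $t_0>0$ and $x\in{\mathbb{R}}^n$, the choice $r=\sqrt{t_0}/8$ satisfies $r<\sqrt{t_0}/4$, so Corollary \ref{cor:4.4} combined with H\"older's inequality (valid since $p\le\tilde q$) yields
\[
\Bigl(\fint_{{\mathcal{B}}((t_0,x),\sqrt{t_0}/8)}|u|^p\Bigr)^{1/p}\lesssim\Bigl(\fint_{{\mathcal{B}}((t_0,x),\sqrt{t_0}/2)}|u|^2\Bigr)^{1/2}\lesssim\tilde N(u)(x).
\]
Raising to the $p$-th power, integrating in $x\in{\mathbb{R}}^n$, and using the Fubini identity $\int_{{\mathbb{R}}^n}\fint_{B(x,\rho)}F(y)\,{\rm d}y\,{\rm d}x=\|F\|_{L^1}$ with $F=|u(t,\cdot)|^p$, I obtain $\fint_{t_0-t_0/64}^{t_0+t_0/64}\|u(t,\cdot)\|_{L^p}^p\,{\rm d}t\lesssim\|f\|_{L^p}^p$. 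Covering $[a,2a]$ by finitely many such sub-intervals with bounded overlap and applying Jensen's inequality in $t$ then bounds $\|g_a\|_{L^p}$ uniformly by $C\|f\|_{L^p}$.

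For (\ref{meanL^p})$\Rightarrow$(\ref{Ex}), I verify the hypothesis of Theorem \ref{thm:THE_THEOREM}. Fixing $0<\alpha<\beta<\infty$, I cover $[\alpha,\beta]\times B(x,\sqrt{\beta})$ by a bounded-overlap family of parabolic balls of radius $r_0\sim\sqrt{\alpha}$, apply the $L^2\lesssim L^1$ reverse H\"older (Corollary \ref{cor:4.4} with $p=1$) on each, and use $\sum_j a_j^2\le(\sum_j a_j)^2$ for $a_j\ge 0$, yielding
\[
\Bigl(\int_\alpha^\beta\int_{B(x,\sqrt{\beta})}|u|^2\Bigr)^{1/2}\lesssim C(\alpha,\beta,n)\,M_{HL}\!\Bigl(\fint_{\alpha/2}^{2\beta}|u(t,\cdot)|\,{\rm d}t\Bigr)(x).
\]
A dyadic splitting of $[\alpha/2,2\beta]$ in $t$ and the hypothesis give $\|\fint_{\alpha/2}^{2\beta}|u(t,\cdot)|\,{\rm d}t\|_{L^p}\lesssim\sup_a\|g_a\|_{L^p}$; the $L^p$-boundedness of the Hardy--Littlewood maximal operator together with H\"older against the weight $e^{-\gamma|x|^2}\in L^{p'}$ then give $M<\infty$. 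Theorem \ref{thm:THE_THEOREM} provides $u(t,\cdot)=\Gamma(t,s)u(s,\cdot)$ for $0<s\le t$. Setting $f_a:=\fint_a^{2a}u(t,\cdot)\,{\rm d}t$, uniform $L^p$-boundedness and reflexivity yield $f_{a_k}\rightharpoonup f\in L^p$ along a subsequence; averaging the reproducing formula over $s\in[a_k,2a_k]$ and using the strong continuity of $s\mapsto\Gamma(t,s)^*h$ from Proposition \ref{prop:adjointGamma} (as in the proof of Theorem \ref{thm:ex!p>2}) identifies $u(t,\cdot)=\Gamma(t,0)f$ in $L^2_{\rm loc}$. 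Uniqueness of $f$ is inherited from Theorem \ref{thm:ex!p>2}, and the norm equivalence follows from $\|f\|_{L^p}\le\liminf_k\|f_{a_k}\|_{L^p}\le\sup_a\|g_a\|_{L^p}$ combined with the forward direction.

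For the strong $L^p$-convergence $f_a\to f$, I exploit Gehring's self-improvement (already used to obtain $\tilde q>2+\tfrac{4}{n}$): the forward direction in fact extends to some $p+\varepsilon<\tilde q$ whenever the initial datum lies in $L^{p+\varepsilon}$. For $f\in L^p\cap L^{p+\varepsilon}\cap L^2$, Theorem \ref{thm:wellposed} gives $\Gamma(\cdot,0)f\in{\mathscr{C}}_0([0,\infty);L^2)$, hence $f_a\to f$ in $L^2$ by Lebesgue differentiation in $t$; interpolating between this $L^2$-convergence and the uniform $L^{p+\varepsilon}$-bound yields $L^p$-convergence. For general $f\in L^p$, density of ${\mathscr{C}}_c^\infty({\mathbb{R}}^n)\subset L^p\cap L^{p+\varepsilon}\cap L^2$ in $L^p$ combined with the $L^p$-stability $\|f_a-(f_n)_a\|_{L^p}\lesssim\|f-f_n\|_{L^p}$ (from Part 1 applied to $\Gamma(\cdot,0)(f-f_n)$) closes the argument. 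The main technical obstacle is the reverse-H\"older-plus-covering estimate used to verify $M<\infty$: the crude $\sum a_j^2\le(\sum a_j)^2$ step loses polynomial factors in the covering count, but this is harmless since we only need $M<\infty$ and the Gaussian weight $e^{-\gamma|x|^2}$ dominates any polynomial growth in $\alpha,\beta$.
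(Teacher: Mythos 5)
Your direct implication and the closing norm-comparison/strong-convergence arguments are essentially the paper's: reverse H\"older (Corollary~\ref{cor:4.4}) to replace $L^2$ by $L^p$ averages in $\tilde N$, Lemma~\ref{lem:slice-tildeN} for $\|\tilde N(u)\|_{L^p}\sim\|f\|_{L^p}$, and interpolation of the uniform $L^{p_1}$ bounds ($p<p_1<\tilde q$) against $L^2$ continuity plus density to get $\fint_a^{2a}u(t,\cdot)\,{\rm d}t\to f$ in $L^p$. Your verification of the hypothesis of Theorem~\ref{thm:THE_THEOREM} (covering by parabolic balls of radius $\sim\sqrt\alpha$, the $L^2$--$L^1$ reverse H\"older estimate, and $M_{HL}$) is also sound, even if the constants are cruder than needed.

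The gap is in the identification of $f$ in the converse. After Theorem~\ref{thm:THE_THEOREM} you average the identity $\int u(s,x)\overline{\Gamma(t,s)^*h(x)}\,{\rm d}x=\int u(t,x)\overline{h(x)}\,{\rm d}x$ over $s\in[a_k,2a_k]$ and claim that weak convergence of $f_{a_k}$ together with the strong continuity of $s\mapsto\Gamma(t,s)^*h$ identifies the limit. But the test function depends on $s$ inside the average, so decoupling produces the error term
\[
E_k=\fint_{a_k}^{2a_k}\int_{{\mathbb{R}}^n} u(s,x)\,\overline{\bigl(\Gamma(t,s)^*-\Gamma(t,0)^*\bigr)h(x)}\,{\rm d}x\,{\rm d}s,
\]
and nothing you have established controls it: your hypothesis bounds only time averages of $|u|$ in $L^p$, not $u(s,\cdot)$ at individual times, while the continuity in Proposition~\ref{prop:adjointGamma} is in $L^2$ and the slice-space continuity of Lemma~\ref{lem:cont-in-slice} must be paired with a uniform bound on $\|u(s,\cdot)\|_{E^p_\delta}$, which you never prove. (To bound $E_k$ by H\"older you would need either such a bound, or a maximal-in-time estimate like $\|\sup_{0<s\le 2a_k}|(\Gamma(t,s)^*-\Gamma(t,0)^*)h|\|_{L^{p'}}\to 0$ with $p'<2$, which is not available in this generality. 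Note that in the proof of Theorem~\ref{thm:ex!p>2}, which you invoke, the pairing converges precisely because $u(t_k,\cdot)$ is uniformly bounded in $E^p_\delta$ while $\Gamma(t,t_k)^*h\to\Gamma(t,0)^*h$ in $E^{p'}_\delta$.) The paper's converse supplies exactly this missing ingredient: combining Lemma~\ref{lem:4.7}, Proposition~\ref{prop:nrjloc} and the reverse H\"older estimate of Corollary~\ref{cor:4.4}, it upgrades the averaged-in-time hypothesis to the pointwise-in-time estimate $\|u(t,\cdot)\|_{E^p_\delta}\lesssim\sup_{a>0}\bigl\|\fint_a^{2a}|u(\sigma,\cdot)|\,{\rm d}\sigma\bigr\|_{L^p}$ for all $0<t\le\delta$, after which the argument of Theorem~\ref{thm:ex!p>2} (weak$^*$ limits in slice spaces, Lemma~\ref{lem:cont-in-slice}, Fatou) applies verbatim and also renders the averaging of the reproducing formula unnecessary. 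Insert this slice-space estimate and the converse is repaired; without it, the limit passage in $E_k$ fails.
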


\begin{proof} 
For the direct part, let $f\in L^p({\mathbb{R}}^n)$ and $u(t, \cdot)=\Gamma(t,0)f$. 
By Theorem~\ref{thm:ex!p>2}, we know that $u$ is a global weak solution and that
$u\in X^p$. Using the reverse H\"older estimates of Corollary~\ref{cor:4.4}, 
we see that we may replace the $L^2$ averages by $L^p$ averages in the 
definition of $\tilde N(u)$ (up to modifying slightly the parameters). Hence by 
H\"older's inequality and averaging 
$$
\Big\| \fint_a^{2a} |u(t,\cdot)|\,{\rm d}t \Big\|_{L^p}^p 
\le \int_{{\mathbb{R}}^n} \fint_a^{2a} |u(t,x)|^p \,{\rm d}t \,{\rm d}x 
\lesssim \|\tilde N(u)\|_{L^p}^p.
$$
This proves the direct part. In addition, this implies that 
$\fint_a^{2a}\Gamma(t,0)\,{\rm d}t$ are  bounded operators on 
$L^p({\mathbb{R}}^n)$ uniformly with respect to $a$. This is true for all 
$p\in [2, \tilde q)$. At the same time, they converge strongly in $\mathcal{L}(L^2)$ 
to $I$ when $a\to 0$. By an interpolation argument (see the proof of the next 
result, Theorem~\ref{thm:uniqueness_p<2}) this implies the strong continuity 
at 0 in ${\mathscr{L}}(L^p)$. In particular, this yields the norm comparison in 
the statement. 

Let us now prove the converse and assume that $u$ is a global weak solution 
of \eqref{eq1} with $\displaystyle{M=\sup_{a>0} \Big\| \fint_a^{2a} |u(t,\cdot)|\, 
{\rm d}t \Big\|_{L^p} <\infty}$.
For all $\delta >0$ and $t \leq \delta$, using Lemma~\ref{lem:4.7} with $p>2$,  
Proposition~\ref{prop:nrjloc} and the reverse H\"older estimate of 
Corollary~\ref{cor:4.4} again, we have  
\begin{align*}
\|u(t,\cdot)\|_{E_\delta^p}
&\lesssim
\|u(t,\cdot)\|_{E_{\frac{t}{32}}^p}=\Bigl(\int_{{\mathbb{R}}^n}
\Bigl(\fint_{B(x,\sqrt{\frac{t}{32}})}
|u(t,y)|^2\,\,{\rm d}y\Bigr)^{\frac{p}{2}}\,{\rm d}x\Bigr)^{\frac{1}{p}}
\\
&\lesssim 
\Bigl(\int_{{\mathbb{R}}^n}\Bigl(\fint_{\frac{7t}{8}}^{\frac{9t}{8}}
\fint_{B(x,\sqrt{\frac{t}{8}})}
|u(s,y)|^2\,\,{\rm d}y\,{\rm d}s\Bigr)^{\frac{p}{2}}\,{\rm d}x\Bigr)^{\frac{1}{p}}
\\
&
\lesssim \Bigl(\int_{{\mathbb{R}}^n}\Bigl(\fint_{\frac{t}{2}}^t
\fint_{B(x,\sqrt{\frac{t}{2}})}
|u(s,y)|\,\,{\rm d}y\,{\rm d}s\Bigr)^p\,{\rm d}x\Bigr)^{\frac{1}{p}}
\\
& 
= \Bigl(\int_{{\mathbb{R}}^n}\Bigl(\fint_{B(x,\sqrt{\frac{t}{2}})}
\fint_{\frac{t}{2}}^t|u(s,y)|\,\,{\rm d}s\,{\rm d}y\Bigr)^p\,{\rm d}x\Bigr)^{\frac{1}{p}}
\\
&
\le\Bigl(\int_{{\mathbb{R}}^n}\fint_{B(x,\sqrt{\frac{t}{2}})}
\Bigl(\fint_{\frac{t}{2}}^{t}|u(s,y)|\,\,{\rm d}s\Bigr)^p
\,{\rm d}y \,{\rm d}x\Bigr)^{\frac{1}{p}}\ \le \ M.
\end{align*}
Thus we have the uniform estimate in the slice space $E^p_\delta$ as in the 
proof of Theorem~\ref{thm:ex!p>2} and the same argument applies. This proves 
the converse.
\end{proof}

\begin{remark} 
In the previous theorem, $u$ has further regularity: 
$(\fint_a^{2a} |u(t,\cdot)|^p\,{\rm d}t)^{\frac{1}{p}} \in L^p({\mathbb{R}}^n)$ 
uniformly in $a>0$ and $\|f\|_{L^p}\sim 
\sup_{a>0} \Big\| (\fint_a^{2a} |u(t,\cdot)|^p\,{\rm d}t)^{\frac{1}{p}}\Big\|_{L^p}$ 
as one can check. The largest class in this scale for uniqueness is the one 
in the statement. 
\end{remark}

\subsection{Results for $p<2$}
\label{subsec:!p<2}
For $p<2$ we do not know general results without imposing further properties 
of the propagators. Here we assume boundedness of the propagators acting 
on $L^p$, and consider solutions in $L^\infty(L^p)$.
Note that, by Remarks~\ref{rk:esssup1} and~\ref{rk:esssup2}, we can 
assume uniform boundedness rather than almost everywhere boundedness.

\begin{theorem}
\label{thm:uniqueness_p<2}
Let $1\le q< p< 2$. Assume that 
$\displaystyle{\sup_{0\leq s \leq t<\infty}
\|\Gamma(t,s)\|_{{\mathscr{L}}(L^q)}<\infty}$. 
Let $u\in L^\infty((0,\infty);L^p({\mathbb{R}}^n))$ be a  global 
weak solution of \eqref{eq1}. Then there exists $u_0\in L^p({\mathbb{R}}^n)$
such that $u(t,\cdot)=\Gamma(t,0)u_0$ in $L^p({\mathbb{R}}^n)$ for all
$t>0$. Moreover, $u\in{\mathscr{C}}_0([0,\infty);L^p({\mathbb{R}}^n))$ 
and, in particular, $u_0$ is unique.
\end{theorem}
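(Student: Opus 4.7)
The plan is to mirror the argument given in the proof of Theorem~\ref{thm:linfl2} and Theorem~\ref{thm:ex!p>2}, extracting a weak limit of $u(t_k,\cdot)$ as $t_k\to 0$, applying the interior representation from Theorem~\ref{thm:THE_THEOREM}, and then passing to the limit. The substitute for the $L^{\infty}(L^2)$-type argument is the combination of the hypothesis $\sup\|\Gamma(t,s)\|_{\mathscr{L}(L^q)}<\infty$ and Riesz--Thorin interpolation with the $L^2$-contractivity of Proposition~\ref{prop:OD}, which will give uniform $L^p$ boundedness of $\Gamma(t,s)$ and uniform $L^{p'}$ boundedness of $\Gamma(t,s)^*$.

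First, since $u\in L^\infty(L^p)$ with $1<p<2$, the weight $w(x)=e^{-\gamma|x|^2}$ belongs to $L^{p'}(\mathbb{R}^n)$ for any $\gamma>0$. Proposition~\ref{prop:L2poids} then guarantees that the quantity $M$ of Theorem~\ref{thm:THE_THEOREM} is finite for every $0<a<b$ and sufficiently small $\gamma>0$, so that $u(t,\cdot)=\Gamma(t,s)u(s,\cdot)$ tested against $h\in\mathscr{C}_c(\mathbb{R}^n)$ holds for every $0<s\le t<\infty$. Fix $t>0$ and a sequence $t_k\downarrow 0$. Since $p>1$, $L^p(\mathbb{R}^n)$ is reflexive, so we can extract a subsequence (still denoted $t_k$) along which $u(t_k,\cdot)\rightharpoonup u_0$ weakly in $L^p$ for some $u_0\in L^p(\mathbb{R}^n)$.

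Next, I would pass to the limit in the identity $\int u(t,x)\overline{h(x)}\,{\rm d}x = \int u(t_k,x)\overline{\Gamma(t,t_k)^*h(x)}\,{\rm d}x$ for $h\in\mathscr{C}_c(\mathbb{R}^n)$. The key input is strong convergence $\Gamma(t,t_k)^*h\to\Gamma(t,0)^*h$ in $L^{p'}(\mathbb{R}^n)$. From the hypothesis and Proposition~\ref{prop:adjointGamma} we get uniform boundedness of $\Gamma(t,s)^*$ on $L^{q'}$ (with $q'>p'>2$); combined with the $L^2$-contractivity, Riesz--Thorin yields uniform $L^{p'}$-boundedness of $\Gamma(t,s)^*$ for $s\in[0,t]$. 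The $L^2$-continuity $\Gamma(t,s)^*h\to\Gamma(t,0)^*h$ from Proposition~\ref{prop:adjointGamma}, combined with this uniform $L^{q'}$-bound and the elementary log-convexity estimate
\[
\|\Gamma(t,t_k)^*h-\Gamma(t,0)^*h\|_{L^{p'}} \le \|\Gamma(t,t_k)^*h-\Gamma(t,0)^*h\|_{L^2}^{\theta}\,\|\Gamma(t,t_k)^*h-\Gamma(t,0)^*h\|_{L^{q'}}^{1-\theta}
\]
for appropriate $\theta\in(0,1)$, gives the desired $L^{p'}$-convergence. Passing to the limit yields $\int u(t,x)\overline{h(x)}\,{\rm d}x=\int u_0(x)\overline{\Gamma(t,0)^*h(x)}\,{\rm d}x$ for all $h\in\mathscr{C}_c$. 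Defining $\Gamma(t,0)$ on $L^p$ by interpolation of its $L^q$ and $L^2$ restrictions, the duality identity $\langle\Gamma(t,0)u_0,h\rangle=\langle u_0,\Gamma(t,0)^*h\rangle$ on $L^p\times L^{p'}$ allows us to conclude $u(t,\cdot)=\Gamma(t,0)u_0$ in $L^p$.

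Finally, I would establish $u\in\mathscr{C}_0([0,\infty);L^p)$, which automatically yields uniqueness of $u_0$ as $u_0=\lim_{t\to 0}u(t,\cdot)$ in $L^p$. For continuity at $0$, use the density of $L^p\cap L^2$ in $L^p$ (since $p>1$) combined with uniform $L^p$-boundedness of $\Gamma(t,0)$: for $v\in L^p\cap L^2$, the $L^2$-convergence $\Gamma(t,0)v\to v$ together with the uniform $L^q$ bound gives, via the same log-convexity inequality, $\Gamma(t,0)v\to v$ in $L^p$; a $3\varepsilon$-argument extends this to arbitrary $u_0\in L^p$. For decay at infinity, apply the same density/interpolation scheme with the energy-solution decay $\|\Gamma(t,0)v\|_{L^2}\to 0$ from Theorem~\ref{thm:wellposed} taking $v$ in the dense subspace $L^p\cap L^2\cap L^q$ (e.g.\ $\mathscr{D}$). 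The main technical obstacle is the careful handling of these interpolation arguments and in particular the identification of the operator $\Gamma(t,0)$ defined by interpolation on $L^p$ with the pre-adjoint of $\Gamma(t,0)^*$ acting on $L^{p'}$; once that is in place, everything else is a direct adaptation of the arguments used for $p\ge 2$.
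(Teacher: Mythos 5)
Your proposal follows essentially the same route as the paper's proof: weak compactness in the reflexive space $L^p$ to extract $u_0$, Proposition~\ref{prop:L2poids} plus Theorem~\ref{thm:THE_THEOREM} for the interior representation, strong $L^{p'}$ convergence of $\Gamma(t,t_k)^*h$ obtained by interpolating the vanishing $L^2$ factor against a uniformly bounded higher-exponent factor (the paper uses the uniform $\mathscr{L}(L^2,L^{r'})$ bound of Lemma~\ref{lem:ODp-2} where you use the dual $L^{q'}$ bound on $\Gamma(t,s)^*$ applied to the fixed $h\in{\mathscr{C}}_c$ --- both are valid), and then the same density-plus-log-convexity argument for $u\in{\mathscr{C}}_0([0,\infty);L^p)$ and uniqueness of $u_0$. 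The one step to tighten is the continuity argument at $t=0$: the inequality $\|g\|_{L^p}\le\|g\|_{L^q}^{\theta}\|g\|_{L^2}^{1-\theta}$ requires the approximating datum to lie in $L^q\cap L^2$, so take $v\in{\mathscr{D}}$ (as the paper does with $v_0\in{\mathscr{C}}_c^\infty$, and as you yourself do for the decay at infinity) rather than merely $v\in L^p\cap L^2$.
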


\begin{proof}
Let $u\in L^\infty(L^p)$ be a global 
weak solution of \eqref{eq1}. Let $t>0$, and pick $(t_k)_{k\in{\mathbb{N}}}$ 
a decreasing sequence of real numbers converging to $0$, with 
$t_{0}<\frac{t}{2}$, such that there exists $u_0\in L^p({\mathbb{R}}^n)$ with
$$
u(t_k,\cdot)\rightharpoonup u_0\ \ \mbox{ as }k\to\infty,\quad
\mbox{weakly$^*$ in }L^p({\mathbb{R}}^n).
$$
By Proposition~\ref{prop:L2poids} and Theorem~\ref{thm:THE_THEOREM}, 
for $k\in{\mathbb{N}}$ and  $h\in{\mathscr{C}}_c({\mathbb{R}}^n)$, 
we have that
$$
\int_{{\mathbb{R}}^n}u(t,x)\overline{h(x)}\,{\rm d}x=
\int_{{\mathbb{R}}^n}u(t_k,x)\overline{\Gamma(t,t_k)^*h(x)}\,{\rm d}x.
$$
It remains to prove that $\|\Gamma(t,t_k)^*h-\Gamma(t,0)^*h\|_{p'}
\xrightarrow[k\to\infty]{}0$. By Lemma~\ref{lem:ODp-2}, and the fact that 
$t-t_k \sim t$ for all $k \in {\mathbb{N}}$, we have that
$\displaystyle{\sup_{k\in{\mathbb{N}}}
\|\Gamma(t,t_k)^*\|_{{\mathscr{L}}(L^2,L^{r'})}<\infty}$ for all $r'\in(p',q')$. 
Let $\theta\in[0,1)$ be defined by
$\frac{1}{p'}=\frac{\theta}{r'}+\frac{1-\theta}{2}$. For all 
$h\in{\mathscr{C}}_c^\infty({\mathbb{R}}^n)$,
\begin{align*}
\|\Gamma(t,t_k)^*h-\Gamma(t,0)^*h\|_{L^{p'}}
&\le \|\Gamma(t,t_k)^*h-\Gamma(t,0)^*h\|_{L^{r'}}^\theta
\|\Gamma(t,t_k)^*h-\Gamma(t,0)^*h\|_{L^2}^{1-\theta}
\\[4pt]
&\lesssim
\|\Gamma(t,t_k)^*h-\Gamma(t,0)^*h\|_{L^2}^{1-\theta}\xrightarrow[k\to\infty]{}0.
\end{align*}
We now show that $u\in{\mathscr{C}}([0,\infty);L^p({\mathbb{R}}^n))$.
Let $\varepsilon>0$ and $v_0\in{\mathscr{C}}_c^\infty({\mathbb{R}}^n)$ be
such that $\|u_0-v_0\|_{L^p}<\varepsilon$. Let $s,t>0$:
\begin{align*}
\|\Gamma(t,0)u_0-\Gamma(s,0)u_0\|_{L^p}
&\le
\|\Gamma(t,0)(u_0-v_0)\|_{L^p}+\|\Gamma(t,0)v_0-\Gamma(s,0)v_0\|_{L^p}
+\|\Gamma(s,0)(v_0-u_0)\|_{L^p}
\\[4pt]
&\lesssim \varepsilon+\|\Gamma(t,0)v_0-\Gamma(s,0)v_0\|_{L^q}^\theta
\|\Gamma(t,0)v_0-\Gamma(s,0)v_0\|_{L^2}^{1-\theta}
\end{align*}
for $\theta\in(0,1]$ such that $\frac{1}{p}=\frac{\theta}{q}+\frac{1-\theta}{2}$. 
Therefore
$$
\|\Gamma(t,0)u_0-\Gamma(s,0)u_0\|_{L^p}
\lesssim\varepsilon+\|v_0\|_{L^q}^\theta
\|\Gamma(t,0)v_0-\Gamma(s,0)v_0\|_{L^2}^{1-\theta}.
$$
Since $\bigl(t\mapsto\Gamma(t,0)v_0\bigr)
\in{\mathscr{C}}([0,\infty);L^2({\mathbb{R}}^n))$,
there exists $\delta>0$ such that for all $t,s>0$ with $|t-s|<\delta$,
$\|\Gamma(t,0)v_0-\Gamma(s,0)v_0\|_{L^2}\le 
\Bigl(\frac{\varepsilon}{\|v_0\|_{L^q}^\theta}\Bigr)^{\frac{1}{1-\theta}}$. 
This proves that
$$
\|\Gamma(t,0)u_0-\Gamma(s,0)u_0\|_{L^p}\lesssim \varepsilon\quad
\forall\,t,s>0,\ |t-s|<\delta,
$$
and then the fact that $\bigl(t\mapsto\Gamma(t,0)u_0\bigr)$ is continuous 
in $L^p({\mathbb{R}}^n)$. In particular, 
$$
u_0=\lim_{t\to0}\Gamma(t,0)u_0=\lim_{t\to0}u(t,\cdot).
$$
Since we know moreover that $\bigl(t\mapsto\Gamma(t,0)v_0\bigr)
\in{\mathscr{C}}_0([0,\infty);L^2({\mathbb{R}}^n))$, the same reasoning
shows that $\|\Gamma(t,0)u_0\|_{L^p}\xrightarrow[t\to \infty]{}0$.
\end{proof}

\begin{corollary}
\label{cor:kioloa}
Let $1\le q<p<2$. Assume that 
$\displaystyle{\sup_{0\leq s\le t<\infty}\|\Gamma(t,s)\|_{{\mathscr{L}}(L^q)}<\infty}$.
For $u\in {\mathscr{D}}'$, the following assertions are equivalent.
\begin{align}
&u \mbox{ is a global weak solution of \eqref{eq1} in } 
L^\infty((0,\infty);L^p({\mathbb{R}}^n)) ;
\label{borneLp}\\[4pt]
&\exists !\,u_0\in L^p({\mathbb{R}}^n) \mbox{ such that }u(t,\cdot)=\Gamma(t,0)u_0
\mbox{ in }L^p({\mathbb{R}}^n)\mbox{ for all }t>0 ;
\label{repLp}\\[4pt]
& u \mbox{ is a global weak solution of \eqref{eq1} in }X^p.
\label{tildeN}
\end{align}
In this case, $u\in{\mathscr{C}}_0(L^p)$ 
and $\|u_0\|_p\sim \|u\|_{L^\infty(L^p)}\sim\|u\|_{X^p}$.
\end{corollary}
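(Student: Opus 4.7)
The equivalence \eqref{borneLp}$\,\Leftrightarrow\,$\eqref{repLp} is essentially
the content of Theorem~\ref{thm:uniqueness_p<2}: that theorem gives
\eqref{borneLp}$\,\Rightarrow\,$\eqref{repLp} together with the continuity
$u\in\mathscr{C}_0([0,\infty);L^p)$ and the identification
$u_0=\lim_{t\to 0}u(t,\cdot)$. For the converse, Riesz--Thorin interpolation
between the hypothesis
$\sup_{0\le s\le t}\|\Gamma(t,s)\|_{\mathscr{L}(L^q)}<\infty$
and the $L^2$-contractivity from Proposition~\ref{lem:propag} yields
$\sup_{0\le s\le t}\|\Gamma(t,s)\|_{\mathscr{L}(L^r)}<\infty$ for every
$r\in[q,2]$, in particular for $r=p$. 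Applying Lemma~\ref{lem:propsol2}
with $p$ in place of $q$ (legitimate since $p\in(q,2)\subset[1,2)$) then shows
that $u=\Gamma(\cdot,0)u_0$ is indeed a global weak solution in
$L^\infty(L^p)$.

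For \eqref{repLp}$\,\Rightarrow\,$\eqref{tildeN}, I adapt the proof of
Lemma~\ref{lem:slice-tildeN}(2) by replacing the Gaffney bound of
Proposition~\ref{prop:OD} with the $L^r$--$L^2$ off-diagonal bound of
Lemma~\ref{lem:ODp-2}, which is available for any $r\in(q,2]$ under our
standing hypothesis. Choosing $r\in(q,p)$, decomposing $u_0$ into its
restrictions to the dyadic annuli $S_j(x,\sqrt\delta)$ and exploiting the
rapid decay $e^{-c\,4^j}$ in the off-diagonal estimate leads, exactly as in
Lemma~\ref{lem:slice-tildeN}(2), to the pointwise control
\[
\tilde N(u)(x)\ \lesssim\ \bigl(M_{HL}|u_0|^r\bigr)^{1/r}(x),
\]
where $M_{HL}$ is the uncentered Hardy--Littlewood maximal function. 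Since
$r<p$, we have $p/r>1$, and the boundedness of $M_{HL}$ on $L^{p/r}$ gives
$\|\tilde N(u)\|_{L^p}\lesssim\|u_0\|_{L^p}$.

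The remaining implication \eqref{tildeN}$\,\Rightarrow\,$\eqref{borneLp},
which does not rely on the hypothesis on the propagators, is the most
novel step. Taking $\delta=t$ in the definition of $\tilde N$ gives the
trivial lower bound
$\tilde N(u)(x)^{2}\ge\fint_{t/2}^{t}\fint_{B(x,\sqrt t)}|u(s,y)|^2
\,{\rm d}y\,{\rm d}s$, while Proposition~\ref{prop:nrjloc} applied with
$a=t/2$, $b=t$, $r=\sqrt t/2$, after dividing by $|B(x,\sqrt t/2)|$, yields
\[
\fint_{B(x,\sqrt t/2)}|u(t,y)|^2\,{\rm d}y\ \lesssim\
\fint_{t/2}^{t}\fint_{B(x,\sqrt t)}|u(s,y)|^2\,{\rm d}y\,{\rm d}s
\]
with a constant independent of $t$. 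Combining these two inequalities, using
Jensen's inequality (since $p<2$) to replace the $L^2$ average on the left
by an $L^p$ average, then raising to the $p$-th power, integrating in $x$
and applying Fubini, we obtain
\[
\|u(t,\cdot)\|_{L^p}^p\ \le\
\int_{{\mathbb R}^n}\fint_{B(x,\sqrt t/2)}|u(t,y)|^p\,{\rm d}y\,{\rm d}x
\ \lesssim\ \|\tilde N(u)\|_{L^p}^p,
\]
uniformly in $t>0$. Hence $u\in L^\infty(L^p)$.

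The norm equivalences then follow by chaining the quantitative bounds from
the three implications, and the continuity $u\in\mathscr{C}_0([0,\infty);L^p)$
is inherited from Theorem~\ref{thm:uniqueness_p<2}. The main subtlety is
the adaptation of Lemma~\ref{lem:slice-tildeN}(2) in the second paragraph:
the strict inequality $q<p$ is essential, as the boundary case $q=p$ would
require $M_{HL}$ to be bounded on $L^1$, which fails. The other delicate
point is arranging the constants in the pointwise estimate of the third
paragraph to be uniform in $t>0$, which is precisely why the averaged,
Kenig--Pipher style maximal function $\tilde N$ is better suited here than
the standard non-tangential maximal function $u^*$.
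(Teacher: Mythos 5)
Your proposal is correct and takes essentially the same route as the paper: Theorem~\ref{thm:uniqueness_p<2} for \eqref{borneLp}$\Rightarrow$\eqref{repLp}, Lemma~\ref{lem:propsol2} with the uniform $L^p$ bound for the converse, the off-diagonal bounds of Lemma~\ref{lem:ODp-2} combined with the Hardy--Littlewood maximal function for \eqref{repLp}$\Rightarrow$\eqref{tildeN}, and Proposition~\ref{prop:nrjloc} plus H\"older/Jensen for \eqref{tildeN}$\Rightarrow$\eqref{borneLp}. The only difference is cosmetic: you spell out the Riesz--Thorin interpolation yielding $\sup_{0\le s\le t}\|\Gamma(t,s)\|_{\mathscr{L}(L^p)}<\infty$, which the paper leaves implicit.
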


\begin{proof}
\eqref{borneLp}$\implies$\eqref{repLp} is proven in 
Theorem~\ref{thm:uniqueness_p<2}. The implication 
\eqref{repLp}$\implies$\eqref{borneLp} is a consequence of 
$\displaystyle{\sup_{0\leq s \leq t<\infty}\|\Gamma(t,s)\|_{{\mathscr{L}}(L^p)}<\infty}$, 
and Lemma \ref{lem:propsol2}.
So is the norm estimate $\|u_0\|_p\sim\|u\|_{L^\infty(L^p)}$.

\noindent
\eqref{tildeN}$\implies$\eqref{borneLp}: Let $t>0$. Using 
Proposition~\ref{prop:nrjloc} and H\"older's inequality, we have that
\begin{align*}
\|u(t,\cdot)\|_p
=&\Bigl(\int_{{\mathbb{R}}^n}\fint_{B(x,\frac{\sqrt{t}}{2})}|u(t,y)|^p
\,{\rm d}y\,{\rm d}x\Bigr)^{\frac{1}{p}}
\\
\lesssim&
\Bigl(\int_{{\mathbb{R}}^n}\Bigl(\fint_{B(x,\frac{\sqrt{t}}{2})}|u(t,y)|^2
\,{\rm d}y\Bigr)^{\frac{p}{2}}\,{\rm d}x\Bigr)^{\frac{1}{p}}
\\
\lesssim&
\Bigl(\int_{{\mathbb{R}}^n}\Bigl(\fint_{\frac{t}{2}}^t\fint_{B(x,\sqrt{t})}|u(\sigma,y)|^2
\,{\rm d}y\,{\rm d}\sigma\Bigr)^{\frac{p}{2}}\,{\rm d}x\Bigr)^{\frac{1}{p}}
\le \|\tilde N(u)\|_p.
\end{align*}
\eqref{repLp}$\implies$\eqref{tildeN}:
Let $r\in(q,p)$, and $x \in {\mathbb{R}}^n$, $\delta>0$. Using 
Lemma~\ref{lem:ODp-2} we have that
$$
\Bigl(\fint_{\frac{\delta}{2}}^\delta\fint_{B(x,\sqrt{\delta})}|\Gamma(t,0)u_0(y)|^2
\,{\rm d}y\,{\rm d}t\Bigr)^{\frac{1}{2}}
\lesssim \bigl(M_{HL}|u_0|^r(x)\bigr)^{\frac{1}{r}},
$$
with constants independent of $x,\delta$.
Therefore $\|\tilde N(u)\|_{L^p}\lesssim \|u_{0}\|_{L^p}\le \|u\|_{L^\infty(L^p)}$ 
as we have shown in the proof of Theorem~\ref{thm:uniqueness_p<2} that 
$t\mapsto \Gamma(t,0)u_{0}$ is continuous in $L^p({\mathbb{R}}^n)$.
Moreover $u$ is a global weak solution of \eqref{eq1} by Lemma~\ref{lem:propsol2}.
\end{proof}

\subsection{Further results}

Without any assumption on the propagators, we have proven well posedness 
results in the class $X^p$ for $p>2$. We now consider solutions in 
$L^\infty(L^p)$ under an $L^p$ boundedness assumption on the propagators. 
Note that, contrary to the case $p<2$, we do not need to make assumptions 
about the boundedness of the propagators for different values of $p$.

\begin{proposition}
\label{cor:cons1}
Let $p \in (2,\infty] $. Assume
that $\displaystyle{\sup_{0\leq s\le t<\infty}\|\Gamma(t,s)\|_{{\mathscr{L}}(L^p)}<\infty}$. 
For $u \in {\mathscr{D}}'$,
the following assertions are equivalent.
\begin{align}
&u \mbox{ is a global weak solution of \eqref{eq1} in } L^\infty(L^p) ;
\label{un1}\\[4pt]
&\exists !\,u_0\in L^p({\mathbb{R}}^n) \mbox{ such that }u(t,\cdot)=\Gamma(t,0)u_0
\mbox{ in }L^p({\mathbb{R}}^n)\mbox{ for all }t>0.
\label{un2}
\end{align}
In this case, $\|u_0\|_p\sim \|u\|_{L^\infty(L^p)} \sim \|u\|_{X^p}$.

\noindent
Moreover, if $p<\infty$ and $\displaystyle{\sup_{0\leq s\le t<\infty}
\|\Gamma(t,s)\|_{{\mathscr{L}}(L^r)}<\infty}$ for some $r \in (p,\infty)$ then
$u\in{\mathscr{C}}_0(L^p)$.
\end{proposition}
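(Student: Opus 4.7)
The direction \eqref{un2}$\Rightarrow$\eqref{un1} is easy: Proposition~\ref{prop:propsol} guarantees that $u(t,\cdot)=\Gamma(t,0)u_{0}$ is a global weak solution of \eqref{eq1} for any $u_{0}\in L^{p}$, and the standing uniform $L^{p}$-boundedness of the propagators directly yields $u\in L^{\infty}(L^{p})$ with $\|u\|_{L^{\infty}(L^{p})}\lesssim\|u_{0}\|_{L^{p}}$. The hard direction is \eqref{un1}$\Rightarrow$\eqref{un2}, for which I plan to follow the scheme of Theorems~\ref{thm:linfl2} and~\ref{thm:uniqueness_p<2}. Remark~\ref{rk:esssup1} gives $\sup_{t>0}\|u(t,\cdot)\|_{L^{p}}=:M<\infty$, Proposition~\ref{prop:L2poids} ensures the hypothesis of Theorem~\ref{thm:THE_THEOREM} is satisfied on every strip, and Banach-Alaoglu produces a decreasing sequence $t_{k}\to 0$ along which $u(t_{k},\cdot)$ converges weakly$^{*}$ in $L^{p}$ to some $u_{0}\in L^{p}$. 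For each $h\in{\mathscr{C}}_{c}({\mathbb{R}}^{n})$, Theorem~\ref{thm:THE_THEOREM} then yields, for every $t>0$ and every $k$ large enough,
$$
\int_{{\mathbb{R}}^{n}}u(t,x)\overline{h(x)}\,{\rm d}x=\int_{{\mathbb{R}}^{n}}u(t_{k},x)\overline{\Gamma(t,t_{k})^{*}h(x)}\,{\rm d}x,
$$
and the whole plan reduces to passing to the limit in $k$.

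The main obstacle is precisely this limit: we need strong $L^{p'}$-convergence of $\Gamma(t,t_{k})^{*}h$ towards $\Gamma(t,0)^{*}h$, and since $p'\in[1,2)$ this does not follow by interpolation between the uniform $L^{p'}$-bound (dual to the hypothesis) and the $L^{2}$-convergence of Proposition~\ref{prop:adjointGamma}, as we have no $L^{r}$-bound available for any $r<p'$. The plan is to split $\|\Gamma(t,t_{k})^{*}h-\Gamma(t,0)^{*}h\|_{L^{p'}}$ into a part on a ball $B(0,R)$, where H\"older's inequality and the $L^{2}$-convergence from Proposition~\ref{prop:adjointGamma} give convergence to zero as $k\to\infty$ for fixed $R$, and a tail on $B(0,R)^{c}$, handled by the dyadic annular decomposition $S_{j}=B(0,2^{j+1}R)\setminus B(0,2^{j}R)$. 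Since $h$ is compactly supported and $t-t_{k}\sim t$ for $k$ large, the $L^{2}$ off-diagonal estimates of Proposition~\ref{prop:OD} applied to $\Gamma(t,t_{k})^{*}$, combined with H\"older's inequality on each $S_{j}$, give
$$
\|{1\!{\rm l}}_{S_{j}}\Gamma(t,t_{k})^{*}h\|_{L^{p'}}\lesssim (2^{j}R)^{n(\frac{1}{p'}-\frac{1}{2})}e^{-c\,4^{j}R^{2}/t}\|h\|_{L^{2}},
$$
summable and convergent to zero as $R\to\infty$ uniformly in $k$. Passing to the limit then gives $\int u(t,\cdot)\overline{h}=\int u_{0}\,\overline{\Gamma(t,0)^{*}h}$ for every $h\in{\mathscr{C}}_{c}$, which identifies $u(t,\cdot)=\Gamma(t,0)u_{0}$ in $L^{p}$. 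Uniqueness of $u_{0}$ is then the injective side of Lemma~\ref{lem:slice-tildeN}(2), and the norm equivalences are immediate: weak$^{*}$ lower semicontinuity gives $\|u_{0}\|_{L^{p}}\le\|u\|_{L^{\infty}(L^{p})}$, the reverse inequality is the $L^{p}$-boundedness assumption, and $\|u\|_{X^{p}}\sim\|u_{0}\|_{L^{p}}$ is Lemma~\ref{lem:slice-tildeN}(2).

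For the moreover part, assuming in addition $\sup_{0\le s\le t}\|\Gamma(t,s)\|_{{\mathscr{L}}(L^{r})}<\infty$ for some $r\in(p,\infty)$, the plan is to mimic the interpolation argument at the end of the proof of Theorem~\ref{thm:uniqueness_p<2}. Since $2<p<r$, there is a unique $\theta\in(0,1)$ with $\tfrac{1}{p}=\tfrac{\theta}{r}+\tfrac{1-\theta}{2}$. Approximating $u_{0}$ in $L^{p}$ by $v_{0}\in{\mathscr{D}}({\mathbb{R}}^{n})$ and using a three-epsilon argument together with the interpolation inequality
$$
\|\Gamma(t,0)v_{0}-\Gamma(s,0)v_{0}\|_{L^{p}}\le\|\Gamma(t,0)v_{0}-\Gamma(s,0)v_{0}\|_{L^{r}}^{\theta}\|\Gamma(t,0)v_{0}-\Gamma(s,0)v_{0}\|_{L^{2}}^{1-\theta},
$$
the uniform $L^{r}$-bound on the propagators and the ${\mathscr{C}}_{0}(L^{2})$ property of Theorem~\ref{thm:wellposed} together yield both continuity of $t\mapsto u(t,\cdot)$ on $[0,\infty)$ with values in $L^{p}$ and its decay to zero as $t\to+\infty$.
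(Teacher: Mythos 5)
Your proposal is correct and follows essentially the same route as the paper: the forward direction via Proposition~\ref{prop:propsol} and the $L^p$-bound, the backward direction via weak$^*$ compactness, Proposition~\ref{prop:L2poids} plus Theorem~\ref{thm:THE_THEOREM}, and the key strong $L^{p'}$ convergence of $\Gamma(t,t_k)^*h$ obtained from the dyadic annular decomposition, H\"older with a volume factor, the $L^2$ off-diagonal bounds of Proposition~\ref{prop:OD} (uniform in $t_k$), and the $L^2$ continuity of Proposition~\ref{prop:adjointGamma}, with the ${\mathscr{C}}_0(L^p)$ statement handled by the same interpolation argument as in Theorem~\ref{thm:uniqueness_p<2}. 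The only difference is presentational: you organize the limit as a ball-plus-uniformly-small-tail split, whereas the paper phrases the identical estimates as dominated convergence for the sum over annuli.
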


\begin{proof}
Proposition~\ref{prop:propsol} and the assumption give us that \eqref{un2} 
implies \eqref{un1}, with $\|u_0\|_{L^p} \sim \|u\|_{L^\infty(L^p)}$. 
We now prove that \eqref{un1} implies \eqref{un2}. Proceeding as in the proof 
of Theorem~\ref{thm:uniqueness_p<2}, we only have to show that
$$
\|\Gamma(t,s)^*h-\Gamma(t,0)^*h\|_{L^{p'}}
\xrightarrow[s \to 0]{}0
$$ 
for all $t>0$ and $h \in {\mathscr{C}}_c({\mathbb{R}}^n)$. 
Let $M>0$ be such that $h$ is supported in $B(0,M)$. For all $j\ge 1$ 
and $t>s>0$, we have that
$$
\|1\!{\rm l}_{S_j(0,M)}(\Gamma(t,s)^*-\Gamma(t,0)^*)h\|_{L^{p'}}
\lesssim_M 2^{jn(\frac{1}{2}-\frac{1}{p})}
\|1\!{\rm l}_{S_j(0,M)}(\Gamma(t,s)^{*}
-\Gamma(t,0)^*)h\|_{L^2}.
$$
For each $j\ge 1$, the right hand side converges to 0 when $s\to 0$ by strong 
continuity of $s\mapsto \Gamma(t,s)^*h$ by Proposition~\ref{prop:adjointGamma}. 
Combining this estimate with Proposition~\ref{prop:OD}, we have the following 
for all $j \geq 2$, and some constant $c>0$:
$$
\|1\!{\rm l}_{S_j(0,M)}(\Gamma(t,s)^*-\Gamma(t,0)^*)h\|_{L^{p'}}
\lesssim_M 2^{jn(\frac{1}{2}-\frac{1}{p})}e^{-c\frac{4^{j}}{t}}
\|h\|_{L^2},
$$
with constant independent of $s$ when $s<t/2$. Therefore, we can apply 
dominated convergence for sums to obtain 
$$
\|\Gamma(t,s)^*h-\Gamma(t,0)^*h\|_{L^{p'}} 
\le \sum_{j\ge 1} \|1\!{\rm l}_{S_j(0,M)}(\Gamma(t,s)^*-\Gamma(t,0)^*)h\|_{L^{p'}} 
\xrightarrow[s \to 0]{}0.
$$
The uniqueness of $u_0$ follows from convergence in $L^2_{\rm loc}$ of
$u$, since we know that $u(t,\cdot)=\Gamma(t,0)u_{0}$ for all $t>0$. 
The equivalence of norms follows from the above and 
Corollary~\ref{thm:uniqueXp}.
 
If we assume that $p<\infty$ and that $\displaystyle{
\sup_{0\leq s\le t<\infty}\|\Gamma(t,s)\|_{{\mathscr{L}}(L^r)}<\infty}$ 
for some $r \in (p,\infty)$, then we obtain that $u \in {\mathscr{C}}_0(L^p)$ 
exactly as in the proof of Theorem~\ref{thm:uniqueness_p<2}.
\end{proof}

An interesting corollary is the following weak maximum principle without continuity.

\begin{corollary} 
Assume that $\displaystyle{C=
\sup_{0\leq s\le t<\infty}\|\Gamma(t,s)\|_{{\mathscr{L}}(L^\infty)}<\infty}$. 
Then any global weak solution $u$ of \eqref{eq1} in 
$L^\infty({\mathbb{R}}^{n+1}_+)$ satisfies
$$
\sup_{t>0}\|u(t,\cdot)\|_{L^\infty({\mathbb{R}}^n)} 
\le C \|f\|_{L^\infty({\mathbb{R}}^n)},
$$
where $f$ is the initial value of $u$ (which exists as limit in the 
$L^2_{\rm loc}$ sense). 
\end{corollary}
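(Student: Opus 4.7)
The plan is to derive this corollary directly from Proposition~\ref{cor:cons1} applied with $p=\infty$, together with the strong continuity statement of Lemma~\ref{lem:cont-in-slice}(3).

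First, I would observe that if $u$ is a global weak solution of \eqref{eq1} in $L^\infty({\mathbb{R}}^{n+1}_+)$, then in particular $u\in L^\infty((0,\infty);L^\infty({\mathbb{R}}^n))$, so the hypothesis~\eqref{un1} of Proposition~\ref{cor:cons1} with $p=\infty$ is fulfilled. The second hypothesis of that proposition---namely uniform boundedness of the propagators on $L^\infty$---is exactly the standing assumption $C<\infty$. Hence Proposition~\ref{cor:cons1} produces a unique $u_0\in L^\infty({\mathbb{R}}^n)$ such that
$$
u(t,\cdot)=\Gamma(t,0)u_0 \quad\text{in }L^\infty({\mathbb{R}}^n),\ \forall\,t>0.
$$

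Second, I must identify this $u_0$ with the initial value $f$ mentioned in the statement. Since $u_0\in L^\infty({\mathbb{R}}^n)$, Lemma~\ref{lem:cont-in-slice}(3) yields $\Gamma(t,0)u_0\to u_0$ in $L^2_{\rm loc}({\mathbb{R}}^n)$ as $t\to 0$. Therefore $u(t,\cdot)\to u_0$ in $L^2_{\rm loc}$, which means $u_0=f$ by uniqueness of this limit.

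Third, taking $L^\infty$ norms and using the standing hypothesis gives, for every $t>0$,
$$
\|u(t,\cdot)\|_{L^\infty({\mathbb{R}}^n)}
=\|\Gamma(t,0)f\|_{L^\infty({\mathbb{R}}^n)}
\le C\|f\|_{L^\infty({\mathbb{R}}^n)},
$$
and taking the supremum in $t$ gives the claim. There is no genuine obstacle here; the only point to verify is the consistency between the $u_0$ from Proposition~\ref{cor:cons1} and the $L^2_{\rm loc}$-limit $f$ referenced in the statement, which is handled by Lemma~\ref{lem:cont-in-slice}(3).
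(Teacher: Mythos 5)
Your proposal is correct and is essentially the paper's own (implicit) argument: the corollary is stated as a direct consequence of Proposition~\ref{cor:cons1} with $p=\infty$, with the representing datum $u_0$ identified with the $L^2_{\rm loc}$ initial value exactly as you do via Lemma~\ref{lem:cont-in-slice}(3). Nothing is missing.
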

 
We end this section with another corollary assuming pointwise bounds.  Remark that this does not include $p=1$. 
  
\begin{corollary} 
Assume the propagators $\Gamma(t,s)$, $0\le s <t<\infty$, have kernels bounds. 
Let $1<p\le\infty$. For $u\in {\mathscr{D}}'$, the following assertions are equivalent.
\begin{align}
&u \mbox{ is a global weak solution of \eqref{eq1} in } L^\infty(L^p) ;
\\[4pt]
&\exists !\,u_0\in L^p({\mathbb{R}}^n) \mbox{ such that }u(t,\cdot)=\Gamma(t,0)u_0
\mbox{ in }L^p({\mathbb{R}}^n)\mbox{ for all }t>0 ;
\\[4pt]
&u \mbox{ is a global weak solution of \eqref{eq1} such that }
\tilde u\in L^p({\mathbb{R}}^n), 
\end{align}
where   
$$
\tilde u(x) =\sup_{t>0}\esssup_{y; |y-x|< 4\sqrt{t}} |u(t,y)|, \quad x \in {\mathbb{R}}^n.
$$ 
In this case, $u\in{\mathscr{C}}_0(L^p)$ 
and $\|u_0\|_p\sim \|u\|_{L^\infty(L^p)}\sim\|\tilde u\|_p \sim \|u\|_{X^p}$.
\end{corollary}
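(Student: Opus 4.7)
The kernel bounds immediately give $\sup_{0\le s\le t<\infty}\|\Gamma(t,s)\|_{\mathscr{L}(L^r)}<\infty$ for every $r\in[1,\infty]$, by integrating the Gaussian bound. With this in hand, the equivalence of (i) and (ii) together with $\|u_0\|_p\sim\|u\|_{L^\infty(L^p)}\sim\|u\|_{X^p}$ and the continuity $u\in\mathscr{C}_0(L^p)$ for $p<\infty$ is already contained in the paper: Corollary~\ref{cor:kioloa} handles $p\in(1,2)$ (take any $q\in[1,p)$), Theorem~\ref{thm:linfl2} handles $p=2$, and Proposition~\ref{cor:cons1} handles $p\in(2,\infty]$ (where an auxiliary $r\in(p,\infty)$ is available to also deliver $\mathscr{C}_0(L^p)$ when $p<\infty$). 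It therefore suffices to insert (iii) into the chain.

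The implication (iii)$\Longrightarrow$(i) is immediate: for a.e. $x$ one has $|u(t,x)|\le\tilde u(x)$ from the defining essential supremum (applied with the ball centered at $x$), whence $\|u(t,\cdot)\|_p\le\|\tilde u\|_p$ for every $t>0$, so $u\in L^\infty(L^p)$.

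The main step is (ii)$\Longrightarrow$(iii). Writing $u(t,y)=\int k(t,0,y,z)u_0(z)\,{\rm d}z$ and using the pointwise kernel bound \eqref{p0}, for any $x\in{\mathbb{R}}^n$ and any $y$ with $|y-x|<4\sqrt t$ the elementary inequality $|y-z|^2\ge\tfrac12|x-z|^2-|x-y|^2\ge\tfrac12|x-z|^2-16t$ yields
$$|u(t,y)|\le C\int_{{\mathbb{R}}^n}t^{-n/2}e^{-c'|x-z|^2/t}|u_0(z)|\,{\rm d}z\lesssim M_{HL}|u_0|(x),$$
where the last inequality is the standard Gaussian-versus-maximal-function domination. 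The bound is uniform in $y\in B(x,4\sqrt t)$ and in $t>0$, so taking $\esssup$ in $y$ followed by $\sup$ in $t$ gives $\tilde u(x)\lesssim M_{HL}|u_0|(x)$. The Hardy-Littlewood maximal inequality (valid precisely for $1<p\le\infty$) then delivers $\|\tilde u\|_p\lesssim\|u_0\|_p$. Combining with (i)$\Leftrightarrow$(ii) and the pointwise bound $\tilde N(u)(x)\le\tilde u(x)$ (observing that $\delta/2<t<\delta$ and $|y-x|<\sqrt\delta$ force $|y-x|<4\sqrt t$) closes all four norm comparisons.

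No step is a genuine obstacle; the only point requiring care is the measure-theoretic handling of the $\esssup$ in $y$ to ensure that the null set of points at which $|u(t,\cdot)|$ might exceed its essential supremum does not spoil the pointwise bound $|u(t,x)|\le\tilde u(x)$, but this follows from Fubini together with the fact that for any fixed $t$ the $\esssup$ over $B(x,4\sqrt t)$ dominates $|u(t,x)|$ for a.e. $x$.
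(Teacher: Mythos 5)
Your argument is correct, and its key step differs from the paper's. The equivalence of the first two assertions, the comparisons $\|u_0\|_p\sim\|u\|_{L^\infty(L^p)}\sim\|u\|_{X^p}$, and the continuity in $L^p$ are obtained exactly as in the paper: the kernel bounds give uniform $L^r$ bounds on the propagators for every $r\in[1,\infty]$, and one then invokes Corollary~\ref{cor:kioloa} for $1<p<2$, the $L^2$ theory (Theorem~\ref{thm:linfl2} together with Corollary~\ref{cor:ntmax2}) for $p=2$, and Proposition~\ref{cor:cons1} for $p>2$. Where you diverge is in how $\tilde u$ is linked to the rest. The paper stays at the level of arbitrary weak solutions: it notes $\tilde N(u)\le \tilde u$ and obtains the converse $\tilde u\lesssim \tilde N_\beta(u)$ from the interior $L^2$--$L^\infty$ bounds of Moser type on Whitney parabolic cylinders that kernel bounds imply (Proposition~\ref{prop:HK}, part (2)), so that $\|\tilde u\|_{L^p}\sim\|\tilde N(u)\|_{L^p}$ holds a priori, for every exponent, without using any representation of $u$. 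You instead pass through the representation $u(t,\cdot)=\Gamma(t,0)u_0$ and dominate the kernel, for $|y-x|<4\sqrt t$, by a Gaussian centred at $x$, yielding $\tilde u\lesssim M_{HL}|u_0|$ pointwise and then $\|\tilde u\|_p\lesssim\|u_0\|_p$ by the Hardy--Littlewood maximal theorem; the reverse direction comes from $|u(t,x)|\le\tilde u(x)$ at Lebesgue points, as you indicate. Your route is more elementary in that it bypasses the Hofmann--Kim local boundedness result entirely, at the price of (a) being a posteriori, i.e.\ tied to solutions already identified with $\Gamma(\cdot,0)u_0$ rather than giving the pointwise comparison $\tilde u\lesssim\tilde N_\beta(u)$ valid for all global weak solutions and all $p$, and (b) consuming the restriction $p>1$ in the maximal theorem, whereas in the paper that restriction enters only through Corollary~\ref{cor:kioloa} and Proposition~\ref{cor:cons1}. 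Neither point causes a gap here, since the corollary's equivalences pass through the second assertion anyway and already require $p>1$.
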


Recall that solutions have no reason to be defined at each point, hence the 
variant of the pointwise maximal function.  

\begin{proof} 
As mentioned, $\Gamma(t,s)$ extends to uniformly bounded operators on 
$L^p$ when $t\ge s>0$. Corollary~\ref{cor:kioloa} and Proposition~\ref{cor:cons1} 
thus yield the result, at least for the modified non-tangential maximal function 
$\tilde{N}(u)$ instead of the standard non-tangential maximal function $u^*$. 
However, $\|\tilde{N}(u)\|_{L^p} \sim \|\tilde u\|_{L^p}$.
Indeed, we first observe that $\tilde{N}(u)\leq \tilde u$. 
A converse inequality $\tilde u \lesssim \tilde{N}_\beta(u)$, for some 
$\beta>0$, follows from the local boundedness properties of solutions as stated 
in Proposition~\ref{prop:HK}. Since $\|\tilde{N}_\beta(u)\|_{L^p} 
\sim \|\widetilde{N}(u)\|_{L^p}$, the proof is complete.
\end{proof}

\section{Close to constant or bounded variation time dependency}
\label{sec6}

In this section, we obtain well-posedness results for $L^p$ data when 
$p<2$. It seems to us that one should be able to extend the following results 
to $p>2$ but this would require other methods and we leave this open. 

\subsection{More about gradient bounds for semigroups}

We need to use the following quantified version of the boundedness property 
for the gradient of semigroups for autonomous problems.

\begin{definition}
\label{def:unifpmoins} 
For $1 \le q <2$,  $\Lambda,\lambda>0$, 
$M:[2,q')\to(0,\infty)$, let us define
${\mathcal{M}}(\Lambda,\lambda, q ,M) \subset 
L^\infty({\mathbb{R}}^{n};{\mathscr{M}}_n({\mathbb{C}}))$ by
$A \in {\mathcal{M}}(\Lambda,\lambda,q,M)$
if and only if $A$ satisfies \eqref{ell} with constants $\Lambda, \lambda$, 
and the following holds for $L=-{\rm div}\,A\nabla$,
\begin{align*}
\sup_{t>0}\|\sqrt{t}\nabla e^{-tL^*}\|_{{\mathscr{L}}(L^r)}
\le M(r)<\infty \quad \forall r\in [2,q'). 
\end{align*}
\end{definition}

As mentioned in the proof of Proposition~\ref{prop:AKMP}, this implies that 
there exists a function $M': [1,2] \cap \bigl(\frac {nq}{n+q},2\bigr]\to (0,\infty)$, 
such that 
$$
\sup_{t\ge0}\|e^{-tL}\|_{{\mathscr{L}}(L^p)}\le M'(p)<\infty \quad 
\forall\, p\in[1,2] \cap \bigl(\textstyle{\frac {nq}{n+q}},2\bigr]
$$
Recall that for $p=2$, $M'(2)=1$ by the contraction property ot the semigroup. 

\begin{remark} 
Any $A$ constant, or even continuous and periodic or almost periodic on 
${\mathbb{R}}^n$ belongs to ${\mathcal{M}}(\Lambda,\lambda,1,M)$ for 
some function $M$ (see \cite[Section 3]{Au07} and the references therein).
\end{remark}

\begin{definition}
Let $A \in L^\infty({\mathbb{R}}^{n+1}_+;{\mathscr{M}}_n({\mathbb{C}}))$ 
and $I\subset {\mathbb{R}}_+$ be a bounded interval. We define
$A_I = \fint_I A(t,.)\,{\rm d}t \in 
L^\infty({\mathbb{R}}^n;{\mathscr{M}}_n({\mathbb{C}}))$.
\end{definition}

\begin{lemma}
\label{lem:AonI}
If $A\in L^\infty({\mathbb{R}}^{n+1}_+;{\mathscr{M}}_n({\mathbb{C}}))$ 
satisfies \eqref{ell}, then there exist $q\in [1,2)$, and $M:[2,q')\to(0,\infty)$ 
such that 
$$
A_I\in{\mathscr{M}}(\Lambda,\lambda,q,M)
\quad \mbox{for all bounded interval } I.
$$
\end{lemma}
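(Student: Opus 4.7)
The plan is very short: this lemma is essentially a consequence of the fact that the constants appearing in \cite[Section~3.4]{Au07} are uniform over the ellipticity class, together with the observation that averaging in $t$ preserves ellipticity.

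First I would verify that for every bounded interval $I$, the matrix $A_I(x) = \fint_I A(t,x)\,{\rm d}t$ still satisfies \eqref{ell} with the \emph{same} constants $\lambda,\Lambda$. This is just an application of Fubini to the pointwise inequalities: for a.e.\ $x$ and all $\xi,\eta \in {\mathbb{C}}^n$,
$$
|\langle A_I(x)\xi,\eta\rangle| \le \fint_I |\langle A(t,x)\xi,\eta\rangle|\,{\rm d}t \le \Lambda|\xi||\eta|,
$$
and similarly $\Re e\,\langle A_I(x)\xi,\xi\rangle \ge \lambda|\xi|^2$. Note that $A_I$ is independent of $t$, so $L_I := -{\rm div}\,A_I\nabla$ is a genuine (autonomous) maximal accretive operator on $L^2({\mathbb{R}}^n)$ to which the results of \cite{Au07} apply.

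Second, I would invoke Remark~\ref{rem:ell} for the adjoint: since $A_I^*$ also satisfies \eqref{ell} with the same constants, there exists $\varepsilon(\lambda,\Lambda)>0$ (independent of $I$) such that $q_+(L_I^*) \ge 2+\varepsilon(\lambda,\Lambda)$. Set $q' := 2+\varepsilon(\lambda,\Lambda)$, so $q=q'/(q'-1) \in [1,2)$ depends only on $\lambda,\Lambda$. By the very definition of $q_+(L_I^*)$ recalled in \cite[Section~3.4]{Au07}, for each $r\in[2,q')$ one has
$$
\sup_{t>0}\|\sqrt{t}\,\nabla e^{-tL_I^*}\|_{{\mathscr{L}}(L^r)} < \infty.
$$

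The only point needing a word of care is the \emph{uniformity} of this bound in $I$: we need a single function $M:[2,q')\to(0,\infty)$ that works for every $A_I$. The hard part, if any, is this uniformity. However, the proofs in \cite[Section~3.4]{Au07} yield bounds on $\sqrt{t}\,\nabla e^{-tL^*}$ that depend only on the ellipticity constants $\lambda,\Lambda$, the dimension $n$, and $r$ (they proceed via interpolation from Gaffney-type estimates and Kato-square-root type identities, whose constants only see $\lambda,\Lambda$). Reading off this dependence from \cite{Au07}, we may set $M(r)$ to be the universal constant produced there for the ellipticity class $(\lambda,\Lambda)$; then $A_I \in {\mathscr{M}}(\Lambda,\lambda,q,M)$ simultaneously for every bounded interval $I$, which is exactly the claim.
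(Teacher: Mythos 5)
Your proof is correct and follows essentially the same route as the paper: one checks that averaging in $t$ preserves \eqref{ell} with the same constants $\lambda,\Lambda$, and then the uniform choice of $q$ and $M$ over the whole ellipticity class is exactly what Remark~\ref{rem:ell} (i.e.\ \cite[Section~3.4]{Au07}) provides. Your extra care about the uniformity of $M(r)$ in $I$ is precisely the point the paper delegates to that remark, so nothing is missing.
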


\begin{proof}
It is immediate that $A_I$ satisfies \eqref{ell} with 
constants $\Lambda,\lambda$. We need the existence of $q$ and $M$ 
that works for all $A_I$. This is provided by Remark~\ref{rem:ell}.   
\end{proof}

\subsection{Existence and uniqueness for $p<2$ with $BV(L^\infty)$ coefficients}
\label{subsec:BVex}

\begin{definition}
\label{def:BV(Linfty)}
We denote by $BV(L^\infty):= 
BV([0,\infty);L^\infty({\mathbb{R}}^n;{\mathscr{M}}_n({\mathbb{C}})))$ 
the space of functions 
$A: (0,\infty) \to L^\infty({\mathbb{R}}^n; {\mathscr{M}}_n({\mathbb{C}}))$  
with (semi-)norm
$$
\|A\|_{BV(L^\infty)}=\sup\Bigl\{\sum_{k=0}^\infty
\|A(t_{k+1},\cdot)-A(t_k,\cdot)\|_{L^\infty} ;
(t_k)_{k\in{\mathbb{N}}}\mbox{ non decreasing in }[0,\infty)\Bigr\}.
$$
\end{definition} 

If the semi-norm is zero then $A$ is independent of $t$. The $BV$ condition 
can thus be seen as a (large) perturbation of the autonomous case. 

Let 
$$
A(t,x)= \sum_{k=0}^\infty 1\!{\rm l}_{[t_k,t_{k+1})}(t)A_k(x)
$$
with $A_k\in L^\infty({\mathbb{R}}^n; {\mathscr{M}}_n({\mathbb{C}}))$ 
for all $k\in {\mathbb{N}}$, and $(t_k)_{k \in {\mathbb{N}}}$ increasing from 
$t_0=0$ to $\infty$. It is easy to see that  $A\in  BV(L^\infty)$ if and only if 
$\displaystyle{\sum_{k=0}^\infty \|A_{k+1}-A_k\|_{L^\infty} <\infty}$, and in 
this case the sum equals $\|A\|_{BV(L^\infty)}$. Moreover, if all $A_k$ 
satisfy \eqref{ell} with same ellipticity constants $\lambda,\Lambda$, then so 
does $A$. This is representative of the general situation thanks to the next lemma.    

\begin{lemma}
\label{lem:bvapp}
Let $A \in L^\infty({\mathbb{R}}^{n+1}_+;{\mathscr{M}}_n({\mathbb{C}}))
\cap BV(L^\infty)$ satisfy \eqref{ell} with constants $\Lambda, \lambda$.
For $j\in {\mathbb{N}}$, and $(t,x) \in {\mathbb{R}}^{n+1}_+$, let us define
$$
A_j(t,x) = \sum_{m=0}^\infty 1\!{\rm l}_{[\frac{m}{2^j},\frac{m+1}{2^j})}(t) 
\fint_{\frac{m}{2^j}}^{\frac{m+1}{2^j}} A(s,x)\,{\rm d}s.
$$
Then 
\begin{enumerate}[(i)]
\item
For all $j \in {\mathbb{N}}$, $A_j$ satisfies \eqref{ell} with constants 
$\Lambda, \lambda$.
\item
For almost every $(t,x)\in (0,\infty)\times {\mathbb{R}}^n$, 
$A_j(t,x) \xrightarrow[j \to \infty]{}A(t,x)$.
\item
For all $j \in {\mathbb{N}}$, $\|A_j\|_{BV(L^\infty)} \leq \|A\|_{BV(L^\infty)}$.
\end{enumerate}
\end{lemma}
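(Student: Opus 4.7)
The plan is to handle the three items in order, with (iii) being the main content and (i)-(ii) being essentially bookkeeping.

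For (i), note that for each dyadic interval $I^j_m = [\frac{m}{2^j}, \frac{m+1}{2^j})$, the matrix $A_j(t,x) = \fint_{I^j_m} A(s,x)\,{\rm d}s$ (for $t \in I^j_m$) is a continuous convex combination of the matrices $A(s,\cdot)$, $s \in I^j_m$. Since \eqref{ell} holds pointwise in $(s,x)$ with constants $\Lambda,\lambda$, integrating both estimates in $s$ and bringing the modulus / real part inside the integral (valid as they are subadditive) preserves the same constants for $A_j$.

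For (ii), since $A \in L^\infty({\mathbb{R}}^{n+1}_+; {\mathscr{M}}_n({\mathbb{C}})) \subset L^1_{\rm loc}$, Fubini gives that for a.e.\ $x \in {\mathbb{R}}^n$ the map $s\mapsto A(s,x)$ is locally integrable on $(0,\infty)$. Applying the one-dimensional Lebesgue differentiation theorem along dyadic intervals entry-by-entry, one obtains $A_j(t,x)\to A(t,x)$ for a.e.\ $t>0$, and a second use of Fubini gives the a.e.\ convergence on $(0,\infty)\times {\mathbb{R}}^n$.

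For (iii), observe that $A_j$ is piecewise constant in $t$ with jumps only at the grid $\{\frac{m}{2^j} \;;\; m \in {\mathbb{N}}\}$, so the supremum in the $BV(L^\infty)$ seminorm is attained on this grid and
$$
\|A_j\|_{BV(L^\infty)} = \sum_{m=0}^\infty \bigl\|A_j(\tfrac{m+1}{2^j},\cdot) - A_j(\tfrac{m}{2^j},\cdot)\bigr\|_{L^\infty}.
$$
Writing the jump as a translation difference,
$$
A_j(\tfrac{m+1}{2^j},x) - A_j(\tfrac{m}{2^j},x)
= 2^j \int_0^{2^{-j}} \bigl(A(\tfrac{m+1}{2^j}+\tau, x) - A(\tfrac{m}{2^j}+\tau, x)\bigr)\,{\rm d}\tau,
$$
taking the $L^\infty_x$-norm, using Minkowski's inequality for integrals, and then interchanging sum and integral via Tonelli yields
$$
\|A_j\|_{BV(L^\infty)} \le 2^j \int_0^{2^{-j}} \Bigl(\sum_{m=0}^\infty \bigl\|A(\tfrac{m+1}{2^j}+\tau,\cdot) - A(\tfrac{m}{2^j}+\tau,\cdot)\bigr\|_{L^\infty}\Bigr) {\rm d}\tau.
$$
For each fixed $\tau \in [0,2^{-j})$, the sequence $\bigl(\tfrac{m}{2^j}+\tau\bigr)_{m \in {\mathbb{N}}}$ is non-decreasing in $[0,\infty)$, so the inner sum is bounded by $\|A\|_{BV(L^\infty)}$ by the very definition of the seminorm; integration in $\tau$ contributes $2^{-j}$, which cancels the $2^j$ outside. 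The only mildly delicate point is verifying that the supremum in Definition \ref{def:BV(Linfty)} really covers sequences with arbitrary initial point $\tau$, which is immediate from the statement ``non decreasing in $[0,\infty)$''.
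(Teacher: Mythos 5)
Your proposal is correct and follows essentially the same route as the paper: averaging preserves the ellipticity constants for (i), the Lebesgue differentiation theorem gives (ii), and for (iii) you write each jump of $A_j$ as an averaged translation difference, swap sum and integral, and bound the inner sum for fixed $\tau$ by $\|A\|_{BV(L^\infty)}$, exactly as in the paper's computation. No gaps.
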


\begin{proof}
$(i)$ and $(ii)$ follow directly from the definition of $A_j$ and Lebesgue's 
differentiation theorem. We turn to $(iii)$. By the discussion above 
\begin{align*}
\|A_j\|_{BV(L^\infty)} &= \sum_{m=0}^\infty 
\Big\|\fint_{\frac{m+1}{2^j}}^{\frac{m+2}{2^j}} A(s,x)ds - 
\fint_{\frac{m}{2^j}}^{\frac{m+1}{2^j}} A(s,x)ds \Big\|_{L^\infty}
\\
& \le \fint_0^{2^{-j}} \sum_{m=0}^{\infty}  
\Bigl\|A\Bigl(\frac{m+1}{2^j}+s,\cdot\Bigr)- 
A\Bigl(\frac{m}{2^j}+s,\cdot\Bigr)\Bigr\|_{L^\infty}\,{\rm d}s
\  \le \|A\|_{BV(L^\infty)}.
\qedhere
\end{align*}
\end{proof}

\begin{lemma}
\label{lem:Aescalier}
Let $q\in [1,2)$, $M:[2,q')\to(0,\infty)$, and $\Lambda,\lambda>0$.
Let $A\in L^\infty({\mathbb{R}}^{n+1}_+;{\mathscr{M}}_n({\mathbb{C}}))
\cap BV(L^\infty)$ be of the form
$$
A(t,x)= \sum_{k=0}^\infty 1\!{\rm l}_{[t_k,t_{k+1})}(t)A_k(x)
$$
with $A_k\in{\mathcal{M}}(\Lambda,\lambda,q,M)$ for all $k\in {\mathbb{N}}$,
and $(t_k)_{k \in {\mathbb{N}}}$ increasing from $t_0=0$ to $\infty$. Then, 
for all $p\in\bigl(\max\{(1,\frac{2n}{n+q'}\},2\bigr)$, and $v_0\in L^p({\mathbb{R}}^n)$, 
$$
\bigl[v:(t,x)\mapsto \Gamma(t,0)v_0(x)\bigr]
\in L^\infty(L^p)
$$
and $\|v_0\|_p\sim\|v\|_{L^\infty(L^p)}$, with constants depending only on 
$p,q,M, \lambda,\Lambda$ and the $BV$ norm of~$A$. 
\end{lemma}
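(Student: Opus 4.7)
The plan is to reduce the general $BV$ estimate to a local small-perturbation estimate and then compose. Using $\|A\|_{BV(L^\infty)}<\infty$ and the piecewise-constant form of $A$, I first fix a small $\varepsilon_0>0$ (to be chosen below) and partition $[0,\infty)$ into finitely many intervals $[\sigma_j,\sigma_{j+1})$, $j=0,\dots,N-1$, with each $\sigma_j$ equal to some $t_{k(j)}$ and $\mathrm{Var}_{[\sigma_j,\sigma_{j+1})}(A)\le \varepsilon_0$. This is achievable with $N\le \lceil\|A\|_{BV(L^\infty)}/\varepsilon_0\rceil+1$ pieces and forces $\sup_{t\in[\sigma_j,\sigma_{j+1})}\|A(t,\cdot)-B_j\|_{L^\infty}\le\varepsilon_0$, where $B_j:=A(\sigma_j,\cdot)\in\mathcal{M}(\Lambda,\lambda,q,M)$. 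Let $L^j:=-\mathrm{div}(B_j\nabla)$ denote the associated autonomous operator.

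On each strip $[\sigma_j,\sigma_{j+1})\times\mathbb{R}^n$ I set up a Neumann series in the tent space $T^{p,2}$. For $g\in L^p\cap L^2$, the energy solution $v(t)=\Gamma(t,\sigma_j)g$ exists by Theorem~\ref{thm:wellposed}, and a comparison with $u_0(t):=e^{-(t-\sigma_j)L^j}g$ via Duhamel gives
$$v=u_0+\mathcal{R}_{L^j}\bigl[(A-B_j)\nabla v\bigr],\qquad \nabla v=\nabla u_0+\tilde{\mathcal{M}}_{L^j}\bigl[(A-B_j)\nabla v\bigr],$$
the second identity holding in $T^{p,2}$ by Proposition~\ref{prop:RvM}. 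Proposition~\ref{prop:AKMP} gives that $\tilde{\mathcal{M}}_{L^j}$ is bounded on $T^{p,2}$ in the stated range of $p$, with norm depending only on $p,q,M$; choosing $\varepsilon_0$ small enough that this norm times $\varepsilon_0$ is less than $1/2$ (uniformly in $j$), the Neumann series for $(I-\tilde{\mathcal{M}}_{L^j}\mathcal{B}_j)^{-1}$ converges, where $\mathcal{B}_j$ denotes multiplication by $A-B_j$, and yields $\|\nabla v\|_{T^{p,2}}\le 2\|\nabla u_0\|_{T^{p,2}}\lesssim\|g\|_{L^p}$. The last inequality is the autonomous conical square function estimate for $L^j$, obtained by combining the Caccioppoli-type bound of Proposition~\ref{prop:nrjloc} with the $L^r\to L^2$ off-diagonal estimates of Lemma~\ref{lem:ODp-2} applied to $e^{-tL^j}$ (both valid since $B_j\in\mathcal{M}(\Lambda,\lambda,q,M)$), and is exactly the autonomous analogue of the last display in the proof of Corollary~\ref{cor:kioloa}. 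Then Proposition~\ref{prop:subMR} yields $\|\mathcal{R}_{L^j}[(A-B_j)\nabla v]\|_{X^p}\lesssim\varepsilon_0\|\nabla v\|_{T^{p,2}}$, while $\|u_0\|_{X^p}=\|\tilde{N}(u_0)\|_{L^p}\lesssim\|g\|_{L^p}$ by the same off-diagonal argument, so $\|v\|_{X^p}\lesssim\|g\|_{L^p}$ on the strip. Since $\|v(t,\cdot)\|_{L^p}\le\|\tilde{N}(v)\|_{L^p}=\|v\|_{X^p}$ (Jensen applied inside Proposition~\ref{prop:nrjloc} for $p\le 2$), this gives
$$\sup_{t\in[\sigma_j,\sigma_{j+1})}\|\Gamma(t,\sigma_j)g\|_{L^p}\le C_0\|g\|_{L^p},\qquad C_0=C_0(p,q,M,\lambda,\Lambda).$$

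For any $t\in[\sigma_k,\sigma_{k+1})$, the factorisation $\Gamma(t,0)=\Gamma(t,\sigma_k)\Gamma(\sigma_k,\sigma_{k-1})\cdots\Gamma(\sigma_1,0)$ together with the previous bound yields $\|\Gamma(t,0)v_0\|_{L^p}\le C_0^{\,N}\|v_0\|_{L^p}$ on $L^p\cap L^2$; a standard density argument extends this to all of $L^p$. The reverse inequality $\|v_0\|_{L^p}\le\|v\|_{L^\infty(L^p)}$ comes from Fatou and the $L^2_{\rm loc}$-continuity of $v$ at $t=0$ (Theorem~\ref{thm:wellposed}). The decisive point is the Neumann series step: one must justify both the Duhamel gradient identity at the $T^{p,2}$ level and the identification of the $T^{p,2}$ fixed point with the actual energy solution. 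I would handle this by running the iteration simultaneously in $T^{p,2}$ and in $T^{2,2}=L^2(L^2)$ (where the energy solution lives), using that for $g\in L^p\cap L^2$ the starting term $\nabla u_0$ lies in both spaces, and invoking Neumann-series uniqueness in $L^2(L^2)$ to identify the two limits. Critically, only \emph{autonomous} information about $L^j$ (Kato square root, off-diagonal bounds, $T^{p,2}$-boundedness of $\tilde{\mathcal{M}}_{L^j}$) enters Step~2, so $C_0$ depends only on $p,q,M,\lambda,\Lambda$, and hence $C_0^N$ depends only on those together with $\|A\|_{BV(L^\infty)}$, avoiding any circular use of the non-autonomous $L^p$-theory one is building.
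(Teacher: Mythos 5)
Your overall strategy is sound, but it is genuinely different from the paper's proof of this lemma. The paper does not partition into intervals of small variation and does not use any smallness or Neumann series: it works directly at the jump times, setting $v_k=\Gamma(t_k,0)v_0$ and $w_k(t,\cdot)=e^{-(t-t_k)L_k}v_k$, and exploits the exact identity $w_{k+1}(t,\cdot)=w_k(t,\cdot)-\int_{t_{k+1}}^te^{-(t-\sigma)L_{k+1}}{\rm div}\,(A_{k+1}-A_k)\nabla w_k(\sigma,\cdot)\,{\rm d}\sigma$, so that Proposition~\ref{prop:AKMP} gives $\|\nabla w_{k+1}\|_{T^{p,2}}\le(1+C\|A_{k+1}-A_k\|_{L^\infty})\|\nabla w_k\|_{T^{p,2}}$ and the $BV$ norm enters only through the convergent product $\prod_k(1+C\|A_{k+1}-A_k\|_{L^\infty})\le e^{C\|A\|_{BV(L^\infty)}}$; the $L^p$ bound on $v(t,\cdot)$ then follows from the lower square function estimate of \cite{Au07} and the vertical/conical comparison of \cite{AHM12}. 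This handles infinitely many jumps in one stroke and produces the intermediate bound \eqref{tp2est}, which the paper reuses later. Your scheme (finitely many strips of variation $\le\varepsilon_0$, frozen coefficients $B_j$, Duhamel plus a contraction on each strip, then composition of $N\sim\|A\|_{BV(L^\infty)}/\varepsilon_0$ propagators) is essentially the mechanism the paper deploys later for Theorems~\ref{thm:pert} and~\ref{thm:cont}, and it does yield the lemma with constant $C_0^N$, also controlled by the $BV$ norm. Your device of running the Picard iteration simultaneously in $T^{p,2}$ and in $L^2(L^2)$ and identifying the fixed points is a legitimate replacement for the paper's a priori estimate \eqref{tp2est}; just note that you then also need $\varepsilon_0\|\tilde{\mathcal{M}}_{L^j}\|_{{\mathscr{L}}(L^2(L^2))}<1$, and that $\|A(t,\cdot)-B_j\|_{L^\infty}\le\varepsilon_0$ only on the strip, so the multiplier must be truncated to the strip and the causality of $\tilde{\mathcal{M}}_{L^j}$ invoked (as the paper does in the proof of Theorem~\ref{thm:cont}).

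The one step whose justification would fail as written is the input $\|\nabla u_0\|_{T^{p,2}}\lesssim\|g\|_{L^p}$ for $u_0(t,\cdot)=e^{-(t-\sigma_j)L^j}g$. Caccioppoli (Proposition~\ref{prop:nrjloc}) combined with off-diagonal bounds only converts Whitney averages of $\nabla e^{-tL^j}g$ into averages of $e^{-sL^j}g$, and the resulting quantity behaves like $\int_0^\infty\fint_{B(x,c\sqrt t)}|e^{-tL^j}g|^2\,{\rm d}y\,\frac{{\rm d}t}{t}$, which diverges near $t=0$ since the inner average tends to $|g(x)|^2$: the cancellation that makes a square function finite is lost. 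Moreover, the display you point to in the proof of Corollary~\ref{cor:kioloa} controls $\tilde N$ of the solution, not a conical square function of its gradient, and Lemma~\ref{lem:ODp-2} concerns the propagators under an $L^q$ hypothesis rather than the autonomous semigroups. The estimate you need is nevertheless true in the stated range of $p$ precisely because $B_j\in{\mathcal{M}}(\Lambda,\lambda,q,M)$, but it must be imported, as the paper does (Step~1 of Theorem~\ref{thm:pert} and its own proof of this lemma), from the square function theory of \cite{Au07} together with \cite{AHM12}. With that citation, and the truncation/causality point above, your proof closes; the remaining ingredients ($\|u_0\|_{X^p}\lesssim\|g\|_{L^p}$ via off-diagonal bounds and the maximal theorem, $\|v(t,\cdot)\|_{L^p}\lesssim\|\tilde N(v)\|_{L^p}$ for $p<2$, Fatou for the reverse inequality, and density) are all correct.
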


\begin{remark}
\label{rem:huang2}  
The range of $p$ within $[1,2)$ depends only on the one of 
Proposition~\ref{prop:AKMP}. According to Remark~\ref{rem:huang}, 
$\frac{2n}{n+q'}$ can be improved to $\frac{qn}{n+q}$. 
\end{remark}
  
\begin{proof}
By density, it is enough to assume 
$v_0 \in L^2({\mathbb{R}}^n) \cap L^p({\mathbb{R}}^n)$. 
Then $v \in {\mathscr{C}}_0(L^2)$. For $k \in {\mathbb{N}}$, set 
$v_k = \Gamma(t_k,0)v_0$ and
$$
w_k(t,\cdot) = 
\begin{cases}
e^{-(t-t_k)L_k}v_k, & \mbox{if } t\geq t_k,\\
0, & \mbox{if } t< t_k,
\end{cases}
$$
where $L_k=-{\rm div}\,A_k\nabla$.
Observe that, for $t \in [t_k,t_{k+1})$, and $s \in [t_i,t_{i+1})$ with 
$i \leq k$, we have that
$$
\Gamma(t,s)=e^{-(t-t_k)L_k}e^{-(t_k-t_{k-1})L_{{k-1}}}\ldots e^{-(t_{i+1}-s)L_i}.
$$
This was proven for finite sequences $(t_j)_{j=0,\dots,N+1}$, but uniqueness in 
Theorem~\ref{thm:wellposed} gives us this formula even for infinite sequences. 
Thus we have that $v(t,\cdot)=w_k(t,\cdot)$ for all $t \in [t_k,t_{k+1}]$. 
Observe that for all $w \in L^2({\mathbb{R}}^n)$ and $t\geq t_{k+1}$,
\begin{align*}
e^{-(t-t_{k+1})L_{k+1}}&(e^{-(t_{k+1}-t_k)L_k}w)
\\
=&e^{-(t-t_k)L_k}w
-\int_{t_{k+1}}^te^{-(t-\sigma)L_{k+1}}{\rm div}\,(A_{k+1}-A_k)
\nabla e^{-(\sigma-t_k)L_k}w\,{\rm d}\sigma.
\end{align*}
Hence, for $t\geq t_{k+1}$,
$$
w_{k+1}(t,\cdot) = w_k(t,\cdot)-\int_{t_{k+1}}^t e^{-(t-\sigma)L_{k+1}}
{\rm div}\,(A_{k+1}-A_k)\nabla w_k(\sigma,\cdot)\,{\rm d}\sigma.
$$
Therefore, by Proposition~\ref{prop:AKMP} and the value of $p$, we have that
$$
\|1\!{\rm l}_{[t_{k+1},\infty)}\nabla(w_{k+1}-w_k)\|_{T^{p,2}}
\le \|\tilde{\mathcal{M}}_{L_{k+1}}\|_{{\mathscr{L}}(T^{p,2})}
\|A_{k+1}-A_{k}\|_{L^\infty}\|1\!{\rm l}_{[t_{k+1},\infty)}\nabla w_k\|_{T^{p,2}}.
$$
The norms $\|\tilde{\mathcal{M}}_{L_{k+1}}\|_{{\mathscr{L}}(T^{p,2})}$ are 
uniformly bounded from our assumption 
$A_k\in{\mathcal{M}}(\Lambda,\lambda,q,M)$ for all $k\in {\mathbb{N}}$. 
Thus there exists a constant $C>0$, depending only on $p, q, M$ and the 
ellipticity constants in \eqref{ell}, such that   
$$
\|\nabla w_{k+1}\|_{T^{p,2}}
\leq (1+C\|A_{k+1}-A_k\|_{L^\infty})\|
\nabla w_{k}\|_{T^{p,2}}.
$$
Iterating, and using \cite[Proposition~2.1]{AHM12}, we have that
\begin{align*}
\|\nabla w_k\|_{T^{p,2}}
\le \prod_{j=0}^k (1+C\|A_{j+1}-A_{j}\|_{L^\infty})
\|\nabla w_0\|_{T^{p,2}}
\le e^{C\|A\|_{BV(L^\infty)}}\|v_0\|_{L^p},
\end{align*}
since $\displaystyle{\sum_{j=0}^\infty \|A_{j+1}-A_j\|_{L^\infty}
=\|A\|_{BV(L^{\infty})}}$ for this particular $A$. So far, we have not used 
that $1< p<2$ in the statement. We note for further use that
\begin{equation}
\label{tp2est}
\sup_{k \in {\mathbb{N}}} \|1\!{\rm l}_{[t_k,t_{k+1})}\nabla v\|_{T^{p,2}}
\le e^{C\|A\|_{BV(L^\infty)}}\|v_0\|_{L^p}.
\end{equation}
The estimate on $w_k$ is sufficient to control $\|v(t,\cdot)\|_{L^p}$ when 
$1< p<2$ as we now show. Let $t \in [t_k,t_{k+1})$ for some $k \in {\mathbb{N}}$. 
Using successively that $A_k\in{\mathcal{M}}(\Lambda,\lambda,q,M)$ for all 
$k\in {\mathbb{N}}$, \cite[Corollary~3.6 and Theorem~5.1]{Au07} , a change of variable $s\mapsto s-t_k$ in the 
 fourth   line and $p<2$ in the  fifth line in applying 
\cite[Proposition~2.1]{AHM12}, we have the following chain of inequalities, 
with constants independent of $t$ and $k$:
\begin{align*}
\|v(t,\cdot)\|_{L^p}
&=\|e^{-(t-t_k)L_k}v_k\|_{L^p} 
\\
&  \lesssim \|v_k\|_{L^p}
\\
&\lesssim \Bigl\|\Bigl(\int_0^\infty\Bigl|\nabla e^{-sL_k}v_k\Bigr|^2
\,{\rm d}s\Bigr)^{\frac{1}{2}}\Bigr\|_{L^p} 
\\
&=\Bigl\|\Bigl(\int_0^\infty\Bigl|\nabla w_k(s,.)\Bigr|^2
\,{\rm d}s\Bigr)^{\frac{1}{2}}\Bigr\|_{L^p} 
\\
&
\lesssim\Bigl\|\nabla w_k\Bigr\|_{T^{p,2}}
\le e^{C\|A\|_{BV(L^\infty)}}\|v_0\|_{L^p}.
\qedhere
\end{align*}
\end{proof}

\begin{theorem}
\label{thm:BV}
Let $A \in L^\infty({\mathbb{R}}^{n+1}_+;{\mathscr{M}}_n({\mathbb{C}}))
\cap BV(L^\infty)$ satisfy \eqref{ell} with constants $\Lambda, \lambda$.
Let  $q\in [1,2)$,  and $M:[2,q')\to(0,\infty)$ be such that 
$A_I \in {\mathcal{M}}(\Lambda,\lambda,q,M)$ for all bounded intervals 
$I$ of ${\mathbb{R}}_+$.

\noindent
Let $p \in \bigl(\max\{1,\frac{2n}{n+q'}\},2\bigr)$\footnote{The range can be 
larger according to Remark \ref{rem:huang2}.} and $u_0 \in L^p({\mathbb{R}}^n)$. 
Then 
\begin{enumerate}[(i)]
\item
$\displaystyle{\sup_{0\le s\le t<\infty}\|\Gamma(t,s)\|_{{\mathscr{L}}(L^p)}<\infty}$.
\item
The function $u:(t,x)\mapsto \Gamma(t,0)u_0(x)$ is the unique global weak solution 
of \eqref{eq1} in $L^\infty(L^p)$ or in $X^p$ such that $u(0,\cdot)=u_0$. 

\noindent
Moreover, $u \in {\mathscr{C}}_0(L^p)$, and
$\|u\|_{L^\infty(L^p)} \sim \|u_0\|_{L^p} \sim \|u\|_{X^p}$. 
\item
The solution $u$ given in $(ii)$ is such that $\nabla u \in T^{p,2}$, and 
$\|\nabla u\|_{T^{p,2}} \sim \|u_0\|_{L^p}$.
\end{enumerate}
\end{theorem}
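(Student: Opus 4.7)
The plan is to establish (i), (ii) and (iii) by approximating $A$ by dyadic-piecewise-constant-in-time matrices, applying Lemma~\ref{lem:Aescalier} to each approximation to get uniform estimates, and then passing to a weak-$^*$ limit. Throughout, fix $v_0 \in L^2({\mathbb{R}}^n) \cap L^p({\mathbb{R}}^n)$; the general case will follow by density. By Lemma~\ref{lem:bvapp}, approximate $A$ by $A_j$, piecewise constant on the dyadic partition $\{t^{(j)}_m = m/2^j\}$, with $A_j|_{[t^{(j)}_m,t^{(j)}_{m+1})} = A_{[t^{(j)}_m,t^{(j)}_{m+1}]}$. By hypothesis these averaged matrices lie in $\mathcal{M}(\Lambda,\lambda,q,M)$, and $\|A_j\|_{BV(L^\infty)} \le \|A\|_{BV(L^\infty)}$. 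Denote by $\Gamma^{(j)}$ the propagator for $A_j$ and set $v^{(j)}(t,\cdot) := \Gamma^{(j)}(t,0) v_0$. Then Lemma~\ref{lem:Aescalier} gives, uniformly in $j$,
$$
\|v^{(j)}\|_{L^\infty(L^p)} \lesssim \|v_0\|_{L^p},
$$
with the implicit constant depending only on $p,q,M,\lambda,\Lambda$ and $\|A\|_{BV(L^\infty)}$.

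Next I would upgrade this to a uniform $T^{p,2}$-estimate on $\nabla v^{(j)}$. Inspecting the proof of Lemma~\ref{lem:Aescalier} yields, for the auxiliary functions $w^{(j)}_k$ defined there, both the uniform bound $\|\nabla w^{(j)}_k\|_{T^{p,2}} \le e^{C\|A\|_{BV(L^\infty)}} \|v_0\|_{L^p}$ and the single-step Duhamel estimate
$$
\|1\!{\rm l}_{[t^{(j)}_{k+1},\infty)} \nabla(w^{(j)}_{k+1}-w^{(j)}_k)\|_{T^{p,2}} \le C \|A^{(j)}_{k+1}-A^{(j)}_k\|_{L^\infty} \|\nabla w^{(j)}_k\|_{T^{p,2}}.
$$
Since $\nabla v^{(j)} = \nabla w^{(j)}_m$ on each slab $[t^{(j)}_m,t^{(j)}_{m+1})$, I would use the telescoping identity
$$
\nabla v^{(j)} = \nabla w^{(j)}_0 + \sum_{k\ge 0} 1\!{\rm l}_{[t^{(j)}_{k+1},\infty)} \nabla(w^{(j)}_{k+1}-w^{(j)}_k),
$$
the triangle inequality in $T^{p,2}$ (valid for $p\ge 1$), and the summability $\sum_k \|A^{(j)}_{k+1}-A^{(j)}_k\|_{L^\infty} \le \|A\|_{BV(L^\infty)}$ to conclude $\|\nabla v^{(j)}\|_{T^{p,2}} \lesssim \|v_0\|_{L^p}$ uniformly in $j$.

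Then I would pass to the limit. Since $v^{(j)}$ is uniformly bounded in $L^\infty(L^2) \cap L^2(\dot{H}^1)$ (by Theorem~\ref{thm:wellposed} applied to each $A_j$), Lemma~\ref{lem:Ak} furnishes a subsequence converging weak-$^*$ to a global weak solution $u$ of \eqref{eq1}. Since $u \in L^\infty(L^2)$ with initial trace $v_0$, Theorem~\ref{thm:linfl2} forces $u(t,\cdot)=\Gamma(t,0) v_0$. The uniform $L^\infty(L^p)$ and $T^{p,2}$ bounds pass to $u$ by Banach--Alaoglu in $L^\infty(L^p)$ and by reflexivity of $T^{p,2}$ (valid since $1<p<\infty$), yielding $\|\Gamma(t,0)v_0\|_{L^p} \lesssim \|v_0\|_{L^p}$ and $\|\nabla u\|_{T^{p,2}} \lesssim \|v_0\|_{L^p}$ for $v_0 \in L^2\cap L^p$. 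Density of $L^2\cap L^p$ in $L^p$ extends this to all $v_0 \in L^p$, and applying the same argument to the time-translated coefficients $\tau \mapsto A(\tau+s,\cdot)$ (which preserves both $\|A\|_{BV(L^\infty)}$ and the hypothesis on averaged $A_I$) gives the full assertion (i) and the upper bound in~(iii).

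Finally, pick $q\in(\max\{1,\tfrac{2n}{n+q'}\},p)$, which again lies in the admissible range of (i); then $\sup \|\Gamma(t,s)\|_{\mathcal L(L^q)}<\infty$, and Corollary~\ref{cor:kioloa} gives at once the representation, the $X^p$-boundedness, membership in $\mathscr{C}_0(L^p)$, and the equivalences $\|u_0\|_{L^p} \sim \|u\|_{L^\infty(L^p)} \sim \|u\|_{X^p}$, proving (ii). The lower bound in (iii) follows by combining the equivalence $\|u_0\|_{L^p} \sim \|u\|_{X^p} = \|\tilde N(u)\|_{L^p}$ with the Fefferman--Stein type comparison \eqref{FS1} for solutions of the form $\Gamma(\cdot,0)u_0$, established in Section~\ref{sec7}. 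The main obstacle is the global $T^{p,2}$-bound of the second paragraph: because $T^{p,2}$ is not translation-invariant in time for $p\ne 2$ (as noted after Definition~\ref{def:Xp}), the piecewise estimate \eqref{tp2est} does not sum directly; the $BV$ hypothesis enters exactly to make the Duhamel error sum telescope via $\sum_k \|A^{(j)}_{k+1}-A^{(j)}_k\|_{L^\infty} \le \|A\|_{BV(L^\infty)}$.
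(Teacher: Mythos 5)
Your treatment of (i) and (ii) is essentially the paper's own proof: the same dyadic approximation of Lemma~\ref{lem:bvapp}, the uniform $L^\infty(L^p)$ bound of Lemma~\ref{lem:Aescalier}, weak$^*$ compactness as in Lemma~\ref{lem:Ak} to produce a limiting global weak solution identified with $\Gamma(\cdot,0)v_0$, translation in time for general $s$, and then Corollary~\ref{cor:kioloa} applied with an auxiliary exponent chosen strictly between $\max\{1,\frac{2n}{n+q'}\}$ and $p$ (the paper leaves this choice implicit; do not call it $q$, which already denotes the exponent in ${\mathcal{M}}(\Lambda,\lambda,q,M)$). Two small points there: the identification of the limit via Theorem~\ref{thm:linfl2} needs to know that the trace of the limit is $v_0$, which is not automatic from weak$^*$ convergence but follows from the weak formulation obtained inside the proof of Lemma~\ref{lem:Ak} since all approximants share the datum $v_0$ (the paper instead invokes Theorem~\ref{thm:wellposed}). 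Where you genuinely diverge is the upper bound in (iii): the paper deduces $\|\nabla u\|_{T^{p,2}}\lesssim\|u_0\|_{L^p}$ from (ii) and Theorem~\ref{thm:SbyN<2}, a comparison valid for arbitrary global weak solutions with no $BV$ input, and indeed remarks that it was ``not able to prove (iii) using the approach of Lemma~\ref{lem:Aescalier}''. Your telescoping identity $\nabla v=\nabla w_0+\sum_{k}1\!{\rm l}_{[t_{k+1},\infty)}\nabla(w_{k+1}-w_k)$ appears to accomplish exactly that: on each slab $[t_m,t_{m+1})$ only the first $m$ corrections are active, each correction equals $-\tilde{\mathcal{M}}_{L_{k+1}}$ applied to $1\!{\rm l}_{[t_{k+1},\infty)}(A_{k+1}-A_k)\nabla w_k$, so by Proposition~\ref{prop:AKMP} (uniform over ${\mathcal{M}}(\Lambda,\lambda,q,M)$) its $T^{p,2}$ norm is $\lesssim\|A_{k+1}-A_k\|_{L^\infty}\,\|\nabla w_k\|_{T^{p,2}}$, the series converges absolutely in $T^{p,2}$ thanks to the $BV$ bound and the iterated estimate $\|\nabla w_k\|_{T^{p,2}}\le e^{C\|A\|_{BV(L^\infty)}}\|v_0\|_{L^p}$, and since $T^{p,2}\hookrightarrow L^2_{\rm loc}$ the sum is indeed $\nabla v$; weak compactness in the reflexive space $T^{p,2}$ (testing against ${\mathscr{C}}_c^\infty$ functions, which lie in $T^{p',2}$) passes the bound to $\nabla u$, and density of $L^2\cap L^p$ in $L^p$, legitimate by linearity and (i), removes the restriction $v_0\in L^2$. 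So your route is correct and stays entirely within the Section~\ref{sec6} machinery, with constants explicit in $\|A\|_{BV(L^\infty)}$; the paper's route through Section~\ref{sec7} is shorter once Theorem~\ref{thm:SbyN<2} is available, needs no approximation or density step, and yields the estimate as an a priori comparison independent of the $BV$ hypothesis. Your lower bound in (iii), via Proposition~\ref{prop:NbyS}/\eqref{FS1} combined with $\|u_0\|_{L^p}\sim\|u\|_{X^p}$ from (ii), is the same as the paper's.
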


\begin{proof}
By Corollary~\ref{cor:kioloa}, we have that $(i)$ implies $(ii)$. 

Next, $(ii)\implies(iii)$ is proven, using independent arguments that do not 
rely on the $BV(L^\infty)$ assumption, in Proposition~\ref{prop:NbyS} 
and Theorem~\ref{thm:SbyN<2}.

Let us now prove $(i)$. Assume that 
$u_0 \in L^2({\mathbb{R}}^n)\cap L^p({\mathbb{R}}^n)$. Let
$\{A_j,j\in {\mathbb{N}}\}$ be the family of approximations of $A$ defined in 
Lemma~\ref{lem:bvapp}. Let $u^{(j)}$ denote the corresponding global 
weak solution of $\partial_t v = {\rm div}\,A_j\nabla v$.
By Lemma~\ref{lem:Aescalier}, we have that 
$\|u_0\|_{L^p} \sim \|u^{(j)}\|_{L^\infty(L^p)}$ for all 
$j \in {\mathbb{N}}$ with implied constants independent of $j$. Moreover 
$\|u_0\|_{L^2} \sim \|u^{(j)}\|_{L^\infty(L^2)} \sim
\|\nabla u^{(j)}\|_{L^2(L^2)}$ uniformly in $j \in {\mathbb{N}}$. 
Therefore, there exists a subsequence $(v^{(j)})_{j\in {\mathbb{N}}}$ of 
$(u^{(j)})_{j\in {\mathbb{N}}}$, a function $v \in L^\infty(L^2)$, and a function 
$u \in L^\infty(L^p)$ such that
$$
\begin{array}{rcll}
v^{(j)}&\xrightarrow[j\to\infty]{}&v&\quad \mbox{weak$^*$ in }L^\infty(L^2),
\\
\nabla v^{(j)}&\xrightarrow[j\to\infty]{}&\nabla v&\quad \mbox{weak$^*$ in }L^2(L^2),
\\
v^{(j)}&\xrightarrow[j\to\infty]{}&u&\quad \mbox{weak$^*$ in }L^\infty(L^p).
\end{array}
$$
We have that $v=u$ as distributions, and that $v$ is a global weak solution 
of \eqref{eq1}. 
By Theorem~\ref{thm:wellposed}, it follows that $v(t,x) = \Gamma(t,0)u_0(x)$ 
for all $t\ge 0$ and almost every $x \in {\mathbb{R}}^n$.
Therefore, for all $t \ge 0$, $\|\Gamma(t,0)u_0\|_{L^p} 
=\|v(t, .)\|_{L^p}\lesssim \|u_0\|_{L^p}$.
Thus $\Gamma(t,0)$ extends to a bounded operator on $L^p({\mathbb{R}}^n)$, 
with norm independent of $t$. Starting at $s>0$ instead of $0$ gives in the 
same way that
$\displaystyle{\sup_{t\in [s, \infty)} \|\Gamma(t,s)\|_{{\mathscr{L}}(L^p)}}$ 
is controlled by the $BV(L^\infty)$ norm of $A$ on the interval $[s,\infty)$, 
which is smaller than the one on $[0,\infty)$. 
\end{proof}

\begin{remark}
Curiously, we are not able to prove $(iii)$ using the approach of 
Lemma~\ref{lem:Aescalier}.
\end{remark}

\begin{remark} 
In the general situation, we can obtain all values of $p\in (1,2)$ if $n=1,2$, 
and all values $p \in \bigl(\frac{2n}{n+2}-\varepsilon(\Lambda,\lambda), 2\bigr)$ 
if $n\ge 3$. If $A(t,x)$ depends only on $t$ or is periodic and continuous with respect to $x$ for all $t>0$ with common period, or even almost periodic 
 for all $t>0$, then we obtain $p\in (1,2)$ in any dimension. 
\end{remark}
 
\subsection{Existence and uniqueness for $p<2$:  small perturbations 
of autonomous equations or continuous coefficients}

We now turn to an existence and uniqueness result for small perturbations of 
an autonomous problem or for continuous coefficients on a finite interval. 
We start with the following variant of Duhamel's formula. 

\begin{lemma}
\label{lem:babyDuhamel}
Let $f \in L^2(L^2)$ and $h \in L^2({\mathbb{R}}^n)$. Let 
$\underline{A}\in L^\infty({\mathbb{R}}^n,{\mathscr{M}}_n({\mathbb{C}}))$ 
satisfy \eqref{ell} and $L=-{\rm div}\,\underline{A} \nabla$.
Define, for all $t>0$,
$$
u(t,\cdot) = e^{-tL}h + {\mathcal{R}}_Lf(t,\cdot),
$$
where 
$$
{\mathcal{R}}_Lf: (t,x) \mapsto
\int_0^t e^{-(t-s)L}{\rm div}\,f(s,\cdot)(x)\,{\rm d}s,
$$
is the bounded operator from $T^{2,2}$ to $X^2$ from 
Proposition~\ref{prop:subMR}.
Then $u$ is the unique element of $\dot{W}(0,\infty)$ such that, for all 
$\phi \in {\mathscr{D}}$,
$$
\langle u, \partial_t \phi \rangle = \langle \underline{A} \nabla u, \nabla \phi \rangle 
+ \langle f, \nabla \phi \rangle,
$$
and ${\rm Tr}(u) = h$.
\end{lemma}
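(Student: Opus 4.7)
The plan is Duhamel: write $u = v + w$ with $v(t,\cdot) = e^{-tL}h$ and $w = \mathcal{R}_L f$, verify each lies in $\dot W(0,\infty)$ with the appropriate weak equation and trace, and then read off uniqueness from Theorem~\ref{thm:wellposed} applied to the autonomous matrix $\underline A$. For $v$, Step~0 in the proof of Theorem~\ref{thm:wellposed} already gives $v \in {\mathscr{C}}_0(L^2) \cap L^2(\dot H^1)$ with $\|\nabla v\|_{L^2(L^2)}^2 \le (2\lambda)^{-1}\|h\|_{L^2}^2$; since $\partial_t v = -Lv = {\rm div}(\underline A\nabla v) \in L^2(\dot H^{-1})$, one has $v \in \dot W(0,\infty)$, ${\rm Tr}(v) = h$, and the homogeneous weak formulation $\langle v, \partial_t\phi\rangle = \langle \underline A\nabla v, \nabla\phi\rangle$.

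For $w$, Proposition~\ref{prop:subMR} yields $w \in X^2 \subset L^2_{\rm loc}({\mathbb R}^{n+1}_+)$, and Proposition~\ref{prop:RvM} applies at $p = 2$ (the critical index $p_c$ there is strictly less than $2$ in view of Remark~\ref{rem:ell}) to give $\nabla w = \tilde{\mathcal{M}}_L f$ in $L^2(L^2)$, so $w \in L^2(\dot H^1)$. To get the inhomogeneous weak equation, I would first treat the dense case $f \in \mathscr{D}$: classical semigroup theory applied to the smooth source ${\rm div}\,f$ places $w$ in $\mathscr{C}([0,\infty);L^2) \cap \mathscr{C}^1((0,\infty);L^2)$ with $w(0)=0$ and $\partial_t w + Lw = {\rm div}\,f$ strongly, so integrating against $\phi \in \mathscr{D}$ and integrating by parts yields the weak form. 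For general $f \in L^2(L^2)$ take $f_n \to f$ in $L^2(L^2)$ with $f_n \in \mathscr{D}$: boundedness of $\mathcal{R}_L$ from $L^2(L^2)$ to $X^2$ gives $w_n \to w$ in $X^2$, and since $\tilde N$ controls $L^2$ averages on every Whitney cylinder this gives $w_n \to w$ in $L^2_{\rm loc}({\mathbb R}^{n+1}_+)$; combined with $\nabla w_n \to \nabla w$ in $L^2(L^2)$, one passes to the limit in the weak equation. That equation then forces $\partial_t w = {\rm div}(\underline A\nabla w + f) \in L^2(\dot H^{-1})$, so $w \in \dot W(0,\infty)$.

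To identify ${\rm Tr}(w)$, I would test against $\phi \in \dot H^1({\mathbb R}^n)$ and use that $\{e^{-\tau L^*} : \tau \ge 0\}$ is uniformly bounded on $\dot H^1$ (noted in the paper as a consequence of Kato's square root problem) to obtain
\begin{equation*}
|\langle w(t,\cdot), \phi\rangle| = \Bigl|\int_0^t \langle f(s,\cdot), \nabla e^{-(t-s)L^*}\phi\rangle\,{\rm d}s\Bigr|
\lesssim \sqrt{t}\,\|f\|_{L^2(0,t;L^2)}\,\|\nabla\phi\|_{L^2},
\end{equation*}
so $\|w(t,\cdot)\|_{\dot H^{-1}} \to 0$ as $t \to 0$ and ${\rm Tr}(w) = 0$ in the sense of Lemma~\ref{lem:struct}. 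Uniqueness then reduces to the following: if $u_1, u_2 \in \dot W(0,\infty)$ are two such solutions, then $u_1 - u_2$ is a global weak solution of the homogeneous autonomous equation with trace $0$, and Theorem~\ref{thm:wellposed} forces it to vanish. The main obstacle I expect is the density argument for the weak equation with general $f \in L^2(L^2)$: one needs the $L^2_{\rm loc}$ convergence $w_n \to w$, which requires combining the $X^2$ convergence (to handle Whitney averages) with the $L^2(L^2)$ convergence of the gradients, and one must also verify carefully that the $\dot H^{-1}$ decay estimate indeed identifies the trace in the sense of Lemma~\ref{lem:struct}.
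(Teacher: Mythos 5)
Your proposal is correct and follows essentially the same route as the paper: decompose $u$ into the semigroup part and the Duhamel term ${\mathcal{R}}_Lf$, use Proposition~\ref{prop:RvM} (with $\tilde{\mathcal{M}}_L$ bounded on $L^2(L^2)$) to place the gradient in $L^2(L^2)$, verify the inhomogeneous weak formulation first for $f\in{\mathscr{D}}$ by classical semigroup theory and then pass to the limit by density using the $X^2$ and $L^2(L^2)$ convergences, and finally obtain uniqueness from Theorem~\ref{thm:wellposed} applied to the difference of two solutions. The one step where you genuinely deviate is the identification of the trace: the paper deduces ${\rm Tr}(u)=h$ from the approximation by invoking the continuity of ${\rm Tr}$ coming from Lemma~\ref{lem:struct}, whereas you compute ${\rm Tr}({\mathcal{R}}_Lf)=0$ directly from the quantitative decay $\|{\mathcal{R}}_Lf(t,\cdot)\|_{\dot{H}^{-1}}\lesssim \sqrt{t}\,\|f\|_{L^2(0,t;L^2)}$, obtained by pairing with $\phi\in\dot{H}^1$ and using the uniform $\dot{H}^1$-boundedness of $e^{-\tau L^*}$ (Kato). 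Both arguments work; your version is slightly more self-contained and has the added merit of sidestepping the ambiguity of the additive constant in the $\dot{W}(0,\infty)$ seminorm that one must implicitly dispose of (e.g.\ via the $X^2$ convergence) when appealing to continuity of the trace. Just make sure, as you flag yourself, that for general $f\in L^2(L^2)$ the duality identity for $\langle {\mathcal{R}}_Lf(t,\cdot),\phi\rangle$ is justified for a.e.\ $t$ by a density/subsequence argument before letting $t\to 0$, and that the $\mathscr{D}'$-limit at $t=0$, which exists by Lemma~\ref{lem:struct}, is then identified with $0$.
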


\begin{proof}
We first assume that $f \in {\mathscr{D}}$. Define
$v_0:(t,x)\mapsto e^{-tL}h(x)$ and
$$
v = v_0 + \mathcal{R}_Lf.
$$
By semigroup theory, $v \in {\mathscr{C}}(L^2)$, and satisfies
$\partial_t v = -Lv_0 +{\rm div}\, f$. By Proposition~\ref{prop:RvM} and
Step~0 of the proof of Theorem~\ref{thm:wellposed}, we have that
$\nabla v \in L^2(L^2)$, and thus
$$
\langle v, \partial_t \phi \rangle = \langle \underline{A} \nabla v, \nabla \phi \rangle 
+ \langle f, \nabla \phi \rangle,
$$
for all $\phi \in {\mathscr{D}}$, as well as ${\rm Tr}(v) = h$. 

Now, we turn to a general $f \in L^2(L^2)$, and let $(f_k)_{k \in {\mathbb{N}}}$ be a 
sequence of functions in ${\mathscr{D}}$ converging to $f$ in $L^2(L^2)$. 
Define, for all $k\in {\mathbb{N}}$, 
$$
u_k = v_0 + {\mathcal{R}}_Lf_k,\quad\mbox{ and }\quad
u = v_0 + {\mathcal{R}}_Lf.
$$
Then $u_k \xrightarrow[k \to \infty]{} u$ in $X^2$ and 
$\nabla u_k\xrightarrow[k \to \infty]{} \nabla u$ in $L^2(L^2)$,
using Propositions~\ref{prop:subMR} and~\ref{prop:AKMP}.
Therefore
$$
\langle u, \partial_t \phi \rangle = \langle \underline{A} \nabla u, \nabla \phi \rangle 
+ \langle f, \nabla \phi \rangle,
$$
for all $\phi \in {\mathscr{D}}$. Since ${\rm Tr}(v_k)=h$ for all $k \in {\mathbb{N}}$, 
and ${\rm Tr}$ is continuous from $\dot{W}(0,\infty)$ to $L^2$ by 
Lemma~\ref{lem:struct}, we also have that ${\rm Tr}(u)=h$. 

We turn to uniqueness. Let 
$\tilde{u} \in \dot{W}(0,\infty)$ be another solution of
$$
\langle \tilde u, \partial_t \phi \rangle 
= \langle \underline{A} \nabla \tilde u, \nabla \phi \rangle 
+ \langle f, \nabla \phi \rangle,
$$
for all $\phi \in {\mathscr{D}}$, with ${\rm Tr}(\tilde{u})=h$. Then 
$w=u-\tilde{u}$ is a solution of
$$
\partial_t w={\rm div}\,\underline{A}\nabla w,
\quad
w \in \dot{W}(0,\infty),
\quad 
{\rm Tr}(w)=0,
$$
and thus $u=\tilde{u}$ by Theorem~\ref{thm:wellposed}.
\end{proof}

\begin{corollary}
Let $A\in L^\infty({\mathbb{R}}^{n+1}_+,{\mathscr{M}}_n({\mathbb{C}}))$ and 
$\underline{A}\in L^\infty({\mathbb{R}}^n,{\mathscr{M}}_n({\mathbb{C}}))$ 
satisfy \eqref{ell}. Let $L=-{\rm div}\,\underline{A} \nabla$.
For all $t>0$ and $h \in L^2({\mathbb{R}}^n)$, the following holds in 
$L^2({\mathbb{R}}^n)$:
\begin{equation}
\label{Duhamel}
\Gamma(t,0)h = e^{-tL}h + \int_0^t e^{-(t-s)L}{\rm div}\,(A(s,.)-\underline{A})
\nabla \Gamma(s,0)h\,{\rm d}s.
\end{equation}
\end{corollary}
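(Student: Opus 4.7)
The plan is to derive this identity as a direct consequence of Lemma~\ref{lem:babyDuhamel} applied to the correctly chosen inhomogeneous term.

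Fix $h \in L^2({\mathbb{R}}^n)$ and set $u(t,\cdot) = \Gamma(t,0)h$. By Theorem~\ref{thm:wellposed}, $u \in \dot{W}(0,\infty) \cap \mathscr{C}_0(L^2)$ with ${\rm Tr}(u)=h$, $\nabla u \in L^2(L^2)$, and $\|\nabla u\|_{L^2(L^2)} \lesssim \|h\|_{L^2}$. Define
$$
f(s,x) := \bigl(A(s,x)-\underline{A}(x)\bigr)\nabla u(s,x).
$$
Since $A,\underline{A} \in L^\infty$, this $f$ belongs to $L^2(L^2)$.

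Next I verify the hypothesis of Lemma~\ref{lem:babyDuhamel}. For every $\phi \in \mathscr{D}$, using that $u$ is a global weak solution of \eqref{eq1},
$$
\langle u, \partial_t\phi\rangle = \langle A\nabla u,\nabla\phi\rangle
= \langle \underline{A}\nabla u,\nabla\phi\rangle + \langle (A-\underline{A})\nabla u,\nabla\phi\rangle
= \langle \underline{A}\nabla u,\nabla\phi\rangle + \langle f,\nabla\phi\rangle.
$$
Combined with $u\in \dot{W}(0,\infty)$ and ${\rm Tr}(u)=h$, the uniqueness part of Lemma~\ref{lem:babyDuhamel} (applied with this $f$ and this $h$) gives
$$
u = e^{-\cdot L}h + \mathcal{R}_Lf \qquad \text{in } \dot W(0,\infty).
$$

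Finally I promote this identity from $\dot{W}(0,\infty)$ to a pointwise (in $t$) identity in $L^2({\mathbb{R}}^n)$. Both $t\mapsto u(t,\cdot)=\Gamma(t,0)h$ (by Theorem~\ref{thm:wellposed}) and $t\mapsto e^{-tL}h$ (by semigroup theory) lie in $\mathscr{C}_0([0,\infty);L^2)$; hence their difference does, and the uniqueness identity above forces $\mathcal{R}_Lf$ to coincide with this difference as an $L^2$-valued continuous function of $t$. Evaluating at $t>0$ yields exactly~\eqref{Duhamel}. There is no genuine obstacle here — the only mildly delicate point is the identification of $\mathcal{R}_Lf(t,\cdot)$ as a well-defined element of $L^2$ for each $t$, but this is handled automatically by the $\mathscr{C}_0(L^2)$ regularity of the other two terms.
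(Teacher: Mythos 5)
Your proof is correct and follows essentially the paper's route: the paper also takes $f=(A-\underline{A})\nabla\Gamma(\cdot,0)h\in L^2(L^2)$ and combines Lemma~\ref{lem:babyDuhamel} with the uniqueness of Theorem~\ref{thm:wellposed}, the only cosmetic difference being that you invoke the uniqueness clause of the lemma applied to $\Gamma(\cdot,0)h$ directly, while the paper subtracts the two candidate solutions and applies Theorem~\ref{thm:wellposed} to the difference — which is exactly how that uniqueness clause is proved anyway.
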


\begin{proof}
Let $h \in L^2({\mathbb{R}}^n)$, define $v_0(t,\cdot)=e^{-tL}h$, and
$f(t,\cdot) = (A(t,\cdot)-\underline{A})\nabla \Gamma(t,0)h$
for all $t>0$. We have that $f \in L^2(L^2)$ by Theorem~\ref{thm:wellposed}. 
Define $u = v_0 + {\mathcal{R}}_Lf$, and $\tilde{u}(t,.)=\Gamma(t,0)h$ for all 
$t>0$. Using Lemma~\ref{lem:babyDuhamel}, we have the following, for 
all $\phi \in {\mathscr{D}}$:
$$
\langle u, \partial_t \phi \rangle
= \langle \underline{A}\nabla u, \nabla \phi \rangle
+\langle (A-\underline{A})\nabla \tilde{u}, \nabla \phi \rangle,
$$
Since $\tilde{u} \in \dot{W}(0,\infty)$ is a global weak solution of \eqref{eq1} with 
${\rm Tr}(\tilde{u}) = h$, we have that $u-\tilde{u}\in \dot{W}(0,\infty)$ is a global 
weak solution of $\partial_t(u-\tilde{u})={\rm div}\,\underline{A} \nabla (u-\tilde{u})$, 
with ${\rm Tr}(u-\tilde{u})=0$. Therefore $u=\tilde{u}$ 
by Theorem~\ref{thm:wellposed}.
\end{proof}

\begin{theorem}
\label{thm:pert}
Let  $q\in[1,2)$, and $M:[2,q')\to(0,\infty)$, and let 
$\underline{A} \in {\mathcal{M}}(\Lambda,\lambda,q,M)$. 
Let $p\in \bigl(\max\{1,\frac{2n}{n+q'}\},2\bigr)$ and assume 
\begin{equation}
\label{eq:small}
\varepsilon:=\|A-\underline{A}\|_{L^\infty}
<\frac{1}{\|\tilde{\mathcal{M}}_{L}\|_{\mathcal{L}(T^{p,2})}},
\end{equation}
where $L=-{\rm div}\,\underline{A} \nabla$. Then, 
$$
\sup_{ 0\le s \le t <\infty}\|\Gamma(t,s)\|_{{\mathscr{L}}(L^p)}<\infty.
$$
Consequently, the conclusions of Corollary~\ref{cor:kioloa} hold in any 
open subinterval $(r, 2)$ on which \eqref{eq:small} is valid. 
\end{theorem}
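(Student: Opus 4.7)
The plan is to leverage the Duhamel identity \eqref{Duhamel} together with an absorption argument in the tent space $T^{p,2}$. By density, it suffices to prove $\|\Gamma(t,0)h\|_{L^p}\lesssim \|h\|_{L^p}$ uniformly in $t>0$ for $h\in L^2\cap L^p$, so that $u:=\Gamma(\cdot,0)h\in\dot W(0,\infty)$ is a global weak solution of \eqref{eq1} by Theorem \ref{thm:wellposed}. The key observation is that taking the gradient in $x$ of \eqref{Duhamel} and invoking Proposition \ref{prop:RvM} to identify $\nabla\mathcal{R}_L g = \tilde{\mathcal{M}}_L g$ in $T^{p,2}$ yields
\[
\nabla u \;=\; \nabla e^{-\cdot L}h \;+\; \tilde{\mathcal{M}}_L\bigl((A-\underline A)\nabla u\bigr)
\quad\text{in } T^{p,2},
\]
and the smallness assumption \eqref{eq:small} lets us absorb the last term into the left hand side after taking $T^{p,2}$ norms: since $\|(A-\underline A)\nabla u\|_{T^{p,2}}\le \varepsilon\|\nabla u\|_{T^{p,2}}$, we conclude
\[
\|\nabla u\|_{T^{p,2}}\;\lesssim\;\|\nabla e^{-\cdot L}h\|_{T^{p,2}}.
\]

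To turn this into an estimate in terms of $\|h\|_{L^p}$, we invoke the autonomous conical square function bound $\|\nabla e^{-\cdot L}h\|_{T^{p,2}}\lesssim \|h\|_{L^p}$ for the time-independent operator $L=-{\rm div}\,\underline{A}\nabla$. This is a Hardy-space-type estimate which holds in the specified range of $p$ because $\underline A\in\mathcal{M}(\Lambda,\lambda,q,M)$: it follows from the solution of the Kato square root problem, the boundedness of the Riesz transform $\nabla L^{-1/2}$ on $L^p$ in this range, and the corresponding $H^p_L$ tent space functional calculus. Matching the numerology with Proposition \ref{prop:AKMP}, which ensures $\tilde{\mathcal{M}}_L\in{\mathscr{L}}(T^{p,2})$ on the same range so that \eqref{eq:small} is meaningful, one gets the a priori bound $\|\nabla u\|_{T^{p,2}}\lesssim \|h\|_{L^p}$.

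Next we upgrade this gradient estimate to an $X^p$ estimate on $u$. Applying Duhamel once more and using both the mapping $\mathcal{R}_L\colon T^{p,2}\to X^p$ from Proposition \ref{prop:subMR} and the autonomous bound $\|\tilde N(e^{-\cdot L}h)\|_{L^p}\lesssim \|h\|_{L^p}$ (proved via $L^p$--$L^2$ off-diagonal estimates and the Hardy--Littlewood maximal inequality, as in the argument of Lemma \ref{lem:slice-tildeN}, given that $e^{-tL}$ is $L^p$-bounded in this range), we obtain $\|u\|_{X^p}\lesssim \|h\|_{L^p}$. Since $u$ is a global weak solution of \eqref{eq1} and $p\le 2$, the elementary reverse Hölder argument from the proof of Corollary \ref{cor:kioloa} (Hölder's inequality plus Proposition \ref{prop:nrjloc}) gives $\|u(t,\cdot)\|_{L^p}\lesssim \|\tilde N(u)\|_{L^p}=\|u\|_{X^p}$ uniformly in $t$, hence $\sup_{t>0}\|\Gamma(t,0)h\|_{L^p}\lesssim \|h\|_{L^p}$. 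Translating the time origin to any $s\ge0$ and noting that the restriction of $A$ to $[s,\infty)$ still obeys \eqref{eq:small} with the same constant yields $\sup_{0\le s\le t<\infty}\|\Gamma(t,s)\|_{{\mathscr{L}}(L^p)}<\infty$. For the last statement, on any open subinterval $(r,2)$ on which \eqref{eq:small} is valid, the hypothesis of Corollary \ref{cor:kioloa} at a given $p\in(r,2)$ is verified by applying our bound at some $q\in(r,p)$, so its conclusions follow directly.

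The main obstacle is the $T^{p,2}$ absorption step: it depends on having the autonomous conical square function bound $\|\nabla e^{-\cdot L}h\|_{T^{p,2}}\lesssim \|h\|_{L^p}$ and the Kenig--Pipher type maximal bound $\|\tilde N(e^{-\cdot L}h)\|_{L^p}\lesssim \|h\|_{L^p}$ available in exactly the same range $(\max\{1,\tfrac{2n}{n+q'}\},2)$ in which $\tilde{\mathcal{M}}_L$ is known to act boundedly on $T^{p,2}$; everything else—Duhamel, absorption, passage from $X^p$ to $L^\infty(L^p)$, and verification of the hypothesis of Corollary \ref{cor:kioloa}—is then essentially bookkeeping.
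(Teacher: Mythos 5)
Your outline reproduces the quantitative skeleton of the paper's argument (Duhamel with respect to $L$, absorption in $T^{p,2}$ using \eqref{eq:small} and Proposition~\ref{prop:AKMP}, the autonomous bounds $\|\nabla e^{-\cdot L}h\|_{T^{p,2}}\lesssim\|h\|_{L^p}$ and $\|\tilde N(e^{-\cdot L}h)\|_{L^p}\lesssim\|h\|_{L^p}$, then $T^{p,2}\to X^p$ via Proposition~\ref{prop:subMR} and $X^p\to L^\infty(L^p)$ via H\"older and Proposition~\ref{prop:nrjloc}), but it has a genuine gap at the very step you call the ``key observation''. The absorption $\|\nabla u\|_{T^{p,2}}\le C\|h\|_{L^p}+\varepsilon\|\tilde{\mathcal{M}}_L\|_{\mathcal{L}(T^{p,2})}\|\nabla u\|_{T^{p,2}}$ only yields a bound if you already know \emph{a priori} that $\|\nabla u\|_{T^{p,2}}<\infty$; otherwise the inequality is vacuous ($\infty\le C+\varepsilon\cdot\infty$). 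For $h\in L^2\cap L^p$ with $p<2$ and a general non-autonomous $A$, the energy theory only gives $\nabla u\in T^{2,2}=L^2(L^2)$, not $\nabla u\in T^{p,2}$, and indeed even writing your identity $\nabla u=\nabla e^{-\cdot L}h+\tilde{\mathcal{M}}_L((A-\underline A)\nabla u)$ ``in $T^{p,2}$'' via Proposition~\ref{prop:RvM} already presupposes $(A-\underline A)\nabla u\in T^{p,2}$, so the argument is circular as written.

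This is precisely why the paper does not run the absorption directly on $A$: it first replaces $A$ by the piecewise-constant-in-time truncations $A_j$ which coincide with $\underline A$ for $t\ge 2^j$. For those coefficients, the estimate \eqref{tp2est} from the proof of Lemma~\ref{lem:Aescalier}, applied a finite number of times, provides the qualitative information $\nabla u\in T^{p,2}$ (with a constant that may depend on $j$); the absorption then produces a bound $\|\nabla u\|_{T^{p,2}}\le C'\|u_0\|_{L^p}$ with $C'$ independent of $j$, and Steps 2--3 give $\sup_t\|\Gamma_j(t,0)u_0\|_{L^p}\lesssim\|u_0\|_{L^p}$ uniformly in $j$. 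Finally one must pass to the limit $j\to\infty$, as in the proof of Theorem~\ref{thm:BV}, using the compactness of Lemma~\ref{lem:Ak} and uniqueness of energy solutions (Theorem~\ref{thm:wellposed}) to identify the limit with $\Gamma(t,0)u_0$ for the original $A$. Your proposal omits both the a priori finiteness device and this approximation/limiting step, so as it stands the uniform $L^p$ bound for the actual propagators of $A$ is not established. The remaining ingredients you list (range of $p$ for the autonomous square function and maximal function bounds, the passage $X^p\to L^\infty(L^p)$, translation of the time origin, and the deduction of Corollary~\ref{cor:kioloa} on subintervals) are correct and match the paper.
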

 
\begin{proof} 
Let $u_0 \in L^p({\mathbb{R}}^n)\cap L^2({\mathbb{R}}^n)$, and define 
$u(t,\cdot) = \Gamma(t,0)u_0$. We want to show that 
$\|u\|_{L^\infty(L^p)} \lesssim \|u_0\|_{L^p}$ with constant independent of $u_0$. 

Let us first assume that $A$ is of the form
$$
A_j(t,x) = \sum_{m=0}^{4^j-1} 1\!{\rm l}_{[\frac{m}{2^j},\frac{m+1}{2^j})}(t) 
\fint_{\frac{m}{2^j}}^{\frac{m+1}{2^j}} A(s,x)\,{\rm d}s 
+ 1\!{\rm l}_{[2^j,\infty)}(t)\underline{A}(x), 
$$
for some $j \in {\mathbb{N}}$ and almost every 
$(t,x) \in (0,\infty)\times{\mathbb{R}}^n$.
With this hypothesis, applying \eqref{tp2est} $4^j+1$ times, we have the 
a priori information that  $\nabla u \in T^{p,2}$ with norm depending on $j$.  
However, we first show that $\|\nabla u\|_{T^{p,2}} \sim \|u_0\|_{L^p}$ 
independently of $j$. Then we deduce a bound 
on $\|u\|_{X^p}$ and, finally, a bound on $\|u\|_{L^\infty(L^p)}$. 

\medskip

\noindent
{\tt Step 1.}
Using the representation \eqref{Duhamel} with $v(t,\cdot)=e^{-tL}u_0$ 
for all $t>0$, we also have that 
$$
\|\nabla u\|_{T^{p,2}} \le \|\nabla v\|_{T^{p,2}} 
+ \|\tilde{\mathcal{M}}_{L}(A-\underline{A})\nabla u\|_{T^{p,2}}.
$$
Using \cite[Corollary 6.10]{Au07} and Proposition~\ref{prop:AKMP} 
(Recall that $p>\frac{nq}{n+q}$ which is the exponent found in \cite{Au07} 
and $p<2$), this gives us that
$$
\|\nabla u\|_{T^{p,2}} \le C\|u_0\|_{L^p} 
+ \varepsilon\|\tilde{\mathcal{M}}_{L}\|_{\mathcal{L}(T^{p,2})}\|\nabla u\|_{T^{p,2}},
$$
for some constant $C>0$. Therefore, with 
$C'= C (1-\varepsilon\|\tilde{ \mathcal{M}}_L\|_{\mathcal{L}(T^{p,2})})^{-1}$, 
independent of $j$,  we have $\|\nabla u\|_{T^{p,2}} \le C' \|u_0\|_{L^p}$.
Using \eqref{Duhamel} with $L=-\Delta$ and  $w(t,\cdot)=e^{t\Delta}u_0$ 
for all $t>0$, together with a classical conical Littlewood-Paley estimate for 
$w$, we have that
$$
\|u_0\|_{L^p} \sim \|\nabla w\|_{T^{p,2}} 
\lesssim
\|\nabla u\|_{T^{p,2}} + \|\tilde{\mathcal{M}}_{-\Delta}(A-I)\nabla u\|_{T^{p,2}} 
\lesssim \|\nabla u\|_{T^{p,2}}.
$$
{\tt Step 2.} Using Proposition \ref{prop:subMR} together with the 
representation \eqref{Duhamel}  with $L=-\Delta$ and step 1, we have that
$$
\|u\|_{X^p} \lesssim \|w\|_{X^p}+\|\nabla u\|_{T^{p,2}}
\lesssim \|w\|_{X^p}+\|u_0\|_{L^p}.
$$
The $L^p$ boundedness of the non-tangential  maximal function for $w$ yields
$\|w\|_{X^p} \lesssim \|u_0\|_{L^p}$, hence $\|u\|_{X^p} \lesssim \|u_0\|_{L^p}$.

\medskip

\noindent
{\tt Step 3.} For $t>0$, using H\"older's inequalities as $p<2$
and Proposition~\ref{prop:nrjloc}, we have 
\begin{align*}
\|u(t,\cdot)\|_{L^p}^p = \int_{{\mathbb{R}}^n} \fint_{B(x,\sqrt{t})} 
|u(t,x)|^p\,{\rm d}x\,{\rm d}y
\lesssim \int_{{\mathbb{R}}^n} \Bigl(\fint _{B(y,\sqrt{t})} |u(t,x)|^2\,{\rm d}x
\Bigr)^{\frac{p}{2}}\,{\rm d}y \lesssim \|u\|_{X^p}^p.
\end{align*}
For all $u_0 \in L^p({\mathbb{R}}^n)\cap L^2({\mathbb{R}}^n)$, we thus 
have obtained from this and step~2 that  
$$
\sup_{t>0}\|u(t,\cdot)\|_{L^p} \lesssim \|u_0\|_{L^p}.
$$
The operators $\Gamma(t,0)$ thus extend to  bounded operators on 
$L^p({\mathbb{R}}^n)$, and one has the uniform estimate 
 $\displaystyle{\sup_{0< t} \|\Gamma(t,0)\|_{{\mathscr{L}}(L^p)}<\infty}$.
Furthermore, we obtain strong continuity of $\Gamma(t,0)$ at 0 in 
$L^p({\mathbb{R}}^n)$ from the one on $L^2({\mathbb{R}}^n)$ as we work for 
$p$ in an open interval. Thus, $\|u_0\|_{L^p} \le \|u\|_{L^\infty(L^p)}$. 
In conclusion, we have shown that
$$
\|u\|_{X^p} \sim \|u_0\|_{L^p} \sim \|\nabla u\|_{T^{p,2}} \sim \|u\|_{L^\infty(L^p)}.
$$
The same reasoning gives us that $\displaystyle{\sup_{0\le s \le t<\infty} 
\|\Gamma(t,s)\|_{{\mathscr{L}}(L^p)}<\infty}$. Note that the bound is uniform 
for all $j$.

\noindent  
The rest of the proof is identical to the proof of Theorem~\ref{thm:BV}, using 
the family $\bigl\{A_j,j\in {\mathbb{N}}\bigr\}$ of approximations of $A$ at the 
beginning of the proof rather than the approximations given by 
Lemma~\ref{lem:bvapp}.
\end{proof}

\begin{theorem}
\label{thm:cont} 
Assume $A\in {\mathscr{C}}([0,T]; 
L^\infty({\mathbb{R}}^n;\mathscr{M}_{n}({\mathbb{C}})))$ and that 
there are $q\in[1,2)$, and $M:[2,q')\to(0,\infty)$ such that  
$A(s,\cdot) \in {\mathcal{M}}(\Lambda,\lambda,q,M)$ for all 
$s \in [0,T]$. For $p\in \bigl(\max\{1,\frac{2n}{n+q'}\},2\bigr)$, we have that
\begin{equation}
\label{T}
\sup_{0\le s \le t <\le T}\|\Gamma(t,s)\|_{\mathcal{L}(L^p)}<\infty.
\end{equation}
Consequently, the conclusions of Corollary~\ref{cor:kioloa} hold replacing 
$t>0$ by $t\in (0,T]$, global solutions by local solutions on $(0,T)$, and 
${\mathscr{C}}_0(L^p)$ by ${\mathscr{C}}([0,T]; L^p)$. 
\end{theorem}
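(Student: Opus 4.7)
The plan is to localize in time, reducing to the small-perturbation setting of Theorem~\ref{thm:pert} via the uniform continuity of $A$. The compactness of $[0,T]$ and the strong uniformity of the constants in our estimates are what make this work.

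First, I need uniformity of the operator norm $\|\tilde{\mathcal{M}}_L\|_{{\mathscr{L}}(T^{p,2})}$ when $L=-{\rm div}\,B\nabla$ ranges over $B\in {\mathcal{M}}(\Lambda,\lambda,q,M)$. This should follow by inspecting the proof of Proposition~\ref{prop:AKMP}: every constant there is controlled by $\lambda,\Lambda,q,M$, so there is a finite $K=K(\Lambda,\lambda,q,M,p)$ with
$$
\sup\bigl\{\|\tilde{\mathcal{M}}_L\|_{{\mathscr{L}}(T^{p,2})} \;;\; L = -{\rm div}\,B\nabla,\ B\in{\mathcal{M}}(\Lambda,\lambda,q,M)\bigr\}\le K.
$$
Fix $\varepsilon \in (0, 1/K)$, i.e.\ strictly smaller than the smallness threshold appearing in \eqref{eq:small} for any choice of $\underline{A}\in {\mathcal{M}}(\Lambda,\lambda,q,M)$.

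Next, since $A \in {\mathscr{C}}([0,T]; L^\infty)$ is uniformly continuous, I pick a partition $0=t_0<t_1<\dots<t_N=T$ such that
$$
\sup_{s\in [t_i,t_{i+1}]} \|A(s,\cdot)-A(t_i,\cdot)\|_{L^\infty}<\varepsilon, \quad i=0,\dots,N-1.
$$
For each $i$, define an auxiliary matrix on $(0,\infty)\times{\mathbb{R}}^n$ by
$$
\tilde{A}_i(t,\cdot):=\begin{cases} A(t,\cdot), & t\in [t_i,t_{i+1}],\\ A(t_i,\cdot), & \text{otherwise.}\end{cases}
$$
Then $\tilde{A}_i$ satisfies \eqref{ell} with the same constants, and $\|\tilde{A}_i-A(t_i,\cdot)\|_{L^\infty}<\varepsilon < 1/K$. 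Since $A(t_i,\cdot)\in{\mathcal{M}}(\Lambda,\lambda,q,M)$, Theorem~\ref{thm:pert} applies to the equation with coefficient $\tilde{A}_i$ in place of $A$ and with $\underline{A}:=A(t_i,\cdot)$. This yields a propagator $\tilde{\Gamma}_i(t,s)$ which is uniformly bounded on $L^p({\mathbb{R}}^n)$ for $0\le s\le t<\infty$, with a constant $C_i$ depending only on $\Lambda,\lambda,q,M,p,\varepsilon$, hence independent of $i$.

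Third, by uniqueness in the energy class (Theorem~\ref{thm:wellposed}, applied on intervals, or rather its local counterpart via Proposition~\ref{prop:nrjloc} and Remark~\ref{rem:local}), the $L^2$-energy solution of \eqref{eq1} on $[t_i,t_{i+1}]$ depends only on the restriction of the coefficient to $[t_i,t_{i+1}]$. Consequently $\Gamma(t,s)=\tilde{\Gamma}_i(t,s)$ whenever $t_i\le s\le t\le t_{i+1}$, so
$$
\sup_{t_i\le s\le t\le t_{i+1}}\|\Gamma(t,s)\|_{{\mathscr{L}}(L^p)}\le C_i.
$$
For arbitrary $0\le s\le t\le T$, picking indices $i\le j$ with $s\in [t_i,t_{i+1}]$ and $t\in [t_j,t_{j+1}]$, the chaining property $\Gamma(t,s)=\Gamma(t,t_j)\Gamma(t_j,t_{j-1})\cdots\Gamma(t_{i+1},s)$ produces a product of at most $N+1$ factors, each bounded by $\max_i C_i$. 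This proves~\eqref{T}.

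For the ``consequently'' part, the same arguments as in Theorem~\ref{thm:uniqueness_p<2} and Corollary~\ref{cor:kioloa} go through on a finite interval. Specifically, given a local weak solution $u$ on $(0,T)$ in $L^\infty(0,T;L^p)$ (resp.\ in $X^p$ over $(0,T)\times{\mathbb{R}}^n$), one extracts a weak-$\ast$ limit of $u(t_k,\cdot)$ as $t_k\to 0$, invokes Theorem~\ref{thm:THE_THEOREM} on $(0,T)\times{\mathbb{R}}^n$ (whose hypothesis is verified by Proposition~\ref{prop:L2poids} or Proposition~\ref{prop:tildeNinLp}), and uses the interpolation argument with \eqref{T} to strengthen weak-$\ast$ convergence $\Gamma(t,t_k)^*h\to\Gamma(t,0)^*h$ in $L^{p'}$ to identify $u(t,\cdot)=\Gamma(t,0)u_0$. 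The continuity $u\in{\mathscr{C}}([0,T];L^p)$ follows from an interpolation estimate between $L^q$-uniform boundedness (here $L^p$-boundedness from \eqref{T} plays the analogous role) and $L^2$-continuity, exactly as in Theorem~\ref{thm:uniqueness_p<2}.

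The main obstacle is the uniform bound $K$ on $\|\tilde{\mathcal{M}}_L\|_{{\mathscr{L}}(T^{p,2})}$ throughout the class ${\mathcal{M}}(\Lambda,\lambda,q,M)$; without this uniformity, the smallness condition~\eqref{eq:small} could fail to hold simultaneously on every piece of the partition. Once that uniformity is secured, the rest of the argument is a standard compactness/chaining scheme.
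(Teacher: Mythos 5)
Your argument is correct, but it takes a different route from the paper's. The paper does not invoke Theorem~\ref{thm:pert} as a black box: it re-runs the perturbation scheme locally in time, estimating the truncated quantities $\|1\!{\rm l}_{(0,2\delta)}\nabla u\|_{T^{p,2}}$ and $\|1\!{\rm l}_{(0,2\delta)}u\|_{X^p}$ via the Duhamel formula around $L=-{\rm div}\,A(0,\cdot)\nabla$, working first with piecewise-constant-in-time approximations $A_j$ of $A$ (so that \eqref{tp2est} gives the qualitative finiteness needed to absorb the perturbation term), then obtaining the bound on each window $[m\delta,(m+1)\delta]$, chaining, and finally passing to the limit in $j$ as in Theorem~\ref{thm:BV}. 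You instead freeze the coefficient outside each subinterval ($\tilde A_i=A$ on $[t_i,t_{i+1}]$, $=A(t_i,\cdot)$ elsewhere), so that $\|\tilde A_i-A(t_i,\cdot)\|_{L^\infty}<\varepsilon$ globally, apply Theorem~\ref{thm:pert} to each $\tilde A_i$, identify $\Gamma(t,s)=\tilde\Gamma_i(t,s)$ for $t_i\le s\le t\le t_{i+1}$ by locality in time, and chain the finitely many factors. This is shorter and more modular; what it requires, and what you correctly single out, is the uniformity over the class ${\mathcal{M}}(\Lambda,\lambda,q,M)$ of the threshold in \eqref{eq:small}, i.e.\ of $\|\tilde{\mathcal{M}}_L\|_{{\mathscr{L}}(T^{p,2})}$: this does hold, since the constants in Proposition~\ref{prop:AKMP} depend only on $n,\lambda,\Lambda,q,M,p$, and the paper itself relies on exactly this uniformity in Lemma~\ref{lem:Aescalier} (note also that uniformity of the resulting $C_i$ in $i$ is not even needed, since there are finitely many pieces). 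Two small refinements: the identification $\Gamma=\tilde\Gamma_i$ on $[t_i,t_{i+1}]$ is best justified by uniqueness of energy solutions on a finite interval (Lions' well-posedness in $W(t_i,t_{i+1})$, or the energy identity of Remark~\ref{rem:<v,dtv>} applied to the difference), rather than by Proposition~\ref{prop:nrjloc}/Remark~\ref{rem:local}, which only give local estimates; and for the ``consequently'' part, to run the interpolation steps of Theorem~\ref{thm:uniqueness_p<2} and Corollary~\ref{cor:kioloa} you should invoke \eqref{T} also at some exponent $q_0\in\bigl(\max\{1,\frac{2n}{n+q'}\},p\bigr)$ playing the role of $q$ there — this is available because the admissible range of exponents is open, and it is what yields both the identification of the trace and the continuity in ${\mathscr{C}}([0,T];L^p)$. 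With these points made explicit, your proof is complete.
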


\begin{proof}
Let $u_0 \in L^p({\mathbb{R}}^n)\cap L^2({\mathbb{R}}^n)$ and define 
$u(t,\cdot) =\Gamma(t,0)u_0$.  We want to show \eqref{T}. 
To do so, we adapt the proof of Theorem~\ref{thm:pert}. 
For $\varepsilon>0$, choose $\delta>0$ according to the uniform continuity of 
$A$ on $[0,T]$ such that $\|A(s,\cdot)-A(s',\cdot)\|_{L^\infty}<\varepsilon$ if 
$|s-s'|\le 2\delta$. We may assume that $\delta =\frac{T}{2^k}$ for some 
$k\in {\mathbb{N}}$. 

We begin by replacing $A$ by $\displaystyle{A_j(t,x) 
= \sum_{m=0}^{2^j-1} 1\!{\rm l}_{\bigl[\frac{mT}{2^j},\frac{(m+1)T}{2^j}\bigr)}(t) 
A\bigl(\textstyle{\frac{mT}{2^j}}, x\bigr)}$, for $j >k$. We still denote the solution 
by $u$ to keep the notation simple. With such coefficients, we know from 
\eqref{tp2est} that $\|\nabla u\|_{T^{p,2}}<\infty$ qualitatively.

\noindent
{\tt Step 1.}
Using the representation \eqref{Duhamel} with $v(,\cdot)= e^{-tL}u_0$, 
$L=-{\rm div}\,A(0,\cdot)\nabla$, we have
$$
\|1\!{\rm l}_{(0,2\delta)}\nabla u\|_{T^{p,2}} 
\le \|1\!{\rm l}_{(0,2\delta)}\nabla v\|_{T^{p,2}} 
+ \|1\!{\rm l}_{(0,2\delta )}\tilde{\mathcal{M}}_L(A_j-{A(0,\cdot}))
\nabla u)\|_{T^{p,2}}.
$$
Note that the truncation implies that the only values of $A(s,x)$ that play a role 
are those for $s\in [0,2\delta]$. Thus as $j\ge k$, we obtain,   
$$
\|1\!{\rm l}_{(0,2\delta)} \nabla u\|_{T^{p,2}} \leq C\|u_0\|_{L^p}
+ \varepsilon\|\tilde{\mathcal{M}}_L\|_{{\mathscr{L}}(T^{p,2})}
\|1\!{\rm l}_{(0,2\delta)}\nabla u\|_{T^{p,2}},
$$
for some constant $C>0$ independent of $j$. 
Therefore, having  first chosen $\varepsilon>0$ with 
$\varepsilon\|{\mathcal{M}}_L\|_{{\mathscr{L}}(T^{p,2})}<1$, using the finiteness of 
$\|1\!{\rm l}_{(0,2\delta)} \nabla u\|_{T^{p,2}}$, we have
$\|1\!{\rm l}_{(0,2\delta)} \nabla u\|_{T^{p,2}} \le C' \|u_0\|_{L^p}$, with 
$C'=C (1-\varepsilon\|\tilde {\mathcal{M}}_L\|_{{\mathscr{L}}(T^{p,2})})^{-1}$.

\medskip

\noindent
{\tt Step 2.} Set $w(t,\cdot)=e^{t\Delta}u_0$. Using Proposition~\ref{prop:subMR} 
together with the representation \eqref{Duhamel} with $L=-\Delta$ and step~1, 
we have that
$$
\|1\!{\rm l}_{(0,2\delta)} u\|_{X^p} \lesssim \|1\!{\rm l}_{(0,2\delta)} w\|_{X^p}
+\|1\!{\rm l}_{(0,2\delta)} \nabla u\|_{T^{p,2}}
\lesssim \|w\|_{X^p}+\|u_0\|_{L^p}.
$$
The $L^p$ boundedness of the non-tangential maximal function for $w$ yields
$\|w\|_{X^p} \lesssim \|u_0\|_{L^p}$, hence 
$\|1\!{\rm l}_{(0,2\delta)} u\|_{X^p} \lesssim \|u_0\|_{L^p}$.

\medskip

\noindent
{\tt Step 3.} For $0<t\le \delta$, using H\"older's inequalities as $p<2$
and Proposition~\ref{prop:nrjloc}, we have 
$$
\|u(t,\cdot)\|_{L^p}^p 
=\int_{{\mathbb{R}}^n} \fint_{B(x,\sqrt{t})} |u(t,x)|^p\,{\rm d}x\,{\rm d}y
\lesssim \int_{{\mathbb{R}}^n} \Bigl(\fint _{B(y,\sqrt{t})} |u(t,x)|^2\,{\rm d}x
\Bigr)^{\frac{p}{2}}\,{\rm d}y \lesssim \|1\!{\rm l}_{(0, 2\delta)} u\|_{X^p}^p.
$$
For all $u_0 \in L^p({\mathbb{R}}^n)\cap L^2({\mathbb{R}}^n)$, we thus have that  
$$
\sup_{0<t\le \delta}\|u(t, \cdot)\|_{L^p} \lesssim  \|u_0\|_{L^p}.
$$
Therefore, the operators $\Gamma(t,0)$ extend to bounded operators on 
$L^p({\mathbb{R}}^n)$, and one has the uniform estimate 
 $\displaystyle{\sup_{ 0< t\le \delta} \|\Gamma(t,0)\|_{{\mathscr{L}}(L^p)}<\infty}$. 
Furthermore, we obtain strong continuity of $\Gamma(t,0)$ at 0 in 
$L^p({\mathbb{R}}^n)$ from the one on $L^2({\mathbb{R}}^n)$ as we work for 
$p$ in an open interval. In conclusion, we have shown that
$$
\|u_0\|_{L^p} \sim \sup_{ 0\le  t\le \delta} \|u(t, \cdot)\|_{L^p}.
$$
Given the form of $A_j$, one can obtain similarly 
$\displaystyle{\sup_{m\delta  \le s \le t\le (m+1)\delta } 
\|\Gamma(t,s)\|_{{\mathscr{L}}(L^p)}<\infty}$.
Iterating at most $\frac{T}{\delta}$ times, using the reproducing formula for 
the propagators, we obtain $\displaystyle{\sup_{0\le s \le t\le T} 
\|\Gamma(t,s)\|_{{\mathscr{L}}(L^p)}<\infty}$.

We conclude for $A$ as in Theorem~\ref{thm:BV} using the above 
approximations $A_j$ of $A$, remarking that the bound obtained for the 
propagators of $A_j$ are uniform for $j$ large enough and depend solely on 
the uniform continuity assumption and $T$.   
\end{proof}
  
\begin{remark} 
In this argument, we only used properties of the semigroups for each coefficients 
$A(s,\cdot)$ on a bounded interval. Thus we may replace the assumption  
$A(s,\cdot) \in {\mathcal{M}}(\Lambda,\lambda,q,M)$ for all 
$s \in [0,T]$, by $A(s,\cdot) \in {\mathcal{M}}_{T}(\Lambda,\lambda,q,M)$ for all 
$s \in [0,T]$. The subscript $T$ means that we consider the supremum in 
Definition~\ref{def:unifpmoins} taken on $(0,T]$. For example, any 
$A\in L^\infty({\mathbb{R}}^n,{\mathscr{M}}_n({\mathbb{C}}))$ that is uniformly 
continuous (or even that belongs to $VMO$) on ${\mathbb{R}}^n$ belongs to 
${\mathcal{M}}_T(\Lambda,\lambda,1,M)$ for some $M$ and all $T>0$; see 
\cite{Au96}.  
In particular for any $A\in {\mathscr{C}}([0,\infty); L^\infty({\mathbb{R}}^n; 
{\mathscr{M}}_n({\mathbb{C}})))$ such that $A(s,\cdot)$ is uniformly continuous 
on ${\mathbb{R}}^n$, uniformly for $s\ge 0$ (the uniformity in $s$ is imposed 
to guarantee that we have the same function $M$ for all $A(s,\cdot)$), we can 
apply our result with $p\in (1,2)$ and obtain global solutions in 
${\mathscr{C}}(L^p)$ (but not bounded). For example, it applies to any 
$A$ which is uniformly continuous on $\overline{{\mathbb{R}}^{n+1}_+}$. 
\end{remark}

\section{Square functions and maximal functions a priori estimates}
\label{sec7}

We prove here some comparisons between conical square functions 
in $L^p({\mathbb{R}}^n)$, namely $\|\nabla u\|_{T^{p,2}}$ and 
non-tangential maximal functions in $L^p$, namely $\|u\|_{X^p}$ for weak solutions of $\partial_tu= {\rm div}\,A \nabla u$. 
In the case of autonomous equations, such bounds are obtained in \cite{HM09} 
for $p\ge 1$.  It is tempting to study the cases where 
$p<1$ as well, but this is outside the scope of the present work.

\subsection{Controlling the maximal function by the square function for $1\le p< \infty$}

As a consequence of Proposition~\ref{prop:subMR} and classical 
Littlewood-Paley theory, we first obtain the following control of the maximal 
function by the square function.

\begin{proposition}
\label{prop:NbyS}
Let $1\leq p< \infty$, $u_0 \in L^2({\mathbb{R}}^n)$, and 
$u(t,\cdot) = \Gamma(t,0)u_0$ for all $t>0$. If $\nabla u \in T^{p,2}$, then 
$u \in X^p$, and
$$
\|u\|_{X^p} \lesssim \|\nabla u\|_{T^{p,2}}
$$
with implicit constant independent of $u$. 
\end{proposition}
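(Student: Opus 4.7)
The plan is to use a Duhamel-type perturbation argument against the free Laplacian. Since $u_0\in L^2(\mathbb{R}^n)$, Theorem~\ref{thm:wellposed} gives $u\in\dot W(0,\infty)\cap{\mathscr C}_0(L^2)$ with $\nabla u\in L^2(L^2)$. Rewriting the equation as $\partial_t u-\Delta u={\rm div}\,((A-I)\nabla u)$ and observing that the source $(A-I)\nabla u$ lies in $L^2(L^2)$, I would apply Lemma~\ref{lem:babyDuhamel} with $\underline A=I$ and invoke uniqueness in $\dot W(0,\infty)$ to obtain the identity
\begin{equation*}
u(t,\cdot)=e^{t\Delta}u_0+{\mathcal R}_{-\Delta}\bigl((A-I)\nabla u\bigr)(t,\cdot),\qquad t>0.
\end{equation*}

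For the second term, Proposition~\ref{prop:subMR} gives ${\mathcal R}_{-\Delta}:T^{p,2}\to X^p$ boundedly for every $p\in[1,\infty)$, and since $A-I$ is bounded this yields
\begin{equation*}
\|{\mathcal R}_{-\Delta}\bigl((A-I)\nabla u\bigr)\|_{X^p}\lesssim \|(A-I)\nabla u\|_{T^{p,2}}\lesssim \|\nabla u\|_{T^{p,2}}.
\end{equation*}

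It remains to control the heat extension $w(t,\cdot):=e^{t\Delta}u_0$ in $X^p$. The strategy is first to control its conical square function via the same identity read as $\nabla w=\nabla u-\nabla{\mathcal R}_{-\Delta}((A-I)\nabla u)$. Since $-\Delta$ has smooth kernels, one may take $q=1$ in Proposition~\ref{prop:AKMP}; this forces $p_c=\tfrac{n}{n+1}<1$, so $\tilde{\mathcal M}_{-\Delta}$ is bounded on $T^{p,2}$ for every $p\in[1,\infty)$, and Proposition~\ref{prop:RvM} identifies $\nabla{\mathcal R}_{-\Delta}$ with $\tilde{\mathcal M}_{-\Delta}$. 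Hence $\|\nabla w\|_{T^{p,2}}\lesssim\|\nabla u\|_{T^{p,2}}$. Next, I would invoke the classical Fefferman--Stein equivalence for the heat extension, $\|w^{*}\|_{L^p}\sim\|\nabla w\|_{T^{p,2}}$ for $1\le p<\infty$ (the delicate endpoint $p=1$ being the area function characterization of $H^1$). Combining with the pointwise bound $\tilde N(w)\le w^{*}$ gives $\|w\|_{X^p}\lesssim\|\nabla u\|_{T^{p,2}}$, and adding this to the estimate of the correction term completes the proof.

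The main technical point is the rigorous justification of the perturbative Duhamel identity for the $L^2$ energy solution $u$, which is handled cleanly by Lemma~\ref{lem:babyDuhamel} together with the $L^2$ uniqueness in $\dot W(0,\infty)$; once this is in place the rest is essentially a bookkeeping exercise combining tent-space maximal regularity for ${\mathcal R}_{-\Delta}$ with classical Littlewood--Paley theory for the heat semigroup.
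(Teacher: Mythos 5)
Your argument is correct and is essentially the paper's own proof: the same Duhamel identity \eqref{Duhamel} with $L=-\Delta$, Proposition~\ref{prop:subMR} for the correction term, and the bound $\|e^{t\Delta}u_0\|_{X^p}\lesssim\|(e^{t\Delta}u_0)^*\|_{L^p}\lesssim\|\nabla e^{t\Delta}u_0\|_{T^{p,2}}\lesssim\|\nabla u\|_{T^{p,2}}$ via the classical conical Littlewood--Paley estimate and the $T^{p,2}$-boundedness of $\tilde{\mathcal{M}}_{-\Delta}$ from Propositions~\ref{prop:AKMP} and~\ref{prop:RvM}. The only cosmetic point is that $\tilde N(w)\le w^*$ is not quite a pointwise inequality with the stated aperture (for $t\in(\delta/2,\delta)$ the ball $B(x,\sqrt\delta)$ slightly exceeds the cone $|y-x|<\sqrt t$), so one should either enlarge the aperture of $w^*$ or invoke the standard $L^p$ change-of-aperture comparability, exactly as the paper's $\lesssim$ implicitly does.
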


\begin{proof} 
Set $v(t,.)= e^{t\Delta}u_{0}$ for all $t>0$. 
Using \eqref{Duhamel} with $L=-\Delta$ and Proposition~\ref{prop:subMR}, 
we have that
$$
\|u\|_{X^p} \lesssim \|v\|_{X^p} + 
\|{\mathcal{R}}_{-\Delta}\|_{\mathcal{L}(T^{p,2},X^p)} \|A-I\|_{L^\infty}
\|\nabla u\|_{T^{p,2}}.
$$
Using \eqref{Duhamel} again, together with the classical conical 
Littlewood-Paley estimate, and Proposition~\ref{prop:AKMP},
we also have that
\begin{equation*}
\|v\|_{X^p} \lesssim \|v^*\|_{L^p}\lesssim  \|\nabla v\|_{T^{p,2}} \lesssim
\|\nabla u\|_{T^{p,2}} + \|\tilde{\mathcal{M}}_{-\Delta}
(A-I)\nabla u\|_{T^{p,2}} \lesssim \|\nabla u\|_{T^{p,2}}.
\qedhere
\end{equation*}
\end{proof}

Note that the range can be improved to $p>\frac n {n+1}$, which is the same range as for the classical conical 
Littlewood-Paley estimate. 

As a corollary, we have the following improvement of Theorem~\ref{thm:wellposed}.

\begin{corollary}
\label{cor:ntmax2}
For all $u_0\in L^2({\mathbb{R}}^n)$, the problem
$$
\partial_tu={\rm div}\,A\nabla u,
\quad
u\in X^2,
\quad 
{\rm Tr}(u)=u_0
$$
is well-posed. Moreover, the solution $u$ is the energy solution, i.e. 
$u(t,\cdot)=\Gamma(t,0)u_0$ for all $t>0$, and
$$
\|u_0\|_{L^2} = \|u\|_{L^\infty(L^2)}\lesssim \|u\|_{X^2}
\lesssim \|\nabla u\|_{L^2(L^2)}\le 
\textstyle{\sqrt{\frac{1}{2\lambda}}}\,\|u_0\|_{L^2}.
$$
\end{corollary}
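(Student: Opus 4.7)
The plan is to combine the already established $L^\infty(L^2)$ well-posedness (Theorem~\ref{thm:wellposed} and Theorem~\ref{thm:linfl2}) with the comparison $\|u\|_{X^2}\lesssim \|\nabla u\|_{L^2(L^2)}$ just proved in Proposition~\ref{prop:NbyS}. The key extra ingredient needed is that, for a weak solution, membership in $X^2$ already forces membership in $L^\infty(L^2)$; this is the only non-formal step, and it is a direct consequence of the local Caccioppoli-type estimate of Proposition~\ref{prop:nrjloc}.

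For existence, let $u(t,\cdot)=\Gamma(t,0)u_0$ be the energy solution produced by Theorem~\ref{thm:wellposed}. Then $u$ is a global weak solution, $\nabla u\in L^2(L^2)$ with the stated equivalences, and Proposition~\ref{prop:NbyS} applied with $p=2$ gives $u\in X^2$ together with $\|u\|_{X^2}\lesssim\|\nabla u\|_{L^2(L^2)}$.

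For uniqueness, suppose $u\in X^2$ is a global weak solution of \eqref{eq1} with trace $u_0$. I will first show $u\in L^\infty(L^2)$ with $\|u\|_{L^\infty(L^2)}\lesssim\|u\|_{X^2}$. Fix $t_0>0$ and apply Proposition~\ref{prop:nrjloc} on the time interval $(t_0/2,t_0)$ and the ball $B(x,\sqrt{t_0}/2)$ (taking $2r=\sqrt{t_0}$):
\[
\fint_{B(x,\sqrt{t_0}/2)}|u(t_0,y)|^2\,{\rm d}y
\;\lesssim\; \fint_{t_0/2}^{t_0}\fint_{B(x,\sqrt{t_0})}|u(s,y)|^2\,{\rm d}y\,{\rm d}s
\;\le\; \tilde N(u)(x)^2,
\]
where the last inequality uses the definition of $\tilde N$ with the choice $\delta=t_0$. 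Integrating in $x$ and invoking Fubini on the left yields $\|u(t_0,\cdot)\|_{L^2}^2\lesssim \|\tilde N(u)\|_{L^2}^2=\|u\|_{X^2}^2$, uniformly in $t_0>0$. Thus $u\in L^\infty(L^2)$, and Theorem~\ref{thm:linfl2} produces a unique $f\in L^2$ with $u(t,\cdot)=\Gamma(t,0)f$ and $f=\lim_{t\to 0}u(t,\cdot)$ in $L^2$. Since convergence in $L^2$ implies convergence in $L^2_{\rm loc}$, the trace condition ${\rm Tr}(u)=u_0$ (interpreted, as in Theorem~\ref{thm:ex!p>2}, as an $L^2_{\rm loc}$ limit at $t=0$) forces $f=u_0$.

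The norm chain in the statement then follows by combining Theorem~\ref{thm:wellposed} (which gives $\|u_0\|_{L^2}=\|u\|_{L^\infty(L^2)}$ and $\|\nabla u\|_{L^2(L^2)}\le\sqrt{1/(2\lambda)}\,\|u_0\|_{L^2}$) with Proposition~\ref{prop:NbyS} for the middle inequality $\|u\|_{X^2}\lesssim\|\nabla u\|_{L^2(L^2)}$; the lower bound $\|u\|_{L^\infty(L^2)}\lesssim\|u\|_{X^2}$ is exactly what the pointwise-in-$t_0$ argument above produces. The main (and essentially only) obstacle is the passage $X^2\Rightarrow L^\infty(L^2)$ for weak solutions, and this is handled cleanly by Proposition~\ref{prop:nrjloc}; everything else is assembly of results already proved.
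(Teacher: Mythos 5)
Your proof is correct and follows essentially the same route as the paper: existence and the norm chain via Theorem~\ref{thm:wellposed} and Proposition~\ref{prop:NbyS}, and uniqueness by reducing to the class $L^\infty(L^2)$ through the bound $\|u\|_{L^\infty(L^2)}\lesssim\|u\|_{X^2}$ from Proposition~\ref{prop:nrjloc} and then invoking Theorem~\ref{thm:linfl2}. The only difference is that you write out explicitly the Whitney-cylinder estimate that the paper leaves as a one-line citation of Proposition~\ref{prop:nrjloc}, and your identification of the trace with $u_0$ via the $L^2$ (hence $L^2_{\rm loc}$) limit is consistent with the paper's usage.
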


\begin{proof}
{\tt Existence.} 
Let $u(t,\cdot)=\Gamma(t,0)u_0$ for all $t>0$.
By Proposition~\ref{prop:NbyS} and Theorem~\ref{thm:wellposed}, we have that
$$
\|u\|_{X^2} \lesssim \|\nabla u\|_{T^{2,2}} = \|\nabla u\|_{L^2(L^2
)} \le \textstyle{\sqrt{\frac{1}{2\lambda}}}\,\|u_0\|_{L^2}.
$$ 
{\tt Uniqueness.} 
We have by Proposition~\ref{prop:nrjloc} that $\|u\|_{L^\infty(L^2)} 
\lesssim \|u\|_{X^2}$ for all $u \in X^2$, and $L^\infty(L^2)$ is a class of 
uniqueness as shown in Theorem~\ref{thm:linfl2}.
\end{proof}

\subsection{Controlling the square function by the maximal function for $p \in [1,2)$}

\begin{theorem}
\label{thm:SbyN<2}
Let $u$ be a global weak solution of \eqref{eq1}. Let $p\in [1,2)$, and assume 
that $u \in X^p$. Then $\nabla u \in T^{p,2}$ and 
$$
\|\nabla u\|_{T^{p,2}} \lesssim \|u\|_{X^p},
$$
with constant depending only on the ellipticity parameters in \eqref{ell}.
\end{theorem}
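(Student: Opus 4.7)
The approach is a good-$\lambda$ inequality coupled with absorption. Writing $\mathcal{S}(\nabla u)(x)^{2} := \int_{0}^{\infty} \fint_{B(x,\sqrt{t})} |\nabla u(t,y)|^{2}\,{\rm d}y\,{\rm d}t$ so that $\|\mathcal{S}(\nabla u)\|_{L^{p}} = \|\nabla u\|_{T^{p,2}}$, my goal is to establish that, for some constant $C>0$ and an enlarged-aperture version $\mathcal{S}^{*}$ (of comparable $L^{p}$ norm to $\mathcal{S}$),
$$
\bigl|\{\mathcal{S}(\nabla u) > 2\alpha\} \cap \{\tilde N(u) \leq \gamma\alpha\}\bigr| \leq C\gamma^{2}\,\bigl|\{\mathcal{S}^{*}(\nabla u) > \alpha\}\bigr|, \quad \alpha > 0,
$$
uniformly in $\gamma \in (0,1)$. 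Multiplying by $\alpha^{p-1}$, integrating in $\alpha$, and using $p<2$ to pick $\gamma$ small enough that $2^{p}C\gamma^{2}$ absorbs the $\mathcal{S}^{*}$-term against $\mathcal{S}$ via the change-of-aperture equivalence in $T^{p,2}$, would then yield the desired $\|\mathcal{S}(\nabla u)\|_{L^{p}} \lesssim \|\tilde N(u)\|_{L^{p}}$. To justify the absorption I would first work with a doubly truncated square function $\mathcal{S}_{R,R'}$ (the integral restricted to $t \in [R^{-1},R]$ and $y \in B(0,R')$), which is in $L^{p}$ a priori by local integrability of $\nabla u$, and then let $R,R' \to \infty$ by monotone convergence.

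For the good-$\lambda$ inequality, I would perform a Whitney decomposition of the open set $\mathcal{O}_{\alpha} := \{\mathcal{S}^{*}(\nabla u) > \alpha\}$ into cubes $\{Q_{j}\}$ and fix one $Q_{j}$. By construction a dilate of $Q_{j}$ touches $\mathcal{O}_{\alpha}^{c}$, hence contains a reference point $\tilde{x}_{j}$ with $\mathcal{S}^{*}(\nabla u)(\tilde{x}_{j}) \leq \alpha$. For $x \in Q_{j}$ I would split the parabolic cone $\Gamma(x) = \{(t,y) : |y-x|<\sqrt{t}\}$ into a \emph{global} part, geometrically contained in the enlarged cone at $\tilde{x}_{j}$, and a \emph{local} part lying inside a sawtooth region $\widehat{CQ_{j}}$ above a dilation of $Q_{j}$. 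The global contribution is then bounded pointwise on $Q_{j}$ by $\mathcal{S}^{*}(\nabla u)(\tilde{x}_{j})^{2} \leq \alpha^{2}$, so it suffices to estimate the local contribution on $Q_{j}\cap \{\tilde N(u)\leq \gamma\alpha\}$.

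For the local part, I would tile the sawtooth $\widehat{CQ_{j}}$ by parabolic Whitney sub-cubes $R$ of size $\ell(R)\sim\operatorname{dist}(R,\partial\widehat{CQ_{j}})$ and apply Caccioppoli's inequality (Proposition~\ref{prop:nrjloc}) on each to obtain $\int_{R}|\nabla u|^{2} \lesssim \ell(R)^{-2}\int_{R^{*}}|u|^{2}$. Using the hypothesis $\tilde N(u)\leq\gamma\alpha$ on a large subset of $Q_{j}$ (together with the $\mathcal{N}_{\beta}$-equivalence that governs the $X^{p}$ norm), the enlarged parabolic averages satisfy $\fint_{R^{*}}|u|^{2}\lesssim(\gamma\alpha)^{2}$, and a Fubini / Schur-type summation over $R$ yields $\int_{Q_{j}\cap\{\tilde N(u)\leq\gamma\alpha\}} \mathcal{S}_{\mathrm{loc}}(\nabla u)^{2} \lesssim (\gamma\alpha)^{2}|Q_{j}|$. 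Chebyshev's inequality then gives $|Q_{j}\cap\{\mathcal{S}_{\mathrm{loc}}(\nabla u)>\alpha\}|\lesssim\gamma^{2}|Q_{j}|$, and summation over $j$ produces the good-$\lambda$ bound. The main obstacle is precisely this summation over Whitney sub-cubes of varying parabolic scales: one must verify that the $1/|B(x,\sqrt{t})|$ normalization in $\mathcal{S}$ balances the $1/\ell(R)^{2}$ factor from Caccioppoli so that no logarithmic divergence appears. This reflects the structural fact---underlying the failure of a naive pointwise bound $\mathcal{S}(\nabla u) \lesssim \tilde N(u)$---that only an $L^{p}$ estimate with $p<2$, and not a pointwise one, can succeed.
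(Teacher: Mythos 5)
The decisive step of your argument fails. In the local estimate you claim that Caccioppoli (Proposition~\ref{prop:nrjloc}) on each parabolic Whitney box $R$ of the sawtooth, followed by a ``Fubini / Schur-type summation'', yields $\int_{Q_j\cap\{\tilde N u\le\gamma\alpha\}}\mathcal{S}_{\mathrm{loc}}(\nabla u)^2\lesssim(\gamma\alpha)^2|Q_j|$. Run the bookkeeping: a box $R$ at height $t$ has spatial radius $\sim\sqrt t$ and temporal length $\sim t$, so Caccioppoli gives $\int_R|\nabla u|^2\lesssim t^{-1}\int_{R^*}|u|^2\lesssim(\gamma\alpha)^2t^{n/2}$, while the number of such boxes meeting the sawtooth at that height is $\sim(\ell(Q_j)^2/t)^{n/2}$; each dyadic height therefore contributes $\sim(\gamma\alpha)^2|Q_j|$, with no decay in the scale, and the sum over the (unboundedly many) heights diverges — with your truncation $t\ge R^{-1}$ it grows like the number of scales, so the constant blows up as $R\to\infty$. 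The weight $|Q_j\cap B(y,\sqrt t)|/t^{n/2}\le1$ coming from Fubini does not help, since it sums back to $|Q_j|$ at every scale. This is not a technicality to be ``verified'': box-by-box Caccioppoli genuinely loses a factor equal to the number of scales (already for $u=e^{t\Delta}f$ with bounded $f$, where the true bound is the Carleson property of $|\nabla u|^2\,{\rm d}y\,{\rm d}t$), and no rearrangement of the same box-wise inputs can recover it. The missing idea is to use the equation globally on the sawtooth: the paper (following Fefferman–Stein, \cite{CMS85} and \cite{HM09}) takes a cutoff $\chi$ adapted to the sawtooth over the density points $E^*$ of $E=\{\mathcal{N}_\beta u\le\sigma\}$ and tests the equation against $u\chi^2$ (justified by Remark~\ref{rem:local}); the bulk term then telescopes in time ($\tfrac12\int\partial_t\|u\chi\|_{L^2}^2\,{\rm d}t=0$) and only boundary-strip terms survive, each living at a single scale per location (via a Whitney decomposition of $(E^*)^c$), so they are summable and controlled by $\sigma^2|(E^*)^c|+\int_E|\mathcal{N}_\beta u|^2$. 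Your good-$\lambda$ skeleton could in principle be completed by running this equation-based sawtooth argument locally over $Q_j$, but as written the core estimate is unproved and the proposed route to it fails.

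Two smaller points. First, the pointwise hypothesis $\tilde N(u)(z)\le\gamma\alpha$ does not control the enlarged averages $\fint_{R^*}|u|^2$: $\mathcal{N}_\beta u$ is not pointwise dominated by $\tilde N u$, only their $L^p$ norms are comparable; you must formulate the good set with $\mathcal{N}_\beta$ itself (as the paper does, converting to $\|u\|_{X^p}$ only at the end). Second, the restriction $p<2$ is not what makes the absorption work — for any fixed $p$ one may take $\gamma$ small after the change-of-aperture bound; in the paper $p<2$ enters instead when integrating the distribution-function inequality $g_S(\sigma)\lesssim g_N(\sigma)+\sigma^{-2}\int_0^\sigma tg_N(t)\,{\rm d}t$ against $\sigma^{p-1}$, which is the weaker (but provable) substitute for a good-$\lambda$ inequality.
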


Note that the proof works for $0<p<1$ as well with the definitions of $T^{p,2}$ 
and $X^p$ extended to these values. 

\begin{proof}
The proof is highly similar to its autonomous counterpart in 
\cite[Theorem 6.1]{HM09}, itself based on Fefferman-Stein's original 
argument for $L=-\Delta$ in \cite{FeSt72}.
The only difference is that we need to use cut-off functions rather than integration 
by parts to localise near the boundary of truncated cones, 
Proposition~\ref{prop:nrjloc} instead of Caccioppoli's inequalities, and 
Kenig-Pipher's maximal function instead of the maximal function used in 
\cite{HM09}. We include the full proof for the convenience of the reader. 
Let $\varepsilon,R,\sigma>0$ with $\varepsilon<\frac{R}{6}$. Pick $\beta>0$ 
to be determined later. Recall that
$$
{\mathcal{N}}_\beta(u)(x) = \sup_{\delta>0}
\Bigl(\fint_{\delta^2}^{\beta^2\delta^2} \fint_{B(x,\beta\delta)}|u(t,y)|^2 
\,{\rm d}y\,{\rm d}t\Bigr)^{\frac{1}{2}} \quad \forall x \in{\mathbb{R}}^n.
$$
Define $E = \bigl\{x \in {\mathbb{R}}^n \;;\; {}\mathcal{N}_\beta u(x)\le \sigma\bigr\}$, 
and $E^{*} = \bigl\{x \in E \;;\; \forall r>0 
\quad |B(x,r)\cap E| \ge \frac{1}{2}|B(x,r)|\bigr\}$. 
Let $B = E^c$, $B^* = (E^*)^c$, and, for $x\in {\mathbb{R}}^n$, $\alpha >0$,
$$
\Gamma^{\varepsilon, R, \alpha}(x) =
\bigl\{(t,y) \in [0,\infty)\times {\mathbb{R}}^n \;;\; |y-x| <\alpha t \mbox{ and }  
\varepsilon < t< R\bigr\},
$$
as well as $\mathcal{R}^{\varepsilon, R, \alpha}(E^*) 
= \bigcup _{x \in E^*} \Gamma^{\varepsilon, R, \alpha}(x)$.

\noindent
We also define $\tilde{\mathcal{B}}^{\varepsilon, R}(E^*)=
\tilde{\mathcal{B}}_{\varepsilon}(E^*)\cup \tilde{\mathcal{B}}^{R}(E^*)
\cup\tilde{\mathcal{B}}'(E^*)$, where
\begin{align*}
\tilde{\mathcal{B}}_{\varepsilon}(E^*)
&= \bigl\{(t,y) \in [0,\infty)\times {\mathbb{R}}^n\;;\; t\in (\varepsilon,2\varepsilon)
\mbox{ and }d(y,E^*)<t\bigr\},
\\
\tilde{\mathcal{B}}^{R}(E^*)
&=\bigl\{(t,y) \in [0,\infty)\times {\mathbb{R}}^n \;;\; t\in (R,2R) \mbox{ and }
d(y,E^*)<t\bigr\},
\\
\tilde{\mathcal{B}}'(E^*)
&= \bigl\{(t,y) \in [0,\infty)\times{\mathbb{R}}^n \;;\; t\in (\varepsilon,2R) 
\mbox{ and }t/2 \le  d(y,E^*)< t \bigr\}.
\end{align*}
Note that 
$$
{\mathcal{R}}^{\varepsilon, 2R, 1 }(E^*) ={\mathcal{R}}^{2\varepsilon, R, 1/2}(E^*)
\cup \tilde{\mathcal{B}}^{\varepsilon, R}(E^*).
$$
We remark that
\begin{align*}
\int_{E^*} \Bigl(\int_0^\infty \fint _{B(x,\frac{\sqrt{t}}{2})}|\nabla u(t,y)|^2
\,{\rm d}y\,{\rm d}t\Bigr)\,{\rm d}x
=2\int_{E^*} \Bigl(\int_0^\infty \fint_{B(x,\frac{s}{2})}s|\nabla u(s^2,y)|^2
\,{\rm d}y\,{\rm d}s\Bigr)\,{\rm d}x.
\end{align*}
and  
$$
\int_{E^*} \Bigl(\int_{2\varepsilon}^R \fint_{B(x,s)}s|\nabla u(s^2,y)|^2 
\,{\rm d}y\,{\rm d}s\Bigr)\,{\rm d}x
\le c_n \int_{{\mathcal{R}}^{2\varepsilon,R,1/2}(E^*)} s|\nabla u(s^2,y)|^2
\,{\rm d}y\,{\rm d}s.
$$
with $c_n$ the reciprocal of the volume of the unit ball. We estimate the last 
integral. To do so, set
$$
\chi(t,y) = \Bigl(1-\eta\bigl(\textstyle{\frac{2\,d(y,E^*)}{t}}\bigr)\Bigr)
\eta\bigl(\textstyle{\frac{t}{\varepsilon}}\bigr)
\Bigl(1-\eta\bigl(\textstyle{\frac{t}{R}}\bigr)\Bigr)
\quad \forall (t,y) \in{\mathbb{R}}^{n+1}_+,
$$
where $\eta \in {\mathscr{C}}_c^\infty([0,\infty), [0,1])$ is equal to 
$1$ on $[2,\infty)$, and equal to $0$ on $[0,1]$. 
Notice that $\chi$ is almost everywhere differentiable, supported in
${\mathcal{R}}^{\varepsilon, 2R,1}(E^*)$, and constantly equal to $1$ on 
${\mathcal{R}}^{2\varepsilon, R,\frac{1}{2}}(E^*)$.
Moreover, for all $(t,y) \in {\mathcal{R}}^{\varepsilon, 2R,1}(E^*)$, we have that
$|\nabla \chi(t,y)| \leq \frac{2\|\eta'\|_{\infty}\sqrt n}{t}$.
For $(t,y) \in \tilde{\mathcal{B}}_{\varepsilon}(E^*)$, we have,
$$
|\partial_t \chi(t,y)|\leq 2\|\eta'\|_\infty
\bigl(\textstyle{\frac{1}{t}+\frac{d(y,E^{*})}{t^{2}}}\bigr) \le \frac{4 \|\eta'\|_{\infty}}{t}.
$$
The same reasoning gives us that 
$|\partial_t \chi(t,y)|\ \leq \frac{4 \|\eta'\|_{\infty}}{t}$, for all 
$(t,y) \in \tilde{\mathcal{B}}^{R}(E^*)$. For $(t,y) \in \tilde{\mathcal{B}}'(E^*)$, 
we also have that 
$|\partial_t \chi(t,y)|\le \frac{2\|\eta'\|_{\infty}d(y,E^*)}{t^{2}} 
\le \frac{2\|\eta'\|_\infty}{t}$.
Putting all these estimates together, we have shown that there exists $C>0$ 
such that for all $(t,y) \in {\mathcal{R}}^{\varepsilon, 2R,1}(E^*)$, we have that
$$
|\nabla \chi(t,y)|+|\partial_t \chi(t,y)| \le \frac{C}{t}.
$$
According to Remark~\ref{rem:local}, provided we show that
$(s,x) \mapsto u(s^2,x)\chi^2(s,x)\in L^2(\varepsilon, 2R; H^1({\mathbb{R}}^n))$, 
we can use this function as a test function. We assume this for now, and 
estimate as follows:
\begin{align*}
\int_{{\mathcal{R}}^{2 \varepsilon,R,1/2}(E^*)} s|\nabla u(s^2,y)|^2
\,{\rm d}y\,{\rm d}s 
&\le 2\lambda\,\Re e \int_{{\mathbb{R}}^{n+1}_+} s\chi^2(s,y) A(s^2,y)
\nabla u(s^2,y)\cdot \overline{\nabla u(s^2,y)}\,{\rm d}y\,{\rm d}s
\\
&\lesssim J_1+J_2+J_3,
\end{align*} 
where 
\begin{align*}
J_1 &= \Bigl|\int_{{\mathbb{R}}^{n+1}_+}  
sA(s^2,y) \nabla u(s^2,y)\cdot\overline{u(s^2,y)\nabla (\chi^2(s,y))} 
\,{\rm d}y\,{\rm d}s\Bigr|,
\\
J_2 &=  \Bigl|\int_{{\mathbb{R}}_+} \langle \partial_s(\chi(s,y)u(s^2,y)), 
u(s^2,y) \chi(s,y)\rangle\,{\rm d}s\Bigr|, 
\\
J_3 &= \Bigl|\int_{{\mathbb{R}}^{n+1}_+} u(s^2,y) \partial_s(\chi(s,y))
\overline{u(s^2,y) \chi(s,y)}\,{\rm d}y\,{\rm d}s\Bigr|.
\end{align*}
Here, we notice that for each $s$, the bracket is the duality  
$H^{-1}({\mathbb{R}}^n), H^1({\mathbb{R}}^n)$. 
Hence, $J_2 = \frac{1}{2}\int_{{\mathbb{R}}_+} 
\partial_s\|\chi(s,\cdot)u(s^2,\cdot )\|_{L^2}^2\,{\rm d}s = 0$.
Moreover,
\begin{align*}
J_1 &\lesssim 
\int_{\tilde{\mathcal{B}}^{ \varepsilon, R}(E^*)} |u(s^2,y)||\nabla u(s^2,y)| 
\,{\rm d}y\,{\rm d}s,
\\
J_3 & \lesssim \int_{\tilde{\mathcal{B}}^{ \varepsilon, R}(E^*)} 
|u(s^2,y)|^2 \,\frac{{\rm d}y\,{\rm d}s}{s}.
\end{align*}
This yields
\begin{align*}
&\int_{E^*} \Bigl(\int_{2\varepsilon}^R \fint_{B(x,\frac{s}{2})}
s|\nabla u(s^2,y)|^2\,{\rm d}y\,{\rm d}s\Bigr)\,{\rm d}x
\\
\lesssim&
\int_{\tilde{\mathcal{B}}^{\varepsilon, R}(E^*)} |u(s^2,y)|^2
\,\frac{{\rm d}y\,{\rm d}s}{s} 
+\Bigl(\int_{\tilde{\mathcal{B}}^{\varepsilon, R}(E^*)} |u(s^2,y)|^2
\,\frac{{\rm d}y\,{\rm d}s}{s}\Bigr)^{\frac{1}{2}}
\Bigl(\int_{\tilde{\mathcal{B}}^{\varepsilon, R}(E^*)} 
s|\nabla u(s^2,y)|^2\,{\rm d}y\,{\rm d}s\Bigr)^{\frac{1}{2}}.
\end{align*}
We now consider the following six integrals, recalling that
$\tilde{\mathcal{B}}^{\varepsilon, R}(E^*)=
\tilde{\mathcal{B}}_\varepsilon(E^*)
\cup \tilde{\mathcal{B}}^R(E^*)
\cup\tilde{\mathcal{B}}'(E^*)$.
\begin{align*}
I_1 &= \int_{\tilde{\mathcal{B}}_{\varepsilon}(E^*)}
|u(s^2,y)|^2\,\frac{{\rm d}y\,{\rm d}s}{s},
\qquad \qquad
I_2 = \int_{\tilde{\mathcal{B}}_{\varepsilon}(E^*)}
s|\nabla u(s^2,y)|^2\,{\rm d}y\,{\rm d}s,
\\
I_3 &= \int_{\tilde{\mathcal{B}}^R(E^*)}
|u(s^2,y)|^2\,\frac{{\rm d}y\,{\rm d}s}{s},
\qquad\qquad
I_4 = \int_{\tilde{\mathcal{B}}^R(E^*)}
s|\nabla u(s^2,y)|^2\,{\rm d}y\,{\rm d}s,
\\
I_5 &= \int_{\tilde{\mathcal{B}}'(E^*)}
|u(s^2,y)|^2\,\frac{{\rm d}y\,{\rm d}s}{s},
\qquad \qquad
I_6 = \int_{\tilde{\mathcal{B}}'(E^*)}
s|\nabla u(s^2,y)|^2\,{\rm d}y\,{\rm d}s.
\end{align*}
For $I_1$, we have the following:
\begin{align*}
I_1 &\lesssim 
\int_{\tilde{\mathcal{B}}_{\varepsilon}(E^*)}
\Bigl(\int_{E \cap B(y,s)} s^{-n}\,{\rm d}x\Bigr)
|u(s^2,y)|^2\,\frac{{\rm d}y\,{\rm d}s}{s} \\
& \lesssim 
\int_{\varepsilon}^{2\varepsilon} 
\int_E \fint_{B(x,s)}|u(s^2,y)|^2\,\frac{{\rm d}y\,{\rm d}s\,{\rm d}x}{s} 
\lesssim 
\int_E \int_{\varepsilon^2}^{4\varepsilon^2} 
\fint_{B(x,2\varepsilon)}|u(s,y)|^2\,\frac{{\rm d}y\,{\rm d}s\,{\rm d}x}{s} 
\\
& \lesssim
\int_E \sup_{\delta>0}
\fint_{\delta^2} ^{4\delta^2} \fint_{B(x,2\delta)} |u(s,y)|^2\,{\rm d}y\,{\rm d}s\,{\rm d}x
=\int_E |{\mathcal{N}}_2u(x)|^2\,{\rm d}x.
\end{align*}
To handle $I_2$, we use Proposition~\ref{prop:nrjloc}, and a covering argument, 
to obtain the following:
\begin{align*}
I_2 
& \lesssim 
\int_{\varepsilon} ^{2\varepsilon} 
\int_E \fint_{B(x,2\varepsilon)} s|\nabla u(s^2,y)|^2\,{\rm d}y\,{\rm d}s\,{\rm d}x
\lesssim 
\int_E \int_{\varepsilon^2}^{4\varepsilon^2} 
\fint_{B(x,2\varepsilon)}|\nabla u(s,y)|^2\,{\rm d}y\,{\rm d}s\,{\rm d}x 
\\
& \lesssim
\int_E \fint_{\varepsilon^2 /2}^{4\varepsilon^2} 
\fint_{B(x,4\varepsilon)} |u(s,y)|^2\,{\rm d}y\,{\rm d}s\,{\rm d}x
= \int_E |{\mathcal{N}}_4u(x)|^2\,{\rm d}x.
\end{align*}
In the same way $I_3+I_4 \lesssim \int_E |{\mathcal{N}}_4u(x)|^2\,{\rm d}x$. 
We now turn to $I_5, I_6$, using a Whitney decom\-po\-sition of $B^*$: 
there exist $c_1, c_2 \in(0,1)$ with $c_2>c_1$, and $c_3 \in {\mathbb{N}}$, 
such that $\displaystyle{B^* = \bigcup_{k=0}^{\infty} B(x_k,r_k)}$ 
for some $x_k \in {\mathbb{R}}^n$ and $r_k >0$ such that
\begin{align*}
\forall k \in {\mathbb{N}} \quad c_1\,d(x_k,E^*)\le r_k \le c_2\,d(x_k,E^*),
\\ 
\forall x \in B^* \quad \bigl|\bigl\{k\in {\mathbb{N}} \;;\; 
x \in B(x_k,r_k)\bigr\}\bigr| \le c_3.
\end{align*}
We have the following:
\begin{align*}
I_5 &\lesssim \sum_{k=0}^\infty
\int_{r_k(\frac{1}{c_2}-1)}^{2r_k(\frac{1}{c_1}+1)}
\int_{B(x_k,r_k)} |u(s^2,y)|^2\,\frac{{\rm d}y\,{\rm d}s}{s}
\\
& \lesssim
\sum_{k=0}^\infty r_k^n
\fint_{r_k^2(\frac{1}{c_2}-1)^2}^{4r_k^2(\frac{1}{c_1}+1)^2}
\fint_{B(x_k,\frac{c_2}{1-c_2}\sqrt{s})} |u(s,y)|^2\,{\rm d}y\,{\rm d}s.
\end{align*}
Now remark that, for $k \in {\mathbb{N}}$ and 
$s \in [r_k^2(\frac{1}{c_2}-1)^2,4r_k^2(\frac{1}{c_1}+1)^2]$:
$$
d(x_k,E) \le d(x_k,E^*) \le \frac{r_k}{c_1} \le
\frac{c_2}{c_1(1-c_2)}\sqrt{s}.
$$
Therefore, there exists $x_k' \in E$ such that
$$
B\bigl(x_k,\textstyle{\frac{c_2}{1-c_2}}\sqrt{s}\bigr) \subset
B\bigl(x_k',\textstyle{\frac{c_2}{1-c_2}(\frac{1}{c_1}+1)}\sqrt{s}\bigr).
$$
This yields
$$
I_5 \lesssim 
\sum_{k=0}^\infty r_k^n \,\sup_{k \in {\mathbb{N}}}\Bigl(
\fint_{r_k^2(\frac{1}{c_2}-1)^2}^{4r_k^2(\frac{1}{c_1}+1)^2}
\fint _{B(x_k',\frac{c_2}{1-c_2}(\frac{1}{c_1}+1)\sqrt{s})} 
|u(s,y)|^2\,{\rm d}y\,{\rm d}s\Bigr)
\lesssim |B^*|\, \sup_{z \in E} |{\mathcal{N}}_{\gamma}u(z)|^{2},
$$
for some $\gamma \ge 4$ depending only on $c_1,c_2$.
In the same way, using Proposition~\ref{prop:nrjloc}, we also have that
\begin{align*}
I_6 & \lesssim
\sum_{k=0}^\infty
\int_{r_k^2(\frac{1}{c_2}-1)^2}^{4r_k^2(\frac{1}{c_1}+1)^2}
\int_{B(x_k,\frac{c_2}{1-c_2}\sqrt{s})} |\nabla u(s,y)|^2\,{\rm d}y\,{\rm d}s,
\\
& \lesssim
\sum_{k=0}^{\infty}
\int_{r_k^2(\frac{1}{c_2}-1)^2/2 } ^{4r_k^2(\frac{1}{c_1}+1)^2}   
\int_{B(x_k,\frac{2c_2}{1-c_2}\sqrt{s})} |u(s,y)|^2 
\,{\rm d}y\,{\rm d}s 
\lesssim |B^*|\,  \sup_{z \in E}  |{\mathcal{N}}_{\gamma'}u(z)|^2,
\end{align*}
for some $\gamma'\ge \gamma$ depending only on $c_1,c_2$.
Now fix $\beta = \gamma'$. Summing all the estimates, and taking limits as 
$\varepsilon \to 0$ and $R \to \infty$, we have the following:
$$
\int_{E^*} \int_0^\infty 
\fint_{B(x,\frac{\sqrt{t}}{2})} |\nabla u(t,y)|^2\,{\rm d}y\,{\rm d}t\,{\rm d}x
\lesssim |B^*|\sigma^2 + \int_E
|{\mathcal{N}}_\beta u(z)|^2\,{\rm d}z.
$$
Now we consider the distribution functions defined by
\begin{align*}
g_S(\sigma) &= \Bigl|\Bigl\{x \in {\mathbb{R}}^n \;;\; \Bigl(\int_0^\infty
\fint_{B(x,\frac{\sqrt{t}}{2})} |\nabla u(t,y)|^2\,{\rm d}y\,{\rm d}t\Bigr)^{\frac{1}{2}} 
> \sigma\Bigr\}\Bigr|,
\\
g_N(\sigma) &= \bigl|\bigl\{x \in {\mathbb{R}}^n \;;\; 
{\mathcal{N}}_{\beta}u(x) > \sigma
\bigr\}\bigr|.
\end{align*}
We have that $|B^*| \lesssim |B|=g_N(\sigma)$, and that
$$
\int_{E} |{\mathcal{N}}_\beta u(x)|^2\,{\rm d}x
\le 2 \int_0^\sigma tg_N(t)\,{\rm d}t.
$$
This implies
\begin{align*}
g_S(\sigma) &\lesssim |B^*| + \frac{1}{\sigma^2}
\int_E \int_0^\infty 
\fint_{B(x,\frac{\sqrt{t}}{2})}|\nabla u(t,y)|^2\,{\rm d}y\,{\rm d}t\,{\rm d}x 
\\
& \lesssim |B^*| +\frac{1}{\sigma^2} \int_{E^*}
|{\mathcal{N}}_\beta u(x)|^2\,{\rm d}x 
\lesssim g_N(\sigma) + \int_0^\sigma tg_{N}(t)\,{\rm d}t.
\end{align*}
Therefore, as $p<2$, 
\begin{align*}
\int_0^\infty \sigma^{p-1}g_S(\sigma)\,{\rm d}\sigma
& \lesssim \int_0^\infty \sigma^{p-1}g_N(\sigma)\,{\rm d}\sigma 
+ \int_0^\infty \sigma^{p-3}
\int_0^\sigma tg_{N}(t)\,{\rm d}t
\,{\rm d}\sigma
\\
& \lesssim
\int_0^\infty \sigma^{p-1}g_N(\sigma)\,{\rm d}\sigma,
\end{align*}
and thus $\nabla u \in T^{p,2}$ with
$\|\nabla u\|_{T^{p,2}} \lesssim 
\|{\mathcal{N}}_\beta u\|_{L^p} \lesssim \|u\|_{X^p}$.

To finish the proof, we check  that 
$(s,x) \mapsto u(s^2,x)\chi^2(s,x) \in L^2(\varepsilon, 2R; H^1({\mathbb{R}}^n))$.
We begin by checking $\int_{\varepsilon}^{2R} \int_{{\mathbb{R}}^n} 
|u(s^2, x)\chi^2(s,x)|^2 \,{\rm d}s\,{\rm d}x<\infty$, with constants that depend on 
$\varepsilon, R$. Indeed, we may split $[\varepsilon, 2R]$ into a finite number 
of intervals $[\delta^2, \beta^2 \delta^2]$. For each of them, arguing as for 
$I_1$, we obtain a bound $\int_E |{\mathcal{N}}_\beta u(x)|^2\,{\rm d}x$.
By definition of $E$ and $p<2$, this does not exceed 
$\sigma^{2-p} \int_{{\mathbb{R}}^n} |{\mathcal{N}}_\beta u(x)|^p\,{\rm d}x<\infty$.  
Next, $\int_\varepsilon^{2R} \int_{{\mathbb{R}}^n} |\nabla (u(s^2, x)\chi^2(s,x))|^2 
\,{\rm d}s\,{\rm d}x$ is estimated similarly, using Proposition~\ref{prop:nrjloc} 
as for $I_2$.   
\end{proof}

\subsection{Controlling the square function by the maximal function for 
$p \in [2,\infty]$}

\begin{theorem}\label{thm:SbyN>2}
Let $u$ be a global weak solution of \eqref{eq1}. Let $p \in [2,\infty]$.
If $u \in X^p$, then $\nabla u \in T^{p,2}$ with
$
\|\nabla u\|_{T^{p,2}} \lesssim \|u\|_{X^p}$ and implicit constant independent of $u$. 
\end{theorem}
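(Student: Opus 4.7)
I would prove Theorem~\ref{thm:SbyN>2} by handling the endpoints $p=2$ and $p=\infty$ directly and covering the intermediate range by interpolation.

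For $p=2$, since $T^{2,2} = L^2(L^2)$, the task is to show $\|\nabla u\|_{L^2(L^2)} \lesssim \|u\|_{X^2}$. Applying Proposition~\ref{prop:nrjloc} with $a=t/2$, $b=t$, $r=\sqrt{t/2}$, together with the volume identity $|B(x,\sqrt{2t})|/|B(x,\sqrt{t/2})| = 2^n$, yields, for each $x\in\mathbb{R}^n$ and $t>0$,
\[
\fint_{B(x,\sqrt{t/2})} |u(t,y)|^2 \,dy \lesssim \fint_{t/2}^t \fint_{B(x,\sqrt{2t})} |u(s,y)|^2 \,dy\,ds \lesssim {\mathcal N}_2(u)(x)^2.
\]
Integrating in $x$ via Fubini gives $\|u(t,\cdot)\|_{L^2}^2 \lesssim \|{\mathcal N}_2(u)\|_{L^2}^2 \sim \|u\|_{X^2}^2$ uniformly in $t$, so $u \in L^\infty(L^2)$. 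Then Theorem~\ref{thm:linfl2} identifies $u = \Gamma(\cdot,0)u_0$ for some $u_0 \in L^2$, and Theorem~\ref{thm:wellposed} furnishes $\|\nabla u\|_{L^2(L^2)} \lesssim \|u_0\|_{L^2} \lesssim \|u\|_{X^2}$.

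For $p=\infty$ the target is the Carleson measure estimate $\int_0^{r_B^2}\int_B |\nabla u|^2 \lesssim |B|\,\|u\|_{X^\infty}^2$ for every ball $B\subset\mathbb{R}^n$. Fix $B$ and a regularization $\epsilon \in (0, r_B^2/2)$, and build a real cutoff $\chi(t,x)=\eta(t)\phi(x)$ with $\phi$ supported in $2B$, $\phi\equiv 1$ on $B$, $|\nabla\phi|\lesssim r_B^{-1}$, and $\eta$ supported in $(\epsilon/2, 2r_B^2)$, $\eta\equiv 1$ on $(\epsilon, r_B^2)$, with $|\eta'|\lesssim \epsilon^{-1}$ on $(\epsilon/2,\epsilon)$ and $|\eta'|\lesssim r_B^{-2}$ on $(r_B^2, 2r_B^2)$. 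By local Caccioppoli $\nabla u \in L^2_{\rm loc}$, and Remark~\ref{rem:local} makes $\chi^2 u$ an admissible test function. Testing $\varphi=\chi^2 u$ in the weak formulation, taking twice the real part, and integrating by parts in $t$ via $2\Re(u\overline{\partial_t u}) = \partial_t |u|^2$ (together with ellipticity \eqref{ell} and Young's inequality) produces
\[
\iint \chi^2 |\nabla u|^2 \lesssim \iint |u|^2 \bigl(|\partial_t \chi^2| + |\nabla\chi|^2\bigr).
\]
Decomposing $(0, 2r_B^2)$ dyadically and covering $2B$ on each slab $(T/2, T)$ by balls of radius $\sqrt{T}$, the definition of $\tilde N(u)\in L^\infty$ yields $\int_{T/2}^T\int_{2B}|u|^2 \lesssim T\,r_B^n\,\|u\|_{X^\infty}^2$. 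Summing over dyadic $T \in (\epsilon, 2r_B^2]$ weighted by $r_B^{-2}$, and bounding the boundary-layer term $\epsilon^{-1}\int_{\epsilon/2}^\epsilon\int_{2B}|u|^2$ by the $T=\epsilon$ case, shows both contributions are $\lesssim r_B^n\|u\|_{X^\infty}^2$ uniformly in $\epsilon$. Monotone convergence as $\epsilon\to 0$ produces the Carleson estimate.

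For $p \in (2,\infty)$ I would interpolate: the tent spaces $\{T^{p,2}\}_{p\in[2,\infty]}$ form a complex interpolation scale (Coifman-Meyer-Stein), and the Kenig-Pipher spaces $X^p$ interpolate in this range by \cite{HyRo12, Huang}. Since the linear map $u \mapsto \nabla u$ is bounded $X^2 \to T^{2,2}$ and $X^\infty \to T^{\infty,2}$, interpolation delivers boundedness $X^p \to T^{p,2}$ throughout $[2,\infty]$. The principal obstacle is the Carleson estimate at $p=\infty$: the weak formulation of \eqref{eq1} admits only test functions vanishing near $t=0$, which forces the regularization parameter $\epsilon>0$, and the delicate point is the uniform-in-$\epsilon$ control of $\epsilon^{-1}\int_{\epsilon/2}^\epsilon\int_{2B}|u|^2$; this works precisely because $\tilde N$ controls Whitney averages of $|u|^2$ at every (arbitrarily small) parabolic scale, allowing a clean passage to the limit.
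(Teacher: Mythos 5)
Your two endpoint arguments are correct, and the $p=2$ one is a genuinely different (and slicker) route than the paper's: instead of an integration by parts with heat-extension cutoffs, you observe $\|u\|_{L^\infty(L^2)}\lesssim\|\mathcal N_2(u)\|_{L^2}\sim\|u\|_{X^2}$ via Proposition~\ref{prop:nrjloc}, identify $u$ with the energy solution through Theorem~\ref{thm:linfl2}, and invoke the energy inequality; this is non-circular, since Theorem~\ref{thm:linfl2} does not depend on the present statement. Your $p=\infty$ Caccioppoli argument with plain cutoffs, the dyadic slab bound $\int_{T/2}^T\int_{2B}|u|^2\lesssim T\,r_B^n\|u\|_{X^\infty}^2$, and the uniform-in-$\varepsilon$ treatment of the boundary layer also give the Carleson estimate correctly; when $\tilde N(u)\in L^\infty$ the heat-extension cutoff $e^{t\Delta}1\!{\rm l}_{B(x_0,R_0)}$ used in the paper is indeed unnecessary.

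The gap is the interpolation step for $2<p<\infty$. The map $u\mapsto\nabla u$ is \emph{not} bounded from $X^p$ to $T^{p,2}$ on the whole space: take $F(t,x)=\sin(x_1/t)$, which is bounded (hence $\tilde N(F)\in L^\infty$) while $\int_0^{r^2}\fint_B|\nabla F|^2\,{\rm d}y\,{\rm d}t\approx\int_0^{r^2}t^{-2}\,{\rm d}t=\infty$. Your endpoint bounds therefore hold only on the subspaces $S^2\subset X^2$, $S^\infty\subset X^\infty$ of global weak solutions of \eqref{eq1}, and interpolating an operator defined only on such subspaces requires identifying $[S^2,S^\infty]_\theta$ (with equivalent norm) with the solution subspace of $X^p$ — interpolation of non-complemented subspaces does not come for free, and nothing in the paper or in \cite{HyRo12,Huang} supplies this. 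Even the natural repair — use Theorem~\ref{thm:ex!p>2} to write $u=\Gamma(\cdot,0)f$ with $\|f\|_{L^p}\sim\|u\|_{X^p}$ and interpolate the linear map $f\mapsto\nabla\Gamma(\cdot,0)f$ between $L^2\to T^{2,2}$ and $L^\infty\to T^{\infty,2}$ — still needs complex interpolation of tent spaces with the $T^{\infty,2}$ endpoint, which is delicate and not established here (it is not in \cite{CMS85}). The paper sidesteps interpolation entirely: it proves the Carleson estimate \emph{locally}, i.e.
\begin{equation*}
\int_0^{R^2}\fint_{B(x_0,R)}|\nabla u(t,x)|^2\,{\rm d}x\,{\rm d}t
\;\lesssim\; \inf_{y\in B(x_0,4R)} M_{HL}\bigl((\tilde N u)^2\bigr)(y),
\end{equation*}
whence $C(|\nabla u|)\lesssim\bigl(M_{HL}((\tilde N u)^2)\bigr)^{1/2}$ pointwise, and concludes for $p>2$ from the parabolic version of \cite[Theorem~3]{CMS85} together with the maximal theorem. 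Your own $p=\infty$ computation already yields this local estimate if, in the slab bound, you retain the quantity $T\int_{4B}(\mathcal N_\beta u)^2$ instead of discarding it for $T\,r_B^n\|u\|_{X^\infty}^2$; so the gap is fixable with the tools you used, but the interpolation argument as written does not prove the theorem for $2<p<\infty$.
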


\begin{proof}
{\tt Step 1:} 
In this step we prove the result for $p=2$. Let $R,\varepsilon>0$ with 
$ R \ge \frac{1}{\sqrt{\varepsilon}}$. Let 
$\chi \in {\mathscr{C}}^{\infty}((0,\infty), [0,1])$ be supported in 
$[\varepsilon, \frac{1}{\varepsilon}]$, and such that, for some constant 
$C>0$,
$$
\textstyle{
\chi(t)=1 \quad \forall t \in \bigl[2\varepsilon, \frac{1}{2\varepsilon}\bigr],
\qquad
|\chi'(t)| \le \frac{C}{\varepsilon} \quad \forall t \in [\varepsilon, 2\varepsilon],
\qquad
|\chi'(t)| \le C\varepsilon \quad \forall 
t \in \bigl[\frac{1}{2\varepsilon}, \frac{1}{\varepsilon}\bigr].
}
$$
Let $\theta \in {\mathscr{C}}^\infty({\mathbb{R}}^n, [0,1])$ be supported in 
$B(0,2R)$ and such that
$$
\theta(x)=1 \quad \forall x \in B(0,R), \qquad
|\nabla \theta(x)| \leq \frac{C}{R} \quad \forall x \in B(0,2R)\setminus B(0,R).
$$
For $M>0$, define $\phi_M(t,x) = e^{t\Delta}(1\!{\rm l}_{B(0,M)})(x)$ 
for all $(t,x) \in (0,\infty)\times {\mathbb{R}}^n$. Remark that $\phi_M$ 
is smooth, that $\phi_M(t,x)$ increases as $M$ increases with 
$\phi_M(t,x) \xrightarrow[M \to \infty]{} 1$
for  all $(t,x) \in (0,\infty)\times {\mathbb{R}}^n$, and that
$$
\|\nabla \phi_M\|_{T^{\infty,2}} \sim \|1\!{\rm l}_{B(0,M)}\|_{BMO} \le 2, 
\qquad \|\phi_M\|_{L^{\infty}(L^\infty)} \le 1.
$$
Let $R_0\ge 4R$ and set $\phi=\phi_{R_0}$. We want to show 
$$
I : = \int_0^\infty \int_{{\mathbb{R}}^n}
|\nabla u(t,x)|^2 \phi(t,x)^2\theta(x)^2\chi(t)^2\,{\rm d}x\,{\rm d}t 
\lesssim  \|u\|_{X^2}^2= : J,
$$ 
independently of $u$, $R_0$, $R$ and $\varepsilon$. Indeed, if this is the case, 
then we can let $R_0\to \infty$ first by monotone convergence, which implies a 
control of 
$\int_{2\varepsilon}^{1/2\varepsilon} \int_{B(0,R)}
|\nabla u(t,x)|^2 \,{\rm d}x\,{\rm d}t$ and it suffices to let $R\to \infty$ and 
$\varepsilon\to 0$. 

Let $a=\varepsilon$, $b=1/\varepsilon$ and $\Omega=B(0,2R)$. Let 
$\psi=\phi \theta \chi$ (we forget the variables to keep the notation reasonable) 
and remark that $u\psi^2 \in L^2(a,b; H^1_0(\Omega))$. According to 
Remark~\ref{rem:local}, we can use this as a test function to obtain 
$$
\int_0^\infty \int_{{\mathbb{R}}^n} A(t,\cdot)\nabla u(t,x)\cdot 
\overline{\nabla (u\psi^2)(t,x)} \rangle \,{\rm d}t\,{\rm d}x =
- \int_0^\infty \langle \partial_tu(t,\cdot),u\psi^2(t,\cdot) \rangle \,{\rm d}t.  
$$
From ellipticity, 
\begin{align*}
I& \lesssim 
\Bigl|\int_0^\infty \int_{{\mathbb{R}}^n}
A(t,x)\nabla u(t,x)\cdot\overline{\nabla(u \psi^2)(t,x)}
\,{\rm d}x\,{\rm d}t\Bigr|
\\
&\quad +\Bigl|\int_0^\infty \int_{{\mathbb{R}}^n}
A(t,x)\nabla u(t,x)\cdot\nabla(\psi^2)(t,x)
\overline{u(t,x)}\,{\rm d}x\,{\rm d}t\Bigr|
\\
& \lesssim I_1+I_2+I_3,
\end{align*}
where
\begin{align*}
I_1 &= \Bigl| \int_0^\infty
\langle \partial_t u(t,\cdot), (u\psi^2)(t,\cdot) \rangle\,{\rm d}t\Bigr|,
\\
I_2 &=\Bigl|\int_0^\infty \int_{{\mathbb{R}}^n}
A(t,x)\nabla u(t,x)\cdot\phi(t,x)\nabla \phi(t,x)
\theta^2(x)\chi^2(t)\overline{u(t,x)}\,{\rm d}x\,{\rm d}t\Bigr|,
\\
I_3 &=  \Bigl|\int_0^\infty \int_{{\mathbb{R}}^n}
A(t,x)\nabla u(t,x)\cdot\phi^2(t,x)\theta(x)\nabla \theta(x)\chi^2(t)
\overline{u(t,x)}\,{\rm d}x\,{\rm d}t\Bigr|.
\end{align*}
To estimate $I_1$, we decompose further and obtain 
$I_1 \lesssim I_{1,1}+I_{1,2}+I_{1,3}$, where (forgetting to write the $t$ variable)
\begin{align*}
I_{1,1} &= \Bigl|\int_0^\infty
\langle \partial_t(u\theta \phi \chi),u\theta \phi \chi \rangle\,{\rm d}t\Bigr|,
\\
\quad I_{1,2} &= \Bigl| \int_0^\infty
\langle u \theta \phi \chi\cdot\theta \chi \partial_t\phi,u \rangle\,{\rm d}t\Bigr|,
\\
\quad I_{1,3} &= \Bigl| \int_0^\infty
\langle u\theta \phi \chi\cdot\theta \phi \chi',u\rangle dt\Bigr|.
\end{align*}
In the first integral, the inner product is the $H^{-1}(\Omega), H^1_0(\Omega)$ 
duality so that we can use Lions' theorem again and 
we have that $I_{1,1} =\frac{1}{2} \int_0^\infty
\partial_t \|u(t,.)\theta\phi(t,.)\chi(t)\|_2^2\, {\rm d}t = 0$.
In the other two integrals, the inner product can be rexpressed with the 
$L^2$ duality. To estimate $I_{1,2}$, remark first that, if  
$g_t$ denotes the standard heat kernel defined by 
$g_t(z) = (\pi t)^{-\frac{n}{2}}e^{-\frac{|z|^2}{4t}}$, we have for all $x \in B(0,2R)$ 
and for all $t>0$,
\begin{align*}
|\partial_t \phi(t,x)| &\lesssim \Bigl|\int_{{\mathbb{R}}^n}
\partial_t g_t(x-y)1\!{\rm l}_{B(0,R_0)}(y)\,{\rm d}y\Bigr|
\\
&
= \Bigl|\int_{{\mathbb{R}}^n}
\partial_t g_t(x-y)1\!{\rm l}_{B(0,R_{0})^c}(y)\,{\rm d}y\Bigr|
\\ 
& \le C
\int_{{\mathbb{R}}^n} t^{-n/2-1}e^{-\frac{R^2}{4t}} e^{-\frac{c|x-y|^2}{4t}}\,{\rm d}y
\lesssim R^{-2}.
\end{align*}
In this calculation, we used $|x-y|\ge R_0-2R\ge 2R$, so that 
$|x-y|^2\ge \frac{|x-y|^2}{4}+ R^2$. 
Now define $k_\varepsilon \in {\mathbb{N}}$ such that 
$2^{k_\varepsilon} \le \frac{1}{\varepsilon^2} < 2^{k_\varepsilon+1}$. 
We have the following
$$
I_{1,2} \lesssim R^{-2} \sum_{k=0} ^{k_\varepsilon}
\int_{2^k\varepsilon}^{2^{k+1}\varepsilon}
\int_{{\mathbb{R}}^n} |u(t,x)|^2\,{\rm d}x\,{\rm d}t
\lesssim R^{-2} 
\sum_{k=0}^{k_\varepsilon} 2^k\varepsilon \|u\|_{X^2}^2 
\lesssim  \|u\|_{X^2}^2,
$$
using $R^{-2}\varepsilon^{-1}\le 1$. Moreover 
$$
I_{1,3} \lesssim \fint_{\varepsilon}^{2\varepsilon}
\int_{{\mathbb{R}}^n} |u(t,x)|^2\,{\rm d}x\,{\rm d}t
+
\fint_{\frac{1}{2\varepsilon}}^{\frac{1}{\varepsilon}}
\int_{{\mathbb{R}}^n} |u(t,x)|^2\,{\rm d}x\,{\rm d}t
\lesssim \|u\|_{X^2}^2,
$$
since 
$|\chi'(t)| \le \frac{C}{\varepsilon}$ for all $t \in [\varepsilon, 2\varepsilon]$, and 
$|\chi'(t)| \le C\varepsilon$ for all 
$t \in \bigl[\frac{1}{2\varepsilon}, \frac{1}{\varepsilon}\bigr]$.

To estimate $I_2$ we use Cauchy-Schwarz inequality,  
Harnack inequalities for each 
$\partial_{x_j}\phi$, $j=1,\dots,n$, and Carleson inequality 
(see \cite[Proposition~3]{CMS85}) to obtain 
\begin{align*}
I_2 &\lesssim
\Bigl(\int_0^\infty \int_{{\mathbb{R}}^n}
|\nabla u(t,x)|^2\theta(x)^2\phi(t,x)^2\chi(t)^2\,{\rm d}x\,{\rm d}t\Bigr)^{\frac{1}{2}}
\Bigl(\int_0^\infty \int_{{\mathbb{R}}^n}
|u(t,x)|^2|\nabla \phi(t,x)|^2\,{\rm d}x\,{\rm d}t\Bigr)^{\frac{1}{2}}
\\
& = I^{\frac{1}{2}}
\Bigl(\int_0^\infty \int_{{\mathbb{R}}^n}
\Bigl(\fint_t^{2t} \fint_{B(x,\sqrt{t})} |u(s,y)|^2|\nabla \phi(s,y)|^2
\,{\rm d}y\,{\rm d}s\Bigr)
\,{\rm d}x\,{\rm d}t\Bigr)^{\frac{1}{2}}
\\
&\lesssim
I^{\frac{1}{2}}
\Bigl(\int_0^\infty \int_{{\mathbb{R}}^n}
\Bigl(\fint_t^{2t} \fint_{B(x,\sqrt{t})} |u(s,y)|^2\,{\rm d}y\,{\rm d}s\Bigr)
|\nabla \phi(t,x)|^2\,{\rm d}x\,{\rm d}t\Bigr)^{\frac{1}{2}}
\\
&\lesssim
I^{\frac{1}{2}}
\Bigl(\int_{{\mathbb{R}}^n} \sup_{(t,x) \in \Gamma_z}
\Bigl(\fint_t^{2t} \fint_{B(x,\sqrt{t})}|u(s,y)|^2\,{\rm d}y\,{\rm d}s\Bigr)
\,{\rm d}z\, \|\nabla \phi\|_{T^{\infty,2}}^2\Bigr)^{\frac{1}{2}}
\lesssim I^{\frac{1}{2}}J^{\frac{1}{2}}.
\end{align*}
For $I_3$, we have by the  Cauchy-Schwarz inequality and  
$R^{-2}\varepsilon^{-1}\le 1$, 
\begin{align*}
I_3 & \lesssim I^{\frac{1}{2}}
\Bigl(\int_0^\infty \int_{{\mathbb{R}}^n}
|u(t,x)|^2|\nabla \theta(x)|^2\phi(t,x)^2\chi(t)^2
\,{\rm d}x\,{\rm d}t\Bigr)^{\frac{1}{2}}
\lesssim \frac{I^{\frac{1}{2}}}{R}
\Bigl(\int_0^\infty \int_{{\mathbb{R}}^n}
|u(t,x)|^2\chi(t)^2\,{\rm d}x\,{\rm d}t\Bigr)^{\frac{1}{2}}
\\
&\lesssim  
\frac{I^{\frac{1}{2}}}{R}\Bigl(
\sum_{k=0}^{k_\varepsilon} 2^{k_\varepsilon}
\fint_{2^k\varepsilon}^{2^{k+1}\varepsilon}
\int_{{\mathbb{R}}^n} \fint_{B(x,\sqrt{2^k\varepsilon})}
|u(t,y)|^2\,{\rm d}y\,{\rm d}t\,{\rm d}x\Bigr)^{\frac{1}{2}}
\lesssim
\frac{I^{\frac{1}{2}}}{R}\Bigl(2^{k_\varepsilon}\varepsilon\Bigr)^{\frac{1}{2}}
J^{\frac{1}{2}} \le I^{\frac{1}{2}}J^{\frac{1}{2}}.
\end{align*}
Combining all the estimates, we have that 
$I\lesssim I^{\frac{1}{2}}J^{\frac{1}{2}}+J$. As $I<\infty$ by definition, we 
thus conclude that $I \lesssim J$ as desired.

\medskip

\noindent
{\tt Step 2:}  In this step we prove the result for $2< p\le \infty$ essentially 
by establishing a local version of Step~1. 
Fix $x_0 \in {\mathbb{R}}^n$, $R_0,R>0$ with $R_0\geq 4R$, 
$\varepsilon>0$ and consider 
$\chi\in {\mathscr{C}}^{\infty}({\mathbb{R}}, [0,1])$ supported in 
$[\varepsilon, 4R^{2}]$ such that, for some $C>0$,
$$
\textstyle{
\chi(t)=1 \quad \forall t \in [2\varepsilon,R^2],
\qquad
|\chi'(t)|\leq \frac{C}{\varepsilon} \quad \forall t \in [\varepsilon,2\varepsilon],\\
\qquad
|\chi'(t)|\leq \frac{C}{R^2} \quad \forall t \in [R^2,4R^2].}
$$
We define $\phi(t,x)=e^{t\Delta}(1_{B(x_0,R_0)})(x)$ for all
$(t,x) \in [0,\infty)\times {\mathbb{R}}^n$. We also let 
$\theta \in {\mathscr{C}}^{\infty}({\mathbb{R}}^n, [0,1])$ 
be supported in $B(x_0,2R)$ and such that
$$
\textstyle{
\theta(x)=1 \quad \forall x \in B(x_0,R),\qquad
|\nabla \theta(x)|\le \frac{C}{R} \quad \forall x \notin B(x_0,R).}
$$
Defining 
\begin{align*}
I &= \int_0^\infty \int_{{\mathbb{R}}^n}
|\nabla u(t,x)|^2\phi(t,x)^2\theta(x)^2\chi(t)^2\,{\rm d}t\,{\rm d}x,
\\
J &= \bigl\|(t,x) \mapsto 1\!{\rm l}_{B(x_0,2R)}(x)1\!{\rm l}_{(0,4R^2)}(t)
u(t,x)\bigr\|_{X^2}^2,
\end{align*}
we only have to show that $I\lesssim J$ with implicit constants independent of 
$u$, $x_0$, $\varepsilon$, $R$, $R_0$. Indeed, if this holds, then, taking the 
limit as $R_0\to \infty$ and then as $\varepsilon \to 0$,
\begin{align*}
\int_0^{R^2} \fint_{B(x_0,R)} &
|\nabla u(t,x)|^2\,{\rm d}x\,{\rm d}t
\\
& \lesssim R^{-n} \int_{{\mathbb{R}}^n}
\Bigl(\sup_{\delta>0} \fint_\delta^{2\delta}
\fint_{B(x,\sqrt{\delta})} |u(t,y)|^2 1\!{\rm l}_{(0,4R^2)}(t)
1\!{\rm l}_{B(x_0,2R)}(y)\,{\rm d}y\,{\rm d}t\Bigr)^2\,{\rm d}x
\\
& \lesssim R^{-n} \int_{B(x_0,4R)}|\tilde Nu(x)|^2\,{\rm d}x
\  \lesssim \inf_{y \in B(x_0, 4R)}M((\tilde Nu)^2)(y)
\end{align*}
where $M$ is the Hardy-Littlewood operator on ${\mathbb{R}}^n$. 

Define $C(F)(y)=\sup_{B\ni y}(\int_B\int_0^{r^2} |F(t,x)|^2 
\,{\rm d}t\,{\rm d}x)^{\frac 1 2}$. Thus if $y\in {\mathbb{R}}^n$ by taking the 
supremum over all  $B=B(x_0, R)\ni y$, we have shown 
$$
C(|\nabla u|)(y) \lesssim \bigl[M((\tilde Nu)^2)\bigr]^{\frac{1}{2}}(y).
$$
As $p>2$, using the parabolic version of  \cite[Theorem~3, (2)]{CMS85} 
and the maximal theorem, we obtain the conclusion. 

We proceed as in Step~1, estimating
$I\lesssim I_{1,1}+I_{1,2}+I_{1,3}+I_2+I_3$, where
\begin{align*}
I_{1,1} &= \Bigl| \int_0^\infty
\langle u, \partial_t(u\theta \phi \chi)\theta \phi \chi \rangle
\,{\rm d}t\Bigr|=0,
\\ 
I_{1,2} &= \Bigl| \int_0^\infty
\langle u, u\theta \phi \chi\cdot \theta \chi \partial_t\phi \rangle \,{\rm d}t\Bigr|,
\\
I_{1,3} &= \Bigl| \int_0^\infty
\langle u, u\theta \phi \chi\cdot\theta \phi \chi' \rangle\,{\rm d}t\Bigr|,\\
I_2 &= \Bigl|\int_0^\infty \int_{{\mathbb{R}}^n}
A(t,x)\nabla u(t,x)\cdot\phi(t,x)\nabla \phi(t,x)\theta^2(x)\chi^2(t)
\overline{u(t,x)}\,{\rm d}x\,{\rm d}t\Bigr|,
\\
I_3 &= \Bigl|\int_0^\infty \int_{{\mathbb{R}}^n}
A(t,x)\nabla u(t,x)\cdot\phi^2(t,x)\theta(x)\nabla \theta(x)\chi^2(t)
\overline{u(t,x)}\,{\rm d}x\,{\rm d}t\Bigr|.
\end{align*}
Using $|\partial_t\phi(t,x)| \lesssim R_0^{-1}$, we have that
$$
I_{1,2} \lesssim \int_0^{R^2}
\int_{B(x_0,2R)} |u(t,x)|^2|\partial_t\phi(t,x)|\,{\rm d}x\,{\rm d}t
\lesssim \frac{R^2}{R_0^2} \int_{{\mathbb{R}}^n}
\tilde N(1\!{\rm l}_{B(x_0,2R)}u)(x)^2\,{\rm d}x \lesssim J,
$$
and using the properties of $\chi$ and $\theta$, we also obtain
$$
I_{1,3} \lesssim \fint_\varepsilon^{2\varepsilon}
\int_{{\mathbb{R}}^n} |1\!{\rm l}_{B(x_0,2R)}(x) u(t,x)|^2\,{\rm d}x\,{\rm d}t
+ \fint_{R^2}^{4R^2} \int_{{\mathbb{R}}^n} 
|1\!{\rm l}_{B(x_0,2R)}(x)u(t,x)|^2\,{\rm d}x\,{\rm d}t\lesssim J.
$$
Next, as in Step~1, we also have that $I_2 \lesssim I^{\frac{1}{2}}J^{\frac{1}{2}}$.
Finally, by Cauchy-Schwarz inequality 
\begin{align*}
I_3 \lesssim& \frac{I^{\frac{1}{2}}}{R}
\Bigl(\int_{{\mathbb{R}}^n} \int_\varepsilon^{4R^{2}} 
|1\!{\rm l}_{B(x_0,2R)}(x)u(t,x)|^2\,{\rm d}x\,{\rm d}t\Bigr)^{\frac{1}{2}}
\\
\lesssim&
\frac{I^{\frac{1}{2}}}{R}\Bigl(\int_{{\mathbb{R}}^n} 
\sum_{k=-2}^\infty 2^{-k}R^2
\fint_{2^{-k-1}R^2}^{2^{-k}R^2}
\fint_{B(x,R2^{-\frac{k}{2}})} |1\!{\rm l}_{B(x_0,2R)}(x)u(t,x)|^2
\,{\rm d}x\,{\rm d}t\,{\rm d}y \Bigr)^{\frac{1}{2}}
\lesssim I^{\frac{1}{2}}J^{\frac{1}{2}}.
\end{align*}
This concludes Step~2.
\end{proof}

\subsection{Consequences}

We have obtained comparison results only for solutions with $L^2$ data. 
We can remove this constraint as follows.

\begin{corollary}
Let $1<p<\infty$. Assume that, for all $f \in L^p({\mathbb{R}}^n)$, the problem
$$
\partial_tu={\rm div}\,A\nabla u,
\quad
u\in X^p,
\quad 
\lim_{t \to 0} u(t,\cdot)=f \ in \  L^2_{\rm loc},
$$
is well-posed. Then the solution satisfies 
$$
\|\tilde N (u)\|_{L^p}\sim \|\nabla u \|_{T^{p,2}} \sim \|f\|_{L^p}.   
$$
\end{corollary}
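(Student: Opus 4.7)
The estimate $\|\nabla u\|_{T^{p,2}}\lesssim \|u\|_{X^p}$ for any global weak solution in $X^p$ is exactly Theorems~\ref{thm:SbyN<2} (for $1<p<2$) and~\ref{thm:SbyN>2} (for $2\le p<\infty$). The trivial direction $\|f\|_{L^p}\le \|u\|_{X^p}$ follows from Lebesgue differentiation applied to the $L^2_{\rm loc}$-convergence $u(t,\cdot)\to f$: for almost every $x$,
$$
|f(x)|^2 = \lim_{\delta\to 0}\fint_{\delta/2}^{\delta}\fint_{B(x,\sqrt{\delta})}|u(t,y)|^2\,{\rm d}y\,{\rm d}t \le \tilde N(u)(x)^2,
$$
so $\|f\|_{L^p}\le \|\tilde N(u)\|_{L^p}=\|u\|_{X^p}$. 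The substance of the corollary is thus to establish the two reverse inequalities $\|u\|_{X^p}\lesssim \|f\|_{L^p}$ and $\|u\|_{X^p}\lesssim \|\nabla u\|_{T^{p,2}}$.

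For the first reverse inequality, the plan is to prove boundedness of the solution map $S:f\mapsto u$ via the open mapping theorem. By well-posedness $S$ is a linear bijection from $L^p$ onto
$$
Y^p:=\bigl\{u\in X^p : u \text{ is a global weak solution of \eqref{eq1} with trace in }L^2_{\rm loc}\bigr\},
$$
and $S^{-1}:Y^p\to L^p$ is bounded by the display above. It suffices to show $Y^p$ is closed in $X^p$. For $(u_k)\subset Y^p$ with $u_k\to u$ in $X^p$, convergence of $\tilde N$-averages forces $u_k\to u$ in $L^2_{\rm loc}((0,\infty)\times{\mathbb R}^n)$, and Proposition~\ref{prop:nrjloc} upgrades this to $\nabla u_k\to \nabla u$ in $L^2_{\rm loc}$, so $u$ remains a weak solution. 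Applying the trivial estimate to $u_j-u_k$ shows that the traces $f_k$ form a Cauchy sequence in $L^p$ with a limit $f$, and a triangle inequality
$$
\|u(t,\cdot)-f\|_{L^2(K)}\le \|u(t,\cdot)-u_k(t,\cdot)\|_{L^2(K)}+\|u_k(t,\cdot)-f_k\|_{L^2(K)}+\|f_k-f\|_{L^2(K)},
$$
combined with Proposition~\ref{prop:nrjloc} for the first term, identifies $f$ as the $L^2_{\rm loc}$-trace of $u$. The open mapping theorem then gives $\|S\|_{{\mathscr L}(L^p,X^p)}<\infty$, i.e.\ $\|u\|_{X^p}\lesssim \|f\|_{L^p}$.

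For the second reverse inequality $\|u\|_{X^p}\lesssim \|\nabla u\|_{T^{p,2}}$, approximate $f\in L^p$ by $f_k\in L^2\cap L^p$ with $f_k\to f$ in $L^p$, and let $u_k:=\Gamma(\cdot,0)f_k$ be the energy solutions of Theorem~\ref{thm:wellposed}. Verifying via Propositions~\ref{prop:tildeNinLp} and~\ref{prop:L2poids} that the difference $Sf_k-u_k$ satisfies the hypothesis of Theorem~\ref{thm:THE_THEOREM}, its common zero trace in $L^2_{\rm loc}$ forces $Sf_k=u_k$. Since $u_k$ has $L^2$ data, Proposition~\ref{prop:NbyS} yields $\|u_k\|_{X^p}\lesssim \|\nabla u_k\|_{T^{p,2}}$. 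The estimate $\|\nabla(u_k-u)\|_{T^{p,2}}\lesssim \|u_k-u\|_{X^p}$ from Theorems~\ref{thm:SbyN<2} and~\ref{thm:SbyN>2} applied to the weak solution $u_k-u$, combined with $\|u_k-u\|_{X^p}\lesssim \|f_k-f\|_{L^p}\to 0$ from the first reverse inequality, gives $\nabla u_k\to \nabla u$ in $T^{p,2}$, and passing to the limit closes the argument.

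The main obstacle is the closedness of $Y^p$ in $X^p$, specifically the identification of the $L^2_{\rm loc}$-trace of the limit when $1<p<2$, for which the inclusion $L^p\hookrightarrow L^2_{\rm loc}$ fails and the third term of the triangle inequality above is not immediate. One circumvents this by extracting a subsequence with $f_{k_j}\to f$ almost everywhere and invoking dominated convergence using a pointwise majorant derived from $\sup_j \tilde N(u_{k_j})\in L^p$. A secondary technicality is the identification $Sf_k=u_k$ when $1<p<2$: since $X^p$ does not embed into $L^\infty(L^2)$, one must apply Theorem~\ref{thm:THE_THEOREM} to the difference $w=Sf_k-u_k$, verify its integrability via the $L^\infty(L^p)$ bound available from $X^p\subset L^\infty(L^p)$ together with the $L^\infty(L^2)$ bound on $u_k$, and pass carefully to the limit $s\to 0$ in the resulting reproducing formula using the Gaussian decay of $\Gamma(t,s)^*h$ from Proposition~\ref{prop:OD} to handle the non-compact support of $\Gamma(t,0)^*h$.
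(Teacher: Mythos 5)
The second half of your argument---approximating $f$ by $f_k\in L^2\cap L^p$, applying Proposition~\ref{prop:NbyS} to the solutions with $L^2$ data and Theorems~\ref{thm:SbyN<2}, \ref{thm:SbyN>2} to the differences, then passing to the limit---is exactly the paper's proof. The divergence is that the paper reads ``well-posed'' as the conclusion of the results recalled in the remark following the corollary, which always come packaged with the representation $u(t,\cdot)=\Gamma(t,0)f$ and the equivalence $\|u\|_{X^p}\sim\|f\|_{L^p}$, so nothing remains to be proved before the approximation step; you instead try to extract these facts from bare existence-and-uniqueness. Two comments on the parts of this that are fine or fixable. Your ``trivial direction'' is stated too strongly: $L^2_{\rm loc}$ convergence of $u(t,\cdot)$ to $f$ does not give convergence of the Whitney averages at a.e.\ $x$ (the normalization $|B(x,\sqrt{\delta})|^{-1}$ blows up as $\delta \to 0$); the inequality $|f(x)|\le \tilde N(u)(x)$ a.e.\ is nevertheless true, by integrating the Whitney averages of $|u-f|^2$ in $x$ over a compact set, extracting $\delta_j\to 0$ along which they tend to $0$ a.e., and using Lebesgue differentiation for $|f|^2\in L^1_{\rm loc}$. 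Your open-mapping argument does work for $2\le p<\infty$: there the slice bound $\sup_{0<t\le 1}\|w(t,\cdot)\|_{L^2(K)}\lesssim_K \|w\|_{X^p}$ for weak solutions $w$ (Proposition~\ref{prop:nrjloc}, a covering, and H\"older with $p\ge 2$) together with $L^p\hookrightarrow L^2(K)$ makes your class $Y^p$ closed; this is a genuinely different, more self-contained route than the paper's one-line appeal to the earlier well-posedness results.

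The genuine gaps are in the range $1<p<2$, precisely where the corollary cannot be run on bare well-posedness alone. First, closedness of $Y^p$: it is not only the third term of your triangle inequality that fails, but also the first, because for $p<2$ the $X^p$ norm does not control $\|w(t,\cdot)\|_{L^2(K)}$ uniformly in $t$ (the covering argument produces a factor $t^{\frac{n}{2}-\frac{n}{p}}$ at the squared level, which blows up as $t\to 0$); and your proposed repair---an a.e.\ convergent subsequence $f_{k_j}\to f$ dominated by $\sup_j \tilde N(u_{k_j})\in L^p$---only reproves $L^p$ convergence of the traces: an $L^p$ majorant with $p<2$ gives no $L^2(K)$ convergence, so the $L^2_{\rm loc}$ trace of the limit is not identified. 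Second, the identification of $Sf_k$ with the energy solution $u_k$: Theorem~\ref{thm:THE_THEOREM} does give the reproducing formula for $0<s\le t$, but to let $s\to 0$ you must pair $Sf_k(s,\cdot)$, controlled only through $X^p\subset L^\infty(L^p)$, against $\Gamma(t,s)^*h$ uniformly in $s$, and the $L^2$ Gaussian off-diagonal bounds of Proposition~\ref{prop:OD} do not dominate an $L^p$ function when $p<2$; one needs uniform $L^{p'}$ bounds on $\Gamma(t,s)^*$, i.e.\ exactly the $L^q$-boundedness hypotheses of Lemma~\ref{lem:ODp-2} and Theorem~\ref{thm:uniqueness_p<2} (or kernel bounds), which bare well-posedness does not supply. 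This is why the paper, for $p<2$, invokes well-posedness only in the settings of Corollary~\ref{cor:kioloa}, Theorem~\ref{thm:BV} or Theorem~\ref{thm:pert}, where $u(t,\cdot)=\Gamma(t,0)f$, the equivalence $\|u\|_{X^p}\sim\|f\|_{L^p}$, and the coincidence of the $X^p$ solution with the energy solution for $L^2\cap L^p$ data come for free; with that reading, your approximation-and-limit step is all that is needed and coincides with the paper's argument.
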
  

\begin{remark}
We have well-posedness in $X^p$ for $p \geq 2$ by 
Corollaries~\ref{thm:uniqueXp} and~\ref{cor:ntmax2}. For $p<2$, we have 
well-posedness in $X^p$ under the assumptions of either 
Corollary~\ref{cor:kioloa}, or Theorem~\ref{thm:BV}, or Theorem~\ref{thm:pert}.
\end{remark}

\begin{proof} 
Let $u \in X^p$ be a global weak solution of \eqref{eq1}.
By well-posedness in $X^p$, we know that $u(t,\cdot)=\Gamma(t,0)f$ for a 
unique $f\in L^p({\mathbb{R}}^n)$, for all $t>0$. Moreover 
$\|\tilde N (u)\|_p \sim \|f\|_p$. Consider an approximation 
$f_k\in L^p({\mathbb{R}}^n) \cap L^2({\mathbb{R}}^n)$ converging to $f$, and 
let $u_k$ be the corresponding solution. We have that 
$\|\tilde N (u_k)\|_p\sim \|\nabla u_k\|_{T^{p,2}}$ combining 
Proposition~\ref{prop:NbyS} and Theorems~\ref{thm:SbyN<2} 
and~\ref{thm:SbyN>2}. The conclusion follows from taking the limit as $k\to \infty$. 
\end{proof}
 
We wish to make a connection with old ideas in the topic such as those 
of Nash, as explained in \cite{FaSt}. Assume $A$ has real coefficients. 
Then for $h\in {\mathscr{C}}^\infty_c({\mathbb{R}}^n)$, $h\ge 0$, 
$u(t)=\Gamma(t,0)h$, and $ 2\le p <\infty$, one has (if the coefficients are 
smooth to ease justifications)
$$
\frac{{\rm d}}{{\rm d}t} \|u(t)\|_{L^p}^p=  
p(p-1)\int_{{\mathbb{R}}^n} u(t,x)^{p-2} A(t,x)\nabla u(t,x) \cdot \nabla u(t,x) 
\,{\rm d}x
$$
and in particular 
\begin{equation}
\label{eq:lp}
\|h\|_{L^p}^p- \|u(t)\|_{L^p}^p = 
p(p-1) \int_0^t\int_{{\mathbb{R}}^n} u(s,x)^{p-2} A(s,x)\nabla u(s,x)\cdot\nabla u(s,x) 
\,{\rm d}x\,{\rm d}s.
\end{equation}
This yields the finiteness of the integral when $t\to \infty$, which is equal to 
$\|h\|_{L^p}^p$ as $\|u(t)\|_{L^p} \to 0$. Note that for $p=2$, we recover the 
energy equality
$$
\|h\|_{L^2}^2 = 2 \int_0^\infty\int_{{\mathbb{R}}^n} A(s,x)\nabla u(s,x) 
\cdot \nabla u(s,x) \,{\rm d}x\,{\rm d}s
$$
which motivated our approach to the energy space. 
Our observation is that for $p>2$ and arbitrary global weak solutions with 
$\|u^*\|_{L^p}<\infty$ (which, as we have shown, are of the form 
$u(t)=\Gamma(t,0)h$ for a unique $h\in L^p({\mathbb{R}}^n)$) we have
$$
I:=\int_0^\infty\int_{{\mathbb{R}}^n} |u(s,x)|^{p-2} A(s,x)\nabla u(s,x) 
\cdot \nabla u(s,x)\,{\rm d}x \,{\rm d}s \lesssim \|u^*\|_{L^p}^p.
$$
Indeed,  using the boundedness of $A$, 
\begin{align*}
\int_0^\infty\int_{{\mathbb{R}}^n} |u(s,x)|^{p-2} A(s,x)\nabla u(s,x) 
\cdot \nabla u(s,x) \,{\rm d}x\,{\rm d}s 
&\lesssim \int_{{\mathbb{R}}^n} C(|\nabla u |)(x)^2 u^*(x)^{p-2}\,{\rm d}x 
\\
&\le \|C(|\nabla u |)\|_{L^p}^2\|u^*\|_{L^p}^{p-2}
\end{align*}
by H\"older's inequality in the last line and, in the first, \cite[Proposition~3]{CMS85} 
adapted to parabolic scaling, with $C(f)(x)$ is the supremum of 
$\big(\int_0^{r^2}\fint_B f^2(t,y)\,{\rm d}y\,{\rm d}t\big)^{\frac{1}{2}}$ taken 
over all balls $B$, $r$ being the radius, that contain $x$. Next, by 
\cite[Theorem~3, (b)]{CMS85}, we have 
$\|C(|\nabla u |)\|_{L^p}\lesssim \|\nabla u\|_{T^{p,2}}$. Finally we conclude 
using our Theorem~\ref{thm:SbyN>2}.

It is not clear at all that integrals of the form $I$ can play a role when considering 
complex equations. In particular, \eqref{eq:lp} does not hold in this case.   
However, (modified) maximal functions and Lusin area functionals remain valid 
tools, as we have just demonstrated.  

\section{Fatou type results}
\label{sec8}
Our well-posedness results imply convergence in  $L^2_{loc}(\mathbb{R}^n)$ or in  $L^p(\mathbb{R}^n)$ sense for some $p$ to the initial value.  Here, we address almost everywhere convergence issues. As weak solutions 
may not be locally bounded, we replace the pointwise (parabolic) non-tangential  
convergence by the convergence of (parabolic) Whitney averages. 

\begin{theorem} 
\label{thm:fatou81}
Let $A$ be as in \eqref{ell}. Let $f\in L^2({\mathbb{R}}^n)$ and 
$u(t,\cdot)=\Gamma(t,0)f$. Then for almost every $x\in{\mathbb{R}}^n$, 
we have convergence of the Whitney averages
$$
\lim_{\delta \to 0}  \fint_{\frac{\delta}{2}}^\delta
\fint_{B(x,\sqrt{\delta})}|u(t,y)-f(x)|^2\,{\rm d}y\,{\rm d}t =0,
$$
as well as of the slice averages
$$
\lim_{\delta \to 0} 
\fint_{B(x,\sqrt{\delta})}|u(\delta ,y)-f(x)|^2\,{\rm d}y =0.
$$
In particular, for almost every $x\in {\mathbb{R}}^n$, 
$$
\lim_{\delta \to 0}  \fint_{\frac{\delta}{2}}^\delta
\fint_{B(x,\sqrt{\delta})}u(t,y)\,{\rm d}y\,{\rm d}t = \lim_{\delta \to 0} 
\fint_{B(x,\sqrt{\delta})}u(\delta ,y)\,{\rm d}y = f(x)
$$
\end{theorem}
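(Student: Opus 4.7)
The proof follows a standard Fatou-type density scheme. Expanding the square,
\[
|u(t,y) - f(x)|^2 = |u(t,y)|^2 - 2\Re\bigl(f(x)\,\overline{u(t,y)}\bigr) + |f(x)|^2,
\]
and averaging over the Whitney cylinder, the first claim reduces to proving separately, for a.e.\ $x \in {\mathbb{R}}^n$,
\[
W_\delta(x) := \fint_{\delta/2}^\delta \fint_{B(x,\sqrt\delta)} |u(t,y)|^2 \,{\rm d}y\,{\rm d}t \xrightarrow[\delta \to 0]{} |f(x)|^2,
\]
\[
M_\delta(x) := \fint_{\delta/2}^\delta \fint_{B(x,\sqrt\delta)} u(t,y) \,{\rm d}y\,{\rm d}t \xrightarrow[\delta \to 0]{} f(x).
\]
The slice-average version reduces by the same expansion to the analogous convergences for $\fint_{B(x,\sqrt\delta)} |u(\delta,y)|^2\,{\rm d}y$ and $\fint_{B(x,\sqrt\delta)} u(\delta,y)\,{\rm d}y$, and the ``in particular'' statement about the unweighted averages follows from the $L^2$-averaged convergences by Jensen's inequality.

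For each of these four a.e.\ limits I would run a two-step argument. The dense class consists of $g \in L^2$ with compact support in an arbitrary ball $B(z,1)$. Decomposing the ambient space into the parabolic annuli $S_j(z,1)$, the Gaffney off-diagonal bounds of Proposition~\ref{prop:OD} supply exponential decay in $j$ of the localized Whitney averages on $B(z,1)$, so for $j \geq 2$ these contributions vanish as $\delta \to 0$. The remaining piece, supported near $z$, is handled by the $L^2$-strong continuity of $t \mapsto \Gamma(t,0)g$ at $t=0$ together with the Lebesgue differentiation theorem applied to $|g|^2$ and $g$, along the lines of the argument invoked in the proof of Lemma~\ref{lem:slice-tildeN}; since $z$ was arbitrary, the four limits hold a.e.\ on ${\mathbb{R}}^n$ for every compactly supported $g \in L^2$.

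To extend to arbitrary $f \in L^2$, I introduce the sublinear operator
\[
S^*(f)(x) := \limsup_{\delta \to 0} \Bigl(\fint_{\delta/2}^\delta \fint_{B(x,\sqrt\delta)} |u_f(t,y) - f(x)|^2 \,{\rm d}y\,{\rm d}t\Bigr)^{1/2},
\]
which satisfies, by the triangle inequality, the pointwise bound $S^*(f)(x) \leq \tilde N(u_f)(x) + |f(x)|$. Corollary~\ref{cor:ntmax2} supplies $\|\tilde N(u_f)\|_{L^2} \lesssim \|f\|_{L^2}$, hence a weak $(2,2)$ bound for $S^*$. The standard three-$\varepsilon$ density argument, using density of compactly supported $L^2$ functions and the vanishing of $S^*$ on that subclass established in the previous paragraph, forces $S^*(f) \equiv 0$ a.e. The slice analogue is identical: the relevant slice-maximal function is dominated by $\tilde N_\beta(u_f)$ via Proposition~\ref{prop:nrjloc} (Caccioppoli), and $\tilde N_\beta$ is equivalent to $\tilde N$ in the $L^2$ norm by the remark following Definition~\ref{def:Xp}.

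The main technical obstacle is the dense-class convergence for the piece compactly supported near $z$. The naive estimate
\[
\fint_{\delta/2}^\delta \fint_{B(x,\sqrt\delta)} |\Gamma(t,0)g - g|^2 \,{\rm d}y\,{\rm d}t \leq \frac{1}{|B(x,\sqrt\delta)|}\sup_{t<\delta}\|\Gamma(t,0)g - g\|_{L^2}^2
\]
is too crude in dimensions $n \geq 4$, since the denominator $\sim \delta^{n/2}$ can decay faster than is controlled by the rate of $L^2$-strong continuity at $t = 0$. A careful coupling of the parabolic scaling of the Whitney region with the spatial Lebesgue differentiation of $|g|^2$ and $g$ at $x$, against the $L^2$-continuity of the propagator at the boundary, is the delicate point on which the density step turns.
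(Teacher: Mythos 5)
Your scheme (expand the square, dense class, weak type $(2,2)$ for $S^*$ via $S^*(f)\le \tilde N(u_f)+|f|$ and Corollary~\ref{cor:ntmax2}, three-$\varepsilon$) is standard and the scaffolding is sound, but the whole proof rests on the dense-class step — that for $g\in L^2$ with compact support the Whitney averages of $|\Gamma(t,0)g-g|^2$ around a.e.\ $x$ tend to $0$ — and you never prove it; your closing paragraph concedes exactly this. The Gaffney bounds of Proposition~\ref{prop:OD} only dispose of the pieces of $g$ supported far from $x$. For the near piece the only information available is strong $L^2$-continuity of $t\mapsto\Gamma(t,0)g$ at $t=0$, which carries no rate, and there are no kernel bounds for complex $A$. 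Even for $g\in{\mathscr{C}}_c^\infty$ the best elementary rate coming from the energy identity is $\|\Gamma(t,0)g-g\|_{L^2}^2\lesssim t^{1/2}\|g\|_{L^2}\|\nabla g\|_{L^2}$, so the crude bound $\delta^{-n/2}\sup_{t\le\delta}\|\Gamma(t,0)g-g\|_{L^2}^2$ does not vanish for any $n$ and blows up already for $n\ge 2$ (the obstruction is worse than the ``$n\ge4$'' you state). Appealing to ``the argument invoked in Lemma~\ref{lem:slice-tildeN}'' does not close the circle: that passage is precisely the same unquantified jump from $L^2$-continuity plus Lebesgue differentiation to a.e.\ Whitney convergence, so you would be assuming the dense-class case of the theorem rather than proving it. In short, the step on which your density argument turns is essentially as hard as the theorem itself, and no tool you invoke supplies it.

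What closes the gap — and is the paper's actual route — is the Duhamel formula \eqref{Duhamel} with $L=-\Delta$: write $u(t,\cdot)=e^{t\Delta}f+v(t,\cdot)$ with $v={\mathcal{R}}_{-\Delta}((A-I)\nabla u)$. Since $v(t,\cdot)$ only involves $\nabla u$ on $(0,t)$ (causality), the $T^{2,2}\to X^2$ bound of Proposition~\ref{prop:subMR} applied on the strip $(0,\varepsilon)$ gives
$$
\int_{{\mathbb{R}}^n}\sup_{\delta\le\varepsilon}\fint_{\delta/2}^{\delta}\fint_{B(x,\sqrt{\delta})}|v(t,y)|^2\,{\rm d}y\,{\rm d}t\,{\rm d}x\ \lesssim\ \|(A-I)\nabla u\|_{L^2(0,\varepsilon;L^2)}^2\xrightarrow[\varepsilon\to 0]{}0,
$$
and monotonicity in $\varepsilon$ upgrades this integrated smallness to a.e.\ vanishing of the Whitney averages of $v$; the heat part converges non-tangentially a.e.\ by classical theory, and the slice averages then follow from Proposition~\ref{prop:nrjloc} applied to the weak solution $u-f(x)$ (constants solve \eqref{eq1}). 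Note that this argument needs no dense class and no maximal-function covering at the level of the data: it works directly for every $f\in L^2$ because $\|\nabla u\|_{L^2(0,\varepsilon;L^2)}\to0$. If you insist on keeping your density scheme, the only apparent way to handle your dense class is this same decomposition, at which point the weak-type layer becomes redundant.
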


\begin{proof}
Considering $f(x)$ as a constant, the convergence of the slice averages 
follows from the local estimates in Proposition~\ref{prop:nrjloc} and the 
convergence of the Whitney averages. 

To show the convergence of the Whitney averages, we use again the 
Duhamel formula \eqref{Duhamel} with $L=-\Delta$, which reads, for almost 
every $(t,y) \in {\mathbb{R}}^{n+1}_+$,  
$$
u(t,y) = e^{t\Delta}f(y) + {\mathcal{R}}_{-\Delta}((A-I)\nabla u)(t,y)
= e^{t\Delta}f(y)+ v(t,y).
$$
Recall that $v(t,y)$ only involves $(A-I)\nabla u$ at times between $0$ and $t$. 
Thus, the proof of the boundedness from $T^{2,2}=L^2(L^2)$ to $X^2$ from 
Proposition~\ref{prop:subMR} shows that 
$$
\int_{{\mathbb{R}}^n}\sup_{\delta \le \varepsilon} \fint_{\frac{\delta}{2}}^\delta
\fint_{B(x,\sqrt{\delta})}|v(t,y)|^2\,{\rm d}y\,{\rm d}t \,{\rm d}x  
\lesssim \|(A-I)\nabla u\|^2_{L^2(0,\varepsilon;L^2)}. 
$$
The right-hand side converges to 0 with $\varepsilon\to 0$ as 
$\|\nabla u \|_{L^2(L^2)} \sim \|f\|_{L^2}$. Thus the left-hand side converges to 0. 
As the integrand is non negative and non decreasing  as a function 
of $\varepsilon$, it follows that it converges to 0 almost everywhere, that is
$$
\fint_{\frac{\delta}{2}}^\delta
\fint_{B(x,\sqrt{\delta})}|v(t,y)|^2\,{\rm d}y\,{\rm d}t \xrightarrow[\delta\to0]{} 0
\qquad \mbox{almost everywhere.}
$$
Finally, for almost every $x \in {\mathbb{R}}^n$,   
$$
\fint_{\frac{\delta}{2}}^\delta
\fint_{B(x,\sqrt{\delta})}|e^{t\Delta}f(y)- f(x)|^2\,{\rm d}y\,{\rm d}t 
\xrightarrow[\delta \to 0]{} 0
$$
by the known results concerning pointwise non-tangential almost everywhere 
convergence of the solutions of the heat equation. This completes the proof.
\end{proof}

\begin{corollary} 
Let $1< p<\infty$. Consider any $A$ as in \eqref{ell} for which \eqref{eq1} is 
well-posed on $X^p$ with initial space $L^p$ (this holds for $p\ge 2$ and 
under conditions when $p<2$). Let $u$ be a global weak solution in $X^p$. Then 
$$
\fint_{\frac{\delta}{2}}^\delta
\fint_{B(x,\sqrt{\delta})}u(t,y)\,{\rm d}y\,{\rm d}t \quad \mbox{and} \quad 
\fint_{B(x,\sqrt{\delta})}u(\delta ,y)\,{\rm d}y 
$$
converge for almost every $x\in {\mathbb{R}}^n$ to the initial value as $\delta \to 0$. 
\end{corollary}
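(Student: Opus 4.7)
The plan is to reduce the corollary to the preceding theorem by a standard density-plus-maximal-function argument, exploiting the well-posedness hypothesis to obtain the required uniform $L^p$ control.

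First, by the well-posedness assumption, any global weak solution $u\in X^p$ is of the form $u(t,\cdot)=\Gamma(t,0)f$ for a unique $f\in L^p({\mathbb{R}}^n)$, and one has the a priori bound $\|\tilde N(u)\|_{L^p}\sim\|f\|_{L^p}$. Since $L^2\cap L^p$ is dense in $L^p$, and the preceding theorem already gives a.e.\ convergence of both the Whitney and slice averages to $f(x)$ when $f\in L^2$, it suffices to control the two limsup operators
\[
\Lambda^W f(x)=\limsup_{\delta\to 0}\Bigl|\fint_{\delta/2}^{\delta}\fint_{B(x,\sqrt\delta)} u(t,y)\,{\rm d}y\,{\rm d}t-f(x)\Bigr|,
\qquad
\Lambda^S f(x)=\limsup_{\delta\to 0}\Bigl|\fint_{B(x,\sqrt\delta)} u(\delta,y)\,{\rm d}y-f(x)\Bigr|
\]
on all of $L^p$. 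Both $\Lambda^W$ and $\Lambda^S$ depend sublinearly on $f$ through $u=\Gamma(\cdot,0)f$ and through the constant $f(x)$.

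Next I would establish maximal inequalities for these operators. For $\Lambda^W$, Jensen's inequality directly gives
\[
\Lambda^W f(x)\le \tilde N(u)(x)+|f(x)|,
\]
so $\|\Lambda^W f\|_{L^p}\lesssim\|f\|_{L^p}$ by the well-posedness bound. For $\Lambda^S$, I apply Proposition~\ref{prop:nrjloc} with $a=\delta/2$, $b=\delta$, $r=\sqrt\delta$ to $u$, which yields
\[
\fint_{B(x,\sqrt\delta)}|u(\delta,y)|^2\,{\rm d}y\lesssim \fint_{\delta/2}^{\delta}\fint_{B(x,2\sqrt\delta)}|u(t,y)|^2\,{\rm d}y\,{\rm d}t\le {\mathcal N}_2(u)(x)^2.
\]
Combined with Cauchy--Schwarz and the equivalence $\|{\mathcal N}_\beta(u)\|_{L^p}\sim\|\tilde N(u)\|_{L^p}$ recalled after Definition~\ref{def:Xp}, this gives $\|\Lambda^S f\|_{L^p}\lesssim\|f\|_{L^p}$.

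Finally, given $f\in L^p$, pick $f_k\in L^2\cap L^p$ with $f_k\to f$ in $L^p$, and decompose $f=f_k+g_k$. The preceding theorem yields $\Lambda^W f_k=\Lambda^S f_k=0$ a.e., so by sublinearity $\Lambda^W f\le\Lambda^W g_k$ and $\Lambda^S f\le\Lambda^S g_k$ a.e. Taking $L^p$ norms and using the maximal inequalities,
\[
\|\Lambda^W f\|_{L^p}+\|\Lambda^S f\|_{L^p}\lesssim\|g_k\|_{L^p}\xrightarrow[k\to\infty]{}0,
\]
whence $\Lambda^W f=\Lambda^S f=0$ almost everywhere, which is the desired convergence. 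The only mildly delicate point is the slice part: one must invoke Proposition~\ref{prop:nrjloc} and the change-of-aperture for $\tilde N$ in order to dominate the slice maximal function by $\tilde N(u)$, but beyond this the argument is the standard Banach principle and presents no real obstacle.
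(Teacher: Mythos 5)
Your argument is correct and is essentially the paper's own proof: the paper explicitly invokes ``the usual argument'' combining (1) $L^p$ control of $\tilde N(u)$ and of the slice averages and (2) a.e.\ convergence for the dense class $L^2\cap L^p$ from the preceding theorem, and you have simply supplied the details it skips. The only cosmetic point is in the slice bound, where Proposition~\ref{prop:nrjloc} dominates $\fint_{B(x,\sqrt\delta)}|u(\delta,y)|^2\,{\rm d}y$ by an average over $(\delta/2,\delta)\times B(x,2\sqrt\delta)$, which sits inside a Whitney region of aperture $2\sqrt{2}$ rather than $2$; this is harmless since you already invoke the change-of-aperture equivalence $\|\mathcal{N}_\beta(u)\|_{L^p}\sim\|\tilde N(u)\|_{L^p}$.
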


\begin{proof}  
Under our assumption, we know that $u(t,\cdot)=\Gamma(t,0)f$ for a unique 
$f\in L^p$ for all $t>0$. We invoke  the usual argument involving 1) control of the 
maximal function $\tilde N (u)$ and the one for the slice averages in $L^p$, 
and 2) existence of limits almost everywhere for a dense class of $f$, here 
$L^p\cap L^2$, from the previous theorem. We skip details.
\end{proof}

\begin{remark} 
Consider $A$ with real coefficients. This results holds for any $1<p<\infty$ and 
can be proven by usual arguments, using the pointwise Gaussian upper estimate 
and the conservation property. For $p=\infty$, one can get convergence under 
uniform continuity of $f$. For $p=1$, we have not attempted to describe the space
$\{u \in L^2_{\rm loc}({\mathbb{R}}^{n+1}_+) \;;\; \|u\|_{X^1}<\infty\}$.  
This would define a Hardy space associated with a non-autonomous equation.  
\end{remark}

\section{$L^1$ theory for propagators with kernel bounds}
\label{sec9}

In this  section, we assume that the propagators $\Gamma(t,s)$, for $t> s\geq 0$, 
have  kernel bounds, and that their adjoints have some time regularity. More 
precisely, we assume the following:   
\begin{align}
\label{p1}
&|k(t,s,x,y)| \le C(t-s)^{-\frac{n}{2}}e^{-c\frac{|x-y|^{2}}{t-s}},\\
\label{p3}
&\forall\, h \in {\mathscr{C}}_c({\mathbb{R}}^n) \quad
\|\Gamma(t_0,s)^*h-\Gamma(t_0,s_0)^*h\|_{L^\infty} 
\xrightarrow[s \to s_0]{} 0,
\end{align}
for some $C,c>0$, all $t_0>s_0\ge 0$, and almost all $x,y \in{\mathbb{R}}^n$.

See Section~\ref{sec:kernel} for a discussion of the first condition. The second 
condition can be checked for real equations as a consequence of Nash's 
regularity theorem \cite{Na}. 

\begin{lemma}
\label{lem:repro}
Assume \eqref{p1}. Let $t>s>r\ge 0$.
Then
$$
\int_{{\mathbb{R}}^n} k(t,s,x,z)k(s,r,z,y)\,{\rm d}z = k(t,r,x,y),
$$
for almost every $x,y \in {\mathbb{R}}^n$. Moreover, 
\begin{equation}
\label{eq:integral1}
\int_{{\mathbb{R}}^n} k(t,r, x,y)\, dx =1
\end{equation}
for almost every $y\in{\mathbb{R}}^n$. Finally, for any $f\in L^1({\mathbb{R}}^n)$, 
$\tau \mapsto \Gamma(\tau,s)f \in {\mathscr{C}}_0([s,\infty), L^1)$.
\end{lemma}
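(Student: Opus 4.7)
The plan is to prove the three claims in sequence, leveraging the propagator semigroup structure and the kernel bounds \eqref{p1}.

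First, for the reproducing (Chapman-Kolmogorov) identity, I would apply the propagator composition $\Gamma(t,r) = \Gamma(t,s)\Gamma(s,r)$ from Lemma~\ref{lem:propag}(2) to arbitrary test functions $f \in \mathscr{C}_c({\mathbb{R}}^n) \cap L^2({\mathbb{R}}^n)$. Unfolding both sides into kernel integrals and noting that \eqref{p1} ensures absolute integrability of the triple integrand $|k(t,s,x,z)||k(s,r,z,y)||f(y)|$, Fubini yields
$$
\int k(t,r,x,y)f(y)\,dy = \int\Bigl(\int k(t,s,x,z)k(s,r,z,y)\,dz\Bigr)f(y)\,dy,
$$
for a.e.~$x$. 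Since $f$ ranges over a dense class in $L^2$, the two kernels (viewed as functions of $y$) must coincide for a.e.~$x$, hence a.e.~in $(x,y)$.

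Next, for the normalisation $\int k(t,r,x,y)\,dx = 1$ a.e., I would invoke the adjoint conservation identity $\Gamma(t,r)^* \mathbf{1} = \mathbf{1}$ in $L^2_{\rm loc}({\mathbb{R}}^n)$ from Corollary~\ref{cor:cons}. Since $\Gamma(t,r)$ has integral kernel $k(t,r,x,y)$, the adjoint $\Gamma(t,r)^*$ has kernel $(y,x) \mapsto \overline{k(t,r,x,y)}$, i.e.\ $\Gamma(t,r)^* g(y) = \int \overline{k(t,r,x,y)}\,g(x)\,dx$ for $g \in L^2$ compactly supported. The Gaussian decay of the kernel allows this formula to be evaluated at $g = \mathbf{1}$ (interpreted locally in $y$ and passed to $L^2_{\rm loc}$), giving $\int \overline{k(t,r,x,y)}\,dx = 1$ for a.e.~$y$; complex conjugation completes the identity.

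Finally, for the continuity statement, the kernel bound \eqref{p1} immediately yields $\sup_{s\le \tau}\|\Gamma(\tau,s)\|_{\mathcal{L}(L^1)}<\infty$ via $\int |k(\tau,s,x,y)|\,dx \lesssim 1$. For continuity at any $\tau_0 \in [s, \infty)$ in $L^1$, I argue by density: it suffices to treat $f \in \mathscr{C}_c({\mathbb{R}}^n) \cap L^2({\mathbb{R}}^n)$ supported in $B(0,R)$, and to split
$$
\|\Gamma(\tau,s)f - \Gamma(\tau_0,s)f\|_{L^1} \le \|\Gamma(\tau,s)f - \Gamma(\tau_0,s)f\|_{L^1(B(0,M))} + \|\Gamma(\tau,s)f - \Gamma(\tau_0,s)f\|_{L^1(B(0,M)^c)}.
$$
The first piece is bounded by $|B(0,M)|^{1/2}\|\Gamma(\tau,s)f - \Gamma(\tau_0,s)f\|_{L^2}$, which vanishes as $\tau \to \tau_0$ by the $L^2$ continuity in Lemma~\ref{lem:propag}(3). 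The second piece is bounded using \eqref{p1} by a uniform Gaussian tail: for $|x| \ge 2R$ one has $|\Gamma(\tau,s)f(x)| \lesssim (\tau-s)^{-n/2} e^{-c|x|^2/(4(\tau-s))}\|f\|_{L^1}$, and the change of variable $u = x/\sqrt{(\tau-s)/c}$ makes $\int_{|x|>M}$ arbitrarily small as $M \to \infty$ uniformly for $\tau$ in a neighbourhood of $\tau_0$ (this includes $\tau_0 = s$, since as $\tau-s \to 0^+$, $M/\sqrt{\tau-s} \to \infty$ improves the tail). A density argument combined with the uniform $L^1$ bound extends continuity to all $f \in L^1$. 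The hardest point will be the decay $\|\Gamma(\tau,s)f\|_{L^1} \to 0$ as $\tau \to \infty$: from the $L^2$ decay of $\Gamma(\tau,s)f$ (for $f \in L^1 \cap L^2$), the uniform $L^1$ bound, and the $L^\infty$ smoothing $\|\Gamma(\tau,s)f\|_{L^\infty} \lesssim (\tau-s)^{-n/2}\|f\|_{L^1}$ from \eqref{p1}, interpolation yields $L^p$ decay for all $p > 1$; upgrading this to $p=1$ is the main obstacle and requires combining the dispersive spreading of the kernel with dominated convergence controlled by its Gaussian tail, exploiting that the $L^1$ mass escapes every fixed compact set as $\tau \to \infty$.
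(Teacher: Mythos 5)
Your treatment of the first two claims and of the continuity statement follows the paper's own proof essentially step by step: the Chapman--Kolmogorov identity is obtained by testing against a dense class and applying Fubini under the absolute convergence supplied by \eqref{p1}; the normalisation \eqref{eq:integral1} is reduced, through the adjoint kernel, to the conservation property $\Gamma(t,r)^*1\!{\rm l}=1\!{\rm l}$ of Corollary~\ref{cor:cons}; and continuity in $L^1$ is proved on a dense class of compactly supported data by splitting into $B(0,M)$, where $\|\cdot\|_{L^1(B(0,M))}\lesssim M^{n/2}\|\cdot\|_{L^2}$ and the $L^2$ continuity of Lemma~\ref{lem:propag} applies, and a Gaussian tail which \eqref{p1} makes small uniformly for times in bounded subsets of $[s,\infty)$, the uniform $L^1$ bound then giving the extension to all of $L^1$. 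Up to this point your argument and the paper's coincide.

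The gap is the last claim, the vanishing of $\|\Gamma(\tau,s)f\|_{L^1}$ as $\tau\to\infty$, which you leave open and for which the sketch you offer cannot be completed. Mass escaping every fixed compact set is not the same as decay of the $L^1$ norm: by \eqref{eq:integral1}, Fubini, and nonnegativity of the kernel (real coefficients, e.g.\ the heat semigroup $A=I$, which satisfies \eqref{p1} and \eqref{p3}), one has $\|\Gamma(\tau,s)f\|_{L^1}=\|f\|_{L^1}$ for every $f\ge 0$, so no such decay can hold in that generality. Interpolation also runs the wrong way: $\|u\|_{L^p}\le\|u\|_{L^1}^{1-\theta}\|u\|_{L^2}^{\theta}$ converts $L^2$ decay plus a uniform $L^1$ bound into $L^p$ decay only for $p>1$, and gives nothing at $p=1$; ``dispersive spreading plus dominated convergence'' cannot produce $L^1$-norm decay either, precisely because the spreading Gaussian carries its full mass with it. It is worth noting that the paper's own proof dispatches this point with the single phrase that the limit at $\infty$ ``is similar'', which does not survive scrutiny: the tail bound used for continuity is uniform only on bounded time intervals, and for fixed $M$ the quantity $\int_{|x|>M}(\tau-s)^{-n/2}e^{-c|x-y|^2/(\tau-s)}\,{\rm d}x$ tends to a positive constant, not to $0$, as $\tau\to\infty$. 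So you have correctly isolated the one genuinely problematic step (continuity on $[s,\infty)$ is fine; the decay at infinity in $L^1$ is not, and as stated it conflicts with conservation of mass), but as a proof of the lemma as written your proposal is incomplete at exactly this point, and the strategy you outline there would fail.
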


\begin{proof}
Let $x,y \in {\mathbb{R}}^n$ and define 
$I(x,y) = \int_{{\mathbb{R}}^n} k(t,s,x,z)k(s,r,z,y)\,{\rm d}z$. Note that the 
integral converges thanks to \eqref{p1}. Next, for 
$f \in{\mathscr{D}}({\mathbb{R}}^n)$, we have the following for almost all 
$x\in {\mathbb{R}}^n$
\begin{align*}
\int_{{\mathbb{R}}^n} I(x,y)f(y)\,{\rm d}y &= 
\int_{{\mathbb{R}}^n}k(t,s,x,z)\Big(\int_{{\mathbb{R}}^n} k(s,r,z,y)f(y)
\,{\rm d}y\Big)\,{\rm d}z\\
&= 
\int_{{\mathbb{R}}^n}k(t,s,x,z)\Gamma(s,r)f(z)\,{\rm d}z
= 
\Gamma(t,s)\Gamma(s,r)f(x)
\\&=\Gamma(t,s)f(x) = \int_{{\mathbb{R}}^n}k(t,s,x,y)f(y)\,{\rm d}y.
\end{align*}
Using \eqref{p1}, we have that the other equality is equivalent to 
$\Gamma(t,r)^*1\!{\rm l}=1\!{\rm l}$ almost everywhere. This is proved in the 
$L^2_{\rm loc}$ sense in Corollary~\ref{cor:cons}. 

Finally, the strong continuity of $\tau \mapsto \Gamma(\tau,s)$ on 
$L^1({\mathbb{R}}^n)$ is proven as follows. As the operators are uniformly 
bounded on $L^1({\mathbb{R}}^n)$, it suffices to check continuity for functions 
in a dense class.
Let $f \in L^\infty({\mathbb{R}}^n)$ be compactly supported, and $M>0$ be 
such that the support of $f$ is contained $B(0,M)$. 
Then, using \eqref{p1}, $\|\Gamma(t,s)f- \Gamma(t',s)f\|_{L^1(B(0,M)^c)}\to 0$ 
when $M\to \infty$ uniformly for $t,t'$ in any bounded set of $[s,\infty)$. So fix 
$t\ge s$ and take $t'\ge s$ with $|t-t'|\le 1$. Let $\varepsilon>0$ and fix 
$M>0$ so that  $\|\Gamma(t,s)f- \Gamma(t',s)f\|_{L^1(B(0,M)^c)}\le \varepsilon$ 
for all such $t,t'$ . 
Since   $\|\Gamma(t,s)f- \Gamma(t',s)f\|_{L^1(B(0,M))} \le 
M^{n/2} \|\Gamma(t,s)f- \Gamma(t',s)f\|_{L^2(B(0,M))}$, we can use the strong 
continuity of $\tau \mapsto \Gamma(\tau,s)$ on $L^2({\mathbb{R}}^n)$ to conclude.  
The proof for the limit at $\infty$ is similar.
\end{proof}

\begin{theorem}
Assume \eqref{p1} and \eqref{p3}.
For $u \in {\mathscr{D}}'$, the following assertions are equivalent.
\begin{align}
\label{eq81}
&\exists !\,u_0\in L^1({\mathbb{R}}^n)\mbox{ such that }u(t,\cdot)=\Gamma(t,0)u_0
\mbox{ in }L^1({\mathbb{R}}^n)\mbox{ for all }t>0;
\\
\label{eq82}&u \mbox{ is a global weak solution of \eqref{eq1} in } 
L^\infty_{wcs}(L^1),
\end{align}
where  $u\in L^\infty_{wcs}(L^1)$ means $u\in L^\infty(L^1)$ and there exists a 
weakly convergent sequence $(u(t_j,.))_{j \in {\mathbb{N}}}$ in 
$L^1({\mathbb{R}}^n)$, where $(t_j)_{j \in {\mathbb{N}}}$ is a sequence of 
positive reals decreasing to $0$. In this case, $u\in{\mathscr{C}}_0([0,\infty),L^1)$.   
\end{theorem}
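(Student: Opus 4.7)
The structure follows the template of Theorem~\ref{thm:uniqueness_p<2}, with the non-reflexivity of $L^1$ forcing two adaptations: the hypothesis $L^\infty_{wcs}(L^1)$ in \eqref{eq82} replaces the automatic existence of a weakly$^*$-convergent subsequence available for $p>1$, while assumption \eqref{p3} replaces the interpolation-based strong convergence of adjoint propagators used there.

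\textbf{From \eqref{eq81} to \eqref{eq82}.} The kernel bound \eqref{p1} yields uniform boundedness of $\Gamma(t,0)$ on $L^1({\mathbb{R}}^n)$, so $u \in L^\infty(L^1)$. By Lemma~\ref{lem:repro}, $t \mapsto \Gamma(t,0)u_0$ lies in ${\mathscr{C}}_0([0,\infty);L^1)$; in particular $u(t_j,\cdot)\to u_0$ strongly, hence weakly, in $L^1$ along any sequence $t_j\to 0$, giving $u \in L^\infty_{wcs}(L^1)$ together with the continuity conclusion at the end of the statement. To verify that $u$ is a global weak solution, I would approximate $u_0$ in $L^1$ by $u_0^{(k)} \in L^1\cap L^2$, so that $u^{(k)}(t,\cdot) := \Gamma(t,0)u_0^{(k)}$ is a weak solution by Lemma~\ref{lem:propag}. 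The $L^1$-uniform kernel bound \eqref{p1} combined with reverse H\"older (Corollary~\ref{cor:4.4}) yields $u^{(k)} \to u$ in $L^2_{\rm loc}({\mathbb{R}}^{n+1}_+)$, and Proposition~\ref{prop:nrjloc} applied to $u^{(k)} - u^{(k')}$ upgrades this to $\nabla u^{(k)} \to \nabla u$ in $L^2_{\rm loc}$, permitting passage to the limit in the weak formulation of \eqref{eq1}.

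\textbf{From \eqref{eq82} to \eqref{eq81}.} Let $(t_j)_{j\in{\mathbb{N}}}$ decrease to $0$ with $u(t_j,\cdot) \rightharpoonup u_0$ weakly in $L^1$. Fix $t>0$ and $h \in {\mathscr{C}}_c({\mathbb{R}}^n)$; choose $b>t$. Proposition~\ref{prop:L2poids} applied with $p=1$, $p'=\infty$ and weight $w(x)=e^{-\gamma|x|^2} \in L^\infty$ verifies the hypothesis of Theorem~\ref{thm:THE_THEOREM} (for $\gamma < \alpha/(4b)$), so that for $j$ large enough that $t_j < t$,
\[
\int_{{\mathbb{R}}^n} u(t,x)\overline{h(x)}\,{\rm d}x = \int_{{\mathbb{R}}^n} u(t_j,x)\,\overline{\Gamma(t,t_j)^* h(x)}\,{\rm d}x.
\]
I split the right-hand side as
\[
\int u(t_j,x)\overline{\bigl(\Gamma(t,t_j)^* h - \Gamma(t,0)^* h\bigr)(x)}\,{\rm d}x
+ \int u(t_j,x)\overline{\Gamma(t,0)^* h(x)}\,{\rm d}x.
\]
The first term is bounded by $\|u\|_{L^\infty(L^1)}\,\|\Gamma(t,t_j)^*h - \Gamma(t,0)^*h\|_{L^\infty}$, which vanishes as $j\to\infty$ by \eqref{p3}; the second converges to $\int u_0(x)\overline{\Gamma(t,0)^*h(x)}\,{\rm d}x = \int \Gamma(t,0)u_0(x)\overline{h(x)}\,{\rm d}x$ by weak $L^1$ convergence of $u(t_j,\cdot)$ against $\Gamma(t,0)^*h \in L^\infty$. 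Since $h$ is arbitrary in ${\mathscr{C}}_c({\mathbb{R}}^n)$ and both $u(t,\cdot)$ and $\Gamma(t,0)u_0$ lie in $L^1$, one concludes $u(t,\cdot) = \Gamma(t,0)u_0$ in $L^1$. Uniqueness of $u_0$ and the continuity/decay $u \in {\mathscr{C}}_0([0,\infty);L^1)$ then follow from Lemma~\ref{lem:repro}.

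\textbf{Main obstacle.} The crux is the non-reflexivity of $L^1$: the existence of a weakly convergent subsequence of $(u(t_j,\cdot))_j$ is not automatic from boundedness in $L^\infty(L^1)$, since Dunford--Pettis demands uniform integrability which is not available a priori; this forces the ``$wcs$'' qualification in the solution class. Furthermore, the passage to the limit in the interior representation formula cannot rely on any weak-to-weak continuity of $\Gamma(t,t_j)^*$ (a weak/weak pairing is not continuous), so strong convergence in $L^\infty$ of $\Gamma(t,t_j)^*h$ is genuinely required, which is precisely what hypothesis \eqref{p3} supplies. No interpolation argument \`a la Theorem~\ref{thm:uniqueness_p<2} is available to bridge $L^2$ and $L^\infty$ in the endpoint case $p=1$, which is why this time regularity of the adjoint kernel must be imposed externally.
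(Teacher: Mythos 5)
Your proposal is correct, and the uniqueness direction \eqref{eq82}$\implies$\eqref{eq81} is essentially the paper's own argument: interior representation via Proposition~\ref{prop:L2poids} (with $p=1$, $w=e^{-\gamma|x|^2}\in L^\infty$) and Theorem~\ref{thm:THE_THEOREM}, then passage to the limit by splitting the pairing and using \eqref{p3} for the adjoint term together with weak $L^1$ convergence against $\Gamma(t,0)^*h\in L^\infty$, with uniqueness and ${\mathscr{C}}_0([0,\infty),L^1)$ from Lemma~\ref{lem:repro}. Where you diverge is the direction \eqref{eq81}$\implies$\eqref{eq82}: the paper exploits the $L^1\to L^2$ smoothing given by \eqref{p1}, namely $\|\Gamma(\tfrac a2,0)f\|_{L^2}\lesssim \|f\|_{L^1}$, and the reproducing formula of Lemma~\ref{lem:repro} to write $\Gamma(t,0)f=\Gamma(t,\tfrac a2)\Gamma(\tfrac a2,0)f$ on $(a,\infty)$, so that $u$ coincides there with an $L^2$ energy solution and is therefore a weak solution for every $a>0$; you instead approximate $u_0$ by data in $L^1\cap L^2$, use the kernel bound (and reverse H\"older) to get $L^2_{\rm loc}$ convergence of $u^{(k)}$ away from $t=0$, upgrade to gradient convergence via Proposition~\ref{prop:nrjloc} applied to differences $u^{(k)}-u^{(k')}$, and pass to the limit in the weak formulation. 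Both routes are sound; the paper's factorization argument is shorter and immediately identifies $u|_{(a,\infty)}$ with the $L^2$ theory, while your approximation argument is more hands-on and makes explicit where the local energy estimates enter, at the cost of the extra Cauchy-sequence bookkeeping (and note the reverse H\"older step is not strictly needed, since \eqref{p1} already gives $\|\Gamma(t,0)g\|_{L^2}\lesssim t^{-n/4}\|g\|_{L^1}$, hence locally uniform convergence away from $t=0$).
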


\begin{proof}
To prove that \eqref{eq81} implies \eqref{eq82}, let $f \in L^1({\mathbb{R}}^n)$ 
and define $u:(t,x)\mapsto \Gamma(t,0)f(x)$ by the intergal formula.
By \eqref{p1} and Lemma~\ref{lem:repro}, we have that 
$u\in{\mathscr{C}}_0([0,\infty),L^1)$. It remains to see it is a global weak solution 
of \eqref{eq1}. For all $a>0$ and $t\in(a,\infty)$, we have, thanks to \eqref{p1}, 
that there exists $c>0$ such that
$$
\|\Gamma(\frac{a}{2},0)f\|_{L^2} 
\lesssim \|e^{ca\Delta}|f|\|_{L^2} 
\lesssim \|f\|_{L^1}.
$$
By Lemma~\ref{lem:repro}, and density of $L^1\cap L^2$ in $L^1$, 
$\Gamma(t,0)f=\Gamma(t,\frac{a}{2})\Gamma(\frac{a}{2},0)f$ for all 
$t\in (a,\infty)$. This shows that $u$ is a global weak solution of \eqref{eq1}.

We now turn to the other direction, and assume \eqref{eq82}.
Let $(t_j)_{j\in{\mathbb{N}}}$ be the sequence of positive reals decreasing to 
$0$ such that $(u(t_j,\cdot ))_{j\in{\mathbb{N}}}$ converges weakly in 
$L^1({\mathbb{R}}^n)$. Call $f$ its limit. By Proposition~\ref{prop:L2poids}
and Theorem~\ref{thm:THE_THEOREM}, 
for all $j \in{\mathbb{N}}$ with $t>t_j$, and $h \in{\mathscr{C}}_c({\mathbb{R}}^n)$, 
we have that
$$
\int_{{\mathbb{R}}^n} u(t_j,x)\overline{\Gamma(t,t_j)^*h(x)}\,{\rm d}x
=\int_{{\mathbb{R}}^n}u(t,x)\overline{h(x)}\,{\rm d}x.
$$
Using \eqref{p3} and also that $(\|u(t_j,\cdot)\|_1)_{j\in{\mathbb{N}}}$ is a 
bounded sequence, the left hand side converges to 
$\int_{{\mathbb{R}}^n} f(x)\overline{\Gamma(t,0)^*h(x)}\,{\rm d}x$. 
This implies $u(t,\cdot) = \Gamma(t,0)f$. The uniqueness of $u_0=f$ follows 
by continuity at $t=0$ in $L^1({\mathbb{R}}^n)$. 
\end{proof}

We then consider solutions only in $L^\infty(L^1)$ and show that such solutions 
arise from considering Radon measures as initial data. To do so, we need to 
impose a further condition on the kernel, which is satisfied in the case of real 
coefficients (see \cite[Theorem~9]{Ar68}), again as a consequence of Nash's 
regularity theorem \cite{Na}.  
\begin{equation}
\label{p4}
\mbox{For all } t > s\ge 0 \mbox { and almost all } x \in{\mathbb{R}}^n, 
\quad y \mapsto k(t,s,x,y) \mbox{ is continuous on } {\mathbb{R}}^n.
\end{equation}
Under \eqref{p1} and \eqref{p4}, then for all $ t > 0$ the integral 
$\int_{{\mathbb{R}}^n} k(t,0,x,y)\,{\rm d}\mu(y)$ makes sense in the duality of 
${\mathscr{C}}_0({\mathbb{R}}^n)$ with the space ${\mathscr{M}}({\mathbb{R}}^n)$
of Radon measures for almost all $x \in {\mathbb{R}}^n$, and belongs to 
$L^1({\mathbb{R}}^n)$ as a function of $x$. We call this function 
$\Gamma(t,0)\mu$. 

\begin{theorem}
Assume \eqref{p1}, \eqref{p3}  and \eqref{p4}. For $u \in {\mathscr{D}}'$,
the following assertions are equivalent.
\begin{align}
\label{eq83}
&\exists !\,\mu \in {\mathscr{M}}({\mathbb{R}}^n) \mbox{ such that }  
u(t,\cdot) = \Gamma(t,0)\mu \mbox{ for all } t>0;
\\
\label{eq84}&u \mbox{ is a global weak solution of \eqref{eq1} in } L^\infty(L^{1}).
\end{align}
In this case, $u\in {\mathscr{C}}_0((0,\infty), L^1)$ and is weakly-star 
convergent to $\mu$ as $t\to 0$.
\end{theorem}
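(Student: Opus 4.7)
The plan is to prove the two implications separately, and then the continuity and weak-star convergence statements.

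For \eqref{eq83} $\Rightarrow$ \eqref{eq84}: given $\mu \in {\mathscr{M}}({\mathbb{R}}^n)$, I would define $u(t,x):=\int_{{\mathbb{R}}^n} k(t,0,x,y)\,{\rm d}\mu(y)$ and check first that $u \in L^\infty(L^1)$. This follows from Fubini combined with \eqref{p1}, which gives $\int|k(t,0,x,y)|\,{\rm d}x\lesssim 1$ uniformly in $t,y$, so $\|u(t,\cdot)\|_{L^1}\lesssim|\mu|({\mathbb{R}}^n)$. To verify that $u$ is a global weak solution, I would fix $a>0$ and observe that $f_a:=u(a/2,\cdot)$ lies in $L^1\cap L^\infty$ (using \eqref{p1} to bound $|f_a(x)|\lesssim a^{-n/2}|\mu|({\mathbb{R}}^n)$), hence in $L^2({\mathbb{R}}^n)$. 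By Lemma~\ref{lem:repro}, $u(t,\cdot)=\Gamma(t,a/2)f_a$ for $t>a$, which is a weak solution on $(a,\infty)\times{\mathbb{R}}^n$ by Lemma~\ref{lem:propag}; letting $a\to 0$ shows $u$ is a global weak solution.

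For \eqref{eq84} $\Rightarrow$ \eqref{eq83}: since the family $\{u(t,\cdot)\}_{t>0}$ is bounded in $L^1({\mathbb{R}}^n)\hookrightarrow{\mathscr{M}}({\mathbb{R}}^n)$, Banach-Alaoglu applied to any sequence $t_j\to 0^+$ yields a subsequence $(u(t_{j_k},\cdot))_k$ weakly-star converging in ${\mathscr{M}}({\mathbb{R}}^n)$ to some $\mu$. By Proposition~\ref{prop:L2poids} the hypothesis of Theorem~\ref{thm:THE_THEOREM} is satisfied on each $(t_{j_k},b)\times{\mathbb{R}}^n$, so for every $t>0$ and $h\in{\mathscr{C}}_c({\mathbb{R}}^n)$,
$$
\int_{{\mathbb{R}}^n} u(t,x)\,\overline{h(x)}\,{\rm d}x
= \int_{{\mathbb{R}}^n} u(t_{j_k},x)\,\overline{\Gamma(t,t_{j_k})^*h(x)}\,{\rm d}x.
$$
The passage to the limit $k\to\infty$ is where the hypotheses \eqref{p3} and \eqref{p4} enter: \eqref{p3} gives $\Gamma(t,t_{j_k})^*h\to\Gamma(t,0)^*h$ in $L^\infty$, and combined with uniform $L^1$ bounds on $u(t_{j_k},\cdot)$ this replaces $\Gamma(t,t_{j_k})^*h$ by $\Gamma(t,0)^*h$ with vanishing error; \eqref{p1} and \eqref{p4} together ensure $\Gamma(t,0)^*h\in{\mathscr{C}}_0({\mathbb{R}}^n)$ (decay via the Gaussian bound, continuity from continuity of $y\mapsto k(t,0,y,\cdot)$ applied in the dual integral representation and dominated convergence). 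The weak-star convergence then gives $\int u(t,x)\overline{h(x)}\,{\rm d}x=\int \overline{\Gamma(t,0)^*h}\,{\rm d}\mu$, i.e.\ $u(t,\cdot)=\Gamma(t,0)\mu$ in the sense of \eqref{eq83}.

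For the final statements, continuity in ${\mathscr{C}}_0((0,\infty),L^1)$ follows as in Lemma~\ref{lem:repro}: for any $\tau>0$, $u(\tau,\cdot)\in L^1\cap L^\infty\subset L^2$ via \eqref{p1} and the conservation bound, and one applies the strong continuity statement of Lemma~\ref{lem:repro} to $\Gamma(\cdot,\tau)u(\tau,\cdot)$ together with the semigroup identity. The weak-star convergence $u(t,\cdot)\rightharpoonup^*\mu$ as $t\to 0$ is obtained by duality: for $\varphi\in{\mathscr{C}}_0({\mathbb{R}}^n)$, $\int \varphi\,u(t,\cdot)\,{\rm d}x=\int \Gamma(t,0)^*\varphi\,{\rm d}\mu$, and one needs $\Gamma(t,0)^*\varphi\to\varphi$ in $L^\infty$ as $t\to 0^+$ followed by dominated convergence against $\mu$. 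Uniqueness of $\mu$ is then automatic, since $\mu$ must coincide with this weak-star limit. The main obstacle is the regularity-type statement that $\Gamma(t,0)^*h\in{\mathscr{C}}_0({\mathbb{R}}^n)$ for $h\in{\mathscr{C}}_c({\mathbb{R}}^n)$ and the limit $\Gamma(t,0)^*\varphi\to\varphi$ uniformly as $t\to 0^+$; the first should follow from \eqref{p1} and \eqref{p4} applied to the adjoint kernel, while the second requires some approximation argument since \eqref{p3} only asserts continuity in $s$ at a fixed later time $t_0$.
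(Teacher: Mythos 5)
Your two implications follow the paper's own route: the forward direction via Fubini, \eqref{p1}, and the reproducing formula of Lemma~\ref{lem:repro} applied from time $a/2$; the converse via Banach--Alaoglu in ${\mathscr{M}}({\mathbb{R}}^n)$, the interior representation of Theorem~\ref{thm:THE_THEOREM} (justified by Proposition~\ref{prop:L2poids}), and the passage to the limit using \eqref{p3} together with the fact that \eqref{p1} and \eqref{p4} put $\Gamma(t,t_j)^*g$ in ${\mathscr{C}}_0({\mathbb{R}}^n)$. Up to that point the argument is sound and essentially identical to the paper's.

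However, you leave a genuine gap exactly where the remaining work lies: the uniqueness of $\mu$ and the weak-star convergence $u(t,\cdot)\rightharpoonup^*\mu$ as $t\to 0$. You reduce both to the claim that $\Gamma(t,0)^*\varphi\to\varphi$ uniformly as $t\to 0^+$, and then correctly observe that \eqref{p3} does not give this (it only concerns continuity in the second variable at a fixed later time), flagging it as an unresolved ``obstacle''. Without this step the measure $\mu$ produced by compactness is not shown to be unique, so assertion \eqref{eq83} is not fully established. The missing idea is elementary and does not require any new hypothesis: for $g\in{\mathscr{C}}_c({\mathbb{R}}^n)$, use the conservation property $\int_{{\mathbb{R}}^n}k(t,0,x,y)\,{\rm d}x=1$ from \eqref{eq:integral1} (itself a consequence of Corollary~\ref{cor:cons}) to write
\begin{equation*}
\int_{{\mathbb{R}}^n}\int_{{\mathbb{R}}^n}k(t,0,x,y)g(x)\,{\rm d}x\,{\rm d}\mu(y)-\int_{{\mathbb{R}}^n}g(y)\,{\rm d}\mu(y)
=\int_{{\mathbb{R}}^n}\int_{{\mathbb{R}}^n}k(t,0,x,y)\bigl(g(x)-g(y)\bigr)\,{\rm d}x\,{\rm d}\mu(y),
\end{equation*}
and split the inner integral into $|x-y|\le 2\delta$, controlled by the uniform continuity of $g$ and the uniform $L^1$ bound on $x\mapsto k(t,0,x,y)$ from \eqref{p1}, and $|x-y|>2\delta$, controlled by $\|g\|_{L^\infty}\,e^{-c\delta^2/t}\to 0$. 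This is uniform in $y$, so it yields $\int g\,{\rm d}\mu=\lim_{t\to 0}\int u(t,x)g(x)\,{\rm d}x$ for every $g\in{\mathscr{C}}_c$, which simultaneously gives the weak-star convergence (by density and the uniform $L^1$ bound on $u(t,\cdot)$) and the uniqueness of $\mu$. Incorporate this splitting argument and your proof is complete; the rest (identification of $\Gamma(t,0)^*g$ as a ${\mathscr{C}}_0$ function, continuity of $t\mapsto u(t,\cdot)$ in $L^1$ on $(0,\infty)$ via Lemma~\ref{lem:repro}) is handled as you describe.
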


\begin{proof}
We first show that \eqref{eq83} implies \eqref{eq84}.
Let $\mu$ be a Radon measure and  $u(t,\cdot) = \Gamma(t,0)\mu$ for all $ t>0$. 
By \eqref{p1} and Fubini's theorem, we have that $u \in L^\infty(L^1)$.
Moreover, for all $a>0$ and $t\in(a,\infty)$, we have that 
$u\bigl(\frac{a}{2},\cdot\bigr)\in L^1({\mathbb{R}}^n)\cap L^\infty({\mathbb{R}}^n)$, 
and, in particular $u\bigl(\frac{a}{2},\cdot\bigr) \in L^2({\mathbb{R}}^n)$.
By the reproducing formula for the kernels in Lemma~\ref{lem:repro} and 
Fubini's theorem, 
$u(t,\cdot)=\Gamma\bigl(t,\frac{a}{2}\bigr)u\bigl(\frac{a}{2},\cdot\bigr)$. 
This shows that $u$ is a weak solution of \eqref{eq1} on $\bigl(\frac{a}{2},\infty\bigr)$ 
for any $a>0$. Hence it is a global weak solution.

Turning to the other direction, we assume \eqref{eq84}.
Let $t_0<t$ and $(t_j)_{j\in{\mathbb{N}}}$ be a sequence of positive reals 
decreasing to $0$ and such that 
$(\int_{{\mathbb{R}}^n }u(t_j,x) \overline {g(x)}\,{\rm d}x)_{j\in {\mathbb{N}}}$ 
converges for all $g \in {\mathscr{C}}_0({\mathbb{R}}^n)$ by 
Banach-Alaoglu's theorem.
Let $\mu \in {\mathscr{M}}({\mathbb{R}}^n)$ be the weak$^*$ limit of
$(u(t_j,\cdot))_{j\in{\mathbb{N}}}$. Let $g \in {\mathscr{C}}_c({\mathbb{R}}^n)$. 
Using \eqref{p1} and \eqref{p4}, we see that 
$\Gamma(t,t_j)^*g \in{\mathscr{C}}_0({\mathbb{R}}^n)$, and by \eqref{p3}, 
that
$$
\|\Gamma(t,t_j)^*g-\Gamma(t,0)^*g\|_{L^\infty}\xrightarrow[j \to \infty]{} 0.
$$
By Proposition~\ref{prop:L2poids}
and Theorem~\ref{thm:THE_THEOREM}, we have that
$$
\int_{{\mathbb{R}}^n} u(t_j,x)\overline{\Gamma(t,t_j)^*g(x)}\,{\rm d}x
=\int_{{\mathbb{R}}^n}u(t,x)\overline{g(x)}\,{\rm d}x
$$
so that  taking the limit as $j\to \infty$, we obtain
$$
\int_{{\mathbb{R}}^n}u(t,x)\overline{g(x)}\,{\rm d}x
=\int_{{\mathbb{R}}^n}\overline{\Gamma(t,0)^{*}g(y)}\,{\rm d}\mu(y)
=\int_{{\mathbb{R}}^n} \int_{{\mathbb{R}}^n} k(t,0,x,y)\overline{g(x)}
\,{\rm d}\mu(y)\,{\rm d}x,
$$
and thus
$$
u(t,x) = \int_{{\mathbb{R}}^n} k(t,0,x,y)\,{\rm d}\mu(y),
$$
for all $t>0$ and almost all $x\in {\mathbb{R}}^n$.

\noindent
To show that $\mu$ is unique, let $g \in {\mathscr{C}}_c({\mathbb{R}}^n)$, 
and $\varepsilon>0$. By uniform continuity, pick 
$\delta>0$ such that $|g(x)-g(y)|\le \varepsilon$ for all $x,y$ such that 
$|x-y|\le \delta$. Using \eqref{p1}, we have that
$$
\Big|\int_{{\mathbb{R}}^n} \int_{B(y,2\delta)} k(t,0,x,y)(g(x)-g(y))
{\rm d}x\,{\rm d}\mu(y)\Big| \lesssim \varepsilon \|\mu\|_{\mathscr{M}}.
$$
Moreover,  we have that
$$
\Big|\int_{{\mathbb{R}}^n} \int _{B(y,2\delta)^c} k(t,0,x,y)(g(x)-g(y))
\,{\rm d}x\,{\rm d}\mu(y)\Big| \lesssim  \|\mu\|_{\mathscr{M}} \|g\|_{L^\infty}
e^{-c\frac{\delta^{2}}{t}},
$$
for some constant $c>0$. Therefore, 
$$
\lim_{t \to 0}\int_{{\mathbb{R}}^n} \int_{{\mathbb{R}}^n} 
k(t,0,x,y)(g(x)-g(y))\,{\rm d}x\,{\rm d}\mu(y) = 0.
$$ 
By Lemma~\ref{lem:repro}, we have that
$$
\int_{{\mathbb{R}}^n} \int_{{\mathbb{R}}^n} k(t,0,x,y)g(x)
\,{\rm d}x\,{\rm d}\mu(y) - \int_{{\mathbb{R}}^n} g(y)\,{\rm d}\mu(y) =
\int_{{\mathbb{R}}^n} \int_{{\mathbb{R}}^n} k(t,0,x,y)(g(x)-g(y))
\,{\rm d}x\,{\rm d}\mu(y),
$$
and thus $\displaystyle{\int_{{\mathbb{R}}^n}g(x)\,{\rm d}\mu(x) = 
\lim_{t\to 0}\int_{{\mathbb{R}}^n }u(t,x) g(x)\,{\rm d}x}$, which proves uniqueness 
of $\mu$.

Finally, the continuity on $(0,\infty)$ and the limit at $\infty$ follow directly from 
Lemma~\ref{lem:repro} since $u(t,\cdot)= \Gamma(t,s)u(s,\cdot)$ for all 
$t\ge s>0$ and $u(s,\cdot) \in L^1({\mathbb{R}}^n)$. 
\end{proof}

\section{Local results}
\label{sec10}

We have been interested in global results with scale invariant norms in the 
interior. As our estimates depend separately on 1) interior representation and 
2) taking limits as $t$ tends to $0$, we can formulate 
well-posedness for larger classes as follows. Let $X$ be one of the spaces 
where we can prove well-posedness for $Y$ data. For $T>0$, let $X_T$ 
be a local version of $X$, obtained by truncating functions by $0$, i.e.
$\|u\|_{X_T}= \|(t,x) \mapsto u(t,x) 1\!{\rm l}_{(0,T)}(t)\|_X$. 
Let ${\mathcal{X}}=\cap_{T>0} X_T$. Note that functions in ${\mathcal{X}}$ 
can have their $X_T$ norms growing arbitrarily fast as $T\to \infty$. Functions 
with bounded $X_T$ norms belong to $X$.  

Consider the Cauchy problem for \eqref{eq1} on ${\mathbb{R}}^{n+1}_+$ 
with $u\in {\mathcal{X}}$ and $u_0\in Y$. In each of the following cases, one 
obtains uniqueness. Since we can construct solutions in $X$, we get a posteriori 
control at $T\to \infty$ and the solution belongs to $X$. In other words, 
arbitrary a priori control on the norms for large times implies a posteriori bounded 
control. The interested reader can write the precise statements from our 
explanations. 

For $Y=L^2({\mathbb{R}}^n)$, one can take $X=L^\infty(L^2)$ or $X=X^2$. 
In the case $X=L^2(\dot H^1)$, one  takes $X_T= L^2(0,T; H^1)$ instead of 
$L^2(0,T;\dot H^1)$.   

For $Y=L^p({\mathbb{R}}^n)$, $2<p\le \infty$, one can take $X=X^p$ or the 
space in Proposition~\ref{prop:rh} when $p<\tilde q$.  

For $Y=L^p({\mathbb{R}}^n)$, $1\le p<\infty$ and uniform $L^q$ bounds on 
the propagators or estimates on its kernel (see above for the relation between 
$q$ and $p$), one can take $X=L^\infty(L^p)$.  

For $Y=L^p({\mathbb{R}}^n)$, $1\le p<\infty$ and non uniform $L^q$ bounds 
on the propagators or estimates on its kernel (i.e. allowing the 
${\mathscr{L}}(L^q)$ norm of $\Gamma(t,s)$ to grow as $|t-s|\to \infty$), one 
also obtains well-posedness for $X=L^\infty(L^p)$, but the solution has $X_T$ 
norms that may grow at infinity. 

For $Y=L^p({\mathbb{R}}^n)$, $p_c<p<2$ and $BV(0,T; L^\infty)$ coefficients 
for all $T>0$, we are in the previous situation with  $X=L^\infty(L^p)$. 

We also note that the results $\|\nabla u\|_{T^{p,2}} \lesssim \|u\|_{X^p}$ 
in Section~\ref{sec7} are easily localisable in time by looking at the proofs. Thus 
the further estimates using truncated maximal functions apply and one can 
deduce bounds on truncated Lusin area integrals. However, note that the 
converse $ \|u\|_{X^p}\lesssim \|\nabla u\|_{T^{p,2}} $ is not localisable in 
time because the right hand side vanishes when the time becomes small. 

{\small
\bibliographystyle{amsplain}
\addcontentsline{toc}{section}{References}

}
\end{document}